\newcommand{\Complex}{\mathbb C}
\newcommand{\Real}{\mathbb R}
\newcommand{\ddbar}{\overline\partial}
\newcommand{\pr}{\partial}
\newcommand{\ol}{\overline}
\newcommand{\Td}{\widetilde}
\newcommand{\norm}[1]{\left\Vert#1\right\Vert}
\newcommand{\abs}[1]{\left\vert#1\right\vert}
\newcommand{\set}[1]{\left\{#1\right\}}
\newcommand{\To}{\rightarrow}
\newcommand{\dbarb}{\ddbar_b}
\newcommand{\Zbar}{\overline{Z}}
\newcommand{\m}{\theta \wedge d\theta}
\newcommand{\teta}{\tilde{\eta}}
\theoremstyle{plain}
\newtheorem{thm}{Theorem}[section]
\newtheorem{cor}[thm]{Corollary}
\newtheorem{lem}[thm]{Lemma}
\newtheorem{prop}[thm]{Proposition}
\theoremstyle{definition}
\theoremstyle{remark}
\numberwithin{equation}{section}
\begin{document}
\title[]{Solving the Kohn Laplacian on asymptotically flat CR manifolds of dimension 3} 
\author[]{Chin-Yu Hsiao}
\address{Institute of Mathematics, Academia Sinica, 6F, Astronomy-Mathematics Building, No.1, Sec.4, Roosevelt Road, Taipei 10617, Taiwan}
\email{chsiao@math.sinica.edu.tw}
\author[]{Po-Lam Yung}
\address{Mathematical Institute, University of Oxford, OX2 6GG, United Kingdom}
\email{yung@maths.ox.ac.uk}
\thanks{The first author is supported in part by the DFG funded project MA 2469/2-1.}
\thanks{The second author is supported in part by NSF grant DMS 1201474.} 
\date{\today}

\begin{abstract} 
Let $(\hat X,T^{1,0}\hat X)$ be a compact orientable CR embeddable three dimensional strongly pseudoconvex CR manifold, where $T^{1,0}\hat X$ is a CR structure on $\hat X$. Fix a point $p\in\hat X$ and take a global contact form $\hat\theta$ so that $\hat\theta$ is asymptotically flat near $p$. Then $(\hat X,T^{1,0}\hat X,\hat\theta )$ is a pseudohermitian $3$-manifold. Let $G_p\in C^\infty(\hat X\setminus\set{p})$, $G_p > 0$, with $G_p(x)\sim\vartheta(x,p)^{-2}$ near $p$, where $\vartheta(x,y)$ denotes the natural pseudohermitian distance on $\hat X$. Consider the new pseudohermitian $3$-manifold with a blow-up of contact form $(\hat X\setminus\set{p},T^{1,0}\hat X,G^2_p\hat\theta)$ and let $\Box_{b}$ denote the corresponding Kohn Laplacian on $\hat X\setminus\set{p}$.

In this paper, we prove that the weighted Kohn Laplacian $G^2_p\Box_b$ has closed range in $L^2$ with respect to the weighted volume form $G^2_p\hat\theta\wedge d\hat\theta$, and that the associated partial inverse and the Szeg\"{o} projection enjoy some regularity properties near $p$. As an application, we prove the existence of some special functions in the kernel of $\Box_{b}$ that grow at a specific rate at $p$. The existence of such functions provides an important ingredient for the proof of a positive mass theorem in 3-dimensional CR geometry by Cheng-Malchiodi-Yang \cite{CMY}.
\end{abstract}  

\maketitle \tableofcontents

\section{Introduction} 

\subsection{Motivation from CR geometry}

The study described in this paper was motivated by that on a positive mass theorem in 3-dimensional CR geometry by Cheng-Malchiodi-Yang~\cite{CMY}, where one needs to find some special functions in the kernel of the Kohn Laplacian that grow at a specific rate at a given point on an asymptotically flat pseudohermitian $3$-manifold. We begin by giving a brief description of the relevance of our result with their work below. 

Consider a compact orientable 3-dimensional strongly pseudoconvex CR manifold $(\hat X,T^{1,0}\hat X)$ with CR structure $T^{1,0}\hat X$. We assume throughout that it is CR embeddable in some $\mathbb{C}^N$. By choosing a contact form $\hat \theta_0$ on $\hat X$ that is compatible with its CR structure, one can make $(\hat X,T^{1,0}\hat X,\hat\theta_0)$ a pseudohermitian 3-manifold; in particular, one can define a Hermitian inner product on $T^{1,0} \hat X$, by
$$
\langle Z_1 | Z_2 \rangle_{\hat \theta_0} = \frac{1}{2} d\hat \theta_0(Z_1, i\overline{Z}_2).
$$ 
Now fix $p \in\hat X$. By conformally changing the contact form, we may find another contact form $\hat\theta$ (which is a multiple of $\hat \theta_0$ by a positive smooth function), so that near $p$, there exists CR normal coordinates $(z,t)$. In other words, the contact form $\hat \theta$ and the coordinates $(z,t)$ are chosen, so that
\begin{enumerate}[(i)]
\item the point $p$ corresponds to $(z,t) = (0,0)$;
\item one can find a local section $\hat Z_1$ of $T^{1,0} \hat X$ near $p$, with $\langle \hat Z_1 | \hat Z_1 \rangle_{\hat \theta} = 1$, such that $\hat Z_1$ admits an expansion near $p$ as described in \eqref{s1-e5b} below; and
\item the Reeb vector field $\hat T$ with respect to $\hat \theta$ admits an expansion as described in \eqref{s1-e5c}. 
\end{enumerate} 
Then $(\hat X,T^{1,0}\hat X,\hat\theta )$ is another pseudohermitian $3$-manifold. Assume that this pseudohermitian $3$-manifold is of positive Tanaka-Webster class: this means that the lowest eigenvalue of the conformal sublaplacian 
$$L_b:=-4\Delta_b+R$$ is strictly positive. Here $R = R_{\hat \theta}$ is the Tanaka-Webster curvature of $\hat X$, and $\Delta_b$ is the sublaplacian on $\hat X$. (The above assumption on $L_b$ will hold when e.g. $R_{\hat \theta}$ is strictly positive on $\hat X$.) Then $L_b$ is invertible, so one can write down the Green's function $G_p$ of $L_b$ with pole at $p$. We normalize $G_p$ so that
$$L_b G_{p}=16\delta _{p}.$$
Then near $p$, $G_p$ admits the following expansion
\begin{equation} \label{eq:Gpexpand0}
G_p=\frac{1}{2\pi}\hat\rho^{-2}+A+f,\ \ f\in\mathcal{E}(\hat\rho^{1}),
\end{equation}
where $A$ is some real constant, $\hat\rho^4(z,t)=\abs{z}^4+t^2$, and for $m\in\Real$, $\mathcal{E}(\hat\rho^m)$ denotes, roughly speaking, the set of all smooth functions $g\in C^\infty(\hat X\setminus\set{p})$ such that $\abs{
%\pr^p_z\pr^q_{\ol z}\pr^r_t
g(z,t)}\leq \hat\rho(z,t)^{m-p-q-2r}$ near $p$,
% for all $p, q, r \in \mathbb N\bigcup\set{0}$ 
along with some suitable control of the growth of derivatives near $p$ (see \eqref{s1-e6} for the precise meaning of the Fr\'echet space $\mathcal{E}(\hat\rho^m)$). 

Let now $$X = \hat X \setminus \{p\} \quad \text{and} \quad \theta = G_p^2 \hat \theta$$
and let $T^{1,0} X$ be the restriction of $T^{1,0} \hat X$ to $X$. Then $(X,T^{1,0} X,\theta)$ is a new non-compact pseudohermitian manifold, which we think of as the blow-up of our original $(\hat X, T^{1,0} \hat X, \hat \theta)$ at $p$. We say that this pseudohermitian manifold is asymptotically flat, since under an inversion of coordinates, $X$ has asymptotically the geometry of the Heisenberg group at infinity. We note that the Tanaka-Webster scalar curvature $R_{\theta}$ of $X$ is identially zero, since the conformal factor $G_p$ we used is the Green's function for the conformal sublaplacian on $\hat X$. Let $\ddbar_b$ and $\Box_b$ denote the associated tangential Cauchy-Riemann operator and Kohn Laplacian respectively. In~\cite{CMY}, Cheng-Malchiodi-Yang introduced the pseudohermitian $p$-mass for $(X,T^{1,0} X, \theta)$, given by $$m(\theta):=\lim_{\Lambda\To0}i\int_{\set{\hat\rho=\Lambda}}\omega^1_1\wedge\theta,$$ where $\omega^1_1$ stands for the connection form of the given pseudohermitian structure. Then they proved that there is a specific $\Td\beta\in\mathcal{E}(\hat\rho^{-1})$, with 
$\Box_b\Td\beta=\mathcal{E}(\hat\rho^{4})$, such that 
\begin{equation}\label{e-intrIImi}
m(\theta) = -\frac{3}{2} \int_X\abs{\Box_b\Td\beta}^2\theta\wedge d\theta + 3 \int_X |\Td \beta_{,\bar{1} \bar{1}}|^2 \theta\wedge d\theta + \frac{3}{4} \int_X \overline{\Td \beta} \cdot P \Td \beta \theta\wedge d\theta,
\end{equation}
where $\Td \beta_{,\bar{1} \bar{1}}$ is some derivative of the function $\Td \beta$, and $P$ is the CR Paneitz operator of $(X,T^{1,0}X,\theta)$.
(Note $R_{\theta} = 0$ in our current set-up, so the term involving $R_{\theta}$ in the corresponding identity of mass in~\cite{CMY} is not present above.) Moreover, it was shown that \eqref{e-intrIImi} holds for any $\beta$ in place of $\Td\beta$, as long as $\Td\beta-\beta\in\mathcal{E}(\hat\rho^{1+\delta})$, and $\Box_b\beta=\mathcal{E}(\hat\rho^{3+\delta})$ for some $\delta > 0$. Thus, if we could find such a $\beta$ in the \emph{kernel} of $\Box_b$, then under the assumption that the CR Paneitz operator $P$ is non-negative, one can conclude that the mass $m(\theta)$ is non-negative. The construction of such $\beta$ is the motivation of the current paper. (See Corollary~\ref{cor1} in the next subsection.)

Classically, if one wants to solve $\Box_b$ on say a compact CR manifold, one proceeds by showing first that $\Box_b$ extends to a closed linear operator on $L^2$, and that this extended $\Box_b$ has closed range on $L^2$. Then one solves $\Box_b$ in a weak sense, and shows that the solution is classical if the right hand side of the equation is smooth. This strategy does not directly apply in our situation, since our CR manifold $X$ is non-compact. In fact, the natural volume form on $X$ is given by $\theta \wedge d\theta$, and even if we extend $\Box_b$ so that it becomes a closed linear operator on $L^2(\theta \wedge d\theta)$, this operator may not have closed range in $L^2(\theta \wedge d\theta)$, as is seen in some simple examples (e.g. when $\hat X$ is the unit sphere in $\mathbb{C}^2$).

To overcome this difficulty, we introduce in this paper a weighted volume form
$$m_1:=G^{-2}_p\theta\wedge d\theta,$$
as well as a weighted Kohn Laplacian, namely $$\Box_{b,1}:=G^2_p\Box_b.$$ We will show that $\Box_{b,1}$ extends as a densely defined closed linear operator
$$
\Box_{b,1} \colon {\rm Dom\,} \Box_{b,1} \subset L^2(m_1) \to L^2(m_1),
$$
and this extended operator has closed range in $L^2(m_1)$. (See Theorem~\ref{thm2} below.) Here $L^2(m_1)$ is the space of $L^2$ functions with respect to the volume form $m_1$. As a result, we have the following $L^2$ decomposition:
\[\Box_{b,1}N+\Pi=I\ \ \mbox{on}\ \ L^2(m_1)\]
Here $N:L^2(m_1)\To{\rm Dom\,}\Box_{b,1}$ is the partial inverse of $\Box_{b,1}$ and $\Pi:L^2(m_1)\To({\rm Ran\,}\Box_{b,1})^\bot$ is the orthogonal projection onto the orthogonal complement of the range of $\Box_{b,1}$ in $L^2(m_1)$. We will show that 
for every $0<\delta<2$, $N$ and $\Pi$ can be extended continuously to
\begin{equation}\label{i-main-e2}
\begin{split}
&N:\mathcal{E}(\hat\rho^{-2+\delta})\To\mathcal{E}(\hat\rho^{\delta}),\\
&\Pi:\mathcal{E}(\hat\rho^{-2+\delta})\To\mathcal{E}(\hat\rho^{-2+\delta})
\end{split}
\end{equation} 
(see Theorem~\ref{thm3} below.)
Hence 
\begin{equation}\label{i-main-e1ad}
\Box_{b,1}N+\Pi=I\ \ \mbox{on}\ \ \mathcal{E}(\hat\rho^{-2+\delta}),
\end{equation}  
for every $0<\delta<2$. Now, let $\Td\beta$ be as in \eqref{e-intrIImi}. Put $$f:=\Box_{b,1}\Td\beta=G^2_p\Box_b\Td\beta.$$ Then by the expansion of $G_p$ and the assumption on $\Box_b \Td\beta$, we have $f \in\mathcal{E}(\hat\rho^{-1+\delta})$, for every $0<\delta<1$. From \eqref{i-main-e2}, we know that $\Pi f$ is well-defined and $Nf\in\mathcal{E}(\hat\rho^{1+\delta})$, for every $0<\delta<1$. Moreover, we will show, in Theorem~\ref{thm4} below, that $$\Pi f=0.$$ Thus from \eqref{i-main-e1ad}, we have $$\Box_{b,1}(Nf)=f=\Box_{b,1}\Td\beta.$$ If we put $$\beta:=\Td\beta-Nf,$$ then $$\Box_b\beta=0 \quad \text{and} \quad \beta-\Td\beta\in\mathcal{E}(\hat\rho^{1+\delta}),$$ for every $0<\delta<1$. With this we have achieved our goal.

It turns out that a large part of our analysis does not depend on the fact that $G_p$ is the Green's function of a conformal sublaplacian. All that we need is that $G_p$ admits an expansion as in (\ref{eq:Gpexpand0}), that it is smooth on $X$, and that it is positive everywhere on $X$. We will formulate our result in this framework in the next subsection.

We expect that it is possible to approach the same problem by proceeding via $L^p$ spaces rather than weighted $L^2$ spaces. In~\cite{HY13}, we solved the $\Box_b$ equation in some $L^p$ spaces in a special case. 

The operator $\Box_{b,1}$ can be seen as the Kohn Laplacian on the non-compact CR manifold $X=\hat X\setminus{\set{p}}$ defined with respect to the natural CR structure
$T^{1,0}\hat X$ and the ``singular" volume form $m_1$. The coefficients of $\Box_{b,1}$ are smooth on $X$ but singular at $p$. This work can be seen as a first study of this kind of ``singular Kohn Laplacians''. It will be quite interesting to develop some kind of ``singular'' functional calculus for pseudodifferential operators and Fourier integral operators and establish a completely microlocal analysis for $\Box_{b,1}$ along the lines of Beals-Greiner~\cite{BG88}, Boutet de Monvel-Sj\"{o}strand~\cite{BouSj76} and~\cite{Hsiao08}. We hope that the ``singular Kohn Laplacians'' will be interesting for analysts. 

\subsection{Our main result}

Let us now formulate our main results in their full generality. Consider a compact orientable 3-dimensional strongly pseudoconvex pseudohermitian manifold $\hat{X}$, with CR structure $T^{1,0}\hat X$ and contact form $\hat\theta_0$. We assume throughout that it is CR embeddable in some $\mathbb{C}^N$. By conformally changing the contact form $\hat\theta_0$, we may find another contact form $\hat\theta$, so that near $p$, one can find CR normal coordinates $(z,t)$ as described in the previous subsection. We will write 
$$\hat\rho(z,t)=(\abs{z}^4+t^2)^{1/4},$$
and for every $m \in \Real$, we can define a Fr\'echet function space $\mathcal{E}(\hat\rho^m)$ as in \eqref{s1-e6}.

Now fix a point $p \in \hat{X}$, and let $$X = \hat X \setminus \{p\}.$$ We fix from now on an everywhere positive function $G_p \in C^{\infty}(X)$, such that $G_p$ admits an expansion
\begin{equation} \label{eq:Gpexpand}
G_p=\frac{1}{2\pi}\hat\rho^{-2}+A+f,\ \ f\in\mathcal{E}(\hat\rho^{1}),
\end{equation}
where $A$ is some real constant. (Again $G_p$ need not be the Green's function of the conformal sublaplacian any more.) Define now
$$
\theta = G_p^2 \hat \theta.
$$
Then $(X, T^{1,0}\hat X, \theta)$ is a non-compact pseudohermitian 3-manifold, which we think of as the blow-up of the original $\hat X$. The $\theta$ defines for us a pointwise Hermitian inner product on $T^{1,0} X$, given by
$$
\langle Z_1 | Z_2 \rangle_{\theta} = \frac{1}{2} d\theta (Z_1, i\overline{Z}_2);
$$
we denote the dual pointwise inner product on the space $(0,1)$ forms on $X$ by the same notation $\langle \cdot | \cdot \rangle_{\theta}$.
Let $\dbarb$ be the tangential Cauchy-Riemann operator on $X$. This is defined depending only on the CR structure on $X$. Now there is a natural volume form on $X$, given by 
$$m := \theta \wedge d\theta.$$
This induces an inner product on functions on $X$, given by
$$(f|g)_m = \int_X f \overline{g} \, m,$$
and an inner product on $(0,1)$ forms on $X$, given by
$$(\alpha|\beta)_{m,\theta} = \int_X \langle \alpha| \beta \rangle_{\theta} \, m.$$
We write $\ol{\pr}^{*,f}_b$ for the formal adjoint of $\dbarb$ under these two inner products. In other words, $\ol{\pr}^{*,f}_b$ satisfies 
\[(\ddbar_bu|v)_{m,\theta}=(u|\ol{\pr}^{*,f}_bv)_m\]
for all functions $u$ and $(0,1)$ forms $v$ on $X$ that are smooth with compact support.
We can now define the Kohn Laplacian on $X$, namely
$$ %\begin{equation}\label{s1-e9pc}
\Box_b:=\ddbar^{*,f}_b\ddbar_b,
$$ %\end{equation}
at least on smooth functions with compact support on $X$; then 
$$(\Box_bu\ |\ f)_{m}=(u\ |\ \Box_bf)_{m}$$ for all functions $u$, $f$ on $X$ that are smooth with compact support, so we can extend $\Box_b$ to distributions on $X$ by duality.

Our goal is then to solve a specific equation involving $\Box_b$. First, let $\chi(z,t)$ be a smooth function with compact support on $\hat X$, so that its support is contained in the local coordinate chart given by the CR normal coordinates $(z,t)$, and that it is identically 1 in a neighborhood of $p$. Let
$$
\beta_0 = \chi(z,t) \frac{i\overline{z}}{|z|^2-it} \in \mathcal{E}(\hat \rho^{-1}).
$$
Then $\Box_b \beta_0 \in \mathcal{E}(\hat\rho^{3}).$
Furthermore, as was shown in \cite{CMY}, there exists $\beta_1 \in \mathcal{E}(\hat\rho^{1})$, such that if
$$ 
\tilde{\beta} := \beta_0 + \beta_1,
$$
then
$$F := \Box_b \tilde{\beta} \in \mathcal{E}(\hat\rho^{4}).$$
Actually, in what follows, all we will use is that $F \in \mathcal{E}(\hat\rho^{3+\delta})$ for all $0 < \delta < 1$. Our main theorem can now be stated as follows:

\begin{thm} \label{thm1} 
Let $F$ be as defined above. Then there exists a smooth function $u$ on $X$, such that $u \in \mathcal{E}(\hat\rho^{1+\delta})$ for any $0 < \delta < 1$, and $$\Box_b u = F.$$
\end{thm}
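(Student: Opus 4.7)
The plan is to deduce Theorem~\ref{thm1} directly from the weighted $L^2$-theory for $\Box_{b,1} = G_p^2 \Box_b$ that is set up in Theorems~\ref{thm2}--\ref{thm4}. First I would rewrite the equation $\Box_b u = F$ in weighted form as $\Box_{b,1} u = f$, with $f := G_p^2 F$. From the expansion \eqref{eq:Gpexpand} one has $G_p \in \mathcal{E}(\hat\rho^{-2})$, so $G_p^2 \in \mathcal{E}(\hat\rho^{-4})$, and multiplying an element of $\mathcal{E}(\hat\rho^{3+\delta})$ by $G_p^2$ lands in $\mathcal{E}(\hat\rho^{-1+\delta})$ for every $0 < \delta < 1$. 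Setting $\delta' := 1+\delta$, this says $f \in \mathcal{E}(\hat\rho^{-2+\delta'})$ for every $\delta' \in (1,2)$, which falls into the range $(0,2)$ where Theorem~\ref{thm3} applies.

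Applying the decomposition \eqref{i-main-e1ad} to this $f$ gives
\[
f \;=\; \Box_{b,1}(Nf) + \Pi f,
\]
with $Nf \in \mathcal{E}(\hat\rho^{\delta'}) = \mathcal{E}(\hat\rho^{1+\delta})$ and $\Pi f \in \mathcal{E}(\hat\rho^{-2+\delta'})$ by \eqref{i-main-e2}. The next step is to kill the obstruction $\Pi f$: because $f = \Box_{b,1} \tilde\beta$ is produced by applying $\Box_{b,1}$ to a function built from the CR normal coordinate data, Theorem~\ref{thm4} will yield $\Pi f = 0$. Granted this, the decomposition reduces to $\Box_{b,1}(Nf) = f$, and multiplying through by the smooth positive function $G_p^{-2}$ on $X$ gives $\Box_b(Nf) = F$. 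Setting $u := Nf$ therefore solves $\Box_b u = F$, and the claimed regularity $u \in \mathcal{E}(\hat\rho^{1+\delta})$ for every $0 < \delta < 1$ is exactly the output of Theorem~\ref{thm3} used above. Smoothness of $u$ on $X$ is automatic since $\mathcal{E}(\hat\rho^{m}) \subset C^\infty(X)$ by definition.

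I expect no real difficulty in executing this deduction once Theorems~\ref{thm2}--\ref{thm4} are in hand; it is essentially bookkeeping with the Fr\'echet scale $\mathcal{E}(\hat\rho^m)$ and the passage between $\Box_b$ and $\Box_{b,1}$. The true obstacle lies upstream, in the mapping property of Theorem~\ref{thm3}: constructing a partial inverse $N$ for $\Box_{b,1}$ that gains two orders in the weighted scale requires a parametrix for a \emph{singular} Kohn Laplacian whose coefficients blow up like $\hat\rho^{-4}$ at $p$. One would have to marry the classical interior calculus (Beals--Greiner, Boutet de Monvel--Sj\"{o}strand) on compact pieces with a Heisenberg-type calculus governing the asymptotically flat end near $p$, and control the gluing precisely enough to extract Fr\'echet continuity in the weighted norms. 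The vanishing $\Pi f = 0$ in Theorem~\ref{thm4} is the secondary delicacy, requiring an explicit understanding of $\ker \Box_{b,1}$ and a careful pairing against the canonical $\tilde\beta$.
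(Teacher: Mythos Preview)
Your proposal is correct and follows essentially the same approach as the paper: set $f = G_p^2 F \in \mathcal{E}(\hat\rho^{-1+\delta})$, apply the decomposition $\Box_{b,1}N + \Pi = I$ on $\mathcal{E}(\hat\rho^{-2+\delta'})$ from Theorem~\ref{thm3}, use Theorem~\ref{thm4} to kill $\Pi f$, and take $u = Nf$. Your re-indexing $\delta' = 1+\delta$ and the passage back from $\Box_{b,1}$ to $\Box_b$ via $G_p^{-2}$ match the paper's argument exactly, and your assessment that the substance lies in Theorems~\ref{thm3} and~\ref{thm4} is accurate.
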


By taking $\beta := \Td \beta - u$, we then have:

\begin{cor} \label{cor1}
There exists $\beta \in \mathcal{E}(\hat \rho^{-1})$ with $\beta - \Td \beta \in \mathcal{E}(\hat \rho^{1+\delta})$ for any $0 < \delta < 1$, such that $$\Box_b \beta = 0.$$ 
\end{cor}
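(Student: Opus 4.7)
The plan is to obtain the corollary as an essentially immediate consequence of Theorem~\ref{thm1}, using the specific choice $\beta := \Td\beta - u$ that the authors already suggest. I would proceed in three short steps and then verify the three conclusions in the statement.

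First, I would apply Theorem~\ref{thm1} to produce a smooth function $u$ on $X$ with $u\in\mathcal{E}(\hat\rho^{1+\delta})$ for every $0<\delta<1$ and $\Box_b u = F$, where $F = \Box_b\Td\beta$ is the object constructed above the theorem from $\beta_0$ and the correction $\beta_1\in\mathcal{E}(\hat\rho^1)$. Second, I would set $\beta := \Td\beta - u$; since $\Box_b$ is linear on its domain and both $\Td\beta$ and $u$ are smooth on $X$, applying $\Box_b$ term by term gives
\[
\Box_b\beta = \Box_b\Td\beta - \Box_b u = F - F = 0,
\]
which is the desired vanishing.

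Third, I would read off the size/growth information. By construction $\beta - \Td\beta = -u$, which lies in $\mathcal{E}(\hat\rho^{1+\delta})$ for every $0<\delta<1$ directly from Theorem~\ref{thm1}. For membership in $\mathcal{E}(\hat\rho^{-1})$, I would use the nesting of the Fr\'echet spaces $\mathcal{E}(\hat\rho^{m})$: since $\hat\rho$ is small near $p$, larger $m$ gives a smaller space, so $\mathcal{E}(\hat\rho^{1+\delta})\subset\mathcal{E}(\hat\rho^{-1})$. Combined with $\Td\beta = \beta_0+\beta_1 \in \mathcal{E}(\hat\rho^{-1})+\mathcal{E}(\hat\rho^1)\subset\mathcal{E}(\hat\rho^{-1})$, this yields $\beta\in\mathcal{E}(\hat\rho^{-1})$, completing the proof.

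There is really no obstacle here beyond invoking Theorem~\ref{thm1}; the only thing worth double-checking is that the inclusion $\mathcal{E}(\hat\rho^{m_1})\subset\mathcal{E}(\hat\rho^{m_2})$ for $m_1\geq m_2$ holds at the level of derivatives as well, which is immediate from the pointwise bound defining $\mathcal{E}(\hat\rho^m)$ (derivatives only shift the exponent by the derivative order, uniformly in $m$). All of the genuine analytic work has been done in Theorem~\ref{thm1} via the closed range statement for $\Box_{b,1}$ and the vanishing $\Pi F=0$ described in the introduction.
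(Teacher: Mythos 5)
Your proof is correct and is exactly the argument the paper gives: immediately after Theorem~\ref{thm1} it says to take $\beta := \Td\beta - u$, and the properties you verify (vanishing of $\Box_b\beta$ by linearity, $\beta-\Td\beta = -u\in\mathcal{E}(\hat\rho^{1+\delta})$, and $\beta\in\mathcal{E}(\hat\rho^{-1})$ by the nesting $\mathcal{E}(\hat\rho^{m})\subset\mathcal{E}(\hat\rho^{m'})$ for $m'<m$) are precisely what the paper's one-line deduction relies on.
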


This provides a key tool in the proof of a positive mass theorem in 3-dimensional CR geometry in the work of Cheng-Malchiodi-Yang \cite{CMY}, as was explained in the last subsection.

Some remarks are in order. The first is about numerology. Considerations of homogenity shows that $\Box_b$ takes a function in $\mathcal{E}(\hat \rho^k)$ to $\mathcal{E}(\hat \rho^{k+2})$. Thus the homogeneity above works out right; the only small surprise is that while $\beta_0$ is in $\mathcal{E}(\hat \rho^{-1})$, $\Box_b \beta_0$ is in $\mathcal{E}(\hat \rho^3)$, which is 2 orders better than expected. But that is a reflection of the fact that our $\beta_0$ has been chosen such that $\ddbar_b \beta_0$ is almost annihilated by $\ddbar^{*,f}_b$. 

Next, $\beta_1$ above is an explicit correction in $\mathcal{E}(\hat \rho^1)$ such that $\Box_b \beta_1 \in \mathcal{E}(\hat \rho^3)$ cancels out the main contribution of $\Box_b \beta_0 \in \mathcal{E}(\hat \rho^3)$. This ensures that $F \in \mathcal{E}(\hat \rho^{3+\delta})$ for all $0 < \delta < 1$, which in turn guarantees that the $\beta$ we construct in Corollary~\ref{cor1} is determined explicitly up to $\mathcal{E}(\hat \rho^{1+\delta})$ for all (in particular, for some) $0 < \delta < 1$. The latter is important in the proof of the positive mass theorem in \cite{CMY}, since any term in the expansion of $\beta$ that is in $\mathcal{E}(\hat \rho^1)$ would enter into the calculation of mass in \eqref{e-intrIImi}. But for the purpose of solving $\Box_b$ in the current paper, the correction term $\beta_1$ is not essential; in particular, if $F_0 := \Box_b \beta_0 \in \mathcal{E}(\hat \rho^3),$ then our proof below carries over, and shows that there exists $u_0 \in \mathcal{E}(\hat \rho^1)$ such that $\Box_b u_0 = F_0$.

Finally, in Corollary~\ref{cor1}, note that we do not claim $\ddbar_b \beta = 0$. It is only $\Box_b \beta$ that vanishes, as can be shown by say the example when $\hat X$ is the standard CR sphere in $\mathbb{C}^2$.

\subsection{Our strategy} \label{subsect:strategy}

As we mentioned earlier, the difficulty in establishing the above theorem is that the CR manifold we are working on, namely $X$, is non-compact; also, the natural measure on $X$, namely $m = \theta \wedge d\theta$, has infinite volume on $X$. Let $L^2(m)$ be the space of $L^2$ functions on $X$ with respect to $m$. Even if we extend $\Box_b$ to be a closed linear operator on $L^2(m) \to L^2(m)$, in general the extended $\Box_b$ may not have closed range in $L^2(m)$. Thus the classical methods of solving $\Box_b$ fail in our situation. 

We thus proceed by introducing a weighted $L^2$ space, and a weighted Kohn Laplacian. Let $$m_1:=G^{-2}_p\theta\wedge d\theta.$$
We define $L^2(m_1)$ to be the space of $L^2$ functions on $X$ with respect to the inner product
$$
(f|g)_{m_1} := \int_X f \overline g \, m_1,
$$
and define $L^2_{(0,1)}(m_1,\hat\theta)$ to be the space of $L^2$ $(0,1)$ forms on $X$ with respect to the inner product
$$
(\alpha|\beta)_{m_1,\hat\theta} := \int_X \langle \alpha | \beta \rangle_{\hat\theta}  \, m_1.
$$
We extend the tangential Cauchy-Riemann operator so that 
$$
\text{Dom} \ddbar_{b,1} := \{ u \in L^2(m_1) \colon \text{the distributional $\ddbar_b$ of $u$ is in $L^2_{(0,1)}(m_1,\hat\theta)$} \},
$$
and define
$$
\ddbar_{b,1} u := \text{the distributional $\ddbar_b$ of $u$}
$$
if $u \in \text{Dom} \ddbar_{b,1}$.
Then
$$
\ddbar_{b,1} \colon {\rm Dom\,} \ddbar_{b,1} \subset L^2(m_1)\To L^2_{(0,1)}( m_1,\hat\theta),
$$
is a densely defined closed linear operator. Let 
$$
\ddbar_{b,1}^* \colon {\rm Dom\,} \ddbar_{b,1}^* \subset L^2_{(0,1)}(m_1,\hat\theta) \To L^2(m_1)
$$
be its adjoint. Let $\Box_{b,1}$ denote the Gaffney extension of the singular Kohn Laplacian given by 
$$ %\begin{equation}\label{s1-e13}
{\rm Dom\,}\Box_{b,1}
=\set{s\in L^2(m_1)\colon s\in{\rm Dom\,}\ddbar_{b,1},\ \ddbar_{b,1}s\in{\rm Dom\,}\ol{\pr}^{*}_{b,1}}\, ,
$$ %\end{equation}
and $\Box_{b,1}s=\ol{\pr}^{*}_{b,1}\ddbar_{b,1}s$ for $s\in {\rm Dom\,}\Box_{b,1}$. By a result of Gaffney, 
$\Box_{b,1}$ is a positive self-adjoint operator (see~\cite[Prop.\,3.1.2]{MM07}). We extend $\Box_{b,1}$ to distributions on $X$ by 
\begin{align*}
%\begin{equation}\label{s1-e14}
%\begin{split}
&\Box_{b,1}:\mathscr D'(X)\To\mathscr D'(X),\\
&(\Box_{b,1}u\ |\ f)_{m_1}=(u\ |\ \Box_{b,1}f)_{m_1},\ \ u\in\mathscr D'(X),\ f\in C^\infty_0(X).
%\end{split}
%%\end{equation}
\end{align*}
This is well-defined, since if $f$ is a test function on $X$, then so is $\Box_{b,1} f$. One can show
$$ %\begin{equation}\label{s1-e15}
\Box_{b,1}u=G^2_p\Box_bu,\ \ \forall u\in\mathscr D'(X).
$$ %\end{equation}  
Thus solving $\Box_b$ is essentially the same as solving for $\Box_{b,1}$, and it is the latter that forms the heart of our paper.

The key here is then three-fold, as is represented by the next three theorems. 
First we will show that 
\begin{thm} \label{thm2}
$\Box_{b,1} \colon {\rm Dom\,} \Box_{b,1}\subset L^2(m_1)\To L^2(m_1)$ has closed range in $L^2(m_1)$. 
\end{thm}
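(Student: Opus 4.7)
The plan is to reduce the closed-range question on the weighted space $L^2(m_1)$ to a closed-range question on the compact ambient manifold $\hat X$ via an isometric identification, and then to exploit Kohn's classical theorem. Since $\theta\wedge d\theta = G_p^{4}\,\hat\theta\wedge d\hat\theta$ on $X$, one has $m_1 = G_p^{2}\,\hat\theta\wedge d\hat\theta$, and the maps
\[
U\colon L^2(m_1)\to L^2(\hat X,\hat\theta\wedge d\hat\theta),\ u\mapsto G_p u,\qquad V\colon L^2_{(0,1)}(m_1,\hat\theta)\to L^2_{(0,1)}(\hat X,\hat\theta),\ \alpha\mapsto G_p\alpha,
\]
are Hilbert space isomorphisms. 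A direct computation using the Leibniz rule shows that $\hat D := V\ddbar_{b,1} U^{-1} = G_p\circ\ddbar_b\circ G_p^{-1} = \ddbar_b - \omega$, where $\omega := G_p^{-1}\ddbar_b G_p = \ddbar_b\log G_p$, acts from $L^2(\hat X)$ into $L^2_{(0,1)}(\hat X,\hat\theta)$. Hence $\Box_{b,1}$ is unitarily equivalent to $\hat D^{*}\hat D$, and Theorem~\ref{thm2} is equivalent to closed range of $\hat D$ on $L^2(\hat X)$. The expansion \eqref{eq:Gpexpand} forces $\omega\in\mathcal E(\hat\rho^{-1})$, so $\omega$ is smooth on $X$ but singular of order $\hat\rho^{-1}$ at $p$.

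To prove closed range of $\hat D$ I would argue by contradiction and compactness. Since $\hat X$ is compact, strongly pseudoconvex and CR embeddable in some $\mathbb{C}^N$, Kohn's theorem gives the closed-range inequality $\|\ddbar_b\tilde u\|_{L^2(\hat X)}^2\geq \lambda_0\|\tilde u\|_{L^2(\hat X)}^2$ for $\tilde u$ orthogonal to $\ker\ddbar_b$. Suppose $\tilde u_n\in{\rm Dom\,}\hat D\cap(\ker\hat D)^\perp$ satisfy $\|\tilde u_n\|_{L^2(\hat X)}=1$ and $\hat D\tilde u_n\to 0$. After passing to a weakly convergent subsequence $\tilde u_n\rightharpoonup\tilde u$, closedness of $\hat D$ forces $\tilde u\in\ker\hat D\cap(\ker\hat D)^\perp=\{0\}$; the heart of the proof is to upgrade this to strong $L^2(\hat X)$-convergence, which would contradict $\|\tilde u_n\|=1$. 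On any compact subset of $\hat X\setminus\{p\}$ this is standard, as $\ddbar_b$ is subelliptic there and Rellich gives local strong compactness. Near $p$ one must control the contribution of $\omega\tilde u_n$ using the explicit asymptotics of $G_p$ and a Hardy-type weighted bound so as to show that $\int_{\hat\rho<\epsilon}|\tilde u_n|^2\,\hat\theta\wedge d\hat\theta\to 0$ uniformly in $n$ as $\epsilon\to 0$.

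The principal obstacle is that multiplication by $\omega$ is \emph{unbounded} on $L^2(\hat X)$ since $\omega\sim\hat\rho^{-1}$ at $p$, so $\hat D$ is not a compact (or even bounded) perturbation of $\ddbar_b$, and abstract soft arguments cannot transfer closed range; the singular term must be controlled quantitatively, using the structural identity $\omega=\ddbar_b\log G_p$ together with \eqref{eq:Gpexpand}. A parallel, and perhaps cleaner, route is to build a parametrix for $\Box_{b,1}$ from Kohn's partial inverse $\hat N$ and Szeg\"o projector $\hat\Pi$ for the compact Kohn Laplacian $\hat\Box_b$ on $(\hat X,\hat\theta)$. Applying the Leibniz formula for the formal adjoint on $\hat X$ one derives the pointwise identity
\[
\Box_{b,1} = \hat\Box_b - 2\,(Z_1 G_p/G_p)\,\bar Z_1,
\]
where $Z_1$ is the normalised $(1,0)$-vector field of $\hat\theta$; this converts the equation $\Box_{b,1} u = f$ into a fixed-point equation $u = \hat N f + \hat\Pi u + 2\hat N\bigl((Z_1 G_p/G_p)\,\bar Z_1 u\bigr)$ whose convergence (via a compactness/contraction argument on a suitable weighted space built from $\mathcal E(\hat\rho^m)$) yields Theorem~\ref{thm2}, and with additional work also the sharper mapping properties stated in Theorems~\ref{thm3}--\ref{thm4}.
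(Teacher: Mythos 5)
There is a genuine gap, and it is the central one: you conjugate by $G_p$, so that $\hat D = G_p\,\ddbar_b\,G_p^{-1} = \ddbar_b - \omega$ with $\omega = \ddbar_b\log G_p \sim \hat\rho^{-1}$ near $p$. As you yourself note, multiplication by $\omega$ is an unbounded operator on $L^2(\hat m)$, so $\hat D$ is not a perturbation of $\ddbar_b$ in any soft operator-theoretic sense, and the subsequent compactness/Hardy-inequality plan and the ``fixed-point equation whose convergence\ldots yields Theorem~\ref{thm2}'' are left entirely open. The difficulty you run into is precisely what the paper is designed to avoid, and nothing in your sketch resolves it. The paper's key move, which is missing from your proposal, is to conjugate not by the real positive function $G_p$ but by the CR function $\psi$ constructed in Section~\ref{sect:CR}: $\psi\in C^\infty(\hat X)$, $\hat\ddbar_b\psi = 0$, $\psi\neq 0$ on $X$, and $\psi \sim 2\pi(|z|^2+it)$ near $p$, so $|\psi|\sim\hat\rho^2\sim G_p^{-1}$.

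Because $\hat\ddbar_b\psi = 0$, the unitary $u\mapsto\psi^{-1}u$ from $L^2(m_1)$ onto $L^2(\Td m)$ (where $\Td m := G_p^2|\psi|^2\,\hat m = |\psi|^2 m_1$) intertwines $\ddbar_{b,1}$ with the \emph{bare} operator $\ddbar_b$, with no zeroth-order correction: $\psi\,\ddbar_b(\psi^{-1}u) = \ddbar_b u$ (Lemmas~\ref{l-lopcI}--\ref{l-lopcII}, Theorem~\ref{t-hobmI}). Moreover $\Td m/\hat m = G_p^2|\psi|^2 \in\mathcal E(\hat\rho^0)$ is bounded above and below, so $L^2(\Td m) = L^2(\hat m)$ with equivalent norms and, as shown in Lemma~\ref{l-hoI}, the closed densely defined operators $\Td\ddbar_b$ and $\hat\ddbar_b$ coincide literally. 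Kohn's closed-range theorem for $\hat\ddbar_b$ on the compact embeddable $\hat X$ then transfers immediately, first to $\Td\ddbar_b$ and $\Td\Box_b$ (Theorems~\ref{t-hoI}--\ref{t-hoII}), then via $\psi$-conjugation to $\ddbar_{b,1}$ and $\Box_{b,1}$ (Theorem~\ref{t-lopcI}). In short, your reduction trades the non-compactness for an uncontrolled singular potential; the paper's $\psi$ removes the potential entirely, and constructing such a $\psi$ (Proposition~\ref{p-ysmipcI}, Theorem~\ref{t-ysmipc}) is the nontrivial input you would need to supply.
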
 
Once this is shown, we have the following $L^2$ decomposition:
\[\Box_{b,1}N+\Pi=I\ \ \mbox{on}\ \ L^2(m_1).\]
Here $N:L^2(m_1)\To{\rm Dom\,}\Box_{b,1}$ is the partial inverse of $\Box_{b,1}$ and $\Pi:L^2(m_1)\To({\rm Ran\,}\Box_{b,1})^\bot$ is the orthogonal projection onto $({\rm Ran\,}\Box_{b,1})^\bot$. 
We now need:
\begin{thm} \label{thm3}
For every $0<\delta<2$, $\Pi$ and $N$ can be extended continuously to
\[ %\begin{equation}\label{i-main-e2}
\begin{split}
&\Pi:\mathcal{E}(\hat\rho^{-2+\delta})\To\mathcal{E}(\hat\rho^{-2+\delta}),\\
&N:\mathcal{E}(\hat\rho^{-2+\delta})\To\mathcal{E}(\hat\rho^{\delta}).
\end{split}
\] %\end{equation} 
\end{thm}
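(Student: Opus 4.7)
The main difficulty is that, for $0<\delta<2$, the Fr\'echet space $\mathcal{E}(\hat\rho^{-2+\delta})$ is not contained in $L^2(m_1)$: from \eqref{eq:Gpexpand} we get $m_1=G_p^2\hat\theta\wedge d\hat\theta\sim\hat\rho^{-1}\,d\hat\rho\,d\omega$ in Heisenberg-homogeneous polar coordinates near $p$, so $\int_X|u|^2m_1$ behaves like $\int\hat\rho^{-5+2\delta}\,d\hat\rho\,d\omega$ and diverges at $\hat\rho=0$ unless $\delta>2$. Thus the statement is a genuine extension of the $L^2$-defined operators, not just a regularity result, and my plan is to construct such extensions via an explicit parametrix tailored to the asymptotic Heisenberg structure of $(X,\theta)$ at $p$, then reconcile the parametrix with the $L^2$ definition on the intersection.

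The strategy is to relate $\Box_{b,1}=G_p^2\Box_b$ to the classical Kohn Laplacian $\hat\Box_b$ on the compact pseudohermitian manifold $(\hat X,T^{1,0}\hat X,\hat\theta)$, whose partial inverse $\hat N$ and Szeg\"o projection $\hat S$ are well understood from the work of Boutet de Monvel--Sj\"ostrand~\cite{BouSj76}, Beals--Greiner~\cite{BG88}, and~\cite{Hsiao08}: $\hat N$ is a Heisenberg pseudodifferential operator of order $-2$, and $\hat S$ is a complex Fourier integral operator concentrated microlocally on the positive cone associated to $T^{1,0}\hat X$. Using the conformal change $\theta=G_p^2\hat\theta$ and the expansion \eqref{eq:Gpexpand}, $N$ and $\Pi$ should, to leading order near $p$, be conjugates of $\hat N$ and $\hat S$ by appropriate powers of $G_p$. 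Accordingly, I will first define candidate extensions $N_0$, $\Pi_0$ on $\mathcal{E}(\hat\rho^{-2+\delta})$ by this recipe; next, use Schwartz-kernel estimates based on the Folland--Stein fundamental solution for $\Box_b$ on the model Heisenberg group (which governs the leading singularity of $\hat N$) to prove the mapping properties $N_0:\mathcal{E}(\hat\rho^{-2+\delta})\to\mathcal{E}(\hat\rho^{\delta})$ and $\Pi_0:\mathcal{E}(\hat\rho^{-2+\delta})\to\mathcal{E}(\hat\rho^{-2+\delta})$; and finally, establish $\Box_{b,1}N_0+\Pi_0=I+R$, where $R$ is a remainder with enough extra $\hat\rho$-weight that $Rf\in L^2(m_1)$ whenever $f\in\mathcal{E}(\hat\rho^{-2+\delta})$. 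The $L^2$-defined $N$ and $\Pi$ from Theorem~\ref{thm2} can then be applied to $Rf$ to absorb the error, and the consistency with the original $L^2$ definition on $L^2(m_1)\cap\mathcal{E}(\hat\rho^{-2+\delta})$ will follow from the uniqueness of the orthogonal decomposition.

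The main obstacle will be controlling the interaction between the singular weight $G_p\sim(2\pi)^{-1}\hat\rho^{-2}$ at $p$ and the off-diagonal structure of the Heisenberg kernels of $\hat N$ and $\hat S$, which are not smooth across the diagonal, so direct freezing of coefficients is unavailable. To handle this I will work in Heisenberg-homogeneous coordinates centered at $p$ and use the asymptotic flatness of $(X,\theta)$ at $p$ (equivalently, the near-Heisenberg geometry at infinity of $X$) to reduce the key kernel estimates to explicit computations on the Heisenberg group. The gain of two orders of $\hat\rho$ for $N$ is then a direct consequence of the order $-2$ Heisenberg homogeneity of the Folland--Stein kernel. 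The derivative estimates $|\hat Z_1^a\overline{\hat Z}_1^b\hat T^c u|\lesssim\hat\rho^{m-a-b-2c}$ that enter the definition of $\mathcal{E}(\hat\rho^m)$ will follow from the same homogeneity together with standard commutator arguments between the kernels and $\hat Z_1$, $\overline{\hat Z}_1$, and the Reeb field $\hat T$.
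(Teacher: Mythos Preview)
Your proposal identifies the right ingredients --- relating $\Box_{b,1}$ to the smooth Kohn Laplacian $\hat\Box_b$ on $\hat X$, and exploiting Heisenberg-type kernel estimates for $\hat N$ and $\hat\Pi$ --- but there are two genuine gaps that would prevent the argument from closing.

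\textbf{First, the conjugation mechanism is wrong.} You propose that $N$ and $\Pi$ are, to leading order, conjugates of $\hat N$ and $\hat S$ by powers of $G_p$. But $G_p$ is real-valued and not CR, so conjugation by $G_p^a$ does not intertwine $\ddbar_b$ with itself; the commutator $[\ddbar_b,G_p^a]$ produces first-order terms with no gain in $\hat\rho$-weight, and these will not be absorbable into your remainder $R$. The paper's decisive move here is different: it constructs a specific CR function $\psi\in C^\infty(\hat X)$ with $\hat\ddbar_b\psi=0$ and $\psi=(2\pi)(|z|^2+it)+\varepsilon(\hat\rho^4)$ near $p$, and uses $\psi$ (not $G_p$) as the conjugating factor. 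Because $\psi$ is CR, one gets the \emph{exact} identity $\Box_{b,1}=\psi\,\Td\Box_b\,\psi^{-1}$ for an intermediate Kohn Laplacian $\Td\Box_b$ built from the measure $\Td m=G_p^2|\psi|^2\hat m$. Then $\Td\Box_b=\hat\Box_b+g\,\hat\ddbar_b$ with $g\in\mathcal{E}(\hat\rho^1,T^{0,1}\hat X)$; the point is that the density $\Td m/\hat m$ is bounded and in $\mathcal{E}(\hat\rho^0)$ (indeed equals $1+\mathcal{E}(\hat\rho^2)$), so the error is genuinely lower order. Without $\psi$, the analogous density $m_1/\hat m=G_p^2$ blows up like $\hat\rho^{-4}$, and there is no clean perturbation formula.

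\textbf{Second, your error-absorption step is circular.} Suppose you do obtain $\Box_{b,1}N_0+\Pi_0=I+R$ with $Rf\in L^2(m_1)$ for $f\in\mathcal{E}(\hat\rho^{-2+\delta})$. Applying the $L^2$-defined $N$ and $\Pi$ to $Rf$ only yields $NRf,\Pi Rf\in L^2(m_1)$; you have no mechanism to conclude $NRf\in\mathcal{E}(\hat\rho^{\delta})$ or $\Pi Rf\in\mathcal{E}(\hat\rho^{-2+\delta})$, since that is precisely the regularity you are trying to establish. The paper faces the same obstacle and resolves it quite differently: it derives operator identities $\Td\Pi(I+\hat R)=\hat\Pi$ and $\Td N(I+\hat R)=(I-\Td\Pi)\hat N$ with $\hat R=g\,\hat\ddbar_b\hat N$, then takes adjoints with respect to $\Td m$, introduces a cutoff $\chi$ near $p$, and proves (this is the technical heart, Section~3) that $(I+\hat R^{*,\Td m}\chi)^{-1}$ exists and maps $\mathcal{E}(\hat\rho^{-4+\delta})$ to itself. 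The $(1-\chi)$ pieces are handled by a separate pseudolocality argument using an auxiliary smooth Kohn Laplacian $\hat\Box_{b,\varepsilon}$. None of this is a simple parametrix-plus-remainder scheme; it is a careful bootstrap that avoids the circularity by inverting a perturbation of the identity rather than by applying $N,\Pi$ to a residual.
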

It then follows that
$$ %\begin{equation}\label{i-main-e1ad}
\Box_{b,1}N+\Pi=I\ \ \mbox{on}\ \ \mathcal{E}(\hat\rho^{-2+\delta}),
$$ %\end{equation}  
for every $0<\delta<2$. Now, let $F$ be as in Theorem~\ref{thm1}. Put $$f:=G^2_p F.$$ Then $f \in\mathcal{E}(\hat\rho^{-1+\delta})$, for every $0<\delta<1$. From Theorem~\ref{thm3}, we know that $\Pi f$ is well-defined; we will show that
\begin{thm} \label{thm4}
$$\Pi f = 0.$$
\end{thm}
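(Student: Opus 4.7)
The plan is to show $\Pi f = 0$ by pairing $f$ against elements of $\ker\Box_{b,1}$. Since $\Box_{b,1}$ is self-adjoint with closed range on $L^2(m_1)$ (Theorem~\ref{thm2}), $\Pi$ is the $L^2(m_1)$-orthogonal projection onto $\ker\Box_{b,1}$, and the identity $(\Box_{b,1}u \,|\, u)_{m_1} = \|\ddbar_{b,1}u\|^2$ forces this kernel to consist of the CR functions in $L^2(m_1)$. Because $m_1\sim\hat\rho^{-4}\hat\theta\wedge d\hat\theta$ has infinite volume near $p$, any such $v$ must vanish at $p$; being smooth and CR on $X$, it extends by a removable-singularity theorem to a CR function on $\hat X$ with $v(p)=0$, and hence $|v|\lesssim\hat\rho$ near $p$ (the lowest-order CR-homogeneous polynomial vanishing at the origin on the Heisenberg model has degree $1$). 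So it suffices to verify $(f|v)_{m_1}=0$ for every such $v$.

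The first step is to approximate $\tilde\beta$ using a cutoff $\chi_\epsilon\in C^\infty(\hat X)$ with $\chi_\epsilon\equiv 0$ on $\set{\hat\rho\leq\epsilon}$ and $\chi_\epsilon\equiv 1$ on $\set{\hat\rho\geq 2\epsilon}$, with Heisenberg-scaled derivative bounds. Then $\chi_\epsilon\tilde\beta$ is smooth and compactly supported in $X$, hence lies in $\mathrm{Dom}\,\Box_{b,1}\cap L^2(m_1)$, and $\Box_{b,1}(\chi_\epsilon\tilde\beta)\in\mathrm{Ran}\,\Box_{b,1}$ is orthogonal to $v$. Writing $\Box_{b,1}(\chi_\epsilon\tilde\beta) = \chi_\epsilon f + G^2_p[\Box_b,\chi_\epsilon]\tilde\beta$ and converting the pairing to $m = \theta\wedge d\theta$ via $m_1 = G^{-2}_p m$, this orthogonality reads
\[
\int_X \chi_\epsilon F\,\bar v\,\theta\wedge d\theta \;=\; -\int_X [\Box_b,\chi_\epsilon]\tilde\beta\cdot\bar v\,\theta\wedge d\theta.
\]
By dominated convergence (using $|F|\lesssim\hat\rho^{3+\delta}$, $|v|\lesssim\hat\rho$, and $\theta\wedge d\theta\lesssim\hat\rho^{-8}\hat\theta\wedge d\hat\theta$), the left-hand side tends to $(f|v)_{m_1}$ as $\epsilon\to 0$. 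Thus $\Pi f=0$ reduces to showing
\[
\int_X [\Box_b,\chi_\epsilon]\tilde\beta\cdot\bar v\,\theta\wedge d\theta \To 0 \quad\text{as}\ \epsilon\to 0.
\]

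Establishing this commutator limit is the principal difficulty. The operator $[\Box_b,\chi_\epsilon]$ is of first Heisenberg order with coefficients supported in $\set{\epsilon\leq\hat\rho\leq 2\epsilon}$, where $\theta\wedge d\theta$ has mass $\sim\epsilon^{-4}$; a naive pointwise bound on the integrand diverges, so genuine cancellation is needed. The strategy is to integrate by parts using $\Box_b=\ddbar^{*,f}_b\ddbar_b$ and exploit $\ddbar_b v=0$: the dominant term becomes an inner product between $\ddbar_b\tilde\beta$ and $(\ddbar_b\chi_\epsilon)v$, and in the CR normal coordinates $(z,t)$ the leading singular coefficient of $\ddbar_b\tilde\beta$ is $(|z|^2-it)^{-1}$, which is itself CR on the Heisenberg model. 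Paired through the concentrating family $\ddbar_b\chi_\epsilon$ against $\bar v$ with $v(p)=0$, this behaves asymptotically as a Szeg\"o-kernel-type evaluation functional at $p$ that vanishes in the limit. The subleading and curvature corrections are then controlled by the $\mathcal{E}(\hat\rho^m)$ calculus, using the enhanced decay $F\in\mathcal{E}(\hat\rho^{3+\delta})$ that $\tilde\beta$ is specifically designed to produce. Making this rigorous, and invoking a density argument over $v$, constitutes the main technical content of the proof.
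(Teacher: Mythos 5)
Your proposal takes a genuinely different route from the paper, but it has a gap precisely where you flag the ``principal difficulty,'' and a couple of the steps you treat as routine are also not as innocent as you suggest.

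The paper's own proof is more structured: it establishes a lemma (Lemma~\ref{lem:PiErho0}) that $\Pi \ddbar_{b,1}^* \alpha = 0$ for any $\alpha \in \mathcal{E}(\hat\rho^0,\Lambda^{0,1}T^*\hat X)$, using density of $C^\infty_0(X)$ together with the continuity of $\Pi$ on $\mathcal{E}(\hat\rho^{-1-\gamma})$ from Theorem~\ref{thm3}. It then writes $\ddbar_{b,1}\Td\beta = \alpha_0 + E$ with $E\in\mathcal{E}(\hat\rho^0)$ and $\alpha_0 = -2\pi i\chi v^{-1}\overline{\psi}^{-3}\hat Z^{\bar 1}\in\mathcal{E}(\hat\rho^{-2})$. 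The whole point of expressing $\alpha_0$ through the CR function $\psi$ is that $\hat Z_1\overline{\psi}=0$, so $\ddbar_{b,1}^*\alpha_0$ is computed explicitly and depends on $\overline{\psi}$ only through powers, and the regularization $\overline{\psi}\mapsto\overline{\psi}+\varepsilon$ (licensed by $\mathrm{Re}\,\psi\geq 0$) produces forms $\alpha_\varepsilon\in\mathcal{E}(\hat\rho^0)$ with $\ddbar_{b,1}^*\alpha_\varepsilon\to\ddbar_{b,1}^*\alpha_0$ in $\mathcal{E}(\hat\rho^{-1})$. No commutator estimate is ever attempted.

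Concretely, here is where your sketch breaks down. First, the commutator limit you announce as the crux is never established; your heuristic for it contains an error. The leading coefficient of $\ddbar_b\Td\beta$ in CR normal coordinates is $-i(|z|^2+it)/(|z|^2-it)^2$, and this is \emph{not} CR: $\hat Z_{\bar 1}(|z|^2-it) = 2z\neq 0$ on the Heisenberg model (it is $|z|^2+it$, equivalently $\psi$, that is CR). So the mechanism you invoke --- a ``Szeg\"o-kernel-type evaluation functional vanishing because of CR-holomorphicity of the leading coefficient'' --- does not apply to the quantity in front of you. Second, and more structurally: since $f\in\mathcal{E}(\hat\rho^{-1+\delta})$ is \emph{not} in $L^2(m_1)$, the symbol $\Pi f$ means the extension of $\Pi$ from Theorem~\ref{thm3}. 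Showing $(f|v)_{m_1}=0$ for every $v\in\ker\Box_{b,1}\cap L^2(m_1)$ does not by itself give $\Pi f=0$; one still has to connect the absolutely convergent pairing $\int F\bar v\,\theta\wedge d\theta$ to the value of the extended operator. The natural way to do this is exactly via the continuity of $\Pi$ on the weighted spaces and density of compactly supported data --- which is what Lemma~\ref{lem:PiErho0} in the paper packages, and what your proposal implicitly appeals to but does not carry out. Third, the characterization of $\ker\Box_{b,1}$ you use (removable singularity at $p$, hence $|v|\lesssim\hat\rho$) is plausible but unsubstantiated; the paper's machinery gives a cleaner statement for free, since $\ker\Box_{b,1}=\psi\cdot\ker\hat\Box_b$ by Theorem~\ref{t-hobmI} and Lemma~\ref{l-hoI}, so in fact $|v|\lesssim\hat\rho^2$.

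In short: your idea of cutting off near $p$ and passing to a limit is in the same spirit as the density step the paper uses, but you have replaced the paper's explicit algebraic control of $\ddbar_{b,1}^*\alpha_0$ (via the CR function $\psi$) with a commutator estimate you acknowledge you have not done, and the sketch offered for that estimate rests on a false CR-holomorphicity claim. The proof as written is therefore incomplete at its central step.
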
 
It then follows that $u:=Nf$ satisfies $$u \in\mathcal{E}(\hat\rho^{1+\delta}), \quad \Box_{b,1} u = f = G_p^2 F.$$ From the relation between $\Box_{b,1}$ and $\Box_b$, we obtain the desired conclusion in Theorem~\ref{thm1}.

\subsection{Outline of proofs} \label{subsect:outlineofproof}

To prove the theorems in the previous subsection, we need to introduce two other Kohn Laplacians, which we denote by $\hat\Box_b$ and $\Td\Box_b$, as follows.

First, $\hat\Box_b$ is the natural Kohn Laplacian on $(\hat X, T^{1,0} \hat X, \hat \theta)$. There we have the natural measure
$$
\hat m := \hat \theta \wedge d \hat \theta.
$$
One can then define $L^2(\hat m)$ to be the space of $L^2$ functions on $\hat X$ with respect to the inner product
$$
(f|g)_{\hat m} := \int_{\hat X} f \overline g \, \hat m,
$$
and define $L^2_{(0,1)}(\hat m,\hat\theta)$ to be the space of $L^2$ $(0,1)$ forms on $\hat X$ with respect to the inner product
$$
(\alpha|\beta)_{\hat m,\hat\theta} := \int_{\hat X} \langle \alpha | \beta \rangle_{\hat\theta}  \, \hat m.
$$
We extend the tangential Cauchy-Riemann operator so that 
$$
\text{Dom} \hat \ddbar_{b}  := \{ u \in L^2(\hat m) \colon \text{the distributional $\ddbar_b$ of $u$ is in $L^2_{(0,1)}(\hat m,\hat\theta)$}\},
$$
and define
$$
\hat \ddbar_{b}  u := \text{the distributional $\ddbar_b$ of $u$}
$$
if $u \in \text{Dom} \hat \ddbar_{b}$.
Then 
$$
\hat \ddbar_{b} \colon {\rm Dom\,} \hat \ddbar_{b}\subset L^2(\hat m)\To L^2_{(0,1)}(\hat m,\hat\theta)
$$
is a densely defined closed linear operator. Let 
$$
\hat \ddbar_{b}^* \colon {\rm Dom\,} \hat \ddbar_{b}^* \subset L^2_{(0,1)}(\hat m,\hat\theta) \To L^2(\hat m)
$$
be its adjoint. Let $\hat \Box_{b}$ denote the Gaffney extension of the Kohn Laplacian given by 
$$ 
{\rm Dom\,} \hat \Box_{b}
=\{ s\in L^2(\hat m)\colon s\in{\rm Dom\,}\hat \ddbar_{b},\ \hat \ddbar_{b}s\in{\rm Dom\,} \hat \ddbar_{b}^* \},
$$
$$\hat\Box_{b}s=\hat \ddbar_{b}^* \hat \ddbar_{b}s \quad \text{for $s\in {\rm Dom\,} \hat\Box_{b}$}.$$
It is then a positive self-adjoint operator on $L^2(\hat m)$ (see e.g. \cite[Prop.\,3.1.2]{MM07}). The analysis of this $\hat\Box_b$ is very well-understood; see work of Kohn \cite{Koh85}, \cite{Koh86}, Boas-Shaw \cite{BoSh}, Christ \cite{Ch88I}, \cite{Ch88II} and Fefferman-Kohn \cite{FeKo88} in the CR embeddable case, and work of Kohn-Rossi \cite{KoRo}, Folland-Stein \cite{FoSt}, Rothschild-Stein \cite{RoSt}, Greiner-Stein \cite{GrSt}, Nagel-Stein \cite{NaSt}, Fefferman \cite{F}, Boutet de Monvel-Sjostrand \cite{BouSj76}, Nagel-Stein-Wainger \cite{NSW}, Nagel-Rosay-Stein-Wainger \cite{NRSW1}, \cite{NRSW2} and Machedon \cite{Ma1}, \cite{Ma2} for some earlier work or related results. On the other hand, it is not very straightforward to reduce the analysis of $\Box_{b,1}$ to the analysis of this $\hat\Box_b$; we go through an intermediate Kohn Laplacian, which we denote by $\tilde \Box_b$. 

To introduce $\tilde \Box_b$, first we need to construct a special CR function $\psi$ on $X$, such that $\ddbar_b \psi = 0$ on $\hat X$, $\psi \ne 0$ on $X$, and near $p$, we have
$$
\psi(z,t) = 2\pi (|z|^2 + it) + \text{error},
$$ 
where the error vanishes like $\hat\rho^4$ near $p$.
(The precise construction is given in Section~\ref{sect:CR}.) One can then define the following volume form on $\hat X$:
$$
\tilde m := G_p^2 |\psi|^2 \hat \theta \wedge d \hat \theta.
$$
Note that $\hat X$ has finite volume with respect to this volume form, since $G_p^2 |\psi|^2$ is bounded near $p$. However, this volume form does not have a smooth density against $\hat{m}$; this is one of the biggest sources of difficulties in what we do below. The key turns out to be the following: the asymptotics (\ref{eq:Gpexpand}) we assumed of $G_p$ allows us to obtain some crucial asymptotics of the density of $\tilde m$ against $\hat m$ near $p$. This in turn implies a crucial relation between the $\Td \Box_b$ we will introduce, and the $\hat \Box_b$ we defined above (see (\ref{eq:Boxbhattotilde2}) below).

Now let $L^2(\Td m)$ be the space of $L^2$ functions on $\hat X$ with respect to the inner product
$$
(f|g)_{\tilde m } := \int_{\hat X} f \overline g \, \tilde m ,
$$
and define $L^2_{(0,1)}(\tilde m ,\hat\theta)$ to be the space of $L^2$ $(0,1)$ forms on $\hat X$ with respect to the inner product
$$
(\alpha|\beta)_{\tilde m,\hat\theta} := \int_{\hat X} \langle \alpha | \beta \rangle_{\hat\theta}  \, \tilde m.
$$
We extend the tangential Cauchy-Riemann operator so that 
$$
\text{Dom} \Td \ddbar_{b}  := \{ u \in L^2(\Td m) \colon \text{the distributional $\ddbar_b$ of $u$ is in $L^2_{(0,1)}(\Td m,\hat\theta)$}\},
$$
and define
$$
\Td \ddbar_{b}  u := \text{the distributional $\ddbar_b$ of $u$}
$$
if $u \in \text{Dom} \Td \ddbar_{b}$.
Then 
$$
\Td \ddbar_{b} \colon {\rm Dom\,} \Td \ddbar_{b}\subset L^2(\Td m)\To L^2_{(0,1)}(\Td m,\hat\theta)
$$
is a densely defined closed linear operator. Let 
$$
\tilde  \ddbar_{b}^* \colon {\rm Dom\,} \tilde  \ddbar_{b}^* \subset L^2_{(0,1)}(\tilde  m,\hat\theta) \To L^2(\tilde  m)
$$
be its adjoint. Let $\tilde \Box_{b}$ denote the Gaffney extension of the Kohn Laplacian given by 
$$ 
{\rm Dom\,} \tilde  \Box_{b}
=\{ s\in L^2(\tilde  m)\colon s\in{\rm Dom\,}\tilde \ddbar_{b},\ \tilde  \ddbar_{b}s\in{\rm Dom\,} \tilde  \ddbar_{b}^* \},
$$
$$\tilde \Box_{b}s=\tilde \ddbar_{b}^* \tilde  \ddbar_{b}s \quad \text{for $s\in {\rm Dom\,} \tilde \Box_{b}$}.$$ It is then a positive self-adjoint operator on $L^2(\tilde m)$. The analysis of $\tilde \Box_b$ is not as well-understood, since this Kohn Laplacian (in particular, the operator $\tilde \ddbar_b^*$) is defined with respect to a non-smooth measure $\tilde{m}$. Nonetheless, it is this Kohn Laplacian that can be related to our operator of interest, namely $\Box_{b,1}$, in a simple manner. We will prove that since $\Td m = |\psi|^2 m_1$, 
\begin{eqnarray}
\mbox{$u\in{\rm Dom\,}\Box_{b,1}$ if and only if $\frac{u}{\psi}\in{\rm Dom\,}\Td\Box_b$}, \label{eq:Boxb1totildeBoxb1} \\
\Box_{b,1}u=\psi\Td\Box_b(\psi^{-1} u),\ \ \forall u\in{\rm Dom\,}\Box_{b,1}. \label{eq:Boxb1totildeBoxb2}
\end{eqnarray}
Thus we can understand the solutions of $\Box_{b,1}$, once we understand the solutions of $\tilde{\Box}_b$. In order to carry out the latter, we relate $\tilde \Box_b$ to $\hat \Box_b$: we will show that 
\begin{equation} \label{eq:Boxbhattotilde1}
{\rm Dom\,}\tilde \Box_{b} = {\rm Dom\,}\hat \Box_b,
\end{equation}
and there exists some $g \in \mathcal{E}(\hat \rho^1, T^{0,1}\hat X)$ (possibly non-smooth at $p$) such that 
\begin{equation} \label{eq:Boxbhattotilde2}
\Td \Box_b u= \hat \Box_b u + g \hat \ddbar_b u ,\ \ \forall u\in{\rm Dom\,}\tilde \Box_{b}.
\end{equation}
(Here $g\hat \ddbar_b u$ is the pointwise pairing of the $(0,1)$ vector $g$ with the $(0,1)$ form $\hat \ddbar_b u$; see the discussion in Section~\ref{subsect:def} for the precise meaning of $\mathcal{E}(\hat \rho^1, T^{0,1}\hat X)$.)

We can now outline the proofs of Theorems~\ref{thm2}, \ref{thm3} and \ref{thm4}.

First, from (\ref{eq:Boxb1totildeBoxb1}) and (\ref{eq:Boxb1totildeBoxb2}), it is clear that $\Box_{b,1}$ has closed range in $L^2(m_1)$, if and only if $\Td \Box_b$ has closed range in $L^2(\tilde m)$. On the other hand, one can check that $\tilde \ddbar_b \colon {\rm Dom\,}\Td \ddbar_{b}\subset L^2(\tilde m) \to L^2_{(0,1)} (\tilde  m, \hat \theta)$ is the identical as an operator to $\hat \ddbar_b \colon {\rm Dom\,}\hat\ddbar_{b}\subset L^2(\hat m) \to L^2_{(0,1)} (\hat m, \hat \theta)$. The latter is known to have closed range in $L^2(\hat m)$ by the CR embeddability of $\hat X$; see \cite{Koh86}. Hence the same holds for the former, and it follows that $\tilde \Box_b$ has closed range in $L^2(\tilde m)$. This proves Theorem~\ref{thm2}.

Now from the above argument, we see that not only $\tilde \Box_b$ has closed range in $L^2(\tilde m)$, but also $\hat \Box_b$ has closed range in $L^2(\hat m)$. Thus there exist partial inverses
$$
\tilde N \colon L^2(\tilde m) \to \text{Dom} (\tilde \Box_b) \subset L^2(\Td m),
$$
and 
$$
\hat N \colon L^2(\hat m) \to \text{Dom} (\hat \Box_b) \subset L^2(\hat m)
$$
to $\tilde \Box_b$ and $\hat \Box_b$ respectively, so that if 
$$
\tilde \Pi \colon L^2(\tilde m) \to L^2(\tilde m)
$$
is the orthogonal projection onto the kernel of $\tilde \Box_b$ in $L^2(\tilde m)$, and 
$$
\hat \Pi \colon L^2(\hat m) \to L^2(\hat m)
$$
is the orthogonal projection onto the kernel of $\hat \Box_b$ in $L^2(\hat m)$, then
$$
\Td \Box_b \Td N + \Td \Pi = I \quad \text{on $L^2(\Td m)$},
$$
and
$$
\hat \Box_b \hat N + \hat \Pi = I \quad \text{on $L^2(\hat m)$}.
$$
From the relation 
$$
m_1 = |\psi|^{-2} \tilde m
$$ 
and (\ref{eq:Boxb1totildeBoxb2}),
it is easy to see that 
\begin{equation} \label{eq:soltoTdsol}
\Pi = \psi \Td \Pi \psi^{-1} \quad \text{and} \quad N = \psi \Td N \psi^{-1},
\end{equation}
at least when applied to functions in $L^2(m_1)$; thus to prove Theorem~\ref{thm3}, it suffices to prove instead that $\Td \Pi$ and $\Td N$ extend as continuous operators
\begin{equation} \label{eq:Pimapprop}
\Td \Pi \colon \mathcal{E}(\hat \rho^{-4+\delta}) \to \mathcal{E}(\hat \rho^{-4+\delta})
\end{equation}
\begin{equation} \label{eq:Nmapprop}
\Td N \colon \mathcal{E}(\hat \rho^{-4+\delta}) \to \mathcal{E}(\hat \rho^{-2+\delta})
\end{equation}
for every $0 < \delta < 2$. In order to do so, we relate $\Td \Pi$ to $\hat \Pi$, and $\Td N$ to $\hat N$, since $\hat \Pi$ and $\hat N$ are much better understood. We will show, on $L^2(\Td m)$, that
\begin{equation} \label{eq:TdPitohatPi}
\Td \Pi (I + \hat R) = \hat \Pi
\end{equation}
\begin{equation} \label{eq:TdNtohatN}
\Td N (I + \hat R) = (I - \Td \Pi) \hat N,
\end{equation}
where
$$
\hat R := g \hat \ddbar_b \hat N \colon L^2(\Td m) \to L^2(\Td m)
$$
is a continuous linear operator; in fact, these identities are almost immediate from (\ref{eq:Boxbhattotilde1}) and (\ref{eq:Boxbhattotilde2}). Furthermore, one can show that $\hat N$ and $\hat \Pi$ extend as continuous operators 
\begin{equation} \label{eq:hPimapprop}
\hat \Pi \colon \mathcal{E}(\hat \rho^{-4+\delta}) \to \mathcal{E}(\hat \rho^{-4+\delta})
\end{equation}
\begin{equation} \label{eq:hNmapprop}
\hat N \colon \mathcal{E}(\hat \rho^{-4+\delta}) \to \mathcal{E}(\hat \rho^{-2+\delta})
\end{equation}
for every $0 < \delta < 2$; 
thus if one can show that $(I+\hat R)$ is invertible on $L^2(\Td m)$, and that the inverse extends to a continuous operator $\mathcal{E}(\hat \rho^{-4+\delta}) \to \mathcal{E}(\hat \rho^{-4+\delta})$, then from (\ref{eq:TdPitohatPi}), at least one can conclude the assertion about $\Td \Pi$ in Theorem~\ref{thm3}. It turns out that it is unclear whether or not the latter can be done; so we choose to proceed differently, by some bootstrap argument. It is this argument that we explain below.

First, so far $\Td \Pi$ and $\Td N$ are defined only on $L^2(\Td m)$. Since $\mathcal{E}(\hat \rho^{-4+\delta})$ is not a subset of $L^2(\Td m)$ when $0 < \delta < 2$, we need to first extend $\Td \Pi$ and $\Td N$ to $\mathcal{E}(\hat \rho^{-4+\delta})$,  $0 < \delta < 2$. This is done by rewriting (\ref{eq:TdPitohatPi}) and (\ref{eq:TdNtohatN}) as
\begin{equation} \label{eq:Tdtohatdist}
\Td \Pi = \hat \Pi - \Td \Pi \hat R , \quad \Td N = (I-\Td \Pi) \hat N - \Td N \hat R.
\end{equation}
Note that $\hat R$ extends to a continuous operator
$$
\hat R \colon \mathcal{E}(\hat \rho^{-4+\delta}) \to \mathcal{E}(\hat \rho^{-2+\delta}) \subset L^2(\Td m)
$$
for every $0 < \delta < 2$, since $\hat R = g \hat \ddbar_b \hat N$, and $\hat N$ satisfies the analogous property. Thus the second term on the right hand sides of the equations in (\ref{eq:Tdtohatdist}) map $\mathcal{E}(\hat \rho^{-4+\delta})$ continuously to $L^2(\Td m)$. It follows that the domains of definition of $\Td \Pi$ and $\Td N$ can be extended to $\mathcal{E}(\hat \rho^{-4+\delta})$, $0 < \delta < 2$.

To proceed further, let's write $\hat{\Pi}^{*,\Td m}$, $\hat{N}^{*,\Td m}$ and $\hat R^{*,\Td m}$ for the adjoints of $\hat \Pi$, $\hat N$ and $\hat R$ with respect to the inner product of $L^2(\Td m)$. Since $\Td \Pi$ and $\Td N$ are self-adjoint operators on $L^2(\Td m)$, we have, by (\ref{eq:TdPitohatPi}) and (\ref{eq:TdNtohatN}), that
\begin{equation} \label{eq:TdtohatPiadjoint}
(I + \hat{R}^{*,\Td m}) \Td \Pi = \hat \Pi^{*,\Td m}
\end{equation}
\begin{equation} \label{eq:TdtohatNadjoint}
(I + \hat{R}^{*,\Td m}) \Td N = \hat N^{*,\Td m} (I - \Td \Pi)
\end{equation}
on $L^2(\Td m)$. Now we need to understand some mapping properties of $\hat \Pi^{*,\Td m}$, $\hat N^{*,\Td m}$ and $\hat R^{*,\Td m}$; to do so, we note that
$$
\hat \Pi^{*,\Td m} = \frac{\hat m}{\Td m} \hat \Pi \frac{\Td m}{\hat m}, 
$$
$$
\hat N^{*,\Td m} = \frac{\hat m}{\Td m} \hat N \frac{\Td m}{\hat m}, 
$$
$$
\hat R^{*,\Td m} = \frac{\hat m}{\Td m} \hat N \hat \ddbar_b^* (g^{*} \frac{\Td m}{\hat m}), 
$$
where $\Td m / \hat m := G_p^2 |\psi|^2$ is the density of $\Td m$ with respect to $\hat m$, and similarly $\hat m / \Td m := G_p^{-2} |\psi|^{-2}$. Here $g^*$ is the $(0,1)$ form dual to $g$. Note that $\Td m / \hat m$, $\hat m / \Td m \in \mathcal{E}(\hat \rho^0)$. Hence one can show that
\begin{equation} \label{eq:Pi*}
\hat \Pi^{*,\Td m} \colon \mathcal{E}(\hat \rho^{-4+\delta}) \to \mathcal{E}(\hat \rho^{-4+\delta})
\end{equation}
\begin{equation} \label{eq:N*}
\hat N^{*,\Td m} \colon \mathcal{E}(\hat \rho^{-4+\delta}) \to \mathcal{E}(\hat \rho^{-2+\delta})
\end{equation}
\begin{equation} \label{eq:R*}
\hat R^{*,\Td m} \colon \mathcal{E}(\hat \rho^{-4+\delta}) \to \mathcal{E}(\hat \rho^{-2+\delta})
\end{equation}
for every $0 < \delta < 2$; these are easy consequences of the analogous properties of $\hat \Pi$, $\hat N$ and $\hat R$. It then follows that (\ref{eq:TdtohatPiadjoint}) and (\ref{eq:TdtohatNadjoint}) continue to hold on $\mathcal{E}(\hat \rho^{-4+\delta})$ for all $0 < \delta < 2$.

Now we return to (\ref{eq:TdtohatPiadjoint}) and (\ref{eq:TdtohatNadjoint}). The problem facing us there is that we do not know whether $I + \hat R^{*,\Td m}$ is invertible on $\mathcal{E}(\hat \rho^{-4+\delta})$; if it is, then we can conclude, say from (\ref{eq:TdtohatPiadjoint}), that at least $\Td \Pi$ satisfies the conclusion of Theorem~\ref{thm3}. In order to get around this problem, we have to proceed differently; the trick here is to introduce a suitable cut-off function, as follows. 

Let $\chi$ be a smooth function on $\hat X$, such that $\chi$ is identically 1 in a neighborhood of $p$, and vanishes outside a small neighborhood of $p$. Then by (\ref{eq:TdtohatPiadjoint}) and (\ref{eq:TdtohatNadjoint}), we have
\begin{equation} \label{eq:TdtohatPiadjointchi}
(I + \hat{R}^{*,\Td m} \chi) \Td \Pi = \hat \Pi^{*,\Td m} - \hat R^{*,\Td m} (1-\chi) \Td \Pi,
\end{equation}
\begin{equation} \label{eq:TdtohatNadjointchi}
(I + \hat{R}^{*,\Td m} \chi) \Td N = \hat N^{*,\Td m} (I - \Td \Pi) -  \hat R^{*,\Td m} (1-\chi) \Td N
\end{equation}
on $\mathcal{E}(\hat \rho^{-4+\delta})$ for all $0 < \delta < 2$. The upshot here is the following: if the support of $\chi$ is sufficiently small, then $(I + \hat{R}^{*,\Td m} \chi)$ is invertible on $L^2(\Td m)$, and extends to a linear map 
\begin{equation} \label{eq:I+hatRchi}
(I + \hat{R}^{*,\Td m} \chi)^{-1} \colon \mathcal{E}(\hat \rho^{-4+\delta}) \to \mathcal{E}(\hat \rho^{-4+\delta})
\end{equation}
for every $0 < \delta < 4$. Roughly speaking, this is possible, because
$$I + \hat{R}^{*,\Td m} \chi = I + \frac{\hat m}{\Td m}  \hat{N} \hat \ddbar_b^* (\chi g^* \frac{\Td m}{\hat m}),$$ and because
$$
\chi g^* \in \mathcal{E}(\hat \rho^1, \Lambda^{0,1} T^*\hat X)
$$
has compact support in a sufficiently small neighborhood of $p$. (In particular, $\chi g^*$ is small.)
Furthermore, for any $0 < \delta < 2$, one can show that
\begin{equation} \label{eq:TdPipseudolocal}
(1-\chi) \Td \Pi \colon \mathcal{E}(\hat \rho^{-4+\delta}) \to C^{\infty}_0(X)
\end{equation}
\begin{equation} \label{eq:TdNpseudolocal}
(1-\chi) \Td N \colon \mathcal{E}(\hat \rho^{-4+\delta}) \to C^{\infty}_0(X)
\end{equation}
where $C^{\infty}_0(X)$ is the space of all smooth functions on $X$ that has compact support in $X$. These can be used to control the last term on the right hand side of (\ref{eq:TdtohatPiadjointchi}) and (\ref{eq:TdtohatNadjointchi}). By (\ref{eq:Pi*}) and (\ref{eq:R*}), one then concludes that the right hand side of (\ref{eq:TdtohatPiadjointchi}) maps $\mathcal{E}(\hat \rho^{-4+\delta})$ into itself for $0 < \delta < 2$; thus (\ref{eq:I+hatRchi}) shows that (\ref{eq:Pimapprop}) holds as desired. This in turn controls the first term of (\ref{eq:TdtohatNadjointchi}); by (\ref{eq:N*}), (\ref{eq:R*}) and (\ref{eq:TdNpseudolocal}), one concludes that the right hand side of (\ref{eq:TdtohatNadjointchi}) maps $\mathcal{E}(\hat \rho^{-4+\delta})$ into $\mathcal{E}(\hat \rho^{-2+\delta})$ for $0 < \delta < 2$. Finally, another application of (\ref{eq:I+hatRchi}) shows that $\Td N$ satisfies (\ref{eq:Nmapprop}) as desired. Thus Theorem~\ref{thm3} is established, modulo (\ref{eq:I+hatRchi}), (\ref{eq:TdPipseudolocal}) and (\ref{eq:TdNpseudolocal}).

It may help to reiterate here the reason for the introduction of the cut-off $\chi$: that was introduced so that one can invert $I+\hat R^{*,\Td m} \chi$. In fact, since the coefficient of $g^* \in \mathcal{E}(\hat \rho^1,\Lambda^{0,1}T^*\hat X)$, $\chi g^*$ has sufficiently small $L^{\infty}$ norm, if the support of $\chi$ is chosen sufficiently small. As a result, one could make $\|\hat{R}^{*,\Td m} \chi \|_{L^2(\Td m) \to L^2(\Td m)} \leq 1/2$, by controlling the support of $\chi$. This allows one to invert $I+\hat{R}^{*,\Td m} \chi$ on $L^2(\Td m)$ via a Neumann series. We note in passing that it is only because we have taken the adjoint of $\hat R$ that the product $\chi g^*$ appears in the expression for $I+\hat{R}^{*,\Td m} \chi$; that is essentially why we want to take the adjoints of (\ref{eq:TdPitohatPi}) and (\ref{eq:TdNtohatN}). One can then proceed to extend $(I + \hat{R}^{*,\Td m} \chi)^{-1}$ so that it satisfies (\ref{eq:I+hatRchi}); the precise detail is rather involved, and we leave this until Section~\ref{sect:aux}.

It then remains to prove (\ref{eq:TdPipseudolocal}) and (\ref{eq:TdNpseudolocal}). To do so, we need yet to introduce yet another Kohn Laplacian, namely $\hat \Box_{b,\varepsilon}$. Let $\eta \in C^{\infty}_0(\mathbb{R}^3)$ be a non-negative function such that $\eta(z,t) = 1$ if $\hat \rho(z,t) \leq 1/2$, and $\eta(z,t) = 0$ if $\hat \rho(z,t) \geq 1$. For $0 < \varepsilon < 1$, let $\eta_{\varepsilon}(z,t) = \eta(\varepsilon^{-1} z, \varepsilon^{-2}t)$, and let
$$
\hat m_{\varepsilon} := \eta_{\varepsilon} \hat m + (1-\eta_{\varepsilon}) \Td m.
$$
This volume form has a smooth density against $\hat m$, and the volume of $\hat X$ with respect to this volume form is finite. So if we extend the Cauchy-Riemann operator such that 
$$
\text{Dom} \hat \ddbar_{b,\varepsilon}  := \{ u \in L^2(\hat m_{\varepsilon}) \colon \text{the distributional $\ddbar_b$ of $u$ is in $L^2_{(0,1)}(\hat m_{\varepsilon},\hat\theta)$}\},
$$
and define
$$
\hat \ddbar_{b,\varepsilon}  u := \text{the distributional $\ddbar_b$ of $u$}
$$
if $u \in \text{Dom} \hat \ddbar_{b,\varepsilon}$, then 
$$
\hat \ddbar_{b,\varepsilon} \colon {\rm Dom\,} \hat \ddbar_{b,\varepsilon}\subset L^2(\hat m_\varepsilon)\To L^2_{(0,1)}(\hat m_\varepsilon,\hat\theta)
$$
is a densely defined closed linear operator. Let
$$
\hat \ddbar_{b,\varepsilon}^* \colon {\rm Dom\,} \hat \ddbar_{b,\varepsilon}^* \subset L^2_{(0,1)}(\hat m_{\varepsilon},\hat\theta) \To L^2(\hat m_{\varepsilon})
$$
be its adjoint. Let $\hat \Box_{b,\varepsilon}$ be the Gaffney extension of the Kohn Laplacian $\hat \ddbar_{b,\varepsilon}^* \hat \ddbar_{b,\varepsilon}$. Then $\hat \Box_{b,\varepsilon}$ is almost as well-behaved as $\hat \Box_b$. In particular, $\hat \Box_{b,\varepsilon} \colon {\rm Dom\,} \hat \Box_{b,\varepsilon} \subset L^2(\hat m_{\varepsilon}) \to L^2(\hat m_{\varepsilon}) $ has closed range in $L^2(\hat m_{\varepsilon})$, and if $\hat \Pi_{\varepsilon}$ denotes the orthogonal projection of $L^2(\hat m_{\varepsilon})$ onto the kernel of $\hat \Box_{b,\varepsilon}$, and $\hat N_{\varepsilon} \colon L^2(\hat m_{\varepsilon}) \to L^2(\hat m_{\varepsilon})$ is the partial inverse of $\hat \Box_{b,\varepsilon}$, then 
$$
\hat \Box_{b,\varepsilon} \hat N_{\varepsilon} + \hat \Pi_{\varepsilon} = I.
$$
Furthermore, $${\rm Dom\,}\tilde \Box_{b} = {\rm Dom\,}\hat \Box_{b,\varepsilon},$$ and there will exist some $g_{\varepsilon} \in \mathcal{E}(\hat \rho^1, T^{0,1} \hat X)$ (possibly non-smooth near $p$) such that 
\begin{equation} \label{eq:Boxbhattotildeep}
\Td \Box_b u= \hat \Box_{b,\varepsilon} u + g_{\varepsilon} \hat \ddbar_b u ,\ \ \forall u\in{\rm Dom\,}\tilde \Box_{b}.
\end{equation} 
The upshot here is that $g_{\varepsilon}$ will be compactly supported in the support of $\eta_{\varepsilon}$, whereas our previous $g$ may not be compactly supported near $p$. One can repeat the proof of (\ref{eq:TdtohatPiadjoint}) and (\ref{eq:TdtohatNadjoint}), and show that 
$$
(I + \hat{R}_{\varepsilon}^{*,\Td m}) \Td \Pi = \hat \Pi_{\varepsilon}^{*,\Td m}
$$
$$
(I + \hat{R}_{\varepsilon}^{*,\Td m}) \Td N = \hat N_{\varepsilon}^{*,\Td m} (I - \Td \Pi)
$$
on $L^2(\Td m)$, where 
$$
\hat{R}_{\varepsilon} := g_{\varepsilon} \hat \ddbar_{b,\varepsilon} \hat N_{\varepsilon}.
$$
It follows that
\begin{equation} \label{eq:1-chiTdPi}
(1-\chi) \Td \Pi = (1-\chi) \hat \Pi_{\varepsilon}^{*,\Td m} - (1-\chi) \hat{R}_{\varepsilon}^{*,\Td m} \Td \Pi,
\end{equation}
\begin{equation} \label{eq:1-chiTdN}
(1-\chi) \Td N = (1-\chi) \hat N_{\varepsilon}^{*,\Td m} (I - \Td \Pi) - (1-\chi) \hat{R}_{\varepsilon}^{*,\Td m} \Td N.
\end{equation}
Now
$$
(1-\chi) \hat{R}_{\varepsilon}^{*,\Td m} = \frac{\hat m_{\varepsilon}}{\Td m} (1-\chi) \hat N_{\varepsilon} \hat \ddbar_{b,\varepsilon}^* g_{\varepsilon}^* \frac{\Td m}{\hat m_{\varepsilon}},
$$
and if $\varepsilon$ is chosen sufficiently small (so that the support of $g_{\varepsilon}$ is disjoint from that of $1-\chi$), then $(1-\chi) \hat N_{\varepsilon} \hat \ddbar_{b,\varepsilon}^* g_{\varepsilon}^*$ is an infinitely smoothing pseudodifferential operator, by pseudolocality of $\hat N_{\varepsilon}$. Hence the last term of (\ref{eq:1-chiTdPi}), and also the last term of (\ref{eq:1-chiTdN}), map $\mathcal{E}(\hat \rho^{-4+\delta})$ into $C^{\infty}_0(X)$. Since for every $0 < \varepsilon < 1$ and every $0 < \delta < 2$, $$\hat \Pi_{\varepsilon}^{*,m} \colon \mathcal{E}(\hat \rho^{-4+\delta}) \to \mathcal{E}(\hat \rho^{-4+\delta}),$$ and $$\hat N_{\varepsilon}^{*,m} \colon \mathcal{E}(\hat \rho^{-4+\delta}) \to \mathcal{E}(\hat \rho^{-2+\delta}),$$ it follows that $(1-\chi)\Td \Pi$ and $(1-\chi)\Td N$ satisfies (\ref{eq:TdPipseudolocal}) and (\ref{eq:TdNpseudolocal}), and we are done with the proof of Theorem~\ref{thm3}.

Finally, to prove Theorem~\ref{thm4}, the key is the following fact, which we prove in Lemma~\ref{lem:PiErho0}: if $\alpha$ is a $(0,1)$ form with coefficients in $\mathcal{E}(\hat \rho^0)$, then $\ddbar_{b,1}^* \alpha \in \mathcal{E}(\hat \rho^{-1})$ satisfies $\Pi \ddbar_{b,1}^* \alpha = 0$. To compute $\Pi f = \Pi \Box_{b,1} \Td \beta = \Pi \ddbar_{b,1}^* (\ddbar_{b,1} \Td \beta)$, we will then decompose $\ddbar_{b,1} \Td \beta$ into a sum
$$
\ddbar_{b,1} \Td \beta = \alpha_0 + E,
$$
where the main term $\alpha_0$ has coefficients in $\mathcal{E}(\hat \rho^{-2})$, and the error $E$ has coefficients in $\mathcal{E}(\hat \rho^0)$. Then by Lemma~\ref{lem:PiErho0}, $\Pi \ddbar_{b,1}^* E = 0$. Furthermore, we will construct by hand an explicit family of $(0,1)$ forms $\alpha_{\varepsilon}$, with coefficients in $\mathcal{E}(\hat \rho^0)$, such that 
$$
\ddbar_{b,1}^* \alpha_{\varepsilon} \to \ddbar_{b,1}^* \alpha_0 \quad \text{in $\mathcal{E}(\hat \rho^{-1})$ as $\varepsilon \to 0$}.
$$
Thus by continuity of $\Pi$ on $\mathcal{E}(\hat \rho^{-1})$, we have $\Pi \ddbar_{b,1}^* \alpha_0 = \lim_{\varepsilon \to 0} \Pi \ddbar_{b,1}^* \alpha_{\varepsilon} = 0$ as well, the last equality following from Lemma~\ref{lem:PiErho0}. Together we get $\Pi f = 0$, as desired.

\subsection{Definitions and Notations} \label{subsect:def}

We shall now recall some basic definitions, and introduce some basic notations.

A 3-dimensional smooth manifold $X$ is said to be a CR manifold, if there exists a 1-dimensional subbundle $L$ of the complexified tangent bundle $\mathbb{C}TX$ such that $L \cap \overline{L} = \{0\}$; such subbundle $L$ is then denoted as $T^{1,0}X$, and $\overline{L}$ denoted as $T^{0,1}X$. The dual bundles to $T^{1,0}X$ and $T^{0,1}X$ will be denoted by $\Lambda^{1,0}T^*X$ and $\Lambda^{0,1}T^*X$ respectively. A typical example is a 3-dimensional smooth submanifold $X$ of $\mathbb{C}^N$; there one has a natural CR structure induced from $\mathbb{C}^N$, given by the bundle of all $(1,0)$ vectors in $\mathbb{C}^N$ that are tangent to $X$.

A 3-dimensional CR manifold $X$ is said to be strongly pseudoconvex, if at every point on $X$ there exists a local section $Z$ of $T^{1,0}X$ such that $[Z,\overline{Z}]$ is transverse to $T^{1,0}X \oplus T^{0,1}X$. It is said to be CR embeddable in $\mathbb{C}^N$, if there exists a smooth embedding $\Phi \colon X \to \Phi(X) \subset \mathbb{C}^N$, such that $d\Phi(T^{1,0}X)$ agrees with the natural CR structure of $\Phi(X)$ induced from $\mathbb{C}^N$.

We shall write $C^{\infty}(X)$ for the space of smooth functions on $X$, and $\Omega^{0,1}(X)$ for the space of smooth sections of $\Lambda^{0,1}T^*X$. We shall also write $C^{\infty}_0(X)$ and $\Omega_0^{0,1}(X)$ for the subspaces of $C^{\infty}(X)$ and $\Omega^{0,1}(X)$ which consist of elements that have compact support in $X$.

Suppose $X$ is a 3-dimensional CR manifold. If there exists a real contact form $\theta$ (i.e. a global real 1-form $\theta$ with $\theta \wedge d\theta \ne 0$ everywhere) such that $T^{1,0} X \oplus T^{0,1} X$ is given by the kernel of $\theta$, then $(X,T^{1,0}X,\theta)$ is called a pseudohermitian 3-manifold. In that case, $X$ is strongly pseudoconvex, and one can define a Hermitian inner product on $T^{1,0}X$, by
$$
\langle Z_1 | Z_2 \rangle_{\theta} := \frac{1}{2} d\theta(Z_1, i \overline{Z}_2).
$$
This allows one to define various geometric quantities on $X$, like the connection form $\omega^1_1$, and the Tanaka-Webster scalar curvature $R$. One can also define the sublaplacian $\Delta_b$, the conformal sublaplacian $L_b := -4\Delta_b + R$, and the CR Paneitz operator $P$. We refer the reader to say \cite{CMY} for the precise definitions of such.

We note that the above Hermitian inner product on $T^{1,0}X$ induces naturally a Hermitian inner product on $\Lambda^{0,1}T^*X$, which we still denote by $\langle \cdot | \cdot \rangle_{\theta}$. For $\alpha \in \Omega^{0,1}X$, we write $|\alpha|_{\theta}^2 := \langle \alpha|\alpha \rangle_{\theta}$.

In what follows, we will need the Reeb vector field $T$ on a contact manifold $(X,\theta)$, which is the unique vector field such that 
$$
\theta(T) \equiv 1, \quad d\theta(T,\cdot) \equiv 0.
$$

If $\hat \rho$ is a non-negative smooth function defined near a point $p$, then we write $\varepsilon(\hat\rho^k)$ for the set of smooth functions $f\in C^\infty(\hat X)$ such that $|f|\leq C\hat\rho^k$ near $p$ for some $C>0$.

Suppose now $(\hat X,T^{1,0}\hat X,\hat\theta_0)$ a pseudohermitian 3-manifold, and we fix $p \in\hat X$. Then as is known, there exists another contact form  $\hat\theta$ on $\hat X$, which is a multiple of $\hat \theta_0$ by a positive smooth function, so that near $p$, there exists CR normal coordinates $(z,t)$. In other words, the contact form $\hat \theta$ and the coordinates $(z,t)$ are chosen, so that
\begin{enumerate}[(i)]
\item the point $p$ corresponds to $(z,t) = (0,0)$;
\item one can find a local section $\hat Z_1$ of $T^{1,0} \hat X$ near $p$, with $\langle \hat Z_1 | \hat Z_1 \rangle_{\hat \theta} = 1$, such that $\hat Z_1$ admits the following expansion near $p$:
\begin{equation} \label{s1-e5b}
\hat Z_1=\frac{\pr}{\pr z}-i\ol z\frac{\pr}{\pr t}+\varepsilon(\hat\rho^4)\frac{\pr}{\pr z}+\varepsilon(\hat\rho^4)\frac{\pr}{\pr\ol z}+\varepsilon(\hat\rho^5)\frac{\pr}{\pr t};
\end{equation}
\item the Reeb vector field $\hat T$ with respect to $\hat \theta$ admits an expansion
\begin{equation} \label{s1-e5c}
\hat T=\frac{\pr}{\pr t}+\varepsilon(\hat\rho^3)\frac{\pr}{\pr z}+\varepsilon(\hat\rho^3)\frac{\pr}{\pr\ol z}+\varepsilon(\hat\rho^4)\frac{\pr}{\pr t}.
\end{equation}
\end{enumerate} 
Here \[\hat\rho(z,t)=(\abs{z}^4+t^2)^{\frac{1}{4}}\] for $(z,t)$ in a neighborhood of $(0,0)$. For later convenience, from now on we will fix a positive smooth extension of $\hat\rho$ to the whole manifold $X$. We will also write $\hat Z_{\bar 1} := \overline{\hat Z_1}$. Note that in CR normal coordinates we have 
\begin{equation} \label{eq:hthetaexpansion}
\hat \theta = dt - i(z d\overline{z} - \overline{z} dz) + \varepsilon(\hat \rho^5) dz + \varepsilon(\hat \rho^5) d\overline{z} + \varepsilon(\hat \rho^4) dt.
\end{equation} 

Next, for $m \in \mathbb{R}$, we will introduce a Fr\'{e}chet space $\mathcal{E}(\hat\rho^m)$, with which our results are formulated. We pause and introduce some notations first. Let $k\in\mathbb N$. We denote by $\hat \nabla^k_b$ any differential operator of the form $L_1\ldots L_k$, where $L_j\in C^\infty(\hat X,T^{1,0}\hat X\oplus T^{0,1}\hat X)$, $\langle\,L_j\,|\,L_j\,\rangle_{\hat\theta}\leq1$, $j=1,\ldots,k$. Let $\mathcal{O}(\hat\rho^m)=\mathcal{O}^{(0)}(\hat\rho^m)$, $m\in\Real$, denote the set of all $f\in C^\infty(X)$ such that $\abs{f}\leq C\hat\rho^m$ near $p$, for some $C>0$. 
Let $\mathcal{O}^{(1)}(\hat\rho^m)$ denote the set of all functions $f\in\mathcal{O}(\hat\rho^m)$ such that 
$\hat \nabla_b f\in\mathcal{O}(\hat\rho^{m-1})$. Similarly, for $k\in\mathbb N$, $k\geq2$, let $\mathcal{O}^{(k)}(\hat\rho^m)$ denote the set of all functions $f\in\mathcal{O}(\hat\rho^m)$ such that $\hat \nabla_b f\in\mathcal{O}^{(k-1)}(\hat\rho^{m-1})$. Put 
\begin{equation}\label{s1-e6} 
\mathcal{E}(\hat\rho^m)=\bigcap_{k\in\mathbb N\bigcup\set{0}}\mathcal{O}^{(k)}(\hat\rho^m).
\end{equation} 
Let $\Omega\subset\hat X$ be an open set. For $f\in C^\infty(\Omega)$, define \[\norm{f}_{L^\infty(\Omega)}:=\sup_{x\in\Omega}\abs{f(x)}.\]
$\mathcal{E}(\hat\rho^m)$ is a Fr\'{e}chet space with the semi-norms:
\begin{equation}\label{s-smmiI}
u\To\norm{\hat\nabla^k_b(\hat\rho^{-m+k}u)}_{L^\infty(X)},\ \ u\in\mathcal{E}(\hat\rho^m), 
\end{equation}
for $k\in\mathbb N_0$. These semi-norms then define the topology of $\mathcal{E}(\hat\rho^m)$. 

There is a version of this space for smooth vector bundles over $X$. Let $E$ be a smooth vector bundle over $\hat X$ of rank $r$. Let $f_1,\ldots,f_r$ be any local frame in some small neighbourhood $U$ of $p$. For $m\in\Real$, let $\mathcal{E}(\hat\rho^m,E)$ be the set of all $u\in C^\infty(X,E)$ such that $u=u_1f_1+\cdots+u_rf_r$ on $U$, and $\chi u_j\in\mathcal{E}(\hat\rho^m)$ for every $\chi\in C^\infty_0(U)$ and every $j=1,\ldots,r$.

Note that $\mathcal{E}(\hat\rho^{m})\subset\mathcal{E}(\hat\rho^{m'})$ if $m'<m$. We also notice that for every $m\in\Real$, $C^\infty_0(X)$ is dense in $\mathcal{E}(\hat\rho^m)$ for the topology of $\mathcal{E}(\hat\rho^{m'})$, for every $m'<m$. Similarly for $\mathcal{E}(\hat\rho^{m},E)$ for any smooth vector bundle $E$.

Distributions on $\hat X$ will be denoted $\mathcal{D}'(\hat X)$.

Finally, suppose $T \colon \text{Dom}(T) \subset H_1 \to H_2$ is a densely defined closed linear operator between two Hilbert spaces $H_1$ and $H_2$, and suppose the range of $T$ is closed in $H_2$. Then the partial inverse of $T$ is the unique linear operator $S \colon H_2 \to H_1$, such that if $\Pi_1 \colon H_1 \to H_1$ and $\Pi_2 \colon H_2 \to H_2$ are the orthogonal projections onto the kernels of $T$ and $T^*$ respectively, then 
$$
TS + \Pi_2 = I, \quad S\Pi_2 = 0, \quad \text{and} \quad \Pi_1 S = 0.
$$
(Here $T^* \colon \text{Dom}(T^*) \subset H_2 \to H_1$ is the adjoint of $T$, which is also densely defined and closed; and $I$ is the identity operator.) It follows that $S \colon H_2 \to H_1$ is bounded, and  
$$
ST + \Pi_1 = I \quad \text{on the domain of $T$}.
$$

As we saw in Section~\ref{subsect:outlineofproof}, the operators 
$$
\hat \ddbar_b \colon \text{Dom} \hat \ddbar_b \subset L^2(\hat m) \to L^2_{(0,1)}(\hat m),
$$
$$
\hat \Box_b \colon \text{Dom} \hat \Box_b \subset L^2(\hat m) \to L^2(\hat m),
$$
$$
\Td \Box_b \colon \text{Dom} \Td \Box_b \subset L^2(\Td m) \to L^2(\Td m),
$$
and
$$
\Box_{b,1} \colon \text{Dom} \Box_{b,1} \subset L^2(m_1) \to L^2(m_1)
$$
all have closed ranges. (See Section~\ref{sect:closedrange} for more details.) Their partial inverses will be denoted by $\hat K$, $\hat N$, $\Td N$ and $N$ respectively. 

A piece of convention here: recall that $\hat K$ is an operator that takes $(0,1)$ forms to functions. By identifying the space of $(0,1)$ forms locally with functions, we would sometimes like to think of $\hat K$ as a map from functions to functions. To do so rigorously, we proceed as follows. At every point $x \in \hat X$, there exists a non-isotropic ball $B(x,r_x)$ such that $T^{0,1} \hat X$ has a non-zero section on $B(x,r_x)$. The sets $\{B(x,r_x/2) \colon x \in \hat X\}$ covers $\hat X$; one can thus take a finite subcover that covers $\hat X$. Denote this finite subcover by $\{B_1, \dots, B_N\}$, and the dual of a non-zero local section of $T^{0,1} \hat X$ on $2B_i$ by $\hat \omega_i$; here $2B_i$ is the non-isotropic ball that has the same center as $B_i$, but twice the radius. We further normalize $\hat \omega_i$ so that $\langle \hat \omega_i | \hat \omega_i \rangle_{\hat \theta} = 1$ on $2B_i$. One then has the following property: there is some $r_0 > 0$ such that if $B$ is a non-isotropic ball of radius $< r_0$ on $\hat X$ that intersects some of the $B_i$ above, then $\hat \omega_i$ is defined and of norm 1 on $B$. By taking a partition of unity $\sum \eta_i = 1$ subordinate to the open cover $\{B_1, \dots, B_N\}$, we can define maps $\hat K_i$, $1 \leq i \leq N$, that map from functions to functions, by the following formula:
\begin{equation} \label{eq:hatKdefn}
\hat K_i \varphi := \hat K (\eta_i \varphi \hat \omega_i).
\end{equation} 
Then since $\hat K = \sum_i \hat K \eta_i$, and $\eta_i \phi = \eta_i \langle \phi | \hat \omega_i \rangle_{\hat \theta} \hat \omega_i$ for all $(0,1)$ forms $\phi$, we have
$$
\hat K \phi = \sum_i \hat K_i [\langle \phi|\hat \omega_i \rangle_{\hat \theta}].
$$
It will be slightly more convenient to consider properties of $\hat K_i$ instead of $\hat K$ at a number of places below. The result will always be independent (up to constants) of the choices of the cut-offs $\eta_i$, and of the choice of frames $\hat \omega_i$.

The plan of the paper is as follows. In Section~\ref{sect2}, we gather together some properties of the Szeg\"o projection $\hat \Pi$, as well as the partial inverse $\hat N$ of the smooth Kohn Laplacian $\hat \Box_b$. In Section~\ref{sect:aux}, we develop tools to establish the key mapping property (\ref{eq:I+hatRchi}), that involves the weighted space $\mathcal{E}(\hat \rho^{-4+\delta})$.  In Section~\ref{sect:CR}, we construct the  CR function $\psi$ that is crucial for us. Sections~\ref{sect5} and \ref{sect:TdBoxbtohatBoxb} clarifies the relations between the various Kohn Laplacians. Sections~\ref{sect:closedrange} to \ref{sect:app} contain the proofs of Theorems~\ref{thm2} to \ref{thm4}, which implies Theorem~\ref{thm1} and Corollary~\ref{cor1} as we have explained above. Finally, in Appendix 1, we establish some properties of the Green's function of the conformal Laplacian $L_b$, which allows us to apply our results towards the study of the CR positive mass theorem as was laid out in \cite{CMY}. In Appendix 2, we prove a subelliptic estimate for $\hat \Box_b$, which should be known to the experts, but which has not appeared explicitly in literature.

\noindent
{\small\emph{\textbf{Acknowledgements.} The authors would like to express their gratitude to Jih-Hsin Cheng, Andrea Malchiodi and Paul Yang for suggesting to us this beautiful problem and for several useful conversations. The second author would like to thank Kenneth Koenig for his interest in our work, and for some useful discussion.}}

\section{Some properties of the smooth Kohn Laplacian $\hat\Box_b$} \label{sect2}

We collect in this section some results from subelliptic analysis and several complex variables. The key is to introduce a class of non-isotropic smoothing operators on our pseudohermitian manifold $\hat X$, and show that the Szeg\"o projection $\hat \Pi$, as well as the partial inverse $\hat N$ of $\hat \Box_b$, are examples of such; we will deduce, as a result, mapping properties of $\hat \Pi$ and $\hat N$ with respect to the weighted spaces $\mathcal{E}(\hat \rho^{\delta})$. Many of these are known; we refer the reader to Nagel-Stein-Wainger~\cite{NSW}, Kohn~\cite{Koh85}, Christ~\cite{Ch88I},~\cite{Ch88II}, Nagel-Rosay-Stein-Wainger~\cite{NRSW2}, Koenig~\cite{Koe02} and the references therein for further details. 

First, we recall the Carnot-Caratheodory metric on our CR manifold $\hat X$. For $\delta > 0$, let $C(\delta)$ be the class of all absolutely continuous mappings $\varphi \colon [0,1] \to \hat X$ such that for a.e. $t$,
$$
\varphi'(t) = a_1(t) X_1(\varphi(t)) + a_2(t) X_2(\varphi(t)), \quad |a_j(t)| < \delta,\quad j=1,2.
$$ 
Here $X_1$ and $X_2$ are the real and imaginary parts of $\hat Z_1$ respectively. The Carnot-Caratheodory metric on $\hat X$ is then defined by
$$
\vartheta(x,y) = \inf \{ \delta > 0 \colon \text{ there exists } \varphi \in C(\delta) \text{ such that } \varphi(0) = x, \varphi(1) = y \}
$$
for $x, y \in \hat X$. From Theorem 4 of \cite{NSW}, coupled with the representations \eqref{s1-e5b} and \eqref{s1-e5c} of $\hat{Z}_1$ and $\hat{T}$ in CR normal coordinates, it is easy to show that for points $x$ sufficiently close to $p$, we have
$$
\vartheta(x,p) \simeq \hat \rho(x).
$$
(See also Theorem 3.5 and Remark 3.3 of Jean \cite{Jean12}.) We write $B(x,r)$ for the non-isotropic ball $\{y \in \hat X \colon \vartheta(x,y) < r\}$ of radius $r$ centered at $x$.

Next, we proceed to define on $\hat X$ a class of (non-isotropic) smoothing operators of order $j$. For our purposes, it suffices to restrict our attention to the case when $0 \leq j < 4$.

Recall that a function $\phi$ on $\hat X$ is said to be a  \emph{normalized bump function} on a ball $B(x,r)$, if it is smooth with compact support on $B(x,r)$, and satisfies 
\begin{equation}
\norm{\hat{\nabla}_b^k \phi}_{L^{\infty}(B(x,r))} \leq C_k r^{-k}
\end{equation} 
for all $k \geq 0$; here $C_k>0$ are absolute constants independent of $r$. 

Usually we only require the above derivative estimate to be satisfied for all $0 \leq k \leq N$ for some large integer $N$. In that case, we say that $\phi$ is a normalized bump function of order $N$ in $B(x,r)$.

Suppose now $T$ is a continuous linear operator $T \colon C^{\infty}(\hat X) \to C^{\infty}(\hat X)$, and its adjoint $T^*$ (with respect to the inner product of $L^2(\hat m)$) is also a continuous map  $T^* \colon C^{\infty}(\hat X) \to C^{\infty}(\hat X)$. We say that $T$ is a smoothing operator of order $j$,  $0 \leq j < 4$, if 
\begin{enumerate}[(a)]
\item there exists a kernel $T(x,y)$, defined and smooth away from the diagonal in $\hat X \times \hat X$, such that 
\begin{equation} \label{eq:Tkernelrep}
Tf(x) = \int_{\hat X} T(x,y) f(y) \hat m(y)
\end{equation}
for any $f \in C^{\infty}(\hat X)$, and every $x$ not in the support of $f$,
\item the kernel $T(x,y)$ satisfies the following differential inequalities when $x \ne y$:
$$
|(\hat \nabla_b)_x^{\alpha_1} (\hat \nabla_b)_y^{\alpha_2} T(x,y)| \lesssim_{\alpha} \vartheta(x,y)^{-4+j-|\alpha|}, \quad |\alpha| = |\alpha_1| + |\alpha_2|;
$$
\item the operators $T$ and $T^*$ satisfy the following cancellation conditions: if $\phi$ is a normalized bump function in some ball $B(x,r)$, then $$\|\hat \nabla_b^{\alpha} T\phi \|_{L^{\infty}(B(x,r))} \lesssim_{\alpha} r^{j-|\alpha|},$$
and
$$\|\hat \nabla_b^{\alpha} T^*\phi \|_{L^{\infty}(B(x,r))} \lesssim_{\alpha} r^{j-|\alpha|}.$$
\end{enumerate}

It is then clear that $T$ is smoothing of order $j$, if and only if $T^*$ is smoothing of order $j$. We also have the following proposition:

\begin{prop}
If $T$ is a smoothing operator of order 0, then $T$ is bounded on $L^p(\hat m)$ for $1 < p < \infty$.
\end{prop}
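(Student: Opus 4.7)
The plan is to establish $L^p$ boundedness by adapting the standard Calder\'on--Zygmund machinery to the space of homogeneous type $(\hat X, \vartheta, \hat m)$. Recall that the non-isotropic balls satisfy $\hat m(B(x,r)) \simeq r^4$ uniformly in $x$ (as follows from the normal coordinate expansions \eqref{s1-e5b}, \eqref{s1-e5c} and the Nagel--Stein--Wainger ball volume formula), so $\hat m$ is doubling with respect to $\vartheta$.

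The first and main step is $L^2$ boundedness, which I would establish by a Cotlar--Stein almost-orthogonality argument. Choose a smooth partition of unity $1 = \sum_{k \in \mathbb{Z}} \eta_k(s)$ on $(0,\infty)$ with $\eta_k$ supported in $[2^{-k-1}, 2^{-k+1}]$, and formally decompose the off-diagonal kernel as $T(x,y) = \sum_k T_k(x,y)$, where $T_k(x,y) := \eta_k(\vartheta(x,y)) T(x,y)$. Each $T_k$ is supported in the shell $\vartheta(x,y) \simeq 2^{-k}$, and the differential inequalities in (b) give the expected derivative bounds on $T_k$, while the cancellation conditions in (c) say that $T_k$ and $T_k^*$ send an $L^\infty$-normalized bump function on a ball of radius $2^{-k}$ to one on a comparable ball. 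Together these yield the almost-orthogonality estimate
$$
\|T_j T_k^*\|_{L^2(\hat m) \to L^2(\hat m)} + \|T_j^* T_k\|_{L^2(\hat m) \to L^2(\hat m)} \lesssim 2^{-|j-k|},
$$
from which the Cotlar--Stein lemma yields $\|T\|_{L^2(\hat m) \to L^2(\hat m)} \lesssim 1$. The contribution of the diagonal, where the $\eta_k$ decomposition is ambiguous, can be absorbed into any one $T_k$ without affecting the estimates, thanks again to condition (c).

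The next step is to upgrade $L^2$ boundedness to weak-$(1,1)$ by a Calder\'on--Zygmund decomposition adapted to $(\hat X, \vartheta, \hat m)$. The H\"ormander-type regularity
$$
\int_{\vartheta(x,y_0) \geq 2 \vartheta(y,y_0)} |T(x,y) - T(x,y_0)| \, \hat m(x) \lesssim 1
$$
follows by integrating the derivative estimate in (b) along a minimizing Carnot--Carath\'eodory path from $y_0$ to $y$ and using the ball volume formula; this together with the $L^2$ bound gives weak-$(1,1)$ by the usual decomposition of $f = g + \sum_i b_i$ into a good part and bad functions with mean zero supported on disjoint balls. Marcinkiewicz interpolation then yields $L^p(\hat m)$ boundedness for $1 < p \leq 2$. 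Finally, since the definition of a smoothing operator of order $0$ is symmetric in $T$ and $T^*$, the same argument applied to $T^*$ together with duality yields $L^p$ boundedness for $2 \leq p < \infty$.

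The only point requiring genuine work is the almost-orthogonality estimate, i.e.\ the $L^2$ bound; once that is in hand, the passage to $L^p$ for $1 < p < \infty$ is routine Calder\'on--Zygmund theory on spaces of homogeneous type in the style of Coifman--Weiss. I would expect the argument to be essentially the one used by Nagel--Rosay--Stein--Wainger \cite{NRSW2} and Koenig \cite{Koe02} in closely related settings, and no new CR-geometric input beyond what is already encoded in the definition is needed.
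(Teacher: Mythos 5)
Your route to $L^2$-boundedness — a Cotlar--Stein almost-orthogonality argument on a dyadic shell decomposition of the off-diagonal kernel — is genuinely different from the paper's, which instead verifies that $T$ and $T^*$ are \emph{restrictedly bounded} (normalized bump functions on $B(x_0,r)$ are sent to $L^2$ functions of norm $\lesssim r^2$, by splitting into a near part handled by cancellation and a far part handled by the kernel bound) and then cites the $T(1)$ theorem from Chapter~7 of Stein. Both are legitimate strategies, and the subsequent Calder\'on--Zygmund/Marcinkiewicz/duality step to pass from $L^2$ to $L^p$ is the same in both. The paper's version is shorter because verifying restricted boundedness is a two-line computation that feeds directly into an off-the-shelf theorem, whereas Cotlar--Stein essentially re-proves that theorem in this setting.

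That said, your sketch has a gap at exactly the point the $T(1)$ machinery is designed to handle: the truncated pieces $T_k$ are not directly known to satisfy the cancellation conditions, and the identity $T = \sum_k T_k$ does not hold at the level of operators. The kernel $T(x,y)$ determines $T$ only off the diagonal — nothing in the definition precludes $T$ from containing, say, a bounded multiple of the identity, which has vanishing off-diagonal kernel and so is invisible to every $T_k$. Thus the ``diagonal contribution, which can be absorbed into any one $T_k$'' is not a small correction; it is the entire weak-boundedness content of condition (c), and controlling it is nontrivial. Similarly, the assertion that $T_k$ sends a bump at scale $2^{-k}$ to a bump at scale $2^{-k}$ does not follow from (c) applied to $T$ alone: one must write $T_k\phi = T\phi - \sum_{j\ne k} T_j\phi$ and control the tail, and while the contribution of larger shells $j<k$ is summable by the kernel estimate, the contribution of smaller shells $j>k$ is not absolutely summable, so one needs a more careful regrouping (precisely because the kernel near the diagonal is only conditionally integrable). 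These are the standard subtleties in proving the $T(1)$ theorem by almost-orthogonality; your argument is morally on the right track, but as written the key almost-orthogonality estimate for $T_jT_k^*$ is asserted rather than justified, and the diagonal is not handled. The paper's invocation of restricted boundedness plus $T(1)$ avoids these issues cleanly.
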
 

\begin{proof}
The boundedness of $T$ on $L^2(\hat m)$ follows from a version of $T(1)$ theorem. In fact, suppose $f$ is a normalized bump function on a ball $B(x_0,r)$. If $T$ is a smoothing operator of order 0, then by the cancellation condition on $T$,
$$
\|Tf\|_{L^2(B(x_0,2r))} \lesssim r^2,
$$
and by the kernel representation of $T$, when $x \notin B(x_0,2r)$,
$$
|Tf(x)| 
\lesssim \int_{y \in B(x_0,r)} \frac{|f(y)|}{\vartheta(x,y)^4} \hat m(y) 
\lesssim \vartheta(x,x_0)^{-4} r^4.
$$
Hence
$$
\int_{x \notin B(x_0,2r)} |Tf(x)|^2 \hat m(x)
\lesssim \int_{x \notin B(x_0,2r)} \vartheta(x,x_0)^{-8} r^8 \hat m(x) 
\lesssim r^4.
$$
Altogether,
$$
\|Tf\|_{L^2(\hat X)} \lesssim r^2;
$$
similiarly for $\|T^*f\|_{L^2(\hat X)}$. Hence both $T$ and $T^*$ are restrictedly bounded, and by the $T(1)$ theorem (see e.g. Chapter 7 of \cite{Ste93}), $T$ is bounded on $L^2(\hat m)$. By the Calderon-Zygmund theory of singular integrals, it then follows that such operators are bounded on $L^p(\hat m)$, $1 < p < \infty$. (See e.g. Chapter 1 of \cite{Ste93}.) 
\end{proof}

Next we have the following theorem:

\begin{thm} \label{thm2.1}
The Szeg\"o projection $\hat \Pi$, and the partial inverse $\hat N$ of $\hat \Box_b$, are smoothing operators of orders $0$ and $2$ respectively. Furthermore, $\hat K$ is smoothing of order $1$, in the sense that the local representations $\hat K_i$ defined by (\ref{eq:hatKdefn}) are smoothing of order $1$.
\end{thm}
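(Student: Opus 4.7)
The plan is to realize each of $\hat\Pi$, $\hat N$, and the local pieces $\hat K_i$ of $\hat K$ as non-isotropic smoothing operators of the stated orders, by invoking the now-standard parametrix theory for $\hat\Box_b$ on CR embeddable strongly pseudoconvex three-manifolds. First, by the CR embeddability assumption and Kohn's closed range theorem \cite{Koh86}, $\hat\ddbar_b$, and hence $\hat\Box_b$, both have closed range in $L^2(\hat m)$, so all three operators are well-defined and $L^2$-bounded. Moreover $\hat K = \hat N\,\hat\ddbar_b^*$ on the range of $\hat\ddbar_b$, which will reduce the statement for $\hat K$ to the one for $\hat N$ together with the fact that $\hat\ddbar_b^*$ is a Heisenberg order-$1$ differential operator: composing an NIS operator of order $j$ with such an operator yields an NIS operator of order $j-1$.

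I would establish the kernel representation (a) and the pointwise differential inequalities (b) from explicit descriptions in the literature. For $\hat\Pi$ I would use the Boutet de Monvel--Sj\"{o}strand representation \cite{BouSj76}, which writes the Szeg\"{o} kernel as a Fourier integral distribution with a complex phase of positive imaginary part, and from this oscillatory integral the bounds $|(\hat\nabla_b)_x^{\alpha_1}(\hat\nabla_b)_y^{\alpha_2}\hat\Pi(x,y)|\lesssim \vartheta(x,y)^{-4-|\alpha|}$ follow by standard integration by parts arguments, giving (b) with $j=0$. For $\hat N$ I would appeal to the Heisenberg pseudodifferential calculus of Beals--Greiner \cite{BG88}, or alternatively the direct parametrix construction of Nagel--Rosay--Stein--Wainger \cite{NRSW2}, to produce $Q$ of Heisenberg order $-2$ satisfying $\hat\Box_b Q = I - R_0$, where $R_0$ is the sum of an approximate Szeg\"{o} projector and a genuinely smoothing remainder. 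Comparing with $\hat\Box_b \hat N = I - \hat\Pi$ and using the kernel bounds for $\hat\Pi$, one concludes that $\hat N - (I-\hat\Pi)Q$ is smoothing in the ordinary sense, so the order-$2$ estimates (b) for $\hat N$ are inherited from $Q$. The bounds for $\hat K_i$ then follow by localizing $\hat N\,\hat\ddbar_b^*$ with $\eta_i\hat\omega_i$.

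The cancellation condition (c) is the main technical obstacle. For $\hat\Pi$ this is a $T(1)$-type statement in the non-isotropic Carnot-Caratheodory geometry: $L^2$-boundedness is automatic from the projection property, and the remaining control on $\|\hat\nabla_b^{\alpha}\hat\Pi\phi\|_{L^\infty(B(x,r))}$ for $\phi$ a normalized bump on $B(x,r)$ follows either by integration by parts inside the Boutet de Monvel--Sj\"{o}strand oscillatory integral, or from the general non-isotropic Calder\'{o}n--Zygmund theory for NIS operators developed in Nagel--Rosay--Stein--Wainger \cite{NRSW2} and Koenig \cite{Koe02}. For $\hat N$ the cancellation condition is built directly into the Heisenberg parametrix for $Q$ and is preserved under the smoothing corrections relating $Q$ to $\hat N$; the condition for $\hat K_i$ then follows from that for $\hat N$. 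The chief technical delicacy in this program is matching the approximate Szeg\"{o} projector appearing in the Heisenberg parametrix for $\hat\Box_b$ with the true $\hat\Pi$ modulo a genuinely smoothing error, so that the ``$-\hat\Pi$'' on the right-hand side of $\hat\Box_b\hat N = I-\hat\Pi$ is cleanly cancelled by the parametrix error and the estimates for $\hat N$ truly reduce to those for $Q$.
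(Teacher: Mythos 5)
Your route is genuinely different from the paper's, both in the direction of reduction and in the toolkit. The paper reduces $\hat N$ to $\hat\Pi$ and $\hat K$ via $\hat N = \hat K\hat K^*$ and Theorem~\ref{thm2.2}, while you reduce $\hat K$ to $\hat N$ via $\hat K = \hat N\hat\ddbar_b^*$; both reductions are sound (it is easy to check directly from the definition that precomposing an order-$2$ smoothing operator with a first-order non-isotropic differential operator gives an order-$1$ smoothing operator). More substantively, the paper does not invoke any microlocal parametrix construction. It works directly from the subelliptic $L^2$ estimates of Propositions~\ref{p-keyest} and \ref{p-keyest2} (which it proves from scratch in Appendix 2, precisely because the needed form is hard to find in the literature) together with Christ's Poincar\'e inequality and his ``Proposition B.'' The whole point of Section~\ref{sect2} is to verify the cancellation conditions \emph{without} a Boutet de Monvel--Sj\"ostrand or Heisenberg-calculus parametrix.

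There are two places where your sketch has a real gap rather than a stylistic difference. First, the cancellation condition (c) is much stronger than a $T(1)$-type condition: it is the full scale-invariant estimate $\|\hat\nabla_b^{\alpha} T\phi\|_{L^\infty(B(x,r))}\lesssim r^{j-|\alpha|}$ for all orders $\alpha$ on every bump function, and $L^2$-boundedness of $\hat\Pi$ gives no leverage towards it. Your appeal to \cite{NRSW2} and \cite{Koe02} is off-target, since those works are for domains in $\mathbb{C}^2$; the reference for abstract CR three-manifolds is Christ \cite{Ch88I,Ch88II}, and even there the cancellation estimate in exactly this form is not stated. The paper's proof of \eqref{cancondhatS} and \eqref{cancondhatK} is the content that would be missing in your argument. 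Second, you correctly flag as the ``chief technical delicacy'' the identification of the parametrix's approximate Szeg\H{o} projector with the true $\hat\Pi$ modulo smoothing; but you do not resolve it, and your deduction that $\hat N - (I-\hat\Pi)Q = \hat N R_0$ is smoothing depends on precisely this. Unless that comparison is supplied (for the abstract CR embeddable three-manifold $\hat X$, not just for hypersurface boundaries where \cite{BouSj76} applies directly), the order-$2$ estimates for $\hat N$ are not ``inherited from $Q$.'' So the plan identifies the right obstacles but does not close either of them.
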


We defer its proof until the end of this section.

We will need two further key facts about this class of smoothing operators:

\begin{thm} \label{thm2.2}
If $T_1$ and $T_2$ are smoothing operators of orders $j_1$ and $j_2$ respectively, with $j_1, j_2 \geq 0$ and $j_1 + j_2 < 4$, then $T_1 \circ T_2$ is a smoothing operator of order $j_1 + j_2$.
\end{thm}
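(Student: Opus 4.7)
The plan is to verify, for $T := T_1 \circ T_2$, each of the three defining properties of a smoothing operator of order $j_1+j_2$, namely the kernel representation (a), the differential inequalities (b), and the cancellation conditions (c). The starting point is the formal identity $T(x,y) = \int_{\hat X} T_1(x,z) T_2(z,y) \hat m(z)$ for $x \ne y$; since this integral is only borderline convergent when $j_1$ or $j_2$ is close to $0$, I would never use it directly, but instead always split the $z$-integration into pieces adapted to $x$, $y$, and the scale $r := \vartheta(x,y)$, and treat each piece with a different tool.

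For the size estimate in (b), fix $x \ne y$ and partition $\hat X = A \sqcup B \sqcup C$, where $A = B(x, r/4)$, $B = B(y, r/4)$, and $C$ is the remainder. On $A$, the function $z \mapsto T_2(z,y)$ satisfies $|\hat\nabla_b^k T_2(z,y)| \lesssim r^{-4+j_2-k}$ by the size estimate on $T_2$, so a smooth truncation of $T_2(\cdot, y)$ to $A$, divided by $r^{-4+j_2}$, is a normalized bump function on $B(x, r/4)$; the cancellation condition (c) for $T_1$ then yields the pointwise bound $\lesssim r^{-4+j_1+j_2}$. The contribution from $B$ is handled dually: it equals $T_2^*$ evaluated on a truncation of $T_1(x, \cdot)$, which is $r^{-4+j_1}$ times a normalized bump on $B(y, r/4)$, and the cancellation condition for $T_2^*$ gives the same bound. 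On $C$ both $\vartheta(x,z)$ and $\vartheta(z,y)$ are bounded below by $r/4$, so the pointwise kernel bounds integrated over dyadic annuli (whose volume scales like $\rho^4$) again produce $\lesssim r^{-4+j_1+j_2}$, the hypothesis $j_1+j_2 < 4$ guaranteeing convergence. Applying $(\hat\nabla_b)_x^{\alpha_1}$ and $(\hat\nabla_b)_y^{\alpha_2}$ before the decomposition replaces the relevant kernel by its derivative and supplies the missing factor $r^{-|\alpha|}$.

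For the cancellation condition (c), let $\phi$ be a normalized bump function on $B(x_0, r)$, and decompose $T_2 \phi = \eta\, T_2\phi + (1-\eta) T_2\phi$, where $\eta$ is a smooth cutoff equal to $1$ on $B(x_0, 2r)$ and supported in $B(x_0, 4r)$. Since a normalized bump on $B(x_0, r)$ is (up to constants) also a normalized bump on $B(x_0, 4r)$, the cancellation condition for $T_2$ on $B(x_0, 4r)$ implies that $r^{-j_2} \eta\, T_2\phi$ is itself a normalized bump function on $B(x_0, 4r)$, so the cancellation condition for $T_1$ on $B(x_0, 4r)$ gives $\|\hat\nabla_b^\alpha T_1(\eta T_2 \phi)\|_{L^\infty(B(x_0, r))} \lesssim r^{j_1+j_2-|\alpha|}$. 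For the tail $(1-\eta) T_2\phi$, since its support is disjoint from $B(x_0, r) \supseteq \operatorname{supp}\phi$, the kernel representation for $T_2$ applies and, combined with the size bound on $T_2$, yields $|(1-\eta) T_2\phi(y)| \lesssim r^{4}\, \vartheta(x_0, y)^{-4+j_2}$ for $y$ outside $B(x_0, 2r)$, together with analogous bounds on derivatives. Feeding this into the kernel representation of $T_1$ at $x \in B(x_0, r)$, and integrating over dyadic annuli around $x_0$, converges precisely because $j_1+j_2 < 4$ and produces $\lesssim r^{j_1+j_2-|\alpha|}$. The cancellation for $(T_1 T_2)^* = T_2^* T_1^*$ follows by the same argument with each $T_j$ replaced by $T_j^*$, and (a) is then automatic from the estimates just established.

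The main technical obstacle I expect is the tail term $(1-\eta) T_2\phi$ in the verification of (c): because $T_2\phi$ is not compactly supported, the cancellation of $T_1$ cannot be applied to it directly, and one is forced to quantify precisely how fast $T_2\phi$ decays away from $B(x_0, r)$. Tracking this decay at all dyadic scales and checking that the resulting integral against the kernel of $T_1$ (and its derivatives) converges uniformly in the multi-index $\alpha$ is the delicate step, and it is here that the hypothesis $j_1 + j_2 < 4$ is used essentially to avoid a logarithmic divergence at the scale $\rho \to \infty$ in the dyadic sum.
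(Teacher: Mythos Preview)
Your approach is essentially the same as the paper's: the three-region decomposition $A \sqcup B \sqcup C$ for the kernel estimates and the near/tail split $\eta\,T_2\phi + (1-\eta)T_2\phi$ for the cancellation conditions are exactly what the paper does (it calls the three pieces $T^{(1)}, T^{(2)}, T^{(3)}$, and uses the same cutoff $\chi$ for the cancellation argument). The paper, incidentally, only writes out the cases $(j_1,j_2) \in \{(1,1),(0,1)\}$ that it actually needs, while you sketch the general case.

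There is one point where you are too quick. You say ``(a) is then automatic from the estimates just established,'' but it is not: having a candidate kernel $T(x,y)$ with the right size and derivative bounds does not, by itself, tell you that $Tf(x) = \int T(x,y) f(y)\,\hat m(y)$ for $x \notin \operatorname{supp} f$. The difficulty is that $T_2 f$ is generally supported on all of $\hat X$, so you cannot immediately invoke the kernel representation of $T_1$ on it; and after cutting off near $x$, the remaining piece of $T_2 f$ may still meet $\operatorname{supp} f$, where the kernel representation of $T_2$ is unavailable. When $j_1, j_2 > 0$ this is harmless because the cancellation condition then forces the integral representation to hold for \emph{all} $x$ (the paper remarks on this), and Fubini closes the argument. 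But when, say, $j_1 = 0$, one has to work harder: the paper defines $T(x,y) := (T_1 k_y)(x)$ with $k_y(z) = T_2(z,y)$ and then verifies the representation via an approximation-by-Riemann-sums argument together with the $L^2$-boundedness of $T_1$. This is a purely technical step, and your decomposition already contains the ingredients for it, but it does need to be said.

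One small inaccuracy: on a compact manifold the dyadic sum in the tail estimate does not diverge as $\rho \to \infty$; rather, the condition $j_1 + j_2 < 4$ is what makes the sum $\sum_{2^l r \lesssim \operatorname{diam}(\hat X)} (2^l r)^{-4+j_1+j_2}$ comparable to its first term $r^{-4+j_1+j_2}$ instead of to a constant (or a logarithm). The arithmetic is the same, but the failure mode is at the top scale, not at infinity.
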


\begin{thm} \label{thm2.3}
If $T$ is a smoothing operator of order $j$, $0 \leq j < 4$, then $T$ extends to a continuous linear map $$T \colon \mathcal{E}(\hat \rho^{-\gamma}) \to \mathcal{E}(\hat \rho^{-\gamma+j}),$$ as long as $j < \gamma < 4$.
\end{thm}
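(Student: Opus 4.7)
\bigskip

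The plan is to establish the pointwise estimate $|\hat\nabla_b^\alpha Tu(x)| \lesssim \hat\rho(x)^{j-\gamma-|\alpha|}$ near $p$ for every multi-index $\alpha$, from which the Fr\'echet continuity follows directly. The natural approach is a dyadic decomposition of $u$ with respect to the non-isotropic scales around $p$: write $u = u_{\mathrm{far}} + \sum_{\ell \geq \ell_*} u_\ell$, where $u_{\mathrm{far}}$ has compact support in $X$ and is smooth on $\hat X$, and each $u_\ell$ is supported in the non-isotropic dyadic annulus $A_\ell := \{ 2^{-\ell-1} \leq \hat\rho \leq 2^{-\ell+1}\}$. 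Using the fact that each $A_\ell$ can be covered by a uniformly bounded number of non-isotropic balls $B(y_{i,\ell}, 2^{-\ell})$ of appropriate radius, I would further split $u_\ell = \sum_i u_{i,\ell}$ via a partition of unity, so that $u_{i,\ell}$ is supported in $B(y_{i,\ell}, 2^{-\ell+1})$. The assumption $u \in \mathcal{E}(\hat\rho^{-\gamma})$ gives $|\hat\nabla_b^k u| \lesssim \hat\rho^{-\gamma - k}$ near $p$, so the rescaling $c \cdot 2^{-\ell\gamma}\, u_{i,\ell}$ is a normalized bump function on $B(y_{i,\ell}, 2^{-\ell+1})$, with $c$ controlled by finitely many semi-norms of $u$. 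The piece $Tu_{\mathrm{far}}$ is smooth on $\hat X$ (since $u_{\mathrm{far}} \in C^\infty(\hat X)$ and $T$ preserves $C^\infty(\hat X)$), so it contributes a harmless term in every $\mathcal{E}(\hat\rho^m)$.

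Next, for a fixed point $x$ with $\hat\rho(x) \sim 2^{-\ell_0}$, I would estimate $\hat\nabla_b^\alpha T u_{i,\ell}(x)$ by distinguishing two regimes. When $x$ lies in a fixed enlargement of $B(y_{i,\ell}, 2^{-\ell+1})$, cancellation condition (c) applied to the normalized bump gives
$$
|\hat\nabla_b^\alpha T u_{i,\ell}(x)| \lesssim 2^{\ell\gamma} \cdot (2^{-\ell})^{j-|\alpha|} = 2^{\ell(\gamma-j+|\alpha|)},
$$
which happens only for $|\ell - \ell_0|$ bounded, contributing $\lesssim \hat\rho(x)^{j-\gamma-|\alpha|}$. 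Otherwise, $\vartheta(x,y) \gtrsim 2^{-\min(\ell,\ell_0)}$ uniformly for $y \in \mathrm{supp}\, u_{i,\ell}$, and the kernel estimate (b) together with the volume bound $|B(y_{i,\ell}, 2^{-\ell+1})| \sim 2^{-4\ell}$ yields
$$
|\hat\nabla_b^\alpha T u_{i,\ell}(x)| \lesssim 2^{\ell\gamma} \cdot 2^{-4\ell} \cdot 2^{(4-j+|\alpha|)\min(\ell,\ell_0)}.
$$
Summing in $i$ (finitely many terms) and in $\ell$: for $\ell \geq \ell_0$ the series is $2^{(4-j+|\alpha|)\ell_0} \sum_{\ell \geq \ell_0} 2^{(\gamma-4)\ell} \lesssim 2^{(\gamma-j+|\alpha|)\ell_0}$, which converges thanks to $\gamma < 4$; for $\ell \leq \ell_0$ the series is $\sum_{\ell \leq \ell_0} 2^{(\gamma-j+|\alpha|)\ell} \lesssim 2^{(\gamma-j+|\alpha|)\ell_0}$, which converges thanks to $\gamma > j$. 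Both contributions yield $\lesssim \hat\rho(x)^{j-\gamma-|\alpha|}$.

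Smoothness of $Tu$ on $X$ away from $p$ is handled separately: for $x_0 \in X$, write $u = \eta u + (1-\eta)u$ with $\eta \in C_0^\infty(X)$ equal to $1$ near $x_0$ and vanishing near $p$; then $T(\eta u) \in C^\infty(\hat X)$, while $T((1-\eta)u)$ near $x_0$ is represented by the convergent integral $\int T(x,y)(1-\eta)(y)u(y)\,\hat m(y)$ (well-defined since $\gamma < 4$ gives $u \in L^1(\hat m)$), whose integrand is smooth in $x$ near $x_0$ because $y$ is bounded away from $x_0$. Collecting these pointwise estimates shows that $Tu \in \mathcal{E}(\hat\rho^{-\gamma+j})$ and that each semi-norm of $Tu$ is bounded by finitely many semi-norms of $u$. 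The main obstacle is the careful bookkeeping combining the cancellation and kernel estimates across the dyadic scales, together with the slight technical trick of enlarging the cancellation ball (using (c) on $B(x,r)$ plus (b) outside), and tracking where the assumptions $j < \gamma$ and $\gamma < 4$ are both needed: they are precisely the conditions that make the two geometric series converge, which is the sharpness visible in the statement.
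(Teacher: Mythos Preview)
Your argument is correct and rests on the same two ingredients as the paper's proof: the cancellation condition for the piece of $u$ near the evaluation point, and the kernel size estimate for the remainder, with the hypotheses $\gamma<4$ and $\gamma>j$ entering exactly where the two tails must be summable.

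The execution, however, differs. You dyadically decompose $u$ around $p$ and then, for each evaluation point $x$, sum contributions over all dyadic shells; this forces you to cover annuli by balls, run a partition of unity, and sum two geometric series. The paper instead localizes around the \emph{evaluation point}: for $x_0$ with $r=\tfrac14\vartheta(x_0,p)$ it writes $g=\eta g+(1-\eta)g$ with a single normalized bump $\eta$ on $B(x_0,r)$. Then $r^\gamma\eta g$ is itself a normalized bump, so cancellation gives $|\hat\nabla_b^k T(\eta g)(x_0)|\lesssim r^{j-\gamma-k}$ in one stroke; the far piece $(1-\eta)g$ is handled by the kernel representation and a direct split of the integral into $\{\vartheta(y,p)<r\}$ and $\{\vartheta(y,p)>r\}$, using $\vartheta(x_0,y)\simeq\vartheta(y,p)$ on the latter region. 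Your geometric sums are effectively the dyadic version of these two integrals. What the paper's route buys is brevity---no covering lemma, no series; what your route buys is a slightly more modular structure that would adapt more readily to operators whose cancellation is only known against bumps of restricted sizes, but that extra generality is not needed here.
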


In particular, in proving Theorem~\ref{thm2.1}, it suffices to prove the statements for $\hat \Pi$ and $\hat K$, since the statement for $\hat N$ follows from Theorem~\ref{thm2.2} and the well-known fact that 
$$
\hat N = \hat K \hat K^*.
$$
Also, combining Theorems~\ref{thm2.1} and \ref{thm2.3}, $\hat \Pi$ and $\hat N$ extend to continuous linear maps
$$
\hat \Pi \colon \mathcal{E}(\hat \rho^{-4+\delta}) \to \mathcal{E}(\hat \rho^{-4+\delta}), \quad 0 < \delta < 4,
$$ 
$$
\hat N \colon \mathcal{E}(\hat \rho^{-4+\delta}) \to \mathcal{E}(\hat \rho^{-2+\delta}), \quad 0 < \delta < 2,
$$ 
and (\ref{eq:hPimapprop}), (\ref{eq:hNmapprop}) follows.

\begin{proof}[Proof of Theorem~\ref{thm2.2}]
We will only need the case when $T_1$ is smoothing of order 1, and $T_2$ is smoothing of order 0 or 1. Thus we will focus on these cases. 

Suppose first both $T_1$ and $T_2$ are smoothing of order 1. Then $T:=T_1 \circ T_2$ is continuous on $C^{\infty}(\hat X)$, and so is $T^*$. Furthermore, when $f \in C^{\infty}(\hat X)$, we have
$$
T_j f(x) = \int_{\hat X} T_j(x,y) f(y) \hat m(y), \quad j = 1,2
$$
for all $x$ not in the support of $f$. Using the cancellation conditions, one can show that this integral representation actually holds for all $x \in \hat X$ (not just for all $x$ outside the support of $f$; c.f. Chapter 1.7 of Stein~\cite{Ste93}). This is typical of operators that are smoothing of positive orders. 

As a result, by Fubini's theorem, when $f \in C^{\infty}(\hat X)$, (\ref{eq:Tkernelrep}) holds for all $x \in \hat X$, where
\begin{equation} \label{eq:TkernelrepT1T2}
T(x,y) = \int_{\hat X} T_1(x,z) T_2(z,y) \hat m(z).
\end{equation}
Fix now $x, y \in \hat X$, and let $r = \vartheta(x,y)/4$. We pick  normalized bump functions $\chi_1$, $\chi_2$ in $B(x,r)$ and $B(y,r)$ respectively, such that $\chi_1 \equiv 1$ on $B(x,r/2)$, and $\chi_2 \equiv 1$ on $B(y,r/2)$. Then inserting $1 = \chi_1(z) + \chi_2(z) + (1-\chi_1-\chi_2)(z)$ into the integral defining $T(x,y)$, we have
$$
T(x,y) = T^{(1)}(x,y) + T^{(2)}(x,y) + T^{(3)}(x,y),
$$
and we estimate these one by one.

First, 
$$
T^{(1)}(x,y) = \int_{\hat X} T_1(x,z) \chi_1(z) T_2(z,y) \hat m(z).
$$
We can differentiate under the integral, and obtain
$$
(\hat \nabla_b)_y^{\alpha_2} T^{(1)}(x,y) = (T_1 f_{y}^{(\alpha_2)})(x)
$$
where
$$
f_{y}^{(\alpha_2)}(z) := \chi_1(z) (\hat \nabla_b)_y^{\alpha_2} T_2(z,y)
$$
is $r^{-3-|\alpha_2|}$ times a normalized bump function in $B(x,r)$. Thus by the cancellation condition for $T_1$, we obtain
$$
|(\hat \nabla_b)_x^{\alpha_1} (\hat \nabla_b)_y^{\alpha_2} T^{(1)}(x,y) | \lesssim r^{-3-|\alpha_2|} r^{1-|\alpha_1|} = r^{-2-|\alpha|}.
$$
This proves the desired differential inequalities for $T^{(1)}(x,y)$. A similar argument, using the cancellation conditions for $\ol T_2^*$ instead, shows that
$$
|(\hat \nabla_b)_x^{\alpha_1} (\hat \nabla_b)_y^{\alpha_2} T^{(2)}(x,y) | \lesssim r^{-2-|\alpha|}.
$$
Finally, the integral defining $T^{(3)}(x,y)$ is supported for $z$ outside the balls $B(x,r/2)$ and $B(y,r/2)$. As a result, $\vartheta(x,z) \simeq \vartheta(y,z)$ for $z$ in the support of the integral defining $T^{(3)}(x,y)$. One can now differentiate under the integral, and obtain
$$
|(\hat \nabla_b)_x^{\alpha_1} (\hat \nabla_b)_y^{\alpha_2} T^{(3)}(x,y) | \lesssim \int_{\vartheta(x,z) > r/2} \vartheta(x,z)^{-3-|\alpha_1|} \vartheta(x,z)^{-3-|\alpha_2|} dz 
\lesssim r^{-2-|\alpha|}.
$$
This proves the desired differential inequalities for $T(x,y)$.

Next, to prove the cancellation conditions for $T$, suppose $\phi$ is a normalized bump function in a ball $B(x_0,r)$. Then we let $\chi$ be a normalized bump function supported in $B(x_0,4r)$, that is identically 1 on $B(x_0,3r)$, and write
$$
T\phi = T_1 (\chi T_2 \phi) + T_1 ((1-\chi)T_2 \phi).
$$
Now by cancellation conditions for $T_2$, one sees that $r^{-1} \chi T_2 \phi$ is a normalized bump function on $B(x_0,4r)$. Hence $T_1 (\chi T_2 \phi)$ obeys the desired bound, namely
$$
\|\hat \nabla_b^{\alpha} T_1 (\chi T_2 \phi)\|_{L^{\infty}(B(x_0,r))} \lesssim_{\alpha} r^{2-|\alpha|}.
$$
Furthermore, for $x \in B(x_0,r)$,
\begin{align*}
&\hat \nabla_b^{\alpha} T_1[(1-\chi)T_2 \phi](x) \\
=& \int_{z \notin B(x,3r)} (\hat \nabla_b)_x^{\alpha} T_1(x,z) (1-\chi)(z) T_2 \phi(z) \hat m(z) \\
=& \int_{z \notin B(x,3r)} \int_{y \in B(x,2r)} (\hat \nabla_b)_x^{\alpha} T_1(x,z) (1-\chi)(z) T_2(z,y) \phi(y) \hat m(y) \hat m(z).
\end{align*}
Putting absolute values,
\begin{align*}
&|\hat \nabla_b^{\alpha} T_1[(1-\chi)T_2 \phi](x)| \\
\leq & \int_{z \notin B(x,3r)} \int_{y \in B(x,2r)} \vartheta(x,z)^{-3-|\alpha|} \vartheta(z,y)^{-3} \hat m(y) \hat m(z) \\
\lesssim & \int_{z \notin B(x,3r)} \int_{y \in B(x,2r)} \vartheta(x,z)^{-6-|\alpha|}  \hat m(y) \hat m(z) \\
\lesssim & r^{2-|\alpha|},
\end{align*}
the second to last line following since $\vartheta(x,z) \simeq \vartheta(z,y)$ on the support of the integrals. This provides the desired bound for $\|(\hat \nabla_b)^{\alpha} T\phi\|_{L^{\infty}(B(x,r))}$. A similar argument establishes the bound for $\|(\hat \nabla_b)^{\alpha} T^* \phi\|_{L^{\infty}(B(x,r))}$. This completes our proof when both $T_1$ and $T_2$ are smoothing of order 1.

Finally, suppose $T_1$ is smoothing of order 0, and $T_2$ is smoothing of order 1. Then $T := T_1 \circ T_2$ maps $C^{\infty}(\hat X)$ continuously into itself, and so does $T^* = T_2^* \circ T_1^*$; one can repeat the above argument to show that both $T$ and $T^*$ satisfy the cancellation conditions for an operator of order 1. Thus it remains to compute the kernel of $T$, and to establish differential inequalities for the kernel of $T$, to which we now turn. 

For $x, y \in \hat X$ with $x \ne y$, let $r := \vartheta(x,y)/4$, and
$$
T(x,y):= (T_1 k_y)(x) \quad \text{where $k_y(z):= T_2(z,y)$}.
$$
We first show that $T(x,y)$ is smooth away from the diagonal, and that it satisfies the differential inequalities
$$
| (\hat \nabla_b)_x^{\alpha_1} (\hat \nabla_b)_y^{\alpha_2} T(x,y) | \lesssim r^{-3-|\alpha|}.
$$
To do so, fix $x \ne y$, and let $\chi_1, \chi_2$ be normalized bump functions in $B(x,r)$ and $B(y,r)$ respectively, such that $\chi_1 \equiv 1$ on $B(x,r/2)$, $\chi_2 \equiv 1$ on $B(y,r/2)$. Then 
$$
T(x,y) = T_1 (\chi_1 k_y)(x) + T_1(\chi_2 k_y)(x) + \int_{\hat X} T_1(x,z) (1-\chi_1-\chi_2)(z) T_2(z,y) \hat m(z).
$$
The last term can be differentiated in both $x$ and $y$ under the integral, and the desired estimates follow. Thus it remains to consider the first two terms. But in the first term, by continuity of $T$ on $C^{\infty}(\hat X)$, one can differentiate with respect to $y$, and obtain
$$
(\hat \nabla_b)_y^{\alpha_2} [T_1 (\chi_1 k_y)(x)] = T_1 [\chi_1 (\hat \nabla_b)_y^{\alpha_2} k_y](x);
$$
the latter is $T_1$ acting on $r^{-3-|\alpha_2|}$ times a normalized bump function in $B(x,r)$. Thus by cancellation condition on $T_1$, 
$$
|(\hat \nabla_b)_x^{\alpha_1} (\hat \nabla_b)_y^{\alpha_2} [T_1 (\chi_1 k_y)(x)]| \lesssim r^{-3-|\alpha|}.
$$
Similarly,
\begin{align*}
T_1(\chi_2 k_y)(x)
&= \int_{\hat X} T_1(x,z) \chi_2(z) T_2(z,y) \hat m(z) \\
&= \ol T_2^* [ T_1(x,\cdot) \chi_2 (\cdot) ](y).
\end{align*}
By continuity of $\ol T_2^*$ on $C^{\infty}(\hat X)$, one can differentiate with respect to $x$, and obtain
$$
(\hat \nabla_b)_x^{\alpha_1} [T_1(\chi_2 k_y)(x)]
=\ol T_2^*[ (\hat \nabla_b)_x^{\alpha_1} T_1(x,\cdot) \chi_2 (\cdot) ](y).
$$
The latter is $\ol T_2^*$ acting on a $r^{-4-|\alpha_1|}$ times a normalized bump function in $B(y,r)$, so by cancellation condition on $\ol T_2^*$, we have
$$
|(\hat \nabla_b)_x^{\alpha_1} (\hat \nabla_b)_y^{\alpha_2} [T_1(\chi_2 k_y)(x)]| \lesssim r^{-3-|\alpha|}.
$$
This proves our desired estimates.

It remains to show that $T(x,y)$ is the kernel of the operator $T$, in the sense that (\ref{eq:Tkernelrep}) holds for all $f \in C^{\infty}(\hat X)$ and all $x$ not in the support of $f$. In fact, fix such an $f$, and a closed set $K$ disjoint from the support of $f$. Let $\chi \in C^{\infty}(\hat X)$ such that $\chi = 1$ on a neighborhood of $K$, and $\chi = 0$ on the support of $f$. Then for $x \in K$, $$Tf(x) = T_1(\chi T_2 f)(x) + T_1 ((1-\chi) T_2 f)(x).$$ The second term is equal to
\begin{align*}
&\int_{\hat X} T_1(x,z) (1-\chi)(z) T_2 f(z) \hat m(z) \\
=& \int_{\hat X} \left( \int_{\hat X} T_1(x,z) (1-\chi)(z) T_2(z,y)  \hat m(z) \right) f(y) \hat m(y),
\end{align*}
the last equality following from Fubini's theorem. We claim that for almost every $x \in \hat X$, the first term is equal to 
\begin{equation} \label{eq:T1chikyfy}
\int_{\hat X} T_1 (\chi k_y)(x) f(y) \hat m(y),
\end{equation}
where $k_y(z) := T_2(z,y)$; if this were true, then (\ref{eq:Tkernelrep}) holds for almost every $x \in K$. Since $K$ is an arbitrary compact set disjoint from the support of $f$, (\ref{eq:Tkernelrep}) holds for almost every $x$ not in the support of $f$. But then by continuity of $T(x,y)$, and bounded convergence theorem, (\ref{eq:Tkernelrep}) holds for every $x$ not in the support of $f$. Our theorem then follows. 

To prove our claim, we approximate $$\chi(z) T_2 f(z) = \int_{\hat X} \chi(z) T_2(z,y) f(y) \hat m(y)$$ by Riemann sums; %since $T(z,y)$ is smooth away from the diagonal, and $f$ is smooth, by uniform continuity, the Riemann sums converge to $\chi T_2 f$ in $C^\infty(\hat X)$. By continuity of $T$ in $C^\infty(\hat X)$, we have $T_1$ of the Riemann sums converging in $L^2$ to $T_1(\chi T_2f)$. 
since $T(z,y)$ is smooth away from the diagonal, and $f$ is smooth, by uniform continuity, the Riemann sums converge uniformly to $\chi T_2 f$. By continuity of $T$ in $L^2(\hat m)$, we have $T_1$ of the Riemann sums converging in $L^2$ to $T_1(\chi T_2f)$. Thus by passing to a subsequence, $T_1$ of the Riemann sums converge almost everywhere to $T_1 (\chi T_2 f)$. On the other hand, $T_1$ of the Riemann sums is the Riemann sums of (\ref{eq:T1chikyfy}); by continuity of $T_1(\chi k_y)(x)$ for $(x,y) \in K \times \text{supp}\, f$, the Riemann sums of (\ref{eq:T1chikyfy}) converges uniformly to (\ref{eq:T1chikyfy}). This establishes our claim.
\end{proof}

\begin{proof}[Proof of Theorem~\ref{thm2.3}]
Suppose $T$ is a smoothing operator of order $j$, $0 \leq j < 4$, and $g \in \mathcal{E}(\hat \rho^{-\gamma})$, $j < \gamma < 4$. Fix $k\in\mathbb N_0$ and fix a point $x_0 \neq p$ sufficiently close to $p$. Let $r=\frac{1}{4}\vartheta(x_0,p)$, and $\eta$ be a normalized bump function supported in $B(x_0,r)$, with $\eta=1$ on $B(x_0,r/2)$. Then, 
\begin{align*}
|\hat{\nabla}_b^k T g(x_0)| \leq |\hat{\nabla}_b^k T (\eta g)(x_0)| + |\hat{\nabla}_b^k T ((1-\eta)g)(x_0)|
\end{align*}
and $r^{\gamma}(\eta g)(x)$ is a normalized bump function on $B(x_0,r)$. So by the cancellation condition for $T$, we see that 
\[|\hat{\nabla}_b^k T(\eta g)(x_0)|\leq C_k r^{j-\gamma-k},\]
where $C_k>0$ is a constant independent of $x_0$ and $r$. By using the kernel estimates, $\hat{\nabla}_b^k T((1-\eta)g)(x_0)$ can be estimated by writing out the integrals directly: 
\[\hat{\nabla}_b^k T((1-\eta)g)(x_0) = \int (\hat{\nabla}_b^k T)(x_0,y) (1-\eta)(y) g(y)\hat m(y),\]
which can be split into two pieces. The first is over where $\vartheta(y,p)\leq r$; this piece is dominated by 
\[D_k \int_{\vartheta(y,p)<r} r^{-4+j-k}\vartheta(y,p)^{-\gamma}\hat m(y)\leq D_k r^{j-\gamma-k},\]
where $D_k>0$ is a constant independent of $x_0$ and $r$. The second piece is over where $\vartheta(y,p) > r$; note since we have cut off those $y$ near $x_0$ with $1-\eta$ already, we can assume that $\vartheta(y,x_0) > r/2$ on this piece of integral as well. As a result, $\vartheta(x_0,y) \simeq \vartheta(y,p)$; it follows that this piece is bounded by 
\[\int_{\vartheta(y,p) > r} \vartheta(y,p)^{-4+j-k} \vartheta(y,p)^{-\gamma} \hat m(y)\leq E_k r^{j-\gamma-k},\] 
where $E_k>0$ is a constant independent of $x_0$ and $r$. Altogether, 
\[|\hat{\nabla}_b^k Tg(x_0)|\leq\Td C_k r^{j-\gamma-k}\] 
as desired, where $\Td C_k>0$ is a constant independent of $x_0$ and $r$. This completes our proof.
\end{proof}

We now turn to the proof of Theorem~\ref{thm2.1}. The key is the following $L^2$ estimate, which can be proved by microlocalization and integration by parts (see e.g. Kohn~\cite{Koh85}). Suppose $\hat{\overline{Z}}$ is a local section of $T^{0,1} \hat X$ with $\langle \hat{\overline{Z}} | \hat{\overline{Z}} \rangle_{\hat \theta} = 1$ on some ball $B(x,2r) \subset \hat X$, and $\hat{\overline{Z}}^*$ be its formal adjoint under $L^2(\hat m)$.

\begin{prop}\label{p-keyest}
If $\hat{\overline{Z}}^* v = u$ on $B(x,2r)$, where $u, v \in C^\infty(\hat X)$, then for every $k\in\mathbb N_0$, there is a constant $C_k>0$ independent of $r$ and $x$ such that
\begin{equation} \label{basicL2est}
\begin{split}
&\|\hat{\nabla}_b^k u\|_{L^2(B(x,r))}\\
&\quad\leq C_k ( \|\hat{\nabla}_b^{k-1} \hat{\overline{Z}} u\|_{L^2(B(x,2r))} + r^{-k} \|u\|_{L^2(B(x,2r))} + r^{-(k+1)} \|v\|_{L^2(B(x,2r))}).
\end{split}
\end{equation}
\end{prop}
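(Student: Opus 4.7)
The plan is to prove this subelliptic $L^2$ estimate by integration by parts, combining (i) the strict pseudoconvexity identity $[\hat Z,\hat{\overline{Z}}]=2i\hat T + a\hat Z + b\hat{\overline{Z}}$ for smooth $a,b$ (obtained from $\langle \hat Z|\hat Z\rangle_{\hat\theta}=1$ together with the definition of the Reeb vector field $\hat T$) with (ii) the hypothesis $\hat{\overline{Z}}^* v=u$, which lets one convert an ``extra'' Reeb derivative on $u$ into a factor of $v$ at the cost of $r^{-1}$. The base case $k=1$ contains the essential subelliptic gain; higher $k$ follow by induction, applying a unit horizontal vector field $L\in C^\infty(\hat X,T^{1,0}\hat X\oplus T^{0,1}\hat X)$ to both sides of $\hat{\overline{Z}}^* v=u$ to get $\hat{\overline{Z}}^*(Lv)=Lu-[L,\hat{\overline{Z}}^*]v$, noting that $[L,\hat{\overline{Z}}^*]$ is first order with smooth bounded coefficients, and iterating on slightly shrunken balls.

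For the base case, pick $\varphi\in C^\infty_0(B(x,2r))$ with $\varphi\equiv 1$ on $B(x,r)$ and $|\hat{\nabla}_b^\ell\varphi|\leq C_\ell r^{-\ell}$, and set $w=\varphi u$. Since $\|\hat{\nabla}_b u\|_{L^2(B(x,r))}^2\lesssim \|\hat Z w\|^2+\|\hat{\overline{Z}} w\|^2$ and $\|\hat{\overline{Z}} w\|^2\lesssim \|\hat{\overline{Z}} u\|_{L^2(B(x,2r))}^2+r^{-2}\|u\|_{L^2(B(x,2r))}^2$ via the cutoff commutator, the task reduces to bounding $\|\hat Z w\|^2$. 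Using the adjoint relations $\hat Z^*=-\hat{\overline{Z}}+c$ and $\hat{\overline{Z}}^*=-\hat Z+\bar c$ (for smooth $c$ depending on $\hat m$), a direct integration by parts yields
\[
\|\hat Z w\|^2 = \|\hat{\overline{Z}} w\|^2 + 2i(\hat T w,w) + O\bigl(\|w\|(\|\hat Z w\|+\|\hat{\overline{Z}} w\|)\bigr),
\]
so matters reduce to controlling the Reeb inner product $(\hat T w,w)$. Here one substitutes $\varphi u=\hat{\overline{Z}}^*(\varphi v)+(\hat{\overline{Z}}\varphi)v$ (which follows from $[\hat{\overline{Z}}^*,\varphi]=-\hat{\overline{Z}}\varphi$) into one slot of $(\hat T w,w)$, integrates $\hat{\overline{Z}}$ by parts onto $\hat T w$, and commutes $\hat{\overline{Z}}\hat T=\hat T\hat{\overline{Z}}+[\hat{\overline{Z}},\hat T]$. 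A further integration by parts using $\hat T^*=-\hat T+\tilde c$ moves the remaining Reeb derivative onto $\varphi v$; Cauchy-Schwarz and Young's inequality (absorbing an $\varepsilon$-fraction of $\|\hat Z w\|^2$ back into the left-hand side) then bound the surviving terms by exactly $\|\hat{\overline{Z}} u\|_{L^2(B(x,2r))}^2+r^{-2}\|u\|_{L^2(B(x,2r))}^2+r^{-4}\|v\|_{L^2(B(x,2r))}^2$, proving the base case.

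The main obstacle is book-keeping all error terms generated in these integrations by parts. Each commutation of $\hat T$, $\hat Z$, or $\hat{\overline{Z}}$ produces zeroth- or first-order remainders coming from the horizontal piece $a\hat Z+b\hat{\overline{Z}}$ of $[\hat Z,\hat{\overline{Z}}]$, from the adjoint correction functions $c,\bar c,\tilde c$, and from derivatives falling on $\varphi$; one must verify that every one of them collapses precisely into the three RHS terms with the stated $r$-weights. The non-isotropic scaling at scale $r$, under which $\hat{\nabla}_b$ has unit size but $\hat T$ has size $r^{-2}$, is what makes the matching work, and in particular explains why $v$ carries the weight $r^{-(k+1)}$ rather than $r^{-k}$: moving one $\hat T$ across costs an extra factor of $r^{-1}$ beyond a single $\hat{\nabla}_b$. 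Once the base case is in hand, the induction step presents no new difficulty, since the commutator $[L,\hat{\overline{Z}}^*]$ is first-order with smooth bounded coefficients and therefore contributes only terms that fit into the $r^{-k}\|u\|_{L^2}$ and $r^{-(k+1)}\|v\|_{L^2}$ slots of the inductive hypothesis.
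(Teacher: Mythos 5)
Your plan—reduce to $w=\varphi u$, use the pseudoconvexity commutator to express $\|\hat Z w\|^2$ through $\|\hat{\overline{Z}} w\|^2$ and the Reeb pairing $(\hat T w,w)$, and control the Reeb term using the hypothesis $\hat{\overline{Z}}^* v=u$—identifies the right ingredients, and your scaling heuristic for the $r^{-(k+1)}$ weight on $\|v\|$ is the correct one. But the crucial step, controlling $(\hat T w,w)$ by pure integration by parts, does not close, and the obstruction is structural rather than a bookkeeping issue. After substituting $\varphi u=\hat{\overline{Z}}^*(\varphi v)+(\hat Z\varphi)v$ (note the commutator with the cut-off produces $\hat Z\varphi$, not $\hat{\overline{Z}}\varphi$), moving $\hat{\overline{Z}}$ onto $\hat Tw$, commuting, and integrating $\hat T$ by parts, you are left with a term of the form $(\hat{\overline{Z}}w,\hat T(\varphi v))$. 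Writing $\hat T(\varphi v)=(\hat T\varphi)v+\varphi\,\hat T v$, the first piece does give $r^{-2}\|v\|$, but the second contains $\hat T v$, a second non-isotropic-order derivative of $v$ that is not bounded by $\|v\|$ and cannot be absorbed into $\varepsilon\|\hat Z w\|^2$, since it is a derivative of $v$, not of $u$. Trading $\hat T$ back for $i[\hat Z,\hat{\overline{Z}}]+\text{l.o.t.}$ and integrating by parts again only exchanges the bad term for either $\hat{\overline{Z}}v$ (uncontrolled: the hypothesis gives control on $\hat Z v$, not on $\hat{\overline{Z}}v$) or $\hat{\nabla}_b^2 w$ (too high order to absorb). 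Pushed to the end the scheme is circular: since $\hat Z(\varphi v)\approx -w$ and $\hat T= i[\hat Z,\hat{\overline{Z}}]+\text{l.o.t.}$, each pass reproduces the term $(\hat{\overline{Z}}w,\hat T(\varphi v))$ you started with plus already controlled quantities, yielding a tautology rather than an estimate.

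The underlying reason is that $(i\hat T w,w)$ has no definite sign for a general localized $w$ and is genuinely of size $r^{-2}\|w\|^2$ in both signs; a sign-blind integration-by-parts identity cannot distinguish the two cases. This is exactly why the paper's Appendix 2 microlocalizes via the Fourier multipliers $\Lambda^+,\Lambda^0,\Lambda^-$, splitting $\eta u=u^++u^0+u^-$ after putting $T=\partial_{x_3}$ in normalized coordinates. On the support of $u^-$ the Reeb pairing has a favorable sign and can simply be dropped; on $u^0$ the Reeb direction is elliptic and $\|T u^0\|$ is controlled by $\|\hat{\nabla}_b(\eta u)\|$; only on $u^+$ is the hypothesis $\hat{\overline{Z}}^* v=u$ used, and even then indirectly, via the mirror subelliptic estimate for the microlocalized $v^+$ (Lemma~\ref{lem:v+simple}), in which $\hat Z$ and $\hat{\overline{Z}}$ swap roles and the favorable sign of the Reeb pairing is recovered on the Fourier support of $v^+$. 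Your scheme has no analogue of this sign mechanism, so the base case $k=1$ — and therefore the whole induction — does not go through as written.
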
 

\begin{prop}\label{p-keyest2}
If $\hat{\overline{Z}} v = u$ on $B(x,2r)$, where $u, v \in C^{\infty}(\hat X)$, then for every $k\in\mathbb N_0$, there is a constant $C_k>0$ independent of $r$ and $x$ such that
\begin{equation} \label{basicL2est2}
\begin{split}
&\|\hat{\nabla}_b^k u\|_{L^2(B(x,r))}\\
&\quad\leq C_k ( \|\hat{\nabla}_b^{k-1} \hat{\overline{Z}}^* u\|_{L^2(B(x,2r))} + r^{-k} \|u\|_{L^2(B(x,2r))} + r^{-(k+1)} \|v\|_{L^2(B(x,2r))}).
\end{split}
\end{equation}
\end{prop}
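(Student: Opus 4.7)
The plan is to establish Proposition~\ref{p-keyest2} by mirroring the microlocalization argument used for Proposition~\ref{p-keyest}, interchanging the roles of the two characteristic half-spaces of the cotangent bundle. Swapping the hypothesis $\hat{\overline Z}^* v = u$ for $\hat{\overline Z} v = u$ amounts to swapping $\hat{\overline Z}$ and $\hat{\overline Z}^*$, and precisely reverses which of the operators $\hat{\overline Z}^*\hat{\overline Z}$ and $\hat{\overline Z}\hat{\overline Z}^*$ is microlocally elliptic in each half-space $\{\pm\tau>0\}$ of the cotangent bundle (here $\tau$ denotes the symbol of $\hat T$).

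Concretely, I would work in a CR coordinate chart containing $B(x,2r)$, introduce cutoffs $\chi,\varphi\in C_0^\infty(B(x,2r))$ with $\chi\equiv 1$ on $B(x,r)$ and $\varphi\equiv 1$ on $\mathrm{supp}\,\chi$, and rescale in the non-isotropic dilations so that it suffices to prove the estimate at $r\sim 1$; the powers of $r$ then reappear by homogeneity. On the Fourier side I would fix a microlocal partition of unity $I=\Psi^++\Psi^-+\Psi^0$, where $\Psi^{\pm}$ localize to conic neighborhoods of $\pm\tau>0$ and $\Psi^0$ to an elliptic neighborhood of the characteristic variety. On $\Psi^0$ the standard elliptic regularity gains arbitrarily many derivatives, contributing to the $\|u\|_{L^2}$ term. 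On $\Psi^-$, the operator $\hat{\overline Z}\hat{\overline Z}^*$ is subelliptic with gain $1/2$, and integration by parts applied to $\chi u$ yields
$$
\|\Psi^-(\chi u)\|_{1/2}^2 \lesssim \|\hat{\overline Z}^* u\|_{L^2(\mathrm{supp}\,\varphi)}^2 + \|u\|_{L^2(\mathrm{supp}\,\varphi)}^2,
$$
which feeds directly into the right-hand side of \eqref{basicL2est2}. On $\Psi^+$ the symbol of $\hat{\overline Z}$ itself is elliptic, so $\hat{\overline Z}$ admits a microlocal parametrix; combined with the hypothesis $\hat{\overline Z} v = u$ this controls $\Psi^+(\chi v)$ by $\|u\|_{L^2}+\|v\|_{L^2}$ with a one-order loss, and reapplying $\hat{\overline Z}$ (using the commutator $[\hat{\overline Z},\hat{\overline Z}^*]\sim -i\hat T$ to exchange the roles when needed) then delivers the required control on $\Psi^+(\chi u)$.

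The higher-order case $k\geq 1$ follows by induction. Applying a first-order operator $\hat{\nabla}_b$ to the hypothesis gives $\hat{\overline Z}(\hat{\nabla}_b v) = \hat{\nabla}_b u + [\hat{\nabla}_b,\hat{\overline Z}]v$; the commutator is of order zero in $v$ and absorbed either by the induction hypothesis at level $k-1$ or by the subelliptic gain of $1/2$ derivative from the preceding microlocal step. The main obstacle in carrying this out is the bookkeeping: one has to track carefully the powers of $r$ generated by commutators between $\chi$, $\varphi$ and the pseudodifferential cutoffs $\Psi^{\pm},\Psi^0$, and verify that they reproduce exactly the weights $r^{-k}\|u\|$ and $r^{-(k+1)}\|v\|$ appearing in \eqref{basicL2est2}. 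The discrepancy of one power of $r$ between the $u$- and $v$-weights reflects the single derivative loss incurred by inverting $\hat{\overline Z}$ microlocally in the $\Psi^+$ region, in exact analogy with the role played by the parametrix of $\hat{\overline Z}^*$ in the proof of Proposition~\ref{p-keyest}.
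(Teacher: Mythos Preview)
Your outline is correct and follows the paper's approach in Appendix~2: reduce by non-isotropic rescaling to a unit-scale estimate, microlocalize via $\Psi^+ + \Psi^0 + \Psi^-$, treat the region with favorable sign of $(iTu,u)$ by direct integration by parts, and in the opposite region invoke the hypothesis $\hat{\overline Z}v=u$ together with the role-reversed estimate $\|\nabla_b^{k+1}v^+\|\lesssim\|\nabla_b^k \hat{\overline Z} v^+\|+\|v\|$, followed by an absorption argument; higher $k$ by induction exactly as you describe.

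One correction worth making: on $\Psi^0$ (the cone $|\tau|\lesssim\varepsilon_0|\xi'|$) you do \emph{not} gain arbitrarily many derivatives collapsing into the $\|u\|_{L^2}$ term. The estimate there is still subelliptic and retains the $\|\hat\nabla_b^{k-1}\hat{\overline Z}^* u\|$ contribution on the right; what $\Psi^0$ buys is only that $T$ is controlled by $\nabla_b$, so the commutator term $(iTu^0,u^0)$ can be bounded by $\varepsilon\|\nabla_b u^0\|^2+C_\varepsilon\|u\|^2$ and absorbed. Likewise, ``$\hat{\overline Z}$ is elliptic on $\Psi^+$'' is better stated as the favorable-sign inequality $\|\nabla_b w\|\lesssim\|\hat{\overline Z}w\|+\|w\|$ for $w$ microlocalized there; the paper does not build a parametrix but runs the integration by parts directly. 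These are refinements of wording, not gaps in the strategy.
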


Here the $L^2$ norms are taken using the norms of $L^2(\hat m)$. Various variants and refinements of these estimates are very well-known; however, we have not been able to locate a precise reference for these estimates. For completeness and the convenience of the reader, we present the proofs of these estimates in an appendix.

Using these $L^2$ estimates, one can prove that $\hat \Pi$ and $\hat K$ (more precisely, the local representations $\hat K_i$'s defined by (\ref{eq:hatKdefn})) maps $C^{\infty}(\hat X)$ continuously into $C^{\infty}(\hat X)$, and that their kernels satisfy differential inequalities of the correct order; c.f. Christ \cite{Ch88I}, \cite{Ch88II}. In particular, if $\hat \Pi(x,y)$ and $\hat K_i(x,y)$ denote the Schwartz kernels of $\hat \Pi$ and $\hat K_i$ respectively, then they are smooth away from the diagonal, and 
$$
|(\hat \nabla_b)_x^{\alpha_1} (\hat \nabla_b)_y^{\alpha_2} \hat \Pi (x,y)| \lesssim_{\alpha} \vartheta(x,y)^{-4-|\alpha|}, 
$$
$$
|(\hat \nabla_b)_x^{\alpha_1} (\hat \nabla_b)_y^{\alpha_2} \hat K_i (x,y)| \lesssim_{\alpha} \vartheta(x,y)^{-3-|\alpha|}.
$$
It thus remains to prove cancellation conditions for $\hat \Pi$ and $\hat K_i$. The proofs will be based on strategies similar to those used in the proofs of the above kernel estimates.

Let $\phi$ be a normalized bump function in $B(x,r)$. We claim that
\begin{equation} \label{cancondhatS}
\|\hat{\nabla}_b^k \hat{\Pi} \phi\|_{L^{\infty}(B(x,r))} \leq C_k r^{-k}.
\end{equation} 
This will follow from the continuity of $\hat \Pi$ on $C^{\infty} (\hat X)$ if $r$ is sufficiently large. Therefore, without loss of generality, we assume that $r < r_0/2$, where $r_0$ is some small absolute constant, so that one can find a section $\hat{\overline{Z}}$ of $T^{1,0} \hat X$ that does not vanish on $B(x,2r)$ (c.f. discussion before (\ref{eq:hatKdefn})). We further normalize $\hat{\overline{Z}}$ so that $\langle \hat{\overline{Z}} | \hat{\overline{Z}} \rangle_{\hat \theta} = 1$ on $B(x,2r)$. Now \eqref{cancondhatS} is the same as showing $\|\hat{\nabla}_b^k (I- \hat{\Pi}) \phi\|_{L^{\infty}(B(x,r))} \leq C_k r^{-k}$. Let $v$ be such that $\hat{\ol{\pr}}^*_b v = (I-\hat{\Pi})\phi$, with $v$ orthogonal to the kernel of $\hat{\ol{\pr}}^*_b$. Then by \eqref{basicL2est}, we have
\[\begin{split}
&\|\hat{\nabla}_b^k (I-\hat{\Pi}) \phi \|_{L^2(B(x,r))}\\
&\leq C_k ( \|\hat{\nabla}_b^{k-1} \hat{\overline{Z}} \phi\|_{L^2(B(x,2r))}+ r^{-k} \| (I-\hat{\Pi}) \phi\|_{L^2(B(x,2r))}\\
&\quad+r^{-(k+1)} \| \langle v | \hat{\omega} \rangle_{\hat \theta} \|_{L^2(B(x,2r))}),\end{split}\]
where $\hat{\omega}$ is the dual $(0,1)$ form to $\hat{\overline{Z}}$ on $B(x,2r)$.
The first term on the right hand side is bounded by $$C_k r^{-k} |B(x,2r)|^{1/2} = C_k r^{2-k}.$$ In the second term, we estimate $\| (I-\hat{\Pi}) \phi\|_{L^2(B(x,2r))}$ trivially by $\|\phi\|_{L^2(\hat{X})} \leq C |B(x,r)|^{1/2} = C r^2$ (recall $\phi$ is normalized in $B(x,r)$), so that the second term is bounded by $C_k r^{2-k}$ as well. In the last term, we estimate using Poincar\'{e}-type inequality (see Corollary 11.5* of Christ \cite{Ch88I}): 
\[\|\langle v | \hat{\omega} \rangle_{\hat \theta}\|_{L^2(B(x,2r))} \leq C r \|(I-\hat{\Pi})\phi\|_{L^2(\hat{X})} \leq C r \|\phi \|_{L^2(\hat{X})} \leq C r r^2.\] 
Thus altogether, 
\[\|\hat{\nabla}_b^k (I-\hat{\Pi}) \phi \|_{L^2(B(x,r))} \leq C_k r^{2-k},\]
and since this holds for all $k$, by Sobolev embedding, 
\[\|\hat{\nabla}_b^k (I-\hat{\Pi}) \phi \|_{L^{\infty}(B(x,r))} \leq C_k r^{-k}\] 
as desired. \eqref{cancondhatS} follows, and since $\hat \Pi$ is self-adjoint on $L^2(\hat m)$, this completes the proof that $\hat \Pi$ is smoothing of order 0.

Let now $\hat \Pi_1 \colon L^2_{(0,1)}(\hat m, \hat \theta) \to L^2_{(0,1)}(\hat m, \hat \theta)$ be the Szeg\"o projection on $(0,1)$ forms, i.e. the orthogonal projection onto the kernel of $\hat \ddbar_b^*$ in $L^2_{(0,1)}(\hat m, \hat \theta)$. Using the partition of unity given just before (\ref{eq:hatKdefn}), we can define local representations of $\hat \Pi_1$, by letting
$$
(\hat \Pi_1)_{ij} \varphi := \langle \eta_i \hat \Pi_1 (\eta_j \varphi \hat \omega_j) | \hat \omega_i \rangle_{\hat \theta}.
$$ 
Then $(\hat \Pi_1)_{ij}$ sends functions to functions for all $1 \leq i,j \leq N$, and 
$$
\hat \Pi_1 \phi = \sum_{i,j} \left( (\hat \Pi_1)_{ij} [\langle \phi | \hat \omega_j \rangle_{\hat \theta} ] \right) \hat \omega_i
$$
for any $(0,1)$ form $\phi$ on $\hat X$.
A proof similar to the above shows that $\hat \Pi_1$ is a smoothing operator of order 0, in the sense that the local representations $(\hat \Pi_1)_{ij}$ are all smoothing of order 0; for instance, to prove that $(\hat \Pi_1)_{ij}$ satisfies the desired cancellation conditions, if $\varphi$ is a normalized bump function on a sufficiently small ball $B(x,2r)$ that intersects the support of $\eta_i$, one would apply Proposition~\ref{p-keyest2} with $\hat{\overline{Z}}$ being the $(0,1)$ vector field dual to $\hat \omega_i$, $u = \eta_j \varphi \langle \hat \omega_j | \hat \omega_i \rangle_{\hat \theta} - \langle \Pi_1 (\eta_j \varphi \hat \omega_j) | \hat \omega_i \rangle_{\hat \theta}$, and $v$ being a function that solves $\hat \ddbar_b v = (I - \hat \Pi_1) (\eta_j \varphi \hat \omega_j)$. In fact then $\hat{\overline{Z}} v = u$ on $B(x,2r)$. We omit the details. 

Now, let $\varphi$ be a normalized bump function in a ball $B(x,r)$ that intersects the support of $\eta_i$, with $r < r_0/4$ as before. We prove cancellation properties for $\hat{K}_i$ and $\hat{K}_i^*$, namely
\begin{equation} \label{cancondhatK}
\|\hat{\nabla}_b^k \hat{K}_i \varphi\|_{L^{\infty}(B(x,r))} \leq C_k r^{1-k},
\end{equation}
 and
\begin{equation} \label{cancondhatK*}
\|\hat{\nabla}_b^k \hat K_i^* \varphi\|_{L^{\infty}(B(x,r))} \leq C_k r^{1-k}.
\end{equation} 
To prove the former, let 
\[u = (I - \hat\Pi) (\psi\hat{K}_i \varphi),\] 
where $\psi\equiv 1$ on $B(x,2r)$, and is a normalized bump function on $B(x,4r)$. We apply estimate (\ref{basicL2est}) for this $u$. On $B(x,2r)$, $\hat\ddbar_b u = (I-\hat\Pi_1)(\eta_i \varphi \hat \omega_i)$, since $\psi$ is identically $1$ there. In other words, writing $\hat{\overline{Z}}$ for the dual of $\hat \omega_i$, we have $$\hat{\overline{Z}} u = \langle (I-\hat\Pi_1)(\eta_i \varphi \hat \omega_i) | \hat \omega_i \rangle_{\hat \theta} = \eta_i \varphi - \sum_j [(\hat \Pi_1)_{ji} \varphi] \langle \hat \omega_j | \hat \omega_i \rangle_{\hat \theta}$$ on $B(x,2r)$. So the first term on the right hand side of \eqref{basicL2est} is bounded by $Cr^{1-k} |B(x,2r)|^{1/2} = C r^{1-k} r^2$, by the cancellation property of $\hat \Pi_1$ we just proved above. On the other hand, by Proposition B of Christ~\cite{Ch88II}, since $u$ is orthogonal to the kernel of $\hat\ddbar_b$, 
\[\|u\|_{L^2(B(x,2r))} \leq Cr \|\hat\ddbar_bu\|_{L^2_{(0,1)}(\hat{X})},\] 
which implies 
\[\|u\|_{L^2(B(x,2r))} \leq Cr \left( \|\psi (I-\hat\Pi_1)(\eta_i \varphi \hat \omega_i)\|_{L^2_{(0,1)}(\hat{X})} + \|(\hat{\nabla}_b\psi)\hat{K}_i\varphi\|_{L^2(\hat{X})}\right).\]
But the first term in the bracket is bounded by $\|\eta_i \varphi\|_{L^2(\hat{X})} \leq C |B(x,r)|^{1/2} = C r^2$, and the second term is bounded by $C r^2$ by the kernel estimates on $\hat{K}_i$ (note $\varphi$ is supported on $B(x,r)$, while $\hat{\nabla}_b \psi$ is supported in an annulus $B(x,4r)\setminus B(x,2r)$.) This in turn implies
\[\|u\|_{L^2(B(x,2r))} \leq C r r^2,\] and the second term on the right hand side of \eqref{basicL2est} is bounded by $C r^{1-k} r^2$.  Finally, let $v$ be such that $\hat{\ol{\pr}}_b^*v = u$, so that $\hat{\overline{Z}}^* [\langle v | \hat \omega_i \rangle_{\hat \theta}] = u$ on $B(x,2r)$. Then 
\[\|\langle v | \hat \omega_i \rangle_{\hat \theta}\|_{L^2(B(x,2r))} \leq C r \|u\|_{L^2(\hat{X})} \leq C r \|\psi \hat{K}_i \varphi\|_{L^2(\hat{X})} \leq C r^2 \|\varphi\|_{L^2(\hat{X})} \leq C r^2 r^2.\]
(The first and the fourth inequality are both applications of Proposition B of Christ \cite{Ch88II} again.) Thus altogether, 
$\|\hat{\nabla}_b^k u \|_{L^2(B(x,r))} \leq C_k r^{1-k} r^2$ for all $k$, and by Sobolev embedding, this implies
\[\|\hat{\nabla}_b^k u \|_{L^{\infty}(B(x,r))} \leq C_k r^{1-k}\] 
for all $k$. 

Remember we want the same estimate for $\hat{K}_i\varphi$ in place of $u$, so as to prove \eqref{cancondhatK}. But $\hat{K}_i \varphi - u$ can be computed on $B(x,r)$ fairly easily. In fact, since $\hat{K}_i \varphi = \hat{K}(\eta_i \varphi \hat \omega_i)$ is orthogonal to the kernel of $\hat\ddbar_b$, we have $\hat{K}_i \varphi =(I-\hat{\Pi})\hat{K}_i \varphi$. So 
\[\hat{K}_i \varphi - u = (I-\hat{\Pi})(1-\psi)\hat{K}_i \varphi.\] 
Since $\psi\equiv1$ on $B(x,2r)$, we have 
\[\hat{K}_i\varphi-u =-\hat{\Pi}(1-\psi)\hat{K}_i\varphi \quad \mbox{on $B(x,r)$}.\]
It follows that for $y\in B(x,r)$, 
\[
\hat{\nabla}_b^k (\hat{K}_i\varphi-u)(y)=-\sum_{j=1}^{\infty}\int_{2^jr\leq\vartheta(z,x)\leq 2^{j+1} r}(\hat{\nabla}_b)_y^k \hat{\Pi}(y,z) (1-\psi)(z)\hat{K}_i\varphi(z) \hat m(z),\] 
so
\[\begin{split}
&\|\hat{\nabla}_b^k(\hat{K}_i\varphi-u)\|_{L^{\infty}(B(x,r))}\\
&\leq C_k\sum_{j=1}^{\infty} (2^jr)^{-2-k}\|\hat{K}_i\varphi\|_{L^2(B(x,2^{j+1}r))}\\
&\leq C_k \sum_{j=1}^{\infty} (2^j r)^{-2-k} (2^j r) \|\hat\ddbar_b \hat{K}_i\varphi\|_{L^2(\hat{X})}.\end{split}\] 
(The last inequality is Proposition B of Christ \cite{Ch88II}.) By estimating the term $\|\hat{\ddbar_b}\hat{K}_i\varphi\|_{L^2(\hat{X})}$ by $\|(I-\hat\Pi_1) (\eta_i \varphi \hat \omega_i)\|_{L^2(\hat{X})} \leq \|\varphi\|_{L^2(\hat{X})} \leq C r^2$, we get 
\[\|\hat{\nabla}_b^k (\hat{K}_i\varphi-u) \|_{L^{\infty}(B(x,r))} \leq C_k r^{-2-k} r r^2 = C_k r^{1-k}.\] 
By combining with the previous estimate on $\hat{\nabla}_b^k u$, we get 
\[\|\hat{\nabla}_b^k \hat{K}_i\varphi \|_{L^{\infty}(B(x,r))} \leq C_k r^{1-k},\] as desired. \eqref{cancondhatK} follows then. A similar argument proves \eqref{cancondhatK*}, since $\hat K^*$ is the partial inverse of $\hat \ddbar_b^*$. This shows that $\hat K$ and $\hat K^*$ are smoothing of order 1, and it follows now that $\hat N = \hat K \hat K^*$ (more precisely, $\hat N = \sum_{i,j} \hat K_i \langle \hat \omega_i | \hat \omega_j \rangle_{\hat \theta} \hat K_j^*$) is smoothing of order 2. This completes the proof of Theorem~\ref{thm2.1}. 

We conclude this section by making the following useful observation: as was demonstrated in Folland-Stein \cite{FoSt} (see Theorem 15.15 there), on $\hat X$ one can construct some smoothing operators $T_0$, $T_1$ of orders $1$, such that schematically,
$$
I = T_1 \hat \nabla_b + T_0.
$$
Thus if $A$ is a smoothing operator of order $1$, then for any positive integers $j$, one can then find smoothing operators $A_0, A_1, \dots, A_j$ of orders $1$ such that 
\begin{equation} \label{eq:commute}
\hat \nabla_b^j A = \sum_{i=0}^j A_i \hat \nabla_b^i;
\end{equation}
in fact, e.g. when $j = 1$, one just needs to observe
$$
\hat \nabla_b A = \hat \nabla_b A (T_1 \hat \nabla_b + T_0),
$$
and the desired equality follows by letting $A_i = \hat \nabla_b A T_i$, $i=0,1$. ($A_i$ is smoothing of order 1 by Theorem~\ref{thm2.2} above.) The general case for (\ref{eq:commute}) follows by induction on $j$. (\ref{eq:commute}) can be thought of as a way of commuting derivatives past smoothing operators. In particular, if $NL^{k,p}$ denotes the non-isotropic Sobolev space, given by the set of all functions whose $\hat \nabla_b^j$ is in $L^p(\hat m)$ for $j=0,1,2,\dots,k$, then (\ref{eq:commute}) implies the first part of the following proposition:

\begin{prop} \label{prop:nonisosmoothing}
\begin{enumerate}[(a)]
\item Any smoothing operator of order 1 maps $NL^{k,p}$ continuously into $NL^{k+1,p}$, for all $k \geq 0$ and all $1 < p < \infty$.
\item Any smoothing operator of order 1 maps $L^{\infty}(\hat m)$ continuously into $L^{\infty}(\hat m)$.
\end{enumerate}
\end{prop}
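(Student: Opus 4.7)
This is immediate from Schur's test. A smoothing operator of order 1 has kernel satisfying $|A(x,y)| \lesssim \vartheta(x,y)^{-3}$, and on the compact manifold $\hat X$, where $|B(x,r)| \sim r^4$, a layer-cake computation yields
\[
\sup_{x \in \hat X} \int_{\hat X} \vartheta(x,y)^{-3} \, \hat m(y) \lesssim \int_0^{\operatorname{diam}(\hat X)} r^{-3} \, d(r^4) \lesssim \operatorname{diam}(\hat X) < \infty,
\]
and symmetrically for integration in $x$ (applied to the kernel $A(y,x)$ of $A^*$, which is also smoothing of order 1). Schur's test then gives boundedness of $A$ on $L^p(\hat m)$ for every $1 \leq p \leq \infty$; specializing to $p = \infty$ gives (b).

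\textbf{Plan for (a).} The essential new ingredient is that $\hat \nabla_b A$ is a smoothing operator of \emph{order 0} whenever $A$ is smoothing of order 1. I would check this directly from the definition: the kernel $(\hat \nabla_b)_x A(x,y)$ satisfies $|(\hat \nabla_b)_x^{\alpha_1}(\hat \nabla_b)_y^{\alpha_2}(\hat \nabla_b A)(x,y)| \lesssim \vartheta(x,y)^{-4-|\alpha|}$, and the cancellation conditions for $\hat \nabla_b A$ and $(\hat \nabla_b A)^* = A^* \hat \nabla_b^*$ follow from those for $A$ and $A^*$, using that $\hat \nabla_b^* \phi$ is $r^{-1}$ times a normalized bump whenever $\phi$ is a normalized bump in $B(x,r)$. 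Consequently, by the earlier proposition in this section, $\hat \nabla_b A \colon L^p(\hat m) \to L^p(\hat m)$ for $1 < p < \infty$.

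\textbf{Execution of (a) and the main obstacle.} Fix $k \geq 0$ and $f \in NL^{k,p}$. For $0 \leq j \leq k$, identity \eqref{eq:commute} at level $j$ gives $\hat \nabla_b^j(Af) = \sum_{i=0}^j A_i^{(j)} \hat \nabla_b^i f$, with each $A_i^{(j)}$ smoothing of order 1 and hence bounded on $L^p$; since $\hat \nabla_b^i f \in L^p$ for $i \leq k$, every term lies in $L^p$. The subtle case is the top order $j = k+1$: naively applying \eqref{eq:commute} at level $k+1$ would produce a spurious term $A_{k+1}^{(k+1)} \hat \nabla_b^{k+1} f$, which is not controlled by the $NL^{k,p}$ norm of $f$. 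The trick is to apply \eqref{eq:commute} at level $k$ instead and then differentiate once more, so that the outermost $\hat \nabla_b$ falls on a smoothing operator rather than on $f$:
\[
\hat \nabla_b^{k+1}(Af) = \hat \nabla_b \Bigl( \sum_{i=0}^k A_i^{(k)} \, \hat \nabla_b^i f \Bigr) = \sum_{i=0}^k (\hat \nabla_b A_i^{(k)}) \, \hat \nabla_b^i f.
\]
Each $\hat \nabla_b A_i^{(k)}$ is smoothing of order 0 by the previous paragraph, hence bounded on $L^p$, and each $\hat \nabla_b^i f$ with $i \leq k$ lies in $L^p$, so the right-hand side is in $L^p$ with the appropriate bound. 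All the operator identities above are valid on $C^\infty(\hat X)$ and extend to $NL^{k,p}$ by density, yielding $\|Af\|_{NL^{k+1,p}} \lesssim \|f\|_{NL^{k,p}}$. The whole point is the reordering of the commutator: by invoking \eqref{eq:commute} at level $k$ rather than $k+1$, one converts the leftover derivative on $A_i^{(k)}$ into a drop of order from 1 to 0, landing precisely in the regime where the $L^p$-boundedness was already established.
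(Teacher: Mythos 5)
Your proof is correct. For part (a), you have in fact supplied a step that the paper leaves implicit: as you observe, invoking \eqref{eq:commute} directly at level $j=k+1$ produces a term $A_{k+1}\hat\nabla_b^{k+1}f$ that the $NL^{k,p}$-norm of $f$ does not control, and the remedy is to apply \eqref{eq:commute} at level $k$ and differentiate once more, so that the outer derivative lands on the smoothing operator rather than on $f$. This relies on the observation — which you verify directly from the kernel and cancellation conditions, and which the paper also uses tacitly in deriving \eqref{eq:commute} — that $\hat\nabla_b A$ is smoothing of order $0$ whenever $A$ is smoothing of order $1$, and hence $L^p$-bounded by the earlier proposition in this section. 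So part (a) is essentially the paper's argument with the details filled in.

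For part (b) you take a genuinely different route. The paper deduces (b) from (a): $L^\infty(\hat m)\hookrightarrow L^p(\hat m)$ for $p>4$ (since $\hat X$ is compact), then $A\colon L^p\to NL^{1,p}$ by part (a) with $k=0$, then $NL^{1,p}\hookrightarrow L^\infty(\hat m)$ for $p>4$ by the non-isotropic Sobolev embedding. Your Schur-test argument is more direct and even gives $L^p$-boundedness for all $1\leq p\leq\infty$ in one stroke, but it quietly uses the fact that for a smoothing operator of positive order the kernel representation \eqref{eq:Tkernelrep} holds for \emph{every} $x$, not just $x$ off the support of $f$ (the paper records this fact in the proof of Theorem~\ref{thm2.2}, and it is needed to justify $Af(x)=\int_{\hat X}A(x,y)f(y)\,\hat m(y)$ pointwise before taking the $\sup_x$). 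The paper's Sobolev route sidesteps that subtlety at the cost of first establishing (a), while yours is self-contained once the pointwise kernel representation is in hand. Both are valid; it would be worth adding a sentence flagging the appeal to the everywhere-valid kernel representation.
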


The last part of this proposition then follows from the case $k = 0$ of the first part by noting that $L^{\infty}(\hat m)$ embeds into $L^p(\hat m)$ for any $p > 4$, and that $NL^{1,p}$ embeds into $L^{\infty}(\hat m)$ by Sobolev embedding.

\section{A key mapping property} \label{sect:aux}

\subsection{The main theorem} In this section, we prove the following theorem, which allows one to establish the important mapping property (\ref{eq:I+hatRchi}) of $(I+\hat{R}^{*,\Td m} \chi)^{-1}$. We use the notion of smoothing operators of order $j$ we introduced in the last section.

\begin{thm}\label{t-slalkeyI}
Suppose $A$ is a smoothing operator of order 1 on $\hat X$, and $h$ is a function in $\mathcal{E}(\hat \rho^1)$ supported in a sufficiently small neighborhood of $p$. We write $h$ also for the operator that is multiplication by $h$. Then the bounded linear operator $I-Ah \colon L^2(\hat m) \to L^2(\hat m)$ is invertible, and its inverse extends to a continuous linear map
$$
(I-Ah)^{-1} \colon \mathcal{E}(\hat \rho^{-4+\delta}) \to \mathcal{E}(\hat \rho^{-4+\delta})
$$ 
for every $0<\delta<4$.
\end{thm}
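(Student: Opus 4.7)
The plan is to invert $I - Ah$ in two stages: first on $L^2(\hat m)$ via a Neumann series that exploits the smallness of $h$, then extend the inverse to the Fr\'echet space $\mathcal{E}(\hat\rho^{-4+\delta})$ by combining the $L^2$ inverse with quantitative mapping estimates for $Ah$ and a density argument.

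For the $L^2$ step, since $A$ is a smoothing operator of order $1$, it is bounded on $L^2(\hat m)$ (in fact by Proposition~\ref{prop:nonisosmoothing} it even maps $L^2$ into the non-isotropic Sobolev space $NL^{1,2}$). Because $h \in \mathcal{E}(\hat\rho^1)$ has support contained in a neighborhood of $p$ on which $\hat\rho \leq \epsilon$, we have $\|h\|_{L^\infty(\hat X)} \leq C\epsilon$, and hence $\|Ah\|_{L^2(\hat m) \to L^2(\hat m)} \leq C\epsilon$. Choosing the support of $h$ small enough makes this norm strictly less than $1/2$, so the Neumann series $\sum_{k \geq 0}(Ah)^k$ converges in operator norm on $L^2(\hat m)$ and gives the bounded inverse there.

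To extend to $\mathcal{E}(\hat\rho^{-4+\delta})$, the central input is a regularity-improving mapping property for $Ah$ on the Fr\'echet scale. When $0 < \delta < 2$, multiplication by $h$ sends $\mathcal{E}(\hat\rho^{-4+\delta})$ continuously into $\mathcal{E}(\hat\rho^{-3+\delta})$, and Theorem~\ref{thm2.3} (applied with $\gamma = 3 - \delta \in (1, 3)$, which meets the hypothesis $1 < \gamma < 4$) then yields $A \colon \mathcal{E}(\hat\rho^{-3+\delta}) \to \mathcal{E}(\hat\rho^{-2+\delta}) \hookrightarrow \mathcal{E}(\hat\rho^{-4+\delta})$, so $Ah$ strictly improves the exponent by $2$. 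When $2 \leq \delta < 4$ the hypothesis of Theorem~\ref{thm2.3} fails, and I would work directly with the kernel bound $|A(x, y)| \lesssim \vartheta(x, y)^{-3}$ and the cancellation conditions satisfied by $A$, combined with the commutator identity \eqref{eq:commute} to handle higher derivatives, in order to show that $Ah$ still maps $\mathcal{E}(\hat\rho^{-4+\delta})$ continuously into $\mathcal{E}(\hat\rho^0) \hookrightarrow \mathcal{E}(\hat\rho^{-4+\delta})$. In every case the operator norm of $Ah$ on $\mathcal{E}(\hat\rho^{-4+\delta})$ measured in each seminorm can be made as small as one wishes by shrinking the support of $h$.

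The inversion on the Fr\'echet space is then closed as follows. Using the density of $C^\infty_0(X)$ in $\mathcal{E}(\hat\rho^{-4+\delta})$ for coarser topologies, it suffices to exhibit uniform seminorm bounds for $(I - Ah)^{-1}$ applied to $u \in C^\infty_0(X) \subset L^2(\hat m)$. For such $u$, the $L^2$ solution is $v = \sum_{k \geq 0}(Ah)^k u$, and the iterated mapping property above shows that each partial sum lies in $\mathcal{E}(\hat\rho^{-4+\delta})$ with seminorms forming a geometric series in $\epsilon$; thus the partial sums converge in the Fr\'echet topology to an element $v \in \mathcal{E}(\hat\rho^{-4+\delta})$, with the desired continuity. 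The main obstacle I anticipate is obtaining uniform seminorm control on $Ah$ across the full range $0 < \delta < 4$: as $\delta$ crosses the threshold $2$, the target exponent leaves the regime in which Theorem~\ref{thm2.3} applies, so the seminorm estimates for $Ah$ at high derivative order $\hat{\nabla}_b^k$ must be re-derived by hand using the kernel bounds, the cancellation conditions, and \eqref{eq:commute}; arranging the numerology so that the smallness of the support of $h$ dominates the loss coming from these derivatives is the key technical point.
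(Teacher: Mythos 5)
Your $L^2$ inversion step and the general strategy (Neumann series plus a term-by-term estimate in the Fr\'echet space, followed by a density argument) match the paper's. The genuine gap is in the mechanism you rely on to close the Fr\'echet-space estimates.

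You claim that by shrinking the support of $h$, ``the operator norm of $Ah$ on $\mathcal{E}(\hat\rho^{-4+\delta})$ measured in each seminorm can be made as small as one wishes,'' and you hope to sum the Neumann series geometrically in each seminorm. This does not work. Smallness of the support of $h$ makes $\|h\|_{L^\infty}$ small, but the derivatives of $h$ are not small near $p$: since $h\in\mathcal{E}(\hat\rho^1)$, one has $|\hat\nabla_b^j h|\lesssim\hat\rho^{1-j}$, which for $j\geq1$ blows up near $p$ and in any case carries no small factor. When you estimate the $k$-th seminorm of $Ahu$ (e.g.\ by commuting $\hat\nabla_b^k$ past $A$ via \eqref{eq:commute} and hitting $hu$ with up to $k$ derivatives), the terms where derivatives land on $h$ contribute constants that cannot be made small by shrinking the support. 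So $Ah$ is \emph{not} a contraction on $\mathcal{E}(\hat\rho^{-4+\delta})$ in any seminorm beyond the zeroth, and your geometric-series argument for the partial sums has no basis. Relatedly, the ``saturation'' you implicitly invoke fails: once the orbit reaches $\mathcal{E}(\hat\rho^{-2+\delta})$, the next application of $Ah$ lands you at best in $L^\infty$ rather than any $\mathcal{E}(\hat\rho^m)$ with derivative control, because $A\colon\mathcal{E}(\hat\rho^{-1+\delta})\to L^\infty$ gains nothing on derivatives and Theorem~\ref{thm2.3} no longer applies (you would need $\gamma>1$, i.e.\ $\delta<0$). The distinction you emphasize between $0<\delta<2$ and $2\leq\delta<4$ is also not where the real trouble lies; the $\delta$-dependence is mild in the paper's proof.

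What actually closes the argument is a quantitative estimate on the \emph{iterated} operator $(Ah)^{k+1}$, not smallness of a single $Ah$. The paper's Lemma~\ref{lem:hrhokderiv} proves $\|\hat\rho^k\hat\nabla_b^k(Ah)^{k+1}u\|_{L^\infty}\lesssim_k\|u\|_{L^\infty}$: the $k+1$ accumulated factors of $h$, which collectively vanish like $\hat\rho^{k+1}$ near $p$, compensate the $\hat\rho^{-k}$ cost of the $k$ derivatives. This requires the non-isotropic Taylor expansion of $h$ at the base point (Lemma~\ref{lem:diffunderintegral}) together with the auxiliary family of operators $T_{j,\ell}$ (Lemmas~3.3--3.6) to track how derivatives distribute among the $h$'s under composition. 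The Neumann tail is then bounded by writing $(Ah)^ju=(Ah)^{k+1}[(Ah)^{j-k-1}u]$, using $L^\infty$ contractivity of $Ah$ to control $\|(Ah)^{j-k-1}u\|_{L^\infty}$ geometrically, and applying Lemma~\ref{lem:hrhokderiv}; the head of the series is handled by finitely many applications of Theorem~\ref{thm2.3}, as in your proposal. This iterated estimate, which does not appear in your proposal, is the essential missing ingredient.
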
 

The key of the proof is the following 

\begin{lem} \label{lem:hrhokderiv}
Suppose $A$ and $h$ are as in the above theorem. Then for any non-negative integer $k$, and any function $u \in C^{\infty}(X) \cap L^{\infty}(\hat X)$, we have 
$$\|\hat \rho^k \hat \nabla_b^k (Ah)^{k+1} u\|_{L^{\infty}(\hat m)} \lesssim_k \|u\|_{L^{\infty}(\hat m)}.$$
\end{lem}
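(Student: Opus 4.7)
The plan is to argue by strong induction on $k$, establishing the strengthened inductive hypothesis
\[
(\ast_n): \qquad \|\hat\rho^m \hat\nabla_b^m (Ah)^n u\|_{L^\infty(\hat m)} \lesssim_{n,m} \|u\|_{L^\infty(\hat m)} \qquad \text{for all } 0 \le m \le n-1.
\]
The lemma is then the case $n=k+1$, $m=k$ of $(\ast_n)$. The base case $n=1$, $m=0$ is immediate: $h \in \mathcal{E}(\hat\rho^1) \subset L^\infty(\hat m)$, and Proposition~\ref{prop:nonisosmoothing}(b) gives $\|Ahu\|_{L^\infty} \lesssim \|hu\|_{L^\infty} \lesssim \|u\|_{L^\infty}$.

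For the inductive step I would write $(Ah)^n u = A\bigl[h(Ah)^{n-1}u\bigr]$ and use the commutation identity~\eqref{eq:commute} to move $\hat\nabla_b^m$ through the outermost $A$:
\[
\hat\nabla_b^m(Ah)^n u = \sum_{i=0}^{m} A_i\,\hat\nabla_b^i\bigl[h(Ah)^{n-1}u\bigr],
\]
where each $A_i$ is smoothing of order $1$. A Leibniz expansion reduces matters to controlling the terms $(\hat\nabla_b^{i-\ell}h)(\hat\nabla_b^\ell (Ah)^{n-1}u)$. By $h\in\mathcal{E}(\hat\rho^1)$ we have $|\hat\nabla_b^{i-\ell}h|\lesssim \hat\rho^{\,1-(i-\ell)}$, and for $\ell\le n-2$ the inductive hypothesis $(\ast_{n-1})$ gives $|\hat\nabla_b^\ell(Ah)^{n-1}u|\lesssim \hat\rho^{-\ell}\|u\|_{L^\infty}$. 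The product is thus pointwise $\lesssim \hat\rho^{1-i}\|u\|_{L^\infty}$. When $i\le 1$ this lies in $L^\infty(\hat m)$ and Proposition~\ref{prop:nonisosmoothing}(b) yields the desired bound on $A_i[\cdot]$; when $i\ge 2$ the input is singular at $p$, and I would invoke a finite-regularity refinement of Theorem~\ref{thm2.3}, which holds verbatim since its proof uses only finitely many cancellation and kernel estimates on $A_i$, to convert a $\hat\rho^{1-i}$ input into a $\hat\rho^{2-i}$ output, whose product with the bounded factor $\hat\rho^m$ is controlled.

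The main obstacle is the single exceptional term $(\ell,i,m)=(n-1,n-1,n-1)$, which falls outside the inductive hypothesis $(\ast_{n-1})$. The strategy is to peel the commutation one more layer: apply~\eqref{eq:commute} and Leibniz to $\hat\nabla_b^{n-1}(Ah)^{n-1}u = \hat\nabla_b^{n-1} A[h(Ah)^{n-2}u]$, and continue. Each peel strips one interior $Ah$-factor and deposits an additional outer factor of $h$ (contributing a weight $\hat\rho$). All but one branch of the resulting Leibniz expansion is controlled by the inductive hypothesis at an earlier value of $n$; the recursive ``worst'' branch is handled by observing that on any given peel one may take the $i=0$ option in~\eqref{eq:commute}, thereby absorbing all remaining derivatives into a single smoothing operator of order $1$ without any derivative reaching $u$. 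Combining the $L^\infty$-boundedness of the resulting composition of order-$1$ smoothing operators interspersed with $h$-factors (via Proposition~\ref{prop:nonisosmoothing}(b) and Theorem~\ref{thm2.2}) with the accumulated $\hat\rho$-weights from the peels yields the desired $L^\infty$ bound. The principal technical work is the combinatorial bookkeeping of the peeling together with the finite-regularity version of Theorem~\ref{thm2.3} needed to close the argument.
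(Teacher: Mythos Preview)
Your peeling argument for the ``worst branch'' does not close. The commutation identity~\eqref{eq:commute} is a decomposition of $\hat\nabla_b^m A$ into a \emph{sum} over $i=0,\dots,m$; you do not get to ``take the $i=0$ option'' and discard the others. Tracking only the top term $i=m$, $\ell=m$ through successive peels, after one peel the innermost factor is $\hat\nabla_b^{n-1}(Ah)^{n-1}u$, after two peels it is $\hat\nabla_b^{n-1}(Ah)^{n-2}u$, and so on: the number of derivatives stays fixed at $n-1$ while the remaining $(Ah)$-factors drop by one each time. After the final peel the innermost factor is $h\,\hat\nabla_b^{n-1}u$, and $u\in C^\infty(X)\cap L^\infty(\hat X)$ carries no control whatsoever on $\hat\nabla_b^{n-1}u$. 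The accumulated outer $h$-factors contribute weights $\hat\rho(y_j)$ at the \emph{integration} variables of the nested $A^{(j)}_m$, not at the evaluation point $x$, so they do not compensate for this uncontrolled term.

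The paper avoids this obstruction by a different decomposition: rather than Leibniz on $h$, it Taylor-expands $h(y)$ about the \emph{evaluation point} $x$,
\[
h(y)=P^x_{k-1}(y)+\bigl(h(y)-P^x_{k-1}(y)\bigr),
\]
where $P^x_{k-1}$ is the non-isotropic Taylor polynomial of degree $k-1$ in the normal coordinates $\Theta(x,y)$. The remainder vanishes like $\vartheta(x,y)^k$ near the diagonal, which, combined with the kernel bound $|\hat\nabla_b^k A(x,y)|\lesssim\vartheta(x,y)^{-3-k}$, gives a direct $L^\infty$ estimate (Lemma~\ref{lem:diffunderintegral}) requiring only the pointwise bound on $v:=S_k u$, no derivative control. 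The polynomial part leads to the auxiliary operators
\[
T_{j,\ell}v(x)=\bigl.(\hat\nabla_b)_z^j\bigr|_{z=x}\bigl[A\,q_\ell^x\,v\bigr](z),
\]
where $q_\ell(x,y)=\Theta(x,y)^\alpha$ vanishes to order $\ell=\|\alpha\|$ on the diagonal; the crucial fact (Lemma~\ref{lem:Tl+1lsmoothing}) is that $T_{\ell+1,\ell}$ is smoothing of order~$0$, so $T_{\ell+1,\ell}A_k$ is again smoothing of order~$1$ and the induction hypothesis applies with one fewer $Ah$-factor \emph{and} correspondingly fewer derivatives. This simultaneous drop in both counts is exactly what your peeling lacks.

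A secondary issue: your ``finite-regularity refinement of Theorem~\ref{thm2.3}'' is not immediate, since the cancellation-condition step in that proof requires the localized input $\eta g$ to be a normalized bump function, hence needs derivative bounds on $g$ that your inductive hypothesis does not supply in full. This part is probably repairable via direct kernel estimates (the order-$1$ kernel is locally integrable), but it needs to be argued, not asserted.
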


Assuming the lemma, we first prove Theorem~\ref{t-slalkeyI}.

\begin{proof}[Proof of Theorem~\ref{t-slalkeyI}]
First, if $A$ is smoothing of order 1, then since $\hat X$ is compact, the kernel of $A$ satisfies 
$$
\sup_{x \in \hat X} \int_{\hat X} |A(x,y)| \hat m(y) +
\sup_{y \in \hat X} \int_{\hat X} |A(x,y)| \hat m(x) < \infty.
$$
It follows that $A$ is bounded on $L^2(\hat m)$. If the support of $h$ is a sufficiently small neighborhood of $p$, then since $h \in \mathcal{E}(\hat \rho^1)$, one can make $\|h\|_{L^{\infty}}$ sufficiently small. Thus the norm of $Ah$, as a bounded linear operator on $L^2(\hat m)$, can be made smaller than $1/2$. This in turn allows one to invert $I-Ah$ by a Neumann series: for $u \in L^2(\hat m)$, one has
$$
u + (Ah) u + (Ah)^2 u + (Ah)^3 u + \dots
$$
converging to a limit $v$ in $L^2(\hat m)$, and $(I-Ah)v = u$. Thus $(I-Ah)$ is invertible on $L^2(\hat m)$, and its inverse is given by the Neumann series
$$
(I-Ah)^{-1} = I + Ah + (Ah)^2 + (Ah)^3 + \dots.
$$

Now we extend $(I-Ah)^{-1}$ to $\mathcal{E}(\hat \rho^{-4+\delta})$, $0 < \delta < 4$. In order to do so, we need to further assume that the norm of $Ah$, as a bounded linear operator on $L^{\infty}(\hat m)$, is smaller than $1/2$. That can be achieved if the support of $h$ is sufficiently small. 

Suppose now $u \in \mathcal{E}(\hat \rho^{-4+\delta})$, $0 < \delta < 4$. Let $v = [I+(Ah)+(Ah)^2+\dots] u$. We want to show that $v \in \mathcal{E}(\hat \rho^{-4+\delta})$. To do so, suppose $k$ is a non-negative integer. To show that
\begin{equation} \label{eq:hrhokderiv}
\hat \rho^{k + (4-\delta)} \hat \nabla_b^k v \in L^{\infty}(\hat m),
\end{equation}
we split the sum defining $v$ into two parts: let $v_1 = [I+(Ah)+\dots+(Ah)^{k+2}] u$, and $v_2 = [(Ah)^{k+3} + (Ah)^{k+4} + \dots ] u$. Then $\hat \rho^{k + (4-\delta)} \hat \nabla_b^k  v_1 \in L^{\infty}(\hat m)$ by Theorem~\ref{thm2.3}, since each term of $v_1$ is in $\mathcal{E}(\hat \rho^{-4+\delta})$. Furthermore, note that 
\begin{equation} \label{eq:smoothtoLinfty}
A \colon \mathcal{E}(\hat \rho^{-1+\delta}) \to L^{\infty}(\hat m) \quad \text{for all $0 < \delta < 1$.}
\end{equation}  
This holds because $\mathcal{E}(\hat \rho^{-1+\delta}) \subset L^p(\hat m)$ for some $p > 4$, and $A \colon L^p(\hat m) \to L^{\infty}(\hat m)$ whenever $p > 4$. Thus from $u \in \mathcal{E}(\hat \rho^{-4+\delta})$, $0 < \delta < 4$, we conclude, from Theorem~\ref{thm2.3} and (\ref{eq:smoothtoLinfty}), that $(Ah)^2 u \in C^{\infty}(X) \cap L^{\infty}(\hat X)$. As a result, by Lemma~\ref{lem:hrhokderiv}, and our bound of $Ah$ on $L^{\infty}(\hat m)$, we have
\begin{align*}
\|\hat \rho^{k} \hat \nabla_b^k v_2 \|_{L^{\infty}(\hat m)} &\leq C_k \sum_{\ell = 0}^{\infty} \|(Ah)^{2 + \ell} u\|_{L^{\infty}(\hat m)} \\
&\leq C_k \sum_{\ell = 0}^{\infty} 2^{-\ell} \|(Ah)^2 u\|_{L^{\infty}(\hat m)} \leq C_k.
\end{align*}
Combining this with the bound for $v_1$, (\ref{eq:hrhokderiv}) follows, and this shows $v \in \mathcal{E}(\hat \rho^{-4+\delta})$ as desired.
\end{proof}

\subsection{An auxiliary family of operators}

Now we need to detour into a discussion of a two-parameter family of operators, that will be indexed by two non-negative integers $j$ and $\ell$. Suppose $A$ is a smoothing operator of order 1. Suppose also that $q_{\ell}(x,y)$ is a function in $C^{\infty}(\hat X \times \hat X)$ that vanishes to non-isotropic order $\ell$ along the diagonal, i.e.
$$
|q_{\ell}(x,y)| \lesssim \vartheta(x,y)^{\ell}
$$
for some non-negative integer $\ell$. We will write $q_{\ell}^x(y) := q_{\ell}(x,y)$; by abuse of notation, we will also denote by $q_{\ell}^x$ the multiplication operator $v(y) \mapsto q_{\ell}^x(y) v(y)$. Given a non-negative integer $j$, for $v \in C^{\infty}(\hat X)$ and $x \in \hat X$, we define
\begin{equation} \label{eq:Tjl}
T v(x) = \left. (\hat \nabla_b)_z^j \right|_{z = x} [A q_{\ell}^x v](z).
\end{equation}
This is well defined, since $q_{\ell}^x v$ is a $C^{\infty}$ function on $\hat X$ for each fixed $x$, and $A$ maps $C^{\infty}(\hat X)$ into $C^{\infty}(\hat X)$. We will see below that this assignment $v \mapsto Tv$ defines a continuous map from $C^{\infty}(\hat X)$ to $C^{\infty}(\hat X)$. Since the properties of this map depend mainly only on the integers $j$ and $\ell$, we will denote any operator of this form by $T_{j,\ell}$. In other words, if $v \in C^{\infty}(\hat X)$ and $x \in \hat X$, then $T_{j,\ell}v(x)$ is given by the right hand side of (\ref{eq:Tjl}) for some smoothing operator $A$ of order 1, and some $q_{\ell} \in C^{\infty}(\hat X \times \hat X)$ that vanishes to non-isotropic order $\ell$ along the diagonal.

\begin{lem}
Suppose $v \in C^{\infty}(\hat X)$, and for all $x \in \hat X$, we have
\begin{equation} \label{eq:Tjldef}
T_{j,\ell} v(x) := \left. (\hat \nabla_b)_z^j \right|_{z = x} [A q_{\ell}^x v](z)
\end{equation}
for some $A$ and $q_{\ell}$ as above. Then $T_{j,\ell} v$ is a $C^{\infty}$ function on $\hat X$. Furthermore, for any $r \geq 0$,
\begin{equation} \label{eq:Tjlderiv}
(\hat \nabla_b)_x^r T_{j,\ell}v(x) = \sum_{s=0}^r \binom{r}{s} \left. (\hat \nabla_b)_z^{j+r-s} \right|_{z = x} [A (\hat \nabla_b)_x^{s} q_{\ell}^x v](z)
\end{equation}
where $(\hat \nabla_b)_x^{s} q_{\ell}^x$ denotes the multiplication operator $v(y) \mapsto (\hat \nabla_b)_x^{s} q_{\ell}(x,y) v(y)$. In addition, the adjoint $T_{j,\ell}^*$ of $T_{j,\ell}$ with respect to $L^2(\hat m)$ maps $C^{\infty}(\hat X)$ into itself, and is given by
\begin{equation} \label{eq:Tjl*}
T_{j,\ell}^* w (z) = A^* [(\hat \nabla_b^*)^j(\ol q_{\ell,z}w)](z)
\end{equation}
where $q_{\ell,z}(x):= q_{\ell}(x,z)$.
\end{lem}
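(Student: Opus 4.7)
The plan is to recast $T_{j,\ell}$ through an auxiliary two-variable function and derive all three assertions from this viewpoint. Set $\Phi(x,z) := A[q_\ell^x v](z)$; that is, for each $x$ apply $A$ (in the $y$-variable) to the smooth function $y \mapsto q_\ell(x,y)v(y)$ and evaluate at $z$. Since $q_\ell \in C^\infty(\hat X \times \hat X)$ and $v \in C^\infty(\hat X)$, the map $x \mapsto q_\ell^x v$ is smooth into the Fr\'echet space $C^\infty(\hat X)$; composing with the continuous linear map $A \colon C^\infty(\hat X) \to C^\infty(\hat X)$, the map $x \mapsto A q_\ell^x v$ is smooth into $C^\infty(\hat X)$ as well, so $\Phi \in C^\infty(\hat X \times \hat X)$. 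Hence $T_{j,\ell}v(x) = (\hat\nabla_b)_z^j \Phi(x,z)|_{z=x}$ is the restriction to the diagonal of a $C^\infty$ function, which gives $T_{j,\ell}v \in C^\infty(\hat X)$.

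For (\ref{eq:Tjlderiv}), the key is the standard rule for differentiating a diagonal restriction. Writing $(\hat\nabla_b)_x^r = L_1 \cdots L_r$ as a product of first-order vector fields, each $L_i$ applied to a diagonal restriction $\Psi(x,x)$ produces $(L_i^{(x)} + L_i^{(z)})\Psi(x,z)|_{z=x}$, where the superscripts indicate the slot on which $L_i$ acts. Expanding $\prod_{i=1}^{r}(L_i^{(x)} + L_i^{(z)})$ and grouping by the number $s$ of factors landing on the $x$-slot gives, schematically, $\binom{r}{s}$ terms of the form $(\hat\nabla_b)_z^{r-s}(\hat\nabla_b)_x^s \Phi(x,z)|_{z=x}$ for $s = 0, \ldots, r$. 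Since $A$ acts in $y$, it commutes with $(\hat\nabla_b)_x$; and since $x$ enters $\Phi$ only through the factor $q_\ell^x$, we have $(\hat\nabla_b)_x^s \Phi(x,z) = A[(\hat\nabla_b)_x^s q_\ell^x v](z)$. Combining these observations yields (\ref{eq:Tjlderiv}).

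The adjoint identity (\ref{eq:Tjl*}) I would prove by induction on $j$; the smoothness of $z \mapsto A^*[(\hat\nabla_b^*)^j(\ol{q_{\ell,z}}w)](z)$ follows from the argument above applied to $A^*$ (also continuous $C^\infty \to C^\infty$). For the base case $j=0$, the kernel $A(z,y)$ is locally integrable with $|A(z,y)| \lesssim \vartheta(z,y)^{-3}$, so
\begin{align*}
(T_{0,\ell}v \mid w)_{\hat m} &= \iint A(x,y)\,q_\ell(x,y)\,v(y)\,\ol{w(x)}\,\hat m(y)\,\hat m(x)\\
&= \int v(y)\,\ol{A^*[\ol{q_{\ell,y}}\,w](y)}\,\hat m(y) = (v \mid T_{0,\ell}^* w)_{\hat m},
\end{align*}
by absolute convergence, Fubini and the defining identity for $A^*$. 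For the inductive step, singling out the outermost $(\hat\nabla_b)_z$ and invoking the $r=1$ case of (\ref{eq:Tjlderiv}) gives the decomposition
$$T_{j+1,\ell}v(x) = (\hat\nabla_b)_x T_{j,\ell}v(x) - T_{j,\ell'}v(x),$$
where $q_{\ell'}(x,y) := (\hat\nabla_b)_x q_\ell(x,y) \in C^\infty(\hat X \times \hat X)$. Taking $L^2(\hat m)$-adjoints of both sides and using the inductive hypothesis,
$$T_{j+1,\ell}^* w(z) = A^*\bigl[(\hat\nabla_b^*)^j\bigl(\ol{q_{\ell,z}}\,(\hat\nabla_b^*)w\bigr)\bigr](z) - A^*\bigl[(\hat\nabla_b^*)^j\bigl(\ol{q_{\ell',z}}\,w\bigr)\bigr](z);$$
the product-rule identity $(\hat\nabla_b^*)(\ol{q_{\ell,z}}\,w) = \ol{q_{\ell,z}}\,(\hat\nabla_b^*)w - \ol{q_{\ell',z}}\,w$, which holds because the first-order part of $\hat\nabla_b^*$ is $-\ol{\hat\nabla_b}$ (so that $-\ol{\hat\nabla_b}\,\ol{q_{\ell,z}} = -\ol{q_{\ell',z}}$), then combines these into $A^*[(\hat\nabla_b^*)^{j+1}(\ol{q_{\ell,z}}w)](z)$, closing the induction.

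The main obstacle is the precise bookkeeping in the inductive step: one must single out the Leibniz cross-term for the adjoint of a first-order operator (only the first-order part of $\hat\nabla_b^*$ contributes) and verify that its sign and $x$-derivative match exactly the $q_{\ell'}$ that arises from applying (\ref{eq:Tjlderiv}) with $r=1$. This bookkeeping is what forces one to work with the full operator $(\hat\nabla_b)^{j+1}$ rather than trying to treat each factor separately and reassemble at the end, and it is also the reason that the naive kernel manipulation---where one would integrate by parts across a kernel $(\hat\nabla_b)_z^j A(z,y)$ of size $\vartheta(z,y)^{-3-j}$ that is not locally integrable for $j\ge 1$---is bypassed entirely. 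Once the product-rule alignment is verified, the induction delivers (\ref{eq:Tjl*}) at every order $j$.
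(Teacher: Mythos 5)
Your proof is correct, and it swaps out two of the paper's ingredients for genuinely different ones. For the smoothness of $T_{j,\ell}v$, the paper differentiates the diagonal restriction directly along curves, estimating the difference quotient in \eqref{eq:derivTjlexpans} term by term; you instead note that $\Phi(x,z) := A[q_\ell^x v](z)$ lies in $C^\infty(\hat X \times \hat X)$ (since $x \mapsto q_\ell^x v$ is smooth into $C^\infty(\hat X)$ and $A$ is continuous on $C^\infty(\hat X)$), which makes both the smoothness of $T_{j,\ell}v$ and the derivative formula \eqref{eq:Tjlderiv} immediate from the usual Leibniz rule for diagonal restriction. For the base case $j=0$ of \eqref{eq:Tjl*}, the paper approximates $q_\ell^x$ by step functions $\sum_j q_\ell^x(y_j)\zeta_j$ so that the $x$-dependence pulls outside $A$ as scalar coefficients, applies $A^*$ termwise, and passes to the limit using the $L^\infty$ bound of Proposition~\ref{prop:nonisosmoothing}(b); you instead use the kernel representation of $A$ and Fubini, which is justified because $A$ is smoothing of positive order, so $Af(x) = \int A(x,y)f(y)\,\hat m(y)$ holds for every $x$ and not merely for $x$ outside the support of $f$ (a fact the paper records at the start of the proof of Theorem~\ref{thm2.2}). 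Your inductive step coincides with the paper's decomposition $T_{j,\ell} = \hat\nabla_b T_{j-1,\ell} - T_{j-1,\ell_0}$, except that you make explicit the Leibniz identity $\hat\nabla_b^*(\ol q_{\ell,z}w) = \ol q_{\ell,z}\hat\nabla_b^*w - \ol q_{\ell',z}w$ that the paper compresses into ``we can check that''---a helpful clarification, not a gap. The Fubini route is shorter; the paper's approximation route is slightly more self-contained, using only the $L^\infty$ mapping property and cancellation conditions rather than the everywhere-validity of the kernel formula.
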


\begin{proof}
Suppose $v \in C^{\infty}(\hat X)$, and $x \in \hat X$. First we show that $T_{j,\ell}$ is differentiable at $x$, and that (\ref{eq:Tjlderiv}) holds when $r = 1$. In fact, for any smooth curve $\gamma \colon (-1,1) \to \hat X$ with $\gamma(0) = x$, $\gamma'(0) = Y$, we have 
\begin{align}
& \frac{1}{\varepsilon} \left[ T_{j,\ell} v(\gamma(\varepsilon)) - T_{j,\ell} v(\gamma(0)) \right] \notag \\
=&  \left. (\hat \nabla_b)_z^j \right|_{z = \gamma(\varepsilon)}  \left[A \left( \frac{q_{\ell}^{\gamma(\varepsilon)} v - q_{\ell}^{\gamma(0)} v}{\varepsilon} - Y_x q_{\ell}^x \right) v\right](z) \notag \\
& \quad +  \frac{1}{\varepsilon} \left(\left. (\hat \nabla_b)_z^j \right|_{z = \gamma(\varepsilon)} - \left. (\hat \nabla_b)_z^j \right|_{z = \gamma(0)} \right) [A q_{\ell}^x v](z) \label{eq:derivTjlexpans} \\
& \quad + \left. (\hat \nabla_b)_z^j \right|_{z = \gamma(\varepsilon)}  [A (Y_x q_{\ell}^x) v](z). \notag
\end{align}
(We wrote $Y_x$ to emphasize that the derivative is with respect to $x$.) Now the first term on the right hand side of (\ref{eq:derivTjlexpans}) is bounded by
$$
\left\|(\hat \nabla_b)_z^j A \left( \frac{q_{\ell}^{\gamma(\varepsilon)} v - q_{\ell}^{\gamma(0)}}{\varepsilon} - Y_x q_{\ell}^x \right) v(z) \right\|_{L^{\infty}(\hat{m}(z))},
$$
which tends to zero as $\varepsilon \to 0$ since 
$$
\left( \frac{q_{\ell}^{\gamma(\varepsilon)} v - q_{\ell}^{\gamma(0)}}{\varepsilon} - Y_x q_{\ell}^x \right) v(z) \to 0 \quad \text{in $C^{\infty}(\hat X)$ as a function of $z$},
$$
and $A \colon C^{\infty}(\hat X) \to C^{\infty}(\hat X)$ is continuous. Next, the second term on the right hand side of (\ref{eq:derivTjlexpans}) converges to
$$
Y_z \left. (\hat \nabla_b)_z^j  [A q_{\ell}^x v](z) \right|_{z = x}
$$
as $\varepsilon \to 0$, since $A q_{\ell}^x v(z)$ is $C^{\infty}$ as a function of $z \in \hat X$. Finally, the last term on the right hand side of (\ref{eq:derivTjlexpans}) converges to
$$
\left. (\hat \nabla_b)_z^j \right|_{z = x}  [A (Y_x q_{\ell}^x) v](z)
$$
as $\varepsilon \to 0$, since $(\hat \nabla_b)_z^j [A (Y_x q_{\ell}^x) v](z)$ is a continuous function of $z \in \hat X$. This proves $T_{j,\ell}v$ is differentiable at $x$, and that 
\begin{equation} \label{eq:Tjlderiv1}
[Y T_{j,\ell} v](x) = \left.(Y_z (\hat \nabla_b)_z^j )\right|_{z = x} [A q_{\ell}^x v](z) + \left. (\hat \nabla_b)_z^j \right|_{z = x} [A (Y_x q_{\ell}^x) v](z).
\end{equation}
In particular, (\ref{eq:Tjlderiv}) holds when $r = 1$. 

By successive differentiation of (\ref{eq:Tjlderiv1}), using the case $r = 1$ of (\ref{eq:Tjlderiv}), then shows that $T_{j,\ell}v \in C^{\infty}(\hat X)$, and that (\ref{eq:Tjlderiv}) holds for all $r \geq 1$.

Now we prove that $T_{j,\ell}^*$ is given by the expression (\ref{eq:Tjl*}). The crux of the matter is to show that this holds when $j = 0$. Recall that $T_{0,\ell} v(x) =  A (q_{\ell}^x v) (x).$ We cover $\hat X$ by balls of radius $\varepsilon$, select a finite subcover, and construct a partition of unity $\sum_j \zeta_j = 1$ subordinate to it. Then we pick a point $y_j$ in the support of $\zeta_j$ for each $j$, and let $$q_{\ell,(\varepsilon)}^x(y) = \sum_j q_{\ell}^x (y_j) \zeta_j(y).$$ (Note $y_j$ and $\zeta_j$ depends implicitly on $\varepsilon$.) We then have
$$
\sup_{x,y \in \hat X \times \hat X} |q_{\ell,(\varepsilon)}^x(y) - q_{\ell}^x(y)| \to 0
$$
as $\varepsilon \to 0$. Hence by part (b) of Proposition~\ref{prop:nonisosmoothing}, for $v \in C^{\infty}(\hat X)$,
$$
\sup_{x,z \in \hat X} |A [q_{\ell,(\varepsilon)}^x v](z) - A[q_{\ell}^{x} v](z)| \to 0
$$
as $\varepsilon \to 0$, which in turn implies that 
$$
A [q_{\ell,(\varepsilon)}^x v](x) \to T_{0,\ell}v(x)
$$
uniformly for $x \in \hat X$. Now 
$$
A [q_{\ell,(\varepsilon)}^x v](x)
= \sum_j q_{\ell}^x(y_j) A [\zeta_j v](x),
$$
and
\begin{align}
\int_{\hat X} \sum_j q_{\ell}^x(y_j) A [\zeta_j v](x) \overline{w(x)} \hat m(x)
&= \int_{\hat X} \sum_j \zeta_j(y) v(y) \overline{ A^* [ \overline{q}_{\ell,y_j} w](y) } \hat m(y) \notag \\
&= \int_{\hat X} v(y) \overline{ A^* [ \sum_j \zeta_j(y)\overline{q}_{\ell,y_j} w](y) } \hat m(y)  \label{eq:dualineqT0l}
\end{align}
Recall that $\zeta_j$ and $y_j$ depends on $\varepsilon$. As $\varepsilon \to 0$,
$$
\sup_{x, y \in \hat X} |\sum_j \zeta_j(y) \overline{q}_{\ell,y_j}(x) w(x) - q_{\ell,y}(x) w(x)| \to 0.
$$
Hence by part (b) of Proposition~\ref{prop:nonisosmoothing},
$$
\sup_{z, y \in \hat X} |A^* [\sum_j \zeta_j(y) \overline{q}_{\ell,y_j} w](z) - A^* [\overline{q}_{\ell,y} w](z)| \to 0
$$
as $\varepsilon \to 0$. In particular,
$$
A^* [\sum_j \zeta_j(y) \overline{q}_{\ell,y_j} w](y) \to A^* [\overline{q}_{\ell,y} w](y)
$$
uniformly for $y \in \hat X$. Hence by (\ref{eq:dualineqT0l}), 
$$
\int_{\hat X} T_{0,\ell} v(x) \overline{w(x)} \hat m(x) = \int_{\hat X} v(y) \overline{  A^* [\overline{q}_{\ell,y} w](y) } \hat m(y).
$$
This shows $T_{0,\ell}^* w(y) = A^* [\overline{q}_{\ell,y} w](y)$, as desired. %If $j > 0$, one can commute the $j$ derivatives in the definition of $T_{j,\ell}$ past $A$, apply the case $j = 0$ to compute the adjoint, and regroup the terms to get (\ref{eq:Tjl*}). We omit the details. 

We assume that \eqref{eq:Tjl*} holds for all $T^*_{k,\ell}$ with $0\leq k\leq j-1$. Take $r=1$ and replace $j$ to $j-1$ in \eqref{eq:Tjlderiv}, we have 
\begin{equation}\label{e-guTj}
T_{j,\ell}=\hat\nabla_bT_{j-1,\ell}-T_{j-1,\ell_0},
\end{equation}
where $\ell_0=\min\set{\ell-1,0}$ and
$$T_{j-1,\ell_0} v(x) := \left. (\hat \nabla_b)_z^{j-1}\right|_{z = x} [A(\hat\nabla_b)_xq_{\ell}^x v](z).$$
By taking adjoint of \eqref{e-guTj} in the sense of distribution with respect to $L^2(\hat m)$, we deduce 
\begin{equation}\label{e-guTjI}
T^*_{j,\ell}= T^*_{j-1,\ell}(\hat\nabla_b)^*-T^*_{j-1,\ell_0}.
\end{equation}
From \eqref{e-guTjI} and the induction assumptions, we can check that 
\[\begin{split}
T^*_{j,\ell}w(z)&=A^*[(\hat\nabla^*_b)^{j-1}(\overline{q}_{\ell,z}\hat\nabla^*_bw)](z)-A^*[(\hat\nabla^*_b)^{j-1}(\ol{\hat\nabla_b{q}_{\ell,z}})w)](z)\\
&=A^*[(\hat\nabla^*_b)^{j}(\overline{q}_{\ell,z}w)](z).
\end{split}\]
\eqref{eq:Tjl*} follows. As a result, by repeating the proof of (\ref{eq:Tjlderiv}), $T_{j,\ell}^*$ maps $C^{\infty}(\hat X)$ into $C^{\infty}(\hat X)$, with
$$
(\nabla_b)_z^r T_{j,\ell}^* w(z) = \sum_{s=0}^r \binom{r}{s} \left. (\hat \nabla_b)_y^{r-s} \right|_{y = z} [A (\hat \nabla_b)_z^{s}(\hat \nabla_b^*)^j(\ol q_{\ell,z}w)](y).
$$
\end{proof}

\begin{lem}
For any $j, \ell \geq 0$, the linear operators
$$
T_{j,\ell} \colon C^{\infty}(\hat X) \to C^{\infty}(\hat X)
$$
and
$$
T_{j,\ell}^* \colon C^{\infty}(\hat X) \to C^{\infty}(\hat X)
$$
considered in the previous lemma are both continuous.
\end{lem}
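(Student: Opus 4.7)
The topology on $C^\infty(\hat X)$ is generated by the seminorms $p_r(u):=\|\hat\nabla_b^r u\|_{L^\infty(\hat X)}$, $r\geq 0$, so to prove continuity of a linear map $T\colon C^\infty(\hat X)\to C^\infty(\hat X)$, it suffices to bound each $p_r(Tv)$ by finitely many $p_i(v)$. The plan is to combine this with the formulas (\ref{eq:Tjlderiv}) and (\ref{eq:Tjl*}) supplied by the previous lemma, together with the fact that $A$ and $A^*$, being smoothing operators of order $1$, are by definition continuous maps $C^\infty(\hat X)\to C^\infty(\hat X)$ (hence, as bounded linear maps between Fr\'echet spaces, they carry quantitative seminorm estimates).

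For $T_{j,\ell}$, formula (\ref{eq:Tjlderiv}) yields the pointwise bound
\[
|\hat\nabla_b^r T_{j,\ell}v(x)|\leq\sum_{s=0}^{r}\binom{r}{s}\,\|\hat\nabla_b^{j+r-s}(Af_{x,s})\|_{L^\infty(\hat X)},
\]
where $f_{x,s}(y):=(\hat\nabla_b)_x^s q_\ell(x,y)\,v(y)$ is smooth in $y$ for each fixed $x$. Since $q_\ell\in C^\infty(\hat X\times\hat X)$ and $\hat X\times\hat X$ is compact, all mixed derivatives of $q_\ell$ are uniformly bounded, so the Leibniz rule gives a uniform bound $\|f_{x,s}\|_{C^N(\hat X)}\leq C_{N,s}\sum_{k\leq N}p_k(v)$ independent of $x$. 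The continuity of $A$ on $C^\infty(\hat X)$ then furnishes integers $N_{r,s}$ and constants $C_{r,s}$ with $\|\hat\nabla_b^{j+r-s}Af_{x,s}\|_{L^\infty(\hat X)}\leq C_{r,s}\|f_{x,s}\|_{C^{N_{r,s}}(\hat X)}$. Summing over $s$ produces the desired estimate $p_r(T_{j,\ell}v)\leq C\sum_{k\leq N}p_k(v)$.

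For $T_{j,\ell}^*$, I would run the parallel argument starting from (\ref{eq:Tjl*}). First, differentiating in $z$ by the same chain-rule calculation that produced (\ref{eq:Tjlderiv}) gives an expansion of the form
\[
\hat\nabla_b^r T_{j,\ell}^*w(z)=\sum_{s=0}^{r}\binom{r}{s}(\hat\nabla_b)_y^{r-s}\big|_{y=z}A^*[h_{z,s}](y),
\]
with $h_{z,s}(x):=(\hat\nabla_b)_z^s(\hat\nabla_b^*)^j[\bar q_\ell(x,z)w(x)]$; pulling $(\hat\nabla_b)_z^s$ inside $A^*$ is legitimate because $A^*$ acts in the $x$-variable while the differentiation is in the parameter $z$. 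Then the same two ingredients — uniform $C^N$-bounds of $h_{z,s}$ in $z$ (via Leibniz, the smoothness of $\bar q_\ell$ on $\hat X\times\hat X$, and compactness) together with continuity of $A^*$ on $C^\infty(\hat X)$ — deliver the required seminorm estimate.

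I do not expect any significant obstacle. All the analytic content — quantitative continuity of $A$ and $A^*$ on $C^\infty(\hat X)$ — is already packaged into the very definition of a smoothing operator of order $1$, and the rest is careful bookkeeping. The only mildly delicate point is justifying the derivative computation for $T_{j,\ell}^*$ displayed above, but this is exactly analogous to the proof of (\ref{eq:Tjlderiv}) in the previous lemma and presents no real difficulty.
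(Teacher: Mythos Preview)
Your proposal is correct and follows essentially the same approach as the paper: both arguments rest on the derivative formula \eqref{eq:Tjlderiv} (and its analogue for $T_{j,\ell}^*$ derived at the end of the previous lemma) together with continuity of $A$ and $A^*$. The only cosmetic difference is that you invoke the Fr\'echet continuity of $A$ on $C^\infty(\hat X)$ directly to obtain seminorm bounds, whereas the paper phrases the argument sequentially, routing through the $NL^{k,2}$ continuity of $A$ (Proposition~\ref{prop:nonisosmoothing}(a)) and then upgrading to $L^\infty$ via Sobolev embedding; both packagings are equivalent here.
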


\begin{proof}
Suppose $v_m$ converges to $v$ in $C^{\infty}(\hat X)$ as $m \to \infty$. Then for any $k, s \geq 0$,
$$
\|(\hat \nabla_b)_z^k [(\hat \nabla_b)_x^s q_{\ell}^x(z) (v_m - v)(z)] \|_{L^2(\hat m(z))} \to 0 \quad \text{uniformly in $x \in \hat X$},
$$
and by continuity of $A$ on $NL^{k,2}$ (see Proposition~\ref{prop:nonisosmoothing}(a)), we see that for any $k, s \geq 0$,
$$
\|(\hat \nabla_b)_z^k [A (\hat \nabla_b)_x^s q_{\ell}^x (v_m - v)](z)\|_{L^2(\hat m(z))} \to  0 \quad \text{uniformly in $x \in \hat X$}.
$$
By Sobolev embedding, it follows that the same is true for all $k, s \geq 0$ if the $L^2$ norm is replaced by $L^{\infty}$ in the above equation. Hence by (\ref{eq:Tjlderiv}), we conclude that
$$
\|(\hat \nabla_b)^r T_{j,\ell} (v_m - v)\|_{L^{\infty}(\hat X)} \to 0
$$
for all $r \geq 0$. Since this is true for all $r$, we proved $T_{j,\ell} v_m \to T_{j,\ell} v$ in $C^{\infty}(\hat X)$ as $m \to \infty$. A similar argument, based on (\ref{eq:Tjl*}) instead, proves that $T_{j,\ell}^* \colon C^{\infty}(\hat X) \to C^{\infty}(\hat X)$ is continuous.
\end{proof}

\begin{lem} \label{lem:Tl+1lsmoothing}
For any $\ell \geq 0$, the operator $T_{\ell+1, \ell}$ is smoothing of order 0.
\end{lem}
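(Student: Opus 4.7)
\textbf{Proof plan for Lemma~\ref{lem:Tl+1lsmoothing}.} The plan is to verify directly the three defining conditions of a smoothing operator of order $0$: a kernel representation away from the diagonal, the correct differential inequalities on that kernel, and the cancellation conditions for both $T_{\ell+1,\ell}$ and $T_{\ell+1,\ell}^*$. The heuristic is that the $\ell+1$ derivatives $(\hat\nabla_b)_z^{\ell+1}$ degrade the order-$1$ smoothing of $A$ by $\ell+1$, while the diagonal vanishing of $q_\ell$ restores $\ell$ of those orders, for a net order of~$0$.

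First, for any $v\in C^\infty(\hat X)$ and any $x$ outside the support of $v$, the fact that $q_\ell^x v\in C^\infty(\hat X)$ vanishes near $x$ lets one use the kernel representation of $A$ and differentiate under the integral, yielding
\[
T_{\ell+1,\ell}v(x)=\int_{\hat X}K(x,y)\,v(y)\,\hat m(y),\qquad K(x,y):=(\hat\nabla_b)_z^{\ell+1}A(z,y)\bigr|_{z=x}\cdot q_\ell(x,y).
\]
Since $A$ is smoothing of order $1$, one has $|(\hat\nabla_b)_x^{\alpha_1}(\hat\nabla_b)_y^{\alpha_2}(\hat\nabla_b)_z^{\ell+1}A(z,y)|_{z=x}|\lesssim\vartheta(x,y)^{-4-\ell-|\alpha|}$, and since $q_\ell$ vanishes to non-isotropic order $\ell$ along the diagonal, $|(\hat\nabla_b)_x^{\beta_1}(\hat\nabla_b)_y^{\beta_2}q_\ell(x,y)|\lesssim \vartheta(x,y)^{\max(\ell-|\beta|,0)}$. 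A Leibniz expansion, checking separately the cases $|\beta|\le\ell$ and $|\beta|>\ell$ (in the latter case using $\vartheta\le\operatorname{diam}(\hat X)$), gives the desired differential inequality $|(\hat\nabla_b)_x^{\alpha_1}(\hat\nabla_b)_y^{\alpha_2}K(x,y)|\lesssim_\alpha\vartheta(x,y)^{-4-|\alpha|}$.

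Next, for the cancellation condition on $T_{\ell+1,\ell}$, let $\phi$ be a normalized bump function on $B(x_0,r)$ and apply formula \eqref{eq:Tjlderiv}:
\[
(\hat\nabla_b)^n_x T_{\ell+1,\ell}\phi(x)=\sum_{s=0}^n\binom{n}{s}(\hat\nabla_b)_z^{\ell+1+n-s}\bigr|_{z=x}\bigl[A\,\psi_{s,x}\bigr](z),\qquad \psi_{s,x}(y):=(\hat\nabla_b)_x^s q_\ell(x,y)\phi(y).
\]
For $x\in B(x_0,r)$ the function $\psi_{s,x}$ is supported in $B(x_0,r)$, and a Leibniz computation using $|\partial^{j} q_\ell|\lesssim\vartheta^{\max(\ell-j,0)}$ together with $\vartheta(x,y)\lesssim r$ on the support of $\phi$ shows that $\psi_{s,x}$ equals $r^{\max(\ell-s,0)}$ times a normalized bump function on $B(x_0,r)$. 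Applying the cancellation property of $A$ (order $1$) at scale $r$ to each term and summing yields $\|\hat\nabla_b^n T_{\ell+1,\ell}\phi\|_{L^\infty(B(x_0,r))}\lesssim r^{-n}$, as required for order $0$.

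Finally, for the cancellation condition on $T_{\ell+1,\ell}^*$, one uses the explicit formula \eqref{eq:Tjl*}: $T_{\ell+1,\ell}^*w(z)=A^*[(\hat\nabla_b^*)^{\ell+1}(\overline{q}_{\ell,z}w)](z)$. Differentiating $n$ times in $z$ produces an analogous Leibniz expansion in which the parameter $z$ is differentiated $s$ times and the evaluation point $n-s$ times. For a normalized bump function $\phi$ on $B(x_0,r)$, the function $(\hat\nabla_b)_z^s\overline{q}_{\ell,z}(y)\phi(y)$ is again $r^{\max(\ell-s,0)}$ times a normalized bump function on $B(x_0,r)$; applying $(\hat\nabla_b^*)^{\ell+1}$ multiplies the scale by $r^{-(\ell+1)}$, and then the cancellation property for $A^*$ (order $1$, at scale $r$) produces the bound $r^{-n}$ by the same case analysis as before. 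This verifies both cancellation conditions and completes the proof. The one genuinely delicate point, and the place that will require the most care in writing up, is the bookkeeping in the Leibniz expansions above, together with the justification that one may freely differentiate inside $A$ and $A^*$ (which rests on the continuity of $A$, $A^*$ from $C^\infty(\hat X)$ to itself, established in the preceding lemmas).
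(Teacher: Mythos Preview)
Your proof is correct and follows essentially the same route as the paper: derive the kernel $K(x,y)=[(\hat\nabla_b)_x^{\ell+1}A(x,y)]\,q_\ell(x,y)$ by differentiating under the integral, check the differential inequalities via Leibniz and the order-$\ell$ vanishing of $q_\ell$, and verify the cancellation conditions for $T_{\ell+1,\ell}$ and $T_{\ell+1,\ell}^*$ using \eqref{eq:Tjlderiv} and \eqref{eq:Tjl*} together with the cancellation property of $A$ and $A^*$. The only thing you leave slightly implicit is that the continuity of $T_{\ell+1,\ell}$ and $T_{\ell+1,\ell}^*$ on $C^\infty(\hat X)$ is itself part of the definition to be verified; the paper cites this from the preceding lemma, and you should do likewise at the outset rather than alluding to it only at the end.
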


\begin{proof}
By the previous lemma, both $T_{\ell+1,\ell}$ and its adjoint $T_{\ell+1,\ell}^*$ map $C^{\infty}(\hat X)$ into $C^{\infty}(\hat X)$ continuously. 

Given $v \in C^{\infty}(\hat X)$, and $x \in \hat X$ not in the support of $v$, if $z$ is in a sufficiently small neighborhood of $x$, we have $z$ not in the support of $q_{\ell}^x v$. Hence if $A(x,y)$ is the kernel of $A$, then for all such $z$, we have
$$
[A q_{\ell}^x v](z) = \int_{\hat X} A(z,y) q_{\ell}(x,y) v(y) \hat m(y).
$$
It follows that one can differentiate under the integral, and obtain
$$
T_{j,\ell} v(x) = \int_{\hat X} [(\hat \nabla_b)_x^j A(x,y)] q_{\ell}(x,y) v(y) \hat m(y).
$$
Hence the kernel of $T_{j,\ell}$ is given by 
$$
T_{j,\ell} (x,y) = [(\hat \nabla_b)_x^j A(x,y)] q_{\ell}(x,y).
$$
When $j = \ell + 1$, this kernel satisfies the differential inequalities
$$
|(\hat \nabla_b)_x^{\alpha_1} (\hat \nabla_b)_y^{\alpha_2} T_{j,\ell} (x,y)| \lesssim \vartheta(x,y)^{-4-|\alpha|},
$$
since $A$ is a smoothing operator of order 1, and $q_{\ell}$ vanishes to order $\ell$ along the diagonal. It follows that the kernel of $T_{\ell+1,\ell}$ satisfies the differential inequalities of a smoothing operator of order 0.

Finally, we verify the cancellation property of $T_{\ell+1,\ell}$. To do so, suppose $\phi$ is a normalized bump function in some ball $B(x_0,r_0) \subset \hat X$. Then for any $x \in B(x_0,r_0)$ and $s \geq 0$, we have $(\hat \nabla_b)_x^s q_{\ell}^x \phi$ being $r_0^{\ell-s}$ times a normalized bump function in $B(x_0,r_0)$. Thus by cancellation property of $A$, we have 
$$
\|(\hat \nabla_b)^{j+r-s} [A (\hat \nabla_b)_x^s q_{\ell}^x \phi]\|_{L^{\infty}(B(x_0,r))} \lesssim r_0^{1-(j+r-\ell)}
$$
for all $j$, $r$ and $s$ with $j \geq 0$, $r \geq s$. In particular, evaluating at $x \in B(x_0,r_0)$, we have
$$
\left|\left.(\hat \nabla_b)_y^{j+r-s} \right|_{z=x} [A  (\hat \nabla_b)_x^s q_{\ell}^x \phi](z)\right| \lesssim r_0^{1-(j+r-\ell)}.
$$
Hence by (\ref{eq:Tjlderiv}), we see that 
$$
|(\hat \nabla_b)_x^r T_{j,\ell} \phi(x)| \lesssim r_0^{1-(j+r-\ell)}.
$$
Since this is true for all $x \in B(x_0,r_0)$, we obtain
$$
\|(\hat \nabla_b)_x^r T_{j,\ell} \phi\|_{L^{\infty}(B(x_0,r_0))} \lesssim r_0^{1-(j+r-\ell)}.
$$
When $j = \ell + 1$, this gives the desired cancellation property of order $0$ for $T_{\ell+1,\ell}$. Similarly, one can prove the desired cancellation property of $T_{\ell+1,\ell}^*$. Hence $T_{\ell+1,\ell}$ is a smoothing operator of order 0.
\end{proof}

\begin{lem} \label{lem:nonsmoothforTjl}
Suppose $v \in C^{\infty}(X) \cap L^{\infty}(\hat X)$. Then for $x \in X$, one can still define $T_{j,\ell} v(x)$ by (\ref{eq:Tjldef}), and formula (\ref{eq:Tjlderiv}) continues to hold for all $x \in X$.
\end{lem}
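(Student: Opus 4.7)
The obstacle is that $v$ need not be smooth at $p$, so $q_{\ell}^x v$ is only a bounded function on $\hat X$ and $A[q_{\ell}^x v]$ is a priori only an $L^\infty(\hat m)$-function; evaluating it at $z = x$ and taking $z$-derivatives there requires producing a smooth representative in a neighbourhood of $z = x$. Since $x \in X$ has positive distance from $p$, this is purely a local issue, and the plan is to remove it by splitting $v$ into a globally smooth piece (handled by the previous lemma) and a bounded piece whose support is disjoint from a neighbourhood of $x$ (handled by direct kernel integration).

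Fix $x_0 \in X$. Choose an open neighbourhood $U$ of $x_0$ with $\overline{U} \subset X$, and a cutoff $\phi \in C^\infty(\hat X)$ with $\phi \equiv 1$ in some neighbourhood of $p$ and $\phi \equiv 0$ on $\overline{U}$; this is possible because $p \notin \overline{U}$. Decompose $v = v_1 + v_2$ with $v_1 := (1-\phi) v$ and $v_2 := \phi v$. Since $1-\phi$ vanishes identically near $p$ while $v \in C^\infty(X)$, extending $v_1$ by $0$ at $p$ yields $v_1 \in C^\infty(\hat X)$; hence the previous lemma applies verbatim and produces $T_{j,\ell} v_1 \in C^\infty(\hat X)$ satisfying (\ref{eq:Tjlderiv}) everywhere. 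The function $v_2 \in L^\infty(\hat X)$ has closed support $F \subset \hat X \setminus \overline{U}$, so for $x,z \in U$ the kernel representation
$$
[A q_{\ell}^x v_2](z) = \int_F A(z,y) \, q_{\ell}(x,y) \, v_2(y) \, \hat m(y)
$$
holds. Because $\operatorname{dist}(\overline{U}, F) > 0$, the pointwise differential inequalities for a smoothing operator of order $1$ make $A(z,y)$ smooth on $\overline{U} \times F$ with all derivatives bounded uniformly in $y \in F$, while $q_{\ell}(x,y)$ is smooth on $\hat X \times \hat X$. Hence the integrand is jointly smooth in $(x,z) \in U \times U$ and differentiation under the integral is fully legitimate; this defines a smooth function $T_{j,\ell} v_2(x) := \left.(\hat\nabla_b)_z^j\right|_{z=x} [A q_{\ell}^x v_2](z)$ on $U$.

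Setting $T_{j,\ell} v := T_{j,\ell} v_1 + T_{j,\ell} v_2$ on $U$ therefore produces a smooth function of $x$ that agrees with (\ref{eq:Tjldef}), interpreted via the smooth representative of $A[q_{\ell}^x v]$ on a neighbourhood of $z = x$ obtained by adding the smooth function $A[q_{\ell}^x v_1]$ to the locally smooth integral $A[q_{\ell}^x v_2]$. The definition is independent of the choice of $\phi$: any other admissible cutoff $\phi'$ differs from $\phi$ by a smooth function that vanishes near $p$, so $(\phi - \phi')v \in C^\infty(\hat X)$ and the previous lemma supplies the consistency. For the identity (\ref{eq:Tjlderiv}), observe that for each fixed $x \in U$ the function $y \mapsto (\hat\nabla_b)_x^s q_{\ell}(x,y) v_1(y)$ lies in $C^\infty(\hat X)$, while $y \mapsto (\hat\nabla_b)_x^s q_{\ell}(x,y) v_2(y)$ is bounded with support still contained in $F$. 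The formula then holds term by term on $U$: on $v_1$ by the previous lemma, and on $v_2$ by Leibniz-rule expansion of $(\hat\nabla_b)_x^r$ applied to $\left.(\hat\nabla_b)_z^j\right|_{z=x}[A q_{\ell}^x v_2](z)$, every operation being justified by the differentiation-under-the-integral argument above. Since $x_0 \in X$ was arbitrary, (\ref{eq:Tjlderiv}) holds throughout $X$. The one step requiring real care is the legitimacy of differentiating under the integral for the $v_2$-piece together with the consistency check under change of $\phi$; both reduce to routine estimates once the positive separation between $\overline U$ and $F$ is used to eliminate the singularity of the kernel of $A$.
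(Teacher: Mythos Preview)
Your proof is correct and follows essentially the same approach as the paper: split $v$ into a piece that is globally smooth on $\hat X$ (handled by the previous lemma) and a piece supported away from the point of interest (handled by the kernel representation and differentiation under the integral). The paper uses a cutoff $\eta \in C^\infty_0(X)$ with $\eta \equiv 1$ near $x$ and writes $v = \eta v + (1-\eta)v$; your cutoff $\phi$ near $p$ plays exactly the role of $1-\eta$, so the two decompositions coincide.

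One minor imprecision: from $\phi \equiv 0$ on $\overline{U}$ you only get $F = \operatorname{supp}(\phi v) \subset \overline{\hat X \setminus \overline{U}}$, which may touch $\partial U$, so $\operatorname{dist}(\overline{U}, F) > 0$ is not guaranteed as stated. This is trivially repaired by either choosing $\phi$ with $\operatorname{supp}(\phi)$ disjoint from $\overline{U}$, or by carrying out the differentiation-under-the-integral argument on a compact $K \subset U$ containing $x_0$ (so that $\operatorname{dist}(K, F) > 0$). With that adjustment the argument is complete.
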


\begin{proof}
Suppose $v \in C^{\infty}(X) \cap L^{\infty}(\hat X)$, and $x \in X$. Let $\eta \in C^{\infty}_0(X)$ be such that $\eta \equiv 1$ in a neighborhood of $x$, and write $w_1 = \eta v$, $w_2 = (1-\eta)v$. Then $w_1 \in C^{\infty}(\hat X)$, and $T_{j,\ell}w_1(x)$ is given by (\ref{eq:Tjldef}) with $v$ replaced by $w_1$. Furthermore, $w_2$ is identically zero near $x$. Thus by pseudolocality of $A$, we have $[A q_{\ell}^x w_2](z)$ being $C^{\infty}$ for $z$ near $x$, and one can define $T_{j,\ell} w_2(x)$ by (\ref{eq:Tjldef}) with $v$ replaced by $w_2$. Since $T_{j,\ell} v = T_{j,\ell} w_1 + T_{j,\ell} w_2$, it follows that one can define $T_{j,\ell} v(x)$ by (\ref{eq:Tjldef}). 

In order to differentiate $T_{j,\ell}v$ at $x$, it suffices to differentiate $T_{j,\ell}w_1$ and $T_{j,\ell}w_2$ at $x$. One can differentiate $T_{j,\ell}w_1$ using (\ref{eq:Tjlderiv}). To differentiate $T_{j,\ell}w_2$ at $x$, note that since $x$ is not in the kernel of $w_2$, for any $z$ in a small neighborhood of $x$, we have
$$
T_{j,\ell}w_2(z) = \int_{\hat X} [(\hat \nabla_b)_z^j A(z,y)] q_{\ell}(z,y) w_2(y) \hat m(y).
$$
Differentiating under the integral using the dominated convergence theorem, one sees that the derivatives of $T_{j,\ell}w_2$ at $x$ satisfies
$$
(\hat \nabla_b)_x^r T_{j,\ell}w_2(x) = \sum_{s=0}^r \binom{r}{s} \left. (\hat \nabla_b)_z^{j+r-s} \right|_{z = x} [A (\hat \nabla_b)_x^{s} q_{\ell}^x w_2](z).
$$
Together, one concludes that the derivatives of $T_{j,\ell}v(x)$ is given by (\ref{eq:Tjlderiv}) in our current case as well.
\end{proof}

\subsection{Proof of Lemma~\ref{lem:hrhokderiv}}

We now move on to the proof of Lemma~\ref{lem:hrhokderiv}. To do so, we fix a neighborhood $U$ of $p$, and fix a frame $\hat Z_1$, $\hat Z_{\bar 1}$, $T$ of the complexified tangent bundle in $U$ as in Section~\ref{subsect:def}. Write $X_1$ and $X_2$ for the real and imaginary parts of $\hat Z_1$. One can now define normal coordinates centered at any point $x \in U$: for $y$ sufficiently close to $x$, there exists a unique $w \in \mathbb{R}^3$ such that if $\gamma(t)$ is the integral curve of $w_1 X_1 + w_2 X_2 + w_3 T$ with $\gamma(0) = x$, then $\gamma(1) = y$. In that case we write $y = x \exp(w)$, or equivalently $w = \Theta(x,y)$. Note in this case, 
$$\vartheta(x,y) \simeq |w_1|+|w_2|+|w_3|^{1/2}.$$
Also, in this normal coordinate system, $X_1$ and $X_2$ then takes the form
$$
X_1 = \frac{\partial}{\partial w_1} + 2 w_2 \frac{\partial}{\partial w_3} + O^1 \frac{\partial}{\partial w_1} + O^1 \frac{\partial}{\partial w_2} + O^2 \frac{\partial}{\partial w_3},
$$
$$
X_2 = \frac{\partial}{\partial w_2} - 2 w_1 \frac{\partial}{\partial w_3} + O^1 \frac{\partial}{\partial w_1} + O^1 \frac{\partial}{\partial w_2} + O^2 \frac{\partial}{\partial w_3}
$$
where $O^k$ are functions that vanishes to non-isotropic order $\geq k$ at $w = 0$. It follows that $(\hat \nabla_b)_x^j \Theta(x,y)^{\alpha}$ vanishes to non-isotropic order $\|\alpha\|-j$ along the diagonal $y = x$, i.e.
$$
|(\hat \nabla_b)_x^j \Theta(x,y)^{\alpha}| \lesssim \vartheta(x,y)^{\|\alpha\|-j}
$$
if $j \leq \|\alpha\|$.

Now fix $h \in \mathcal{E}(\hat \rho^1)$. For $x \in U$, let $h^x$ be the function of $w$ defined by $h^x(w) = h(x \exp(w))$. Let $P^x_k$ be the Taylor polynomial of $h^x$ at $w = 0$ up to non-isotropic order $k$. We sometimes think of $P^x_k$ as a function of $y$. Then
$$
P^x_k(y) = \sum_{\|\alpha\| \leq k} \frac{1}{\alpha!} (\partial_w^{\alpha} h^x)(0) [\Theta(x,y)]^{\alpha},
$$
where $\|\alpha\|:=|\alpha_1|+|\alpha_2|+2|\alpha_3|$ if $\alpha$ is the multi-index $(\alpha_1,\alpha_2,\alpha_3)$. One can show, by reduction to the ordinary Taylor's theorem, that 
\begin{equation} \label{eq:nonisoTaylor}
|h(y) - P^x_{k-1}(y)| \leq C \hat \rho(x)^{1-k} \vartheta(x,y)^k \quad \text{if $\vartheta(x,y) < \frac{1}{4} \hat \rho(x)$}.
\end{equation}
We will make crucial use of this in the proof of the following lemma:

\begin{lem} \label{lem:diffunderintegral}
Suppose $A$ is a smoothing operator of order 1, and $h \in \mathcal{E}(\hat \rho^1)$. If $v \in C^{\infty}(X) \cap L^{\infty}(\hat X)$, then for any $k \geq 1$, we have
$$
\left\| \hat \rho^k(x) \left. (\hat \nabla_b)_z^k \right|_{z=x} [A(h-P^x_{k-1})v](z) \right\|_{L^{\infty}(\hat m(x))} \lesssim \|v\|_{L^{\infty}(\hat m)}
$$
where $P_{k-1}^x(y)$ is defined as above.
\end{lem}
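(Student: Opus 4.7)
Interpret $P^x_{k-1}(y)$ globally via a cutoff $\chi_x\in C^\infty(\hat X)$ supported in $B(x,\hat\rho(x)/4)$, identically $1$ on $B(x,\hat\rho(x)/8)$, with $|\hat\nabla_b^j\chi_x|\lesssim\hat\rho(x)^{-j}$; since $B(x,\hat\rho(x)/4)$ is disjoint from a neighborhood of $p$ when $x$ is close to $p$, the product $\chi_xP^x_{k-1}$ extends to a $C^\infty$ function on $\hat X$. Split
\[
(h-\chi_xP^x_{k-1})v \;=\; \phi_1^x+\phi_2^x,\qquad \phi_1^x:=\chi_x(h-P^x_{k-1})v,\ \ \phi_2^x:=(1-\chi_x)hv,
\]
and treat the two pieces separately.

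The support of $\phi_2^x$ is contained in $\{y:\vartheta(x,y)\geq\hat\rho(x)/8\}$, so $z=x$ lies outside it and one may differentiate $A\phi_2^x$ directly under the integral. Applying the kernel bound $|(\hat\nabla_b)_z^kA(z,y)|\lesssim\vartheta(z,y)^{-3-k}$ (valid since $A$ is smoothing of order one) together with $|h(y)|\lesssim\hat\rho(y)$ (from $h\in\mathcal{E}(\hat\rho^1)$), and splitting the $y$-integration into $\vartheta(x,y)\lesssim\hat\rho(x)$ (where $\hat\rho(y)\sim\hat\rho(x)$) and $\vartheta(x,y)>\hat\rho(x)$ (where $\hat\rho(y)\lesssim\vartheta(x,y)$), a direct calculation yields a contribution of size $\hat\rho(x)^{2-k}\norm{v}_{L^\infty}$ (or $\hat\rho(x)\norm{v}_{L^\infty}$ when $k=1$), which upon multiplication by $\hat\rho(x)^k$ is dominated by $\norm{v}_{L^\infty}$ since $\hat\rho$ is bounded on $\hat X$.

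For the near piece, set $q^x:=\chi_x(h-P^x_{k-1})$. The Taylor estimate \eqref{eq:nonisoTaylor} together with its higher-derivative analogue yields $|\hat\nabla_b^jq^x(y)|\lesssim\hat\rho(x)^{1-k}\vartheta(x,y)^{k-j}$ for $y\in B(x,\hat\rho(x)/4)$, $0\leq j\leq k$. The target reduces to establishing the identity
\[
\left.(\hat\nabla_b)_z^k[A(q^xv)](z)\right|_{z=x} = \int\bigl[(\hat\nabla_b)_z^kA(z,y)\bigr]_{z=x}\,q^x(y)\,v(y)\,\hat m(y),
\]
the right side being absolutely convergent and dominated, by combining the two preceding bounds, by
$\hat\rho(x)^{1-k}\norm{v}_{L^\infty}\int_{B(x,\hat\rho(x)/4)}\vartheta(x,y)^{-3}\hat m(y)\lesssim\hat\rho(x)^{2-k}\norm{v}_{L^\infty}$,
which again absorbs the prefactor $\hat\rho(x)^k$. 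The main obstacle is precisely this identity, since $z=x$ lies in the support of $q^xv$ and $v$ need not be smooth at $p$.

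To overcome it, I decompose $q^x=\sum_{j\geq 0}q^x_j$ smoothly and dyadically, with each $q^x_j$ supported in the annulus $\{2^{-j-2}\hat\rho(x)\leq\vartheta(x,y)\leq 2^{-j}\hat\rho(x)\}$ and obeying $|\hat\nabla_b^iq^x_j|\lesssim\hat\rho(x)^{1-k}(2^{-j}\hat\rho(x))^{k-i}$. Since each $q^x_jv$ is supported at distance $\gtrsim 2^{-j-2}\hat\rho(x)$ from $z=x$, differentiation under the integral is justified term by term, yielding
\[
\left.(\hat\nabla_b)_z^k[A(q^x_jv)](z)\right|_{z=x} = \int\bigl[(\hat\nabla_b)_z^kA(z,y)\bigr]_{z=x}\,q^x_j(y)\,v(y)\,\hat m(y),
\]
with the annular kernel estimates producing an individual bound $\lesssim\hat\rho(x)^{2-k}2^{-j}\norm{v}_{L^\infty}$ and hence an absolutely convergent geometric series. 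A density argument---approximating $v$ by a smooth extension $\tilde v\in C^\infty(\hat X)$ that agrees with $v$ on $B(x,\hat\rho(x)/4)$ and satisfies $\norm{\tilde v}_{L^\infty}\lesssim\norm{v}_{L^\infty}$ (possible since $B(x,\hat\rho(x)/4)$ is a relatively compact subset of $X$, where $v$ is smooth)---combined with the $L^2$-boundedness of $A$ and the uniform convergence of the dyadic partial sums on compacta of $\hat X\setminus\{x\}$, identifies the summed integral with $(\hat\nabla_b)_z^k[A(q^xv)](z)|_{z=x}$, establishing the pointwise identity. Assembling this with the far-piece estimate finishes the proof.
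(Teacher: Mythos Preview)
Your decomposition into a far piece $(1-\chi_x)hv$ and a near piece $\chi_x(h-P^x_{k-1})v$ is essentially the paper's splitting via $1=(1-\varphi_r)+(\varphi_r-\varphi_\varepsilon)+\varphi_\varepsilon$, and your far-piece estimate is correct (indeed slightly sharper than the paper's, since cutting off $P^x_{k-1}$ by $\chi_x$ lets you avoid estimating the polynomial tail separately). Your dyadic treatment of the near piece replaces the paper's limiting argument $\varepsilon\to 0$; the annular estimates and the resulting geometric series are correct and reproduce the paper's bound on its piece $II$.

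The gap is in the last step, where you identify $\sum_j\left.(\hat\nabla_b)_z^k[A(q^x_jv)]\right|_{z=x}$ with $\left.(\hat\nabla_b)_z^k[A(q^xv)]\right|_{z=x}$. Neither ``$L^2$-boundedness of $A$'' nor ``uniform convergence of the dyadic partial sums on compacta of $\hat X\setminus\{x\}$'' yields this: you need convergence of the $k$-th derivative \emph{at the point $z=x$}, and $x$ lies in the support of the tail $R_N:=\sum_{j>N}q^x_jv$. The tail $R_N$ is supported in $B(x,2^{-N}\hat\rho(x))$, and while $R_N\to 0$ in $L^2$, this only gives $AR_N\to 0$ in $L^2$, which says nothing about $(\hat\nabla_b)^kAR_N(x)$. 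Uniform convergence away from $x$ is likewise irrelevant at $z=x$.

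This is exactly the point where the paper invokes the \emph{cancellation condition} of $A$: the tail $R_N$ is (up to a constant depending on $x$ but not $N$) $2^{-N}$ times a normalized bump function on $B(x,2^{-N}\hat\rho(x))$, so the cancellation estimate $\|\hat\nabla_b^k A\phi\|_{L^\infty(B)}\lesssim r^{1-k}$ gives $\bigl|(\hat\nabla_b)^kAR_N(x)\bigr|\lesssim_x 2^{-N}\to 0$. Alternatively you could push $R_N\to 0$ in $NL^{k,p}$ for some $p>4$ (using your Taylor bounds for $j\leq k$), then use Proposition~\ref{prop:nonisosmoothing}(a) and Sobolev embedding; but either way, the justification you wrote does not close the argument.
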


\begin{proof}
Fix $x \in X$, and write $r := \hat \rho(x) / 8$. Let $\varepsilon \in (0,r)$, and write
$$
1 = (1-\varphi_r) + (\varphi_r - \varphi_{\varepsilon}) + \varphi_{\varepsilon}
$$
where $\varphi_r$ and $\varphi_{\varepsilon}$ are normalized bump function on $B(x,2r)$ and $B(x,2\varepsilon)$ respectively, with $\varphi_r(y) \equiv 1$ on $B(x,r)$, $\varphi_{\varepsilon}(y) \equiv 1$ on $B(x,\varepsilon)$. Then
$$
h-P_{k-1}^x = (h-P_{k-1}^x) (1-\varphi_r) + (h-P_{k-1}^x) (\varphi_r - \varphi_{\varepsilon}) + (h-P_{k-1}^x) \varphi_{\varepsilon},
$$
and we estimate the contribution of each of these three terms to
$$
\hat \rho^k(x) \left. (\hat \nabla_b)_z^k \right|_{z=x} [A(h-P^x_{k-1})v](z).
$$ 
Let's call the above contributions $I$, $II$ and $III$. We will show that $I$ and $II$ are bounded by $C \|v\|_{L^{\infty}(\hat m)}$ uniformly in $x$ and $\varepsilon$, while $III$ tends to 0 as $\varepsilon \to 0$. These obviously imply the desired conclusion in our lemma.

Now note $I$ and $II$ can be computed using the kernel of $A$: we have
$$
I =  \hat \rho^k(x) \int_{\hat X} (1-\varphi_r(y)) [(\hat \nabla_b)_x^k A(x,y)] [h(y)-P_x^{k-1}(y)] v(y) \hat m(y) 
$$
and
$$
II =  \hat \rho^k(x) \int_{\hat X} (\varphi_r(y)-\varphi_{\varepsilon}(y)) [(\hat \nabla_b)_x^k A(x,y)] [h(y)-P_x^{k-1}(y)] v(y) \hat m(y).
$$
$I$ can then be estimated by breaking up $h-P_{k-1}^x$ into $h$ and the individual terms in $P_{k-1}^x$: 
the term involving $h$ only is bounded by
\begin{align*}
&\hat \rho^k(x) \int_{\vartheta(y,x) \geq r} |(\hat \nabla_b)_x^k A(x,y) h(y) v(y)| \hat m(y) \\ 
\leq  & \hat \rho^k(x) \int_{\vartheta(y,x) \geq r} \vartheta(x,y)^{-3-k} \hat m(y) \|v\|_{L^{\infty}(\hat m)} \\ \lesssim &
\begin{cases}
 r \|v\|_{L^{\infty}(\hat m)} \quad \text{if $k > 1$} \\
 r \log r \|v\|_{L^{\infty}(\hat m)} \quad \text{if $k = 1$}
\end{cases}\\
\lesssim & \|v\|_{L^{\infty}(\hat m)}.
\end{align*}
Also, the term involving $P^x_{k-1}$ can be bounded by
\begin{align*}
& \hat \rho^k(x) \int_{\vartheta(y,x) \geq r} |[(\hat \nabla_b)_x^k A(x,y)] P^x_{k-1}(y) v(y)| \hat m(y) \\
\leq & \sum_{\|\alpha\| \leq k-1} \hat \rho^k(x) \int_{\vartheta(y,x) \geq r} \vartheta(x,y)^{-3-k} \hat \rho^{1-\|\alpha\|}(x) \vartheta(x,y)^{\|\alpha\|} \hat m(y) \|v\|_{L^{\infty}(\hat m)}.
\end{align*}
The terms when $\|\alpha\|= k-1$ can be bounded by 
$$
\hat \rho^2(x) \int_{\vartheta(y,x) \geq r} \vartheta(x,y)^{-4} \hat m(y) \|v\|_{L^{\infty}(\hat m)} \lesssim r^2 \log r \|v\|_{L^{\infty}(\hat m)} \lesssim \|v\|_{L^{\infty}(\hat m)}.
$$
The terms when $\|\alpha\| < k-1$ can be bounded by
\begin{align*}
&\hat \rho(x)^{k+1-\|\alpha\|} \int_{\vartheta(x,y) \geq r} \vartheta(x,y)^{-4+\|\alpha\|-(k-1)} \hat m(y) \|v\|_{L^{\infty}(\hat m)} \\
\lesssim & \rho(x)^{k+1-\|\alpha\|} r^{\|\alpha\|-(k-1)} \|v\|_{L^{\infty}(\hat m)}  \\
\lesssim & r^2 \|v\|_{L^{\infty}(\hat m)}  \lesssim  \|v\|_{L^{\infty}(\hat m)}.
\end{align*}
This shows that $|I| \lesssim \|v\|_{L^{\infty}(\hat m)}$ uniformly in $x$, as desired.

Next, $II$ can be bounded using (\ref{eq:nonisoTaylor}), uniformly in $x$ and $\varepsilon$:
\begin{align*}
& \hat \rho^k(x) \int_{\vartheta(y,x) < r} |[(\hat \nabla_b)_x^k A(x,y)] (h-P^x_{k-1})(y) v(y)| \hat m(y) \\ 
\leq &\hat \rho^k(x) \int_{\vartheta(y,x) < r} \vartheta(x,y)^{-3-k} r^{1-k} \vartheta(x,y)^k \hat m(y) \|v\|_{L^{\infty}(\hat m)} \\
\lesssim & r^2 \|v\|_{L^{\infty}(\hat m)} \lesssim \|v\|_{L^{\infty}(\hat m)}.
\end{align*}
Thus it remains to show that $III$ tends to 0 as $\varepsilon \to 0$.

To do so, note that there is some constant $C_v$ (possibly depending on many derivatives of $v$) such that $C_v^{-1} \varepsilon^{-k} r^{k} \varphi_{\varepsilon} (h-P^x_{k-1}) v$ is a normalized bump function in $B(x,2\varepsilon)$. Thus since $A$ is smoothing of order 1, by the cancellation conditions of $A$, we have
$$
|III| \leq r^k \left\| (\hat \nabla_b)^k [A \varphi_{\varepsilon} (h-P^x_{k-1}) v] \right\|_{L^{\infty}(B(x,2\varepsilon))} \leq C_v \varepsilon,
$$
which tends to 0 as $\varepsilon \to 0$. This completes the proof of the current lemma.
\end{proof}

We are now ready to prove Lemma~\ref{lem:hrhokderiv}. In fact we will prove the following slight generalization:

\begin{lem} \label{lem:hrhokderiv2}
For any non-negative integer $k$, if $A_1$, $A_2$, $\dots$, $A_{k+1}$ are smoothing operators of order $1$, and $h \in \mathcal{E}(\hat \rho^1)$ is supported in a sufficiently small neighborhood of $p$, then for any function $u \in C^{\infty}(X) \cap L^{\infty}(\hat X)$, we have 
$$ %\begin{equation} \label{eq:inducthypomain}
\|\hat \rho^k \hat \nabla_b^k (A_{k+1}h A_k h \dots A_1 h) u\|_{L^{\infty}(\hat m)} \lesssim_k \|u\|_{L^{\infty}(\hat m)}.
$$ %\end{equation}
\end{lem}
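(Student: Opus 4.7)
The plan is to establish Lemma~\ref{lem:hrhokderiv2} by induction on $k$, mirroring the (unwritten) inductive proof of Lemma~\ref{lem:hrhokderiv}; allowing each factor to carry its own smoothing operator $A_i$ introduces no essential new difficulty, since the estimates are formal in the $A_i$'s. The base case $k=0$ is immediate from the $L^\infty$-boundedness of smoothing operators of order $1$ (Proposition~\ref{prop:nonisosmoothing}(b)) and the boundedness of $h$: $\|A_1 h u\|_{L^\infty}\lesssim \|h\|_{L^\infty}\|u\|_{L^\infty}\lesssim \|u\|_{L^\infty}$.

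For the inductive step at level $k\geq 1$, assume the lemma holds for all smaller values. Set $v:=(A_k h)(A_{k-1}h)\cdots(A_1 h)u\in C^\infty(X)\cap L^\infty(\hat X)$, so that $\|v\|_{L^\infty}\lesssim\|u\|_{L^\infty}$ and the quantity to bound is $\hat\rho(x_0)^k\,|\hat\nabla_b^k A_{k+1}(hv)(x_0)|$. For each $0\le j\le k-1$, write $v=(A_k h)\cdots(A_{k-j} h)u'_j$ with $u'_j:=(A_{k-j-1}h)\cdots(A_1 h)u$, note $\|u'_j\|_{L^\infty}\lesssim\|u\|_{L^\infty}$ by iterated $L^\infty$-boundedness, and apply the inductive hypothesis with parameter $j$ to the outer $j+1$ factors to get $\|\hat\rho^j\hat\nabla_b^j v\|_{L^\infty}\lesssim\|u\|_{L^\infty}$. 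Now fix $x_0\in X$ and Taylor-expand $h$ non-isotropically around $x_0$ up to order $k-1$: $h=P^{x_0}_{k-1}+(h-P^{x_0}_{k-1})$. Since $A_{k+1}(hv)(z)$ is independent of $x_0$, applying $\hat\nabla_b^k$ in $z$ and setting $z=x_0$ splits the quantity into a polynomial contribution plus a remainder; the remainder $\hat\nabla_b^k_z|_{z=x_0}A_{k+1}((h-P^{x_0}_{k-1})v)(z)$ is bounded by $\hat\rho(x_0)^{-k}\|v\|_{L^\infty}\lesssim \hat\rho(x_0)^{-k}\|u\|_{L^\infty}$ directly by Lemma~\ref{lem:diffunderintegral} applied with $A=A_{k+1}$.

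For the polynomial contribution, expand $P^{x_0}_{k-1}(y)=\sum_{\|\alpha\|\leq k-1}c_\alpha(x_0)\,\Theta(x_0,y)^\alpha$ with $|c_\alpha(x_0)|\lesssim \hat\rho(x_0)^{1-\|\alpha\|}$. Each summand equals $c_\alpha(x_0)\cdot T_{k,\|\alpha\|}v(x_0)$, where $T_{j,\ell}$ is the operator of Section~\ref{sect:aux} built from $A_{k+1}$ and $q_\ell(x,y)=\Theta(x,y)^\alpha$, $\ell=\|\alpha\|$. In the extremal case $\|\alpha\|=k-1$, Lemma~\ref{lem:Tl+1lsmoothing} identifies $T_{k,k-1}$ as a smoothing operator of order $0$, whose cancellation condition (applied after localizing $v$ near $x_0$ on the scale $\hat\rho(x_0)/4$, using the derivative bounds on $v$) yields $|T_{k,k-1}v(x_0)|\lesssim\|u\|_{L^\infty}$; combined with $\hat\rho(x_0)^k|c_\alpha(x_0)|\lesssim\hat\rho(x_0)^2$, this gives the desired bound. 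For $\|\alpha\|<k-1$, applying formula~\eqref{eq:Tjlderiv} with $j=\|\alpha\|+1$ and $r=k-\|\alpha\|-1$ expresses $T_{k,\|\alpha\|}v(x_0)$ as $\hat\nabla_b^{k-\|\alpha\|-1}\bigl(T_{\|\alpha\|+1,\|\alpha\|}v\bigr)(x_0)$ plus correction terms of the form $T_{k-s,\max(\|\alpha\|-s,0)}v(x_0)$, $1\le s\le k-\|\alpha\|-1$. The leading term is estimated by splitting $v=\eta v+(1-\eta)v$ with $\eta$ a normalized bump function on $B(x_0,\hat\rho(x_0)/4)$: the derivative bounds on $v$ up to order $k-1$ make $\eta v/\|u\|_{L^\infty}$ a normalized bump function to which the cancellation condition of the order-$0$ operator $T_{\|\alpha\|+1,\|\alpha\|}$ applies, while the tail $(1-\eta)v$ is controlled by the kernel bounds of order $0$, producing $|\hat\nabla_b^{k-\|\alpha\|-1}T_{\|\alpha\|+1,\|\alpha\|}v(x_0)|\lesssim \hat\rho(x_0)^{-(k-\|\alpha\|-1)}\|u\|_{L^\infty}$, which combined with $\hat\rho(x_0)^{k+1-\|\alpha\|}$ yields $\hat\rho(x_0)^2\|u\|_{L^\infty}$ as required. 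The correction terms have parameters $(j',\ell')$ with either the weight $\ell'$ or the excess $j'-\ell'$ strictly decreased, so they are handled by iterating the same reduction, the recursion terminating in finitely many steps at smoothing operators of order $0$ of the form $T_{\ell''+1,\ell''}$.

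The main obstacle is the bookkeeping in the recursive step and verifying that the regularity of $v$ furnished by the inductive hypothesis (derivatives up to order $k-1$ controlled by $\hat\rho^{-j}\|u\|_{L^\infty}$) is precisely sufficient to invoke the cancellation conditions of every order-$0$ smoothing operator that appears: at each step of the recursion, the number of derivatives demanded of $v$ is at most $k-\|\alpha\|-1\le k-1$, and it is this compatibility that closes the induction.
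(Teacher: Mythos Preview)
Your overall architecture---induction on $k$, non-isotropic Taylor expansion of $h$ at $x_0$, handling the remainder by Lemma~\ref{lem:diffunderintegral}, and expressing the polynomial contributions through the operators $T_{j,\ell}$---is exactly the paper's. The gap is in your treatment of the leading term $\hat\nabla_b^{k-\ell-1}T_{\ell+1,\ell}v(x_0)$. You localize $v=\eta v+(1-\eta)v$ and invoke the cancellation condition of the order-$0$ operator $T_{\ell+1,\ell}$ on the near piece, asserting that the bounds $\|\hat\rho^j\hat\nabla_b^j v\|_{L^\infty}\lesssim\|u\|_{L^\infty}$ for $j\le k-1$ make $\eta v/\|u\|_{L^\infty}$ an admissible bump function. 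But the cancellation conditions in the paper's definition of smoothing operators are stated for \emph{fully} normalized bump functions, and if you trace the proof of Lemma~\ref{lem:Tl+1lsmoothing} you see that the cancellation of $T_{\ell+1,\ell}$ at output-level $k-\ell-1$ is derived from the cancellation of $A_{k+1}$ at level up to $k$, applied to inputs of the form $(\hat\nabla_b)_x^s q_\ell^x\phi$; this in turn (via the subelliptic $L^2$ estimates plus Sobolev embedding used in Section~\ref{sect2}) demands control of strictly more than $k-1$ derivatives of $\phi$ in $y$. Your closing claim that ``the number of derivatives demanded of $v$ is at most $k-\|\alpha\|-1$'' is therefore unjustified, and the induction does not close as written.

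The paper sidesteps this entirely by never appealing to pointwise regularity of $v=S_ku$. The key move you are missing is to \emph{absorb} the outermost factor $A_k$ of $v$ into the operator: since $T_{\ell+1,\ell}$ is smoothing of order $0$ and $A_k$ of order $1$, Theorem~\ref{thm2.2} gives that $\tilde A:=T_{\ell+1,\ell}\circ A_k$ is again smoothing of order $1$, whence
\[
\hat\nabla_b^{\,j-\ell-1}\bigl[T_{\ell+1,\ell}(S_ku)\bigr]
=\hat\nabla_b^{\,j-\ell-1}\bigl[\tilde A\,h\,A_{k-1}h\cdots A_{k-j+\ell+1}h\bigr](S_{k-j+\ell}u)
\]
is precisely of the form covered by the outer inductive hypothesis at level $i=j-\ell-1\le k-1$, applied with input $S_{k-j+\ell}u\in L^\infty$. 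No localization and no bump-function regularity of $v$ enter. The correction terms are then organized as a clean inner induction on $j$ (the paper's inequality \eqref{eq:inducthypojl}), each step again reducing to the outer hypothesis via the same absorption trick, rather than the open-ended recursion you sketch.
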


For simplicity, we will write below $S_k$ for the operator $A_k h \dots A_1 h$. We then need to bound $\|\hat \rho^k \hat \nabla_b^k S_{k+1} u \|_{L^{\infty}(\hat m)}$ for $u \in C^{\infty}(X) \cap L^{\infty}(\hat X)$.

\begin{proof}
We proceed by induction on $k$. When $k = 0$, one just need to notice that $\|A_1 h u\|_{L^{\infty}(\hat m)} \lesssim \|u\|_{L^{\infty}(\hat m)}$, which holds since $A_1$ and $h$ both preserves $L^{\infty}(\hat m)$. Suppose now the proposition has been proved up to $k-1$ for some positive integer $k$. In other words, we assume
\begin{equation} \label{eq:inducthypomain}
\|\hat \rho^i \hat \nabla_b^i S_{i+1} u\|_{L^{\infty}(\hat m)} \lesssim_i \|u\|_{L^{\infty}(\hat m)}.
\end{equation}
for all $0 \leq i \leq k-1$. Let $u \in C^{\infty}(X) \cap L^{\infty}(\hat X)$, and write 
\begin{align*}
\hat \rho^k \hat \nabla_b^k S_{k+1} u(x) = \hat \rho^k \hat \nabla_b^k A_{k+1} h S_k u(x) = V_1(x) + V_2(x)
\end{align*}
where
$$
V_1(x) = \hat \rho^k(x) \left. (\hat \nabla_b)_z^k \right|_{z=x} [A_{k+1} (h-P^x_{k-1}) S_k u](z)
$$
and
$$
V_2(x) = \hat \rho^k(x) \left. (\hat \nabla_b)_z^k \right|_{z=x} [A_{k+1} P^x_{k-1} S_k u](z).
$$
We want to show that both $V_1(x)$ and $V_2(x)$ are bounded by $C \|u\|_{L^{\infty}(\hat m)}$. 

First $V_1(x)$ is like an error term, that can be estimated by Lemma~\ref{lem:diffunderintegral}. In fact, applying it to $v = S_k u$, which is in $C^{\infty}(X) \cap L^{\infty}(\hat X)$, we have
$$ %\begin{equation} \label{eq:V1key}
\left\|\hat \rho^k(x) \left. (\hat \nabla_b)_z^k \right|_{z = x} [A_{k+1} (h-P^x_{k-1}) (S_k u)](z) \right\|_{L^{\infty}(\hat m (x))} \leq C \|S_k u\|_{L^{\infty}(\hat m)},
$$ %\end{equation}
and the latter is bounded trivially by $C \|u\|_{L^{\infty}(\hat m)}$. Thus it remains to estimate the main term $V_2(x)$.

Now write $\Theta_x(y) = \Theta(x,y)$. Then
\begin{align*}
V_2(x) 
= & \sum_{\|\alpha\| \leq k-1} \frac{1}{\alpha!} (\partial_w^{\alpha} h^x)(0) \hat \rho^k(x) \left. (\hat \nabla_b)_z^k \right|_{z=x} [A_{k+1} \Theta_x^{\alpha} S_k u](z).
\end{align*}
Note that on the right hand side of this sum, $\left. (\hat \nabla_b)_y^k \right|_{z=x} [A_{k+1} \Theta_x^{\alpha} S_k u](z)$ is of the form $T_{k,\ell} S_k u$ if $\ell = \|\alpha\|$, where $T_{k,\ell}$ is defined as in (\ref{eq:Tjldef}), with $A = A_{k+1}$ and $q_{\ell}(x,y) = \Theta(x,y)^{\alpha}$. Thus 
$$
|V_2(x)| \lesssim \sum_{\ell = 0}^{k-1} \hat \rho^{k-\ell+1}(x) |T_{k,\ell} (S_k u)(x)|.
$$
We will prove, by induction, that when $1 \leq j \leq k$, and $0 \leq \ell \leq j-1$, if $T_{j,\ell}$ is any operator of the form (\ref{eq:Tjldef}), we have
\begin{equation} \label{eq:inducthypojl}
\left\| \hat \rho^{j-\ell-1}(x) T_{j,\ell}(S_k u)(x)  \right\|_{L^{\infty}(\hat m(x))} \lesssim \|u\|_{L^{\infty}(\hat m)},
\end{equation}
and that the bound is uniform in the choice of $A$ and $q_{\ell}$, as long as they are suitably normalized. Assume this for the moment. Then applying it to the case $j = k$, with $T_{k,\ell}$ defined using $A = A_{k+1}$ and $q_{\ell}(x,y) = \Theta(x,y)^{\alpha}$, we have
$$
|V_2(x)| \lesssim \sum_{\ell = 0}^{k-1} \hat \rho^{2}(x) \|u\|_{L^{\infty}(\hat m)} \lesssim \|u\|_{L^{\infty}(\hat m)}.
$$
This will complete the proof of the current lemma.

It remains to prove (\ref{eq:inducthypojl}). To do so, we proceed by induction on $j$. First, when $j = 1$ (hence $\ell = 0$), note that $T_{1,0}$ is a smoothing operator of order 0 by Lemma~\ref{lem:Tl+1lsmoothing}. Hence 
$$
T_{1,0} S_k u = (T_{1,0} A_k) h S_{k-1} u = \tilde{A} h S_{k-1} = \tilde{S}_1 S_{k-1} u,
$$ 
where $\tilde{A} := T_{1,0} A_k$ is smoothing of order 1 by Theorem~\ref{thm2.2}, and $\tilde{S}_1 = \tilde{A} h$. By our induction hypothesis (\ref{eq:inducthypomain}) with $i = 0$, we see that 
$$
\|T_{1,0} S_k u(x)\|_{L^{\infty}(\hat m(x))} = \|\tilde{S}_1 [S_{k-1} u](x)\|_{L^{\infty}(\hat m(x))} \lesssim \|S_{k-1} u\|_{L^{\infty}(\hat m)} \lesssim \|u\|_{L^{\infty}(\hat m)}.
$$
Hence (\ref{eq:inducthypojl}) holds when $j = 1$. 

Next, assume that (\ref{eq:inducthypojl}) has been proved up to $j-1$, for some $2 \leq j \leq k$. In other words, we fix $j$ with $2 \leq j \leq k$, and assume that we have verified already
\begin{equation} \label{eq:inducthypojlassumed}
\left\| \hat \rho^{a-b-1}(x) T_{a,b}(S_k u)(x)  \right\|_{L^{\infty}(\hat m(x))} \lesssim \|u\|_{L^{\infty}(\hat m)}
\end{equation}
for all $a$ and $b$ with $1 \leq a \leq j-1$ and $0 \leq b \leq a-1$. We want to prove the same statement when $a = j$. So we fix $\ell$ with $0 \leq \ell \leq j-1$. First, $S_k u \in C^{\infty}(X) \cap L^{\infty}(\hat X)$ since $u$ is as such. Thus we can apply Lemma~\ref{lem:nonsmoothforTjl} and (\ref{eq:Tjlderiv}) to $v = S_k u$: in fact, if $T_{j,\ell}$ is defined so that $T_{j,\ell}v(x) = \left. (\hat \nabla_b)_z^j \right|_{z = x} [A q_{\ell}^x v](z)$, then by (\ref{eq:Tjlderiv}), we have
\begin{align} \notag
T_{j,\ell} (S_k u)(x) =& \hat \nabla_b^{j-\ell-1} [T_{\ell+1,\ell} (S_k u)](x)\\
&\quad - \sum_{s = 1}^{j-\ell-1} \binom{j-\ell-1}{s} \left. (\hat \nabla_b)_z^{j-s} \right|_{z = x} [A (\hat \nabla_b)_x^s q_{\ell}^x S_k u](z). \label{eq:Tjlest}
\end{align}
The first term on the right hand side can then be written as
$$
\hat \nabla_b^{j-\ell-1} [\tilde{A} h S_{k-1} u](x) = \hat \nabla_b^{j-\ell-1} [\tilde{A} h S_{j-\ell-1}] (S_{k-j+\ell} u)(x) 
$$
where $\tilde{A} := T_{\ell+1,\ell} A_k$ is smoothing of order 0 by Lemma~\ref{lem:Tl+1lsmoothing} and Theorem~\ref{thm2.2}. Hence by our induction hypothesis (\ref{eq:inducthypomain}), with $i = j-\ell-1$ (note $0 \leq i \leq k-1$ under our assumptions so (\ref{eq:inducthypomain}) applies), we have 
\begin{equation} \label{eq:rhoj-1main2}
|\hat \rho^{j-\ell-1}(x) \hat \nabla_b^{j-\ell-1} [T_{\ell+1,\ell} (S_k u)](x)| \lesssim \|S_{k-j+\ell} u\|_{L^{\infty}(\hat m)} \lesssim \|u\|_{L^{\infty}(\hat m)}.
\end{equation}
To bound the rest of the sum on the right hand side of (\ref{eq:Tjlest}), note that $\left. (\hat \nabla_b)_z^{j-s} \right|_{z = x} [A (\hat \nabla_b)_x^s q_{\ell}^x S_k u](z)$ is of the form $T_{j-s,\max\{\ell-s,0\}} S_k u(x)$. This is because $(\hat \nabla_b)_x^s q_{\ell}(x,y)$ is smooth on $\hat X \times \hat X$, and vanishes on the diagonal to order $\max\{\ell-s,0\}$. Hence 
\begin{align}
& \left| \hat \rho^{j-\ell-1}(x) \sum_{s=1}^{j-\ell-1} \binom{j-\ell-1}{s} \left. (\hat \nabla_b)_z^{j-s} \right|_{z = x} [A (\hat \nabla_b)_x^s q_{\ell}^x S_k u](z) \right| \notag \\
\lesssim & \sum_{s=1}^{j-\ell-1} \hat \rho^{\max\{s-\ell,0\}}(x) \hat \rho^{j-s-\max\{\ell-s,0\}-1}(x) |T_{j-s,\max\{\ell-s,0\}} S_k u(x)| \label{eq:rhoj-1error2} \\
\lesssim & \sum_{s=1}^{j-\ell-1} \hat \rho^{\max\{s-\ell,0\}}(x) \|u\|_{L^{\infty}(\hat m)} \lesssim \|u\|_{L^{\infty}(\hat m)}, \notag
\end{align} 
the second inequality following from our induction hypothesis (\ref{eq:inducthypojlassumed}) with $a = j-s$, $b = \max\{\ell-s,0\}$ (note that then $1 \leq a \leq j-1$ and $0 \leq b \leq  j-s-1$ in the above sum over $s$, so (\ref{eq:inducthypojlassumed}) applies for this $a$ and $b$). Combining (\ref{eq:rhoj-1main2}) and (\ref{eq:rhoj-1error2}), we see that
$$
|\rho^{j-\ell-1}(x) T_{j,\ell} S_k u (x)| \lesssim \|u\|_{L^{\infty}(\hat m)}
$$
uniformly in $x$, as desired. This completes our proof of (\ref{eq:inducthypojl}), and hence the proof of the current lemma.
\end{proof}

\section{Construction of the CR function $\psi$} \label{sect:CR}

The goal in this section is to show that there exists a function $\psi\in C^\infty(\hat X)$ such that $\hat\ddbar_b\psi=0$ and near $p$, we have $\psi=\abs{z}^2+it+R$, $R=\varepsilon(\hat\rho^4)$ (see Theorem~\ref{t-ysmipc}). It is via this $\psi$ that we reduce our problem from the non-compact manifold $X$ to the compact manifold $\hat X$, as was explained in the introduction. 

Until further notice, we work in some small neighbourhood of $p$. Put 
\[\ol Z^0_1=\frac{\pr}{\pr\ol z}+iz\frac{\pr}{\pr t}.\]
First, we need 

\begin{lem}\label{l-ysmipcI}
For any monomial $a\ol z^{\alpha}z^{\beta}t^\gamma$, $a\in\Complex$, $\alpha, \beta, \gamma\in\mathbb N_0:=\mathbb N\bigcup\set{0}$, with $\alpha+\beta+2\gamma\geq 3$, we can find a polynomial $f=\sum^m_{j=1}d_j\ol z^{\alpha_j}z^{\beta_j}t^{\gamma_j}$, $\alpha_j, \beta_j, \gamma_j\in\mathbb N_0$, $d_j\in\Complex$, $j=1,\ldots,m$, such that $\ol Z^0_1f=az^{\alpha}\ol z^{\beta}t^{\gamma}$ and
\begin{equation}\label{e2-ysmipcI} 
\begin{split}
&\mbox{$d_j=0$ if $\alpha_j+\beta_j+2\gamma_j\neq\alpha+\beta+2\gamma+1$},\\
&\mbox{$\abs{{\rm Re\,}f(z,t)}\leq c\abs{z}^2(\abs{z}+\abs{t})$ in a neighbourhood of $p$,}\\
&\mbox{where $c>0$ is a constant}.
\end{split}
\end{equation}
\end{lem}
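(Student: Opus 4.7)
The plan is to pass to the Heisenberg-type coordinates $(z,\overline z, w)$ with $w := t - iz\overline z$. A direct chain rule computation shows that in these coordinates $\overline Z_1^0 = \partial/\partial \overline z$; in particular both $z$ and $w$ lie in the kernel of $\overline Z_1^0$, so any polynomial in $z$ and $w$ can be added to a solution as a ``holomorphic'' correction. Using the binomial identity $t^\gamma = (w+iz\overline z)^\gamma = \sum_{j=0}^\gamma \binom{\gamma}{j} i^j z^j \overline z^j w^{\gamma-j}$, I would rewrite the source and integrate once in $\overline z$ to produce the candidate
\[
f_0 \;=\; a \sum_{j=0}^\gamma \binom{\gamma}{j} \frac{i^j}{\beta+j+1}\, z^{\alpha+j}\, \overline z^{\beta+j+1}\, w^{\gamma-j}.
\]
Re-expanding $w = t - iz\overline z$ back into $(z,\overline z, t)$, every monomial that appears in $f_0$ has Heisenberg weight (with $z,\overline z$ of weight $1$ and $t$ of weight $2$) equal to $\alpha+\beta+2\gamma+1$, so the homogeneity condition in \eqref{e2-ysmipcI} holds automatically.

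Next I would estimate the $j$-th summand pointwise by $C|z|^{\alpha+\beta+2j+1}|w|^{\gamma-j}$ and note $|w| \lesssim |t|+|z|^2$ near $p$. For $j\ge 1$ the prefactor is at least $|z|^3$ while $|w|^{\gamma-j}$ is bounded, so these terms are $O(|z|^3)\leq C|z|^2(|z|+|t|)$. The $j=0$ term $\tfrac{a}{\beta+1}z^\alpha\overline z^{\beta+1}w^\gamma$ is likewise controlled when $\alpha+\beta\ge 2$, since its leading $z$-degree is already $\ge 3$; and when $\alpha+\beta=1$ the hypothesis $\alpha+\beta+2\gamma\ge 3$ forces $\gamma\ge 1$, in which case $|w|^\gamma\lesssim |t|+|z|$ near $p$ and the estimate again holds. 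The only troublesome case is $\alpha=\beta=0$ (which forces $\gamma\ge 2$), where the $j=0$ term equals $a\overline z\, w^\gamma$ and is only of size $|z|$; this is where the real-part condition \eqref{e2-ysmipcI} is not yet met, and this is the main obstacle.

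To overcome it, in this case I would set
\[
f := f_0 - \overline a\, z\, w^\gamma.
\]
The correction $-\overline a\, z\, w^\gamma$ is a polynomial in $z$ and $w$, hence annihilated by $\overline Z_1^0$; expanded in $(z,\overline z,t)$, each of its monomials has weight $1+2\gamma=\alpha+\beta+2\gamma+1$, so the weight condition is preserved. The corrected $j=0$ term becomes
\[
a\overline z\, w^\gamma - \overline a\, z\, w^\gamma = (a\overline z - \overline{a\overline z})\, w^\gamma = 2i\,{\rm Im}(a\overline z)\, w^\gamma,
\]
whose real part is $-2\,{\rm Im}(a\overline z)\,{\rm Im}(w^\gamma)$. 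Here ${\rm Im}(a\overline z)=O(|z|)$, and since ${\rm Im}(w)=-|z|^2$, the binomial expansion $w^\gamma=(t-i|z|^2)^\gamma$ shows that every monomial in ${\rm Im}(w^\gamma)$ carries a factor $|z|^{2k}$ with $k\geq 1$, so ${\rm Im}(w^\gamma)=O(|z|^2)$ near $p$. Multiplying, the real part of the corrected $j=0$ term is $O(|z|^3)\le c|z|^2(|z|+|t|)$. Combined with the $O(|z|^3)$ control of the $j\geq 1$ contributions, this gives $|{\rm Re}\,f|\le c|z|^2(|z|+|t|)$ as required.

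The main subtlety is exactly the $\alpha=\beta=0$ case: a naive $\overline z$-integration yields a term of order $|z|$, whereas the target requires order $|z|^3$ in the real part. The key observation that closes the argument is that the freedom to add a holomorphic (in $z,w$) correction of the correct Heisenberg weight, together with the identity ${\rm Im}(w)=-|z|^2$, supplies precisely the missing factor of $|z|^2$ when one takes real parts.
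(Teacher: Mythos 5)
Your proof is correct, and it takes a genuinely different route from the one in the paper. The paper's proof stays entirely in the $(z,\ol z,t)$ variables and proceeds by induction on $\gamma$: the base case $\gamma=0$ is a direct integration, and for $\gamma\ge1$ one guesses a leading term ($\frac{a}{\alpha+1}\ol z^{\alpha+1}z^\beta t^\gamma$ when $\alpha+\beta\ge1$, or $a\ol z t^\gamma - \ol a z t^\gamma$ when $\alpha=\beta=0$), then corrects the error — which has strictly smaller $\gamma$ — via the inductive hypothesis. You instead pass to the Heisenberg holomorphic coordinate $w=t-iz\ol z$, in which $\ol Z_1^0$ becomes exactly $\pr/\pr\ol z$, expand the source via the binomial theorem, and integrate in one shot; the homogeneity condition then drops out automatically and the real-part estimate is done termwise. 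This eliminates the induction entirely and makes the mechanism transparent: the only genuinely dangerous contribution is $a\ol z\, w^\gamma$ in the $\alpha=\beta=0$ case, and the freedom to add a $\ol Z_1^0$-holomorphic (i.e.\ polynomial in $z,w$) term of the right weight supplies the needed cancellation. Notably, your construction and the paper's use the very same key idea in that case — subtracting $\ol a z w^\gamma$ (respectively $\ol a z t^\gamma$) to make the leading part purely imaginary, and then exploiting $\mathrm{Im}\,w=-|z|^2$ to gain the extra factor of $|z|^2$ in the real part. Your version is arguably the more illuminating of the two, since it isolates exactly where and why the correction is needed, rather than having it emerge term by term through the recursion. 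All the estimates check out: the $j\ge1$ terms carry $|z|^{\alpha+\beta+2j+1}$ with exponent $\ge3$; the $j=0$ term is fine once $\alpha+\beta\ge1$ (using $|w|^\gamma\lesssim|t|+|z|$ when $\gamma\ge1$); and the corrected $j=0$ term in the $\alpha=\beta=0,\gamma\ge2$ case has real part $-2\,\mathrm{Im}(a\ol z)\,\mathrm{Im}(w^\gamma)=O(|z|)\cdot O(|z|^2)$, as you say.
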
 

\begin{proof} 
We proceed by induction over $\gamma$. First we assume that $\gamma=0$. Given a monomial $a\ol z^{\alpha}z^{\beta}$, $a\in\Complex$, $\alpha, \beta\in\mathbb N_0$, with $\alpha+\beta\geq 3$. Put $f=\frac{a}{\alpha+1}\ol z^{\alpha+1}z^\beta$. It is easy to see that $\ol Z^0_1f=a\ol z^{\alpha}z^{\beta}$ and \eqref{e2-ysmipcI} hold.  Let $\gamma\geq1$. Given a monomial $a\ol z^{\alpha}z^{\beta}t^\gamma$, $a\in\Complex$, $\alpha, \beta,\gamma\in\mathbb N_0$, with $\alpha+\beta+2\gamma\geq 3$. First, we assume that $\alpha=\beta=0$. By the induction assumption,
we can find $f_1=\sum^m_{j=1}d_j\ol z^{\alpha_j}z^{\beta_j}t^{\gamma_j}$, $\alpha_j, \beta_j, \gamma_j\in\mathbb N_0$, $d_j\in\Complex$, $j=1,\ldots,m$, such that 
\[\ol Z^0_1f_1=-ia\gamma\abs{z}^2t^{\gamma-1}+i\ol a\gamma z^2t^{\gamma-1}\] 
and \eqref{e2-ysmipcI} hold. Put 
\[f=a\ol zt^\gamma-\ol azt^\gamma+f_1.\] 
It is not difficult to check that $\ol Z^0_1f=at^\gamma$ and \eqref{e2-ysmipcI} hold. Now, we assume that $\alpha+\beta\geq1$. 
By the induction assumption,
we can find $f_1=\sum^m_{j=1}d_j\ol z^{\alpha_j}z^{\beta_j}t^{\gamma_j}$, $\alpha_j, \beta_j, \gamma_j\in\mathbb N_0$, $d_j\in\Complex$, $j=1,\ldots,m$, such that 
\[\ol Z^0_1f_1=-i\frac{a\gamma}{\alpha+1}\ol z^{\alpha+1}z^{\beta+1}t^{\gamma-1}\] 
and \eqref{e2-ysmipcI} hold. Put 
\[f=\frac{a}{\alpha+1}\ol z^{\alpha+1}z^\beta t^{\gamma}+f_1.\] 
It is not difficult to check that $\ol Z^0_1f=a\ol z^\alpha z^\beta t^\gamma$ and \eqref{e2-ysmipcI} hold.

The lemma follows.
\end{proof}

We say that $g$ is a quasi-homogeneous polynomial of degree $d\in\mathbb N_0$ if $g$ is the finite sum \[g=\sum_{\alpha+\beta+2\gamma=d,\alpha,\beta,\gamma\in\mathbb N_0}c_{\alpha,\beta,\gamma}\ol z^\alpha z^\beta t^\gamma,\quad c_{\alpha,\beta,\gamma}\in\Complex.\]
We need 

\begin{prop}\label{p-ysmipcI}
There exists a function $\varphi\in C^\infty(\hat X)$ such that ${\rm Re\,}\varphi\geq0$ on $\hat X$, $\hat\ddbar_b\varphi$ vanishes to infinite order at $p$ and near $p$, we have
\[\varphi(z,t)=(2\pi)(\abs{z}^2+it)+S,\ \ S=\varepsilon(\hat\rho^6).\]
\end{prop}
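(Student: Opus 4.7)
The plan is to construct $\varphi$ by matching a formal power series near $p$ order-by-order via Lemma~\ref{l-ysmipcI}, realize that series as a genuine smooth function through a Borel-type construction with carefully tuned cutoff radii, and then extend to all of $\hat X$ by cutting off. The reason Lemma~\ref{l-ysmipcI} produces solutions with $\abs{{\rm Re\,}f}\leq c\abs{z}^2(\abs{z}+\abs{t})$ (rather than just $O(\hat\rho^{d+1})$) is precisely what will allow the positivity ${\rm Re\,}\varphi\geq 0$ to be preserved after summation.

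\textbf{Step 1 (Formal construction).} I work in CR normal coordinates near $p$. Using \eqref{s1-e5b}, I write $\hat Z_{\ol 1}=\ol Z^0_1+E$, where
\[
E=\varepsilon(\hat\rho^4)\frac{\pr}{\pr z}+\varepsilon(\hat\rho^4)\frac{\pr}{\pr \ol z}+\varepsilon(\hat\rho^5)\frac{\pr}{\pr t}
\]
raises quasi-homogeneous degree by at least $3$. Since $\ol Z^0_1\varphi_{(0)}=0$ for $\varphi_{(0)}:=2\pi(\abs{z}^2+it)$, only $E\varphi_{(0)}$ contributes at leading order, and the lowest degree appearing is $5$. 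Matching the formal identity $\hat Z_{\ol 1}(\varphi_{(0)}+\sum_{e\geq 6}\phi_e)\equiv 0$ degree-by-degree reduces, at each quasi-homogeneous degree $d\geq 5$, to
\[
\ol Z^0_1\phi_{d+1}=-[E\varphi_{(0)}]_{\text{deg }d}-\sum_{6\leq e\leq d-3}[E\phi_e]_{\text{deg }d}.
\]
Each right-hand side is a sum of monomials of quasi-homogeneous degree exactly $d\geq 3$, so Lemma~\ref{l-ysmipcI}, applied monomial by monomial, produces a quasi-homogeneous polynomial $\phi_{d+1}$ of degree $d+1$ satisfying this equation and the bound $\abs{{\rm Re\,}\phi_{d+1}(z,t)}\leq c_{d+1}\abs{z}^2(\abs{z}+\abs{t})$ for some constant $c_{d+1}>0$.

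\textbf{Step 2 (Borel summation).} Fix a cutoff $\chi\in C^\infty_0(\Real)$ with $\chi\equiv 1$ near $0$ and supported in $[-1,1]$. I choose a sequence $r_d\searrow 0$ small enough to simultaneously ensure: (i) standard Borel-type smallness so that $\tilde S:=\sum_{d\geq 6}\chi(\hat\rho/r_d)\phi_d$ converges in $C^\infty$ in a neighborhood of $p$; (ii) $\abs{\tilde S}\leq C\hat\rho^6$ near $p$; and (iii) $c_d r_d\leq \eta\cdot 2^{-d}$ for a prescribed $\eta>0$. Setting $\tilde\varphi:=\varphi_{(0)}+\tilde S$, the formal Taylor series of $\tilde\varphi$ at $p$ equals $\varphi_{(0)}+\sum_d\phi_d$, so by the formal cancellation in Step 1 the Taylor series of $\hat\ddbar_b\tilde\varphi$ at $p$ is zero, i.e. $\hat\ddbar_b\tilde\varphi$ vanishes to infinite order at $p$. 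For the real part, on $\mathrm{supp}\,\chi(\hat\rho/r_d)$ we have $\hat\rho\leq r_d$, hence $\abs{z}+\abs{t}\leq 2\hat\rho\leq 2r_d$, so
\[
\abs{\chi(\hat\rho/r_d)\,{\rm Re\,}\phi_d}\leq 2c_d r_d\abs{z}^2\leq 2^{-d+1}\eta\abs{z}^2,
\]
which sums to $\abs{{\rm Re\,}\tilde S}\leq 2\eta\abs{z}^2$. Taking $\eta<\pi$ yields
\[
{\rm Re\,}\tilde\varphi=2\pi\abs{z}^2+{\rm Re\,}\tilde S\geq (2\pi-2\eta)\abs{z}^2\geq 0
\]
on some open neighborhood $U$ of $p$.

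\textbf{Step 3 (Cut off).} Pick $\chi_0\in C^\infty_0(U)$ with $0\leq\chi_0\leq 1$ and $\chi_0\equiv 1$ on a smaller neighborhood $V$ of $p$, and set $\varphi:=\chi_0\tilde\varphi$, extended by $0$ outside $U$. Then $\varphi\in C^\infty(\hat X)$, ${\rm Re\,}\varphi=\chi_0{\rm Re\,}\tilde\varphi\geq 0$ on all of $\hat X$, and on $V$ we have $\varphi=\tilde\varphi=2\pi(\abs{z}^2+it)+S$ with $S:=\tilde S\in\varepsilon(\hat\rho^6)$. Finally $\hat\ddbar_b\varphi=\hat\ddbar_b\tilde\varphi$ on $V$ vanishes to infinite order at $p$.

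\textbf{Main obstacle.} The delicate point is the triple condition on the Borel radii $r_d$ in Step 2. Beyond the standard smallness required to get $C^\infty$ convergence and the pointwise control $\tilde S\in\varepsilon(\hat\rho^6)$, one needs the additional bound $c_d r_d\leq\eta\cdot 2^{-d}$ so that the pointwise estimates $\abs{{\rm Re\,}\phi_d}\leq c_d\abs{z}^2(\abs{z}+\abs{t})$ from Lemma~\ref{l-ysmipcI} sum to a quantity much smaller than $2\pi\abs{z}^2$. It is only because Lemma~\ref{l-ysmipcI} guarantees the refined structural bound on ${\rm Re\,}\phi_d$ (rather than merely $\abs{\phi_d}\leq c_d\hat\rho^{d+1}$) that the positivity of the leading term $2\pi\abs{z}^2$ survives the correction.
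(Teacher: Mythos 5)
Your proof is correct and follows essentially the same approach as the paper's: you solve the $\hat\ddbar_b$-closure condition formally degree by degree using Lemma~\ref{l-ysmipcI}, realize the formal series via a Borel-type summation with cutoffs whose radii are chosen small enough to simultaneously give $C^\infty$ convergence, the $\varepsilon(\hat\rho^6)$ bound on the correction, and the crucial estimate $\abs{{\rm Re\,}\tilde S}\ll\abs{z}^2$ that preserves positivity, and then multiply by a bump function. The paper uses the anisotropic cutoff $\phi(z/\epsilon_j,t/\epsilon_j^2)$ where you use $\chi(\hat\rho/r_d)$, but these play identical roles; the key structural point — that Lemma~\ref{l-ysmipcI} gives the refined bound $\abs{{\rm Re\,}f_j}\leq c_j\abs{z}^2(\abs{z}+\abs{t})$ rather than merely $O(\hat\rho^{j})$, so the positivity of $2\pi\abs{z}^2$ can survive — is exactly the paper's argument.
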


\begin{proof}
We assume that the local coordinates $(z,t)$ defined on a small open set $W\subset X$ of $p$. 
From Lemma~\ref{l-ysmipcI} and \eqref{s1-e5b}, it is not difficult to see that we can find $f_j(z,t)$, $j=6,7,\ldots$, 
where for each $j$, $f_j$ is a quasi-homogeneous polynomial of degree $j$, 
such that 
\begin{equation}\label{e2-ysmiIb}
\hat Z_{\ol 1}\bigr((2\pi)(\abs{z}^2+it)+\sum^m_{j=6}f_j(z,t)\bigr)\in\varepsilon(\hat\rho^{m+3}),\ \ m=6,7,\ldots,
\end{equation}
and for each $j=6,7,\ldots$, 
\begin{equation}\label{e2-ysmiI}
\mbox{$\abs{{\rm Re\,} f_j(z,t)}\leq c_j\abs{z}^2(\abs{z}+\abs{t})$ on $W_j\subset W$, $c_j>0$ is a constant},
\end{equation} 
where $W_j$ is an open set, for each $j=6,7,\ldots$. 
Take $\phi(z,t)\in C^\infty_0(\Complex\times\Real,\ol\Real_+)$ so that $\phi(z,t)=1$ if $\abs{z^2}+\abs{t}\leq\frac{1}{2}$ 
and $\phi(z,t)=0$ if $\abs{z^2}+\abs{t}\geq1$. For each $j=6,7,\ldots$, take $\epsilon_j>0$ be a small constant ($\epsilon_j \sim 2^{-j}$ will do) so that 
${\rm Supp\,}\phi(\frac{z}{\epsilon_j},\frac{t}{\epsilon^2_j})\subset W_j$, 
\begin{equation}\label{e2-ysmiII}
\left| \phi(\frac{z}{\epsilon_j},\frac{t}{\epsilon^2_j}){\rm Re\,}f_j(z,t) \right| <2^{-j}\abs{z}^2
\end{equation}
and for all $\alpha,\beta,\gamma\in\mathbb N_0$, $\alpha+\beta+2\gamma<j$, we have 
\begin{equation}\label{e2-ysmiIII}
\left\| \pr^\alpha_{\ol z}\pr^\beta_z\pr^\gamma_t\bigr(\phi(\frac{z}{\epsilon_j},\frac{t}{\epsilon^2_j})f_j(z,t)\bigr) \right\|_{L^{\infty}} <2^{-j}.
\end{equation} 
On $W$, we put
\[\varphi_1(z,t)=(2\pi)(\abs{z}^2+it)+\sum^{\infty}_{j=6}\phi(\frac{z}{\epsilon_j},\frac{t}{\epsilon^2_j})f_j(z,t).\] 
From \eqref{e2-ysmiIII}, we can check that $\varphi_1(z,t)$ is well-defined as a smooth function on $W$ and 
for all $\alpha,\beta,\gamma\in\mathbb N_0$, $\alpha+\beta+2\gamma=d$, $d\geq6$, we have 
\[\pr^\alpha_{\ol z}\pr^\beta_z\pr^\gamma_t\varphi_1|_{(0,0)}=\pr^\alpha_{\ol z}\pr^\beta_z\pr^\gamma_tf_d|_{(0,0)}.\]
Combining this with \eqref{e2-ysmiIb}, we conclude that $\hat\ddbar_b\varphi_1$ vanishes to infinite order 
at $p$. Moreover, from \eqref{e2-ysmiII}, we have 
\[{\rm Re\,}\varphi_1(z,t)\geq\abs{z^2}(2\pi-\sum^{\infty}_{j=6}2^{-j})>\frac{1}{2}\abs{z}^2.\]
Thus, ${\rm Re\,}\varphi_1\geq0$ on $W$. Take $\chi\in C^\infty_0(W)$, $\chi\geq0$ and $\chi=1$ near $p$ and put 
$\varphi=\chi\varphi_1\in C^\infty(X)$. Then, $\varphi$ satisfies the claim of this proposition. The proposition follows.
\end{proof} 

Let $W$ be a small neighbourhood of $p$ such that $\abs{\varphi(z,t)}\geq\hat\rho(z,t)^2$ on $W$, where $\varphi$ is as 
in Proposition~\ref{p-ysmipcI}. Take $\chi\in C^\infty_0(W,\ol\Real_+)$ so that $\chi=1$ in some small 
neighbourhood of $p$. Put 
\[\tau:=\frac{\chi}{\varphi}.\]
It is easy to check that $\tau$ is well-defined as an element in $\mathscr D'(\hat X)$. Since $\hat\ddbar_b\varphi$ vanishes to infinite order at $p$, we have $\hat\ddbar_b\tau\in C^\infty(\hat X)$.
Thus, 
\[\hat\Box_b\tau\in C^\infty(\hat X).\]
Now 
\begin{equation}\label{eq:dual}
I = \hat \Pi + \hat N \hat \Box_b \quad \text{on $\mathcal{D}'(\hat X)$},
\end{equation} 
since $I = \hat \Pi + \hat \Box_b \hat N$ on $L^2(\hat m)$, and since the operators involved are self-adjoint and pseudolocal. Hence we obtain
\begin{equation}\label{e-ysmiI}
\tau=\hat\Pi\tau+\hat N\hat\Box_b\tau=\hat\Pi\tau-F,
\end{equation}
where $F\in C^\infty(\hat X)$. Thus, 
\begin{equation}\label{e-ysmiII}
\hat\ddbar_b(\tau+F)=0\ \ \mbox{on $\mathscr D'(\hat X)$}.
\end{equation} 
Take $C_0>0$ be a large constant so that ${\rm Re\,}F+C_0>0$ on $\hat X$. Since ${\rm Re\,}\varphi\geq0$, we conclude that 
\begin{equation}\label{e-ysmiIII}
{\rm Re\,}(\tau+F+C_0)>0\ \ \mbox{on $\hat X$}.
\end{equation} 
Put 
\[\psi=\frac{1}{\tau+F+C_0}.\] 
From \eqref{e-ysmiIII}, we know that $\tau+F+C_0\neq0$ on $X$. Since $\tau+F+C_0\in C^\infty(X)$, we conclude that $\psi\in C^\infty(X)$. Now, we study the behaviour of $\psi$ near $p$. Let $W'\Subset W$ be a small neighbourhood of $p$ such that $\chi=1$ on $W'$ and $\abs{\varphi(F+C_0)}<1$ on $W'$. Then, on $W'$, 
\begin{equation}\label{e-ysmiIV}
\psi=\frac{1}{\tau+F+C_0}=\frac{1}{\frac{1}{\varphi}+F+C_0}=\frac{\varphi}{1+\varphi(F+C_0)}\in C^\infty(W').
\end{equation} 
Thus, 
\begin{equation}\label{e-ysmiV}
\psi\in C^\infty(\hat X).
\end{equation}
Moreover, from \eqref{e-ysmiIV}, we can check that near $p$, 
\begin{equation}\label{e-ysmiVI}
\psi=(2\pi)(\abs{z}^2+it)+R,\ \ R\in\varepsilon(\hat\rho^4). 
\end{equation} 

\begin{lem} \label{l-ysmipcII}
We have 
\[\hat\ddbar_b\psi=0\ \ \mbox{on $\hat X$}.\]
\end{lem}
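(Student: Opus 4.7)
The plan is to compute $\hat\ddbar_b\psi$ directly on $X = \hat X \setminus \{p\}$ by the classical chain rule, use \eqref{e-ysmiII} to see the answer is zero there, and then extend to all of $\hat X$ by smoothness.

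First I would check that on $X$, all the objects involved are smooth in the classical sense. Since $\chi$ is compactly supported in $W$ and $|\varphi| \geq \hat\rho^2$ on $W$, the function $\varphi$ is nonvanishing on $W \setminus \{p\}$, so $\tau = \chi/\varphi \in C^\infty(X)$. Together with $F \in C^\infty(\hat X)$ and \eqref{e-ysmiIII}, this gives $\tau + F + C_0 \in C^\infty(X)$ with no zeros, and hence $\psi = (\tau + F + C_0)^{-1} \in C^\infty(X)$. Applying the chain rule pointwise on $X$,
\[
\hat\ddbar_b \psi \;=\; -\frac{\hat\ddbar_b(\tau + F + C_0)}{(\tau+F+C_0)^2} \;=\; -\psi^2\, \hat\ddbar_b(\tau + F) \quad \text{on } X.
\]
By \eqref{e-ysmiII}, $\hat\ddbar_b(\tau + F) = 0$ as a distribution on $\hat X$, and in particular its restriction to the open set $X$ (where it is smooth) vanishes in the classical sense. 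Hence $\hat\ddbar_b \psi = 0$ on $X$.

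To promote this to all of $\hat X$, I would invoke \eqref{e-ysmiV}, which says $\psi \in C^\infty(\hat X)$. Consequently $\hat\ddbar_b \psi$ is a smooth $(0,1)$-form on the compact manifold $\hat X$; since it vanishes on the dense open subset $X$, continuity forces $\hat\ddbar_b \psi = 0$ at $p$ as well. This completes the proof.

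I do not expect a real obstacle here; the only thing one must be careful about is that the distributional identity \eqref{e-ysmiII} can legitimately be combined with the pointwise identity for $\hat\ddbar_b \psi$ on $X$, and this is handled simply by noting that distributions that are smooth on an open set agree with the classical derivatives there. The construction of $\psi$ in \eqref{e-ysmiVI} (in particular the leading behavior $2\pi(|z|^2 + it)$ and the remainder $R \in \varepsilon(\hat\rho^4)$) is not needed for this lemma itself but will enter when $\psi$ is used later.
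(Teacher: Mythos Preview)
Your proof is correct and follows essentially the same route as the paper: show $\hat\ddbar_b\psi=0$ on $X$ via the chain rule together with \eqref{e-ysmiII}, then extend to $p$ by the smoothness \eqref{e-ysmiV}. The paper phrases the chain-rule step as a distributional pairing $(\hat\ddbar_b\psi\,|\,g)=-(\hat\ddbar_b h\,|\,\bar h^{-2}g)$ with $h=\tau+F+C_0$, but this is equivalent to your pointwise computation on $X$.
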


\begin{proof}
Put $h:=\tau+F+C_0$, so that $\psi = h^{-1}$, and take any $g\in\Omega^{0,1}_0(X)$. We have 
\begin{equation}\label{e-ysmiVII}
(\hat\ddbar_b\psi\ |\ g)_{\hat m,\hat\theta}=-(\hat\ddbar_bh\ |\ \ol h^{-2}g)_{\hat m,\hat\theta}=0
\end{equation}
since $\hat\ddbar_bh=0$ in the sense of distribution. Thus, $\hat\ddbar_b\psi=0$ on $X$. Since $\psi\in C^\infty(\hat X)$, we conclude that $\hat\ddbar_b\psi=0$ on $\hat X$. The lemma follows.
\end{proof}

From \eqref{e-ysmiIII}, \eqref{e-ysmiV}, \eqref{e-ysmiVI} and Lemma~\ref{l-ysmipcII}, we obtain the main result of this section 

\begin{thm}\label{t-ysmipc}
There is a smooth function $\psi\in C^\infty(\hat X)$ such that $\hat\ddbar_b\psi=0$ on $\hat X$, $\psi\neq0$ on $X$, ${\rm Re\,}\psi\geq0$ on $\hat X$ and near $p$, we have 
\begin{equation}\label{e-ysmiVIII}
\psi(z,t)=(2\pi)(\abs{z}^2+it)+R,\ \ R=\varepsilon(\hat\rho^4).
\end{equation}
\end{thm}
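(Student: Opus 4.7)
The plan is to build $\psi$ in two stages: first construct an approximate CR function $\varphi$ near $p$ whose leading term is $2\pi(|z|^2+it)$ and whose $\hat{\ddbar}_b\varphi$ vanishes to infinite order at $p$; then promote it to an exact CR function by inverting and correcting by a smooth term using $\hat N$.

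For the first stage, I would work in CR normal coordinates and write $\hat Z_{\bar 1} = \bar Z_1^0 + E$, where $\bar Z_1^0 = \partial_{\bar z} + iz\partial_t$ is the Heisenberg model piece and $E$ is a higher-order error coming from \eqref{s1-e5b}. Starting from $\varphi^{(0)} = 2\pi(|z|^2+it)$, one checks that $\hat Z_{\bar 1}\varphi^{(0)}$ vanishes to high order at $p$, and then iteratively produces quasi-homogeneous polynomials $f_j$ of degree $j\ge 6$ so that $\varphi^{(m)} := \varphi^{(0)}+\sum_{j=6}^m f_j$ has $\hat Z_{\bar 1}\varphi^{(m)} \in \varepsilon(\hat\rho^{m+3})$. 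The key input is a lemma solving $\bar Z_1^0 f = a\bar z^{\alpha} z^{\beta} t^{\gamma}$ by an explicit monomial-level recipe, while simultaneously controlling $\mathrm{Re}\,f$ so that $|\mathrm{Re}\,f| \lesssim |z|^2(|z|+|t|)$. The positivity control is essential because we want $\mathrm{Re}\,\varphi \ge 0$ at the end. To turn the formal series into an actual smooth function, I would multiply each $f_j$ by a cutoff $\phi(z/\varepsilon_j,t/\varepsilon_j^2)$ with $\varepsilon_j$ small enough (e.g.\ $\varepsilon_j \sim 2^{-j}$) so that both the size bounds $|\phi\cdot\mathrm{Re}\,f_j| < 2^{-j}|z|^2$ and Borel-type derivative bounds hold, guaranteeing convergence in $C^\infty$ of the tail and that $\varphi$ inherits both the correct Taylor expansion at $p$ and the positivity $\mathrm{Re}\,\varphi \ge \tfrac{1}{2}|z|^2$. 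After cutting off by a bump equal to $1$ near $p$, I get $\varphi\in C^\infty(\hat X)$ with $\hat\ddbar_b\varphi$ flat at $p$ and $\mathrm{Re}\,\varphi \ge 0$.

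For the second stage, I exploit that $\varphi$ is bounded below by $\hat\rho^2$ near $p$, so $\tau := \chi/\varphi$ is a well-defined distribution on $\hat X$. Since $\hat\ddbar_b\varphi$ vanishes to infinite order at $p$ and $\chi$ is smooth, a direct computation shows $\hat\ddbar_b\tau \in C^\infty(\hat X)$, hence $\hat\Box_b\tau\in C^\infty(\hat X)$. Applying the $L^2$ decomposition $I = \hat\Pi + \hat N\hat\Box_b$ (extended to distributions by duality and pseudolocality) gives $\tau = \hat\Pi\tau - F$ with $F = -\hat N\hat\Box_b\tau$ smooth on $\hat X$. Therefore $\hat\ddbar_b(\tau+F)=0$ in the sense of distributions, so $\tau+F$ is a CR distribution that blows up like $\varphi^{-1}$ at $p$.

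Finally, I pass to the reciprocal. Pick a constant $C_0$ so large that $\mathrm{Re}(\tau+F+C_0) > 0$ on $\hat X$; this uses $\mathrm{Re}\,\varphi \ge 0$ (so $\mathrm{Re}\,\tau \ge 0$ where $\chi=1$) together with compactness of the set where $\chi<1$. Define $\psi := (\tau+F+C_0)^{-1}$. Then $\psi\in C^\infty(X)$ and is nonzero there by positivity of the real part of the denominator. Near $p$, where $\tau = \varphi^{-1}$, one computes $\psi = \varphi/(1+\varphi(F+C_0))$, which is smooth across $p$ and has expansion $2\pi(|z|^2+it)+\varepsilon(\hat\rho^4)$ since $\varphi$ has this expansion and $1+\varphi(F+C_0) = 1 + O(\hat\rho^2)$. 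Hence $\psi\in C^\infty(\hat X)$. Finally, $\hat\ddbar_b\psi = 0$ follows from a distributional computation: pairing $\hat\ddbar_b\psi$ against a test $(0,1)$-form and integrating by parts gives a multiple of $\hat\ddbar_b(\tau+F)$, which is zero.

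The main obstacle I anticipate is the construction of $\varphi$ in stage one, specifically ensuring the simultaneous control of positivity of $\mathrm{Re}\,f_j$ and of derivatives for the Borel-type summation; the positivity condition $|\mathrm{Re}\,f_j| \lesssim |z|^2(|z|+|t|)$ is delicate because the natural primitive of a monomial under $\bar Z_1^0$ can easily violate it, and the inductive recipe in the solving lemma must be crafted to kill the unwanted real parts at each step (e.g.\ by symmetrizing $a\bar z t^\gamma - \bar a z t^\gamma$ when the base monomial is purely a power of $t$).
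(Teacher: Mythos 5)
Your proposal is correct and follows essentially the same route as the paper: the same flat CR approximation $\varphi$ built from quasi-homogeneous corrections $f_j$ via the $\bar Z_1^0$-inversion lemma with the symmetrization trick to tame $\mathrm{Re}\,f_j$, the same Borel-type cutoff summation with $\epsilon_j\sim 2^{-j}$, then $\tau=\chi/\varphi$, the $L^2$ decomposition $I=\hat\Pi+\hat N\hat\Box_b$ to produce the smooth correction $F$, and finally $\psi=(\tau+F+C_0)^{-1}$ with the distributional integration-by-parts check that $\hat\ddbar_b\psi=0$. No substantive differences.
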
 

In the study of the positive $p$-mass theorem (see~\cite{CMY}), one needs to find a special $CR$ function of specific growth rate on $\hat X$. More precisely, in \cite{CMY}, one needs to find a CR function $g\in C^\infty(X)$ with 
\[g=\frac{z}{\abs{z}^2+it}+g_1\]
near $p$, where $g_1\in\mathcal{E}(\hat\rho^0)$. By using the proof of Theorem~\ref{t-ysmipc}, we can construct a such CR function

\begin{thm} \label{t-crho}
There is a function $g\in C^\infty(X)\bigcap\mathscr D'(\hat X)$ such that $\hat\ddbar_bg=0$ and 
\[g=\frac{1}{\psi}(z+r),\]
where $\psi$ is as in Theorem~\ref{t-ysmipc} and $r\in C^\infty(\hat X)$, $r=\varepsilon(\hat\rho^2)$.
\end{thm}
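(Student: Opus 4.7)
The plan is to mimic the construction of $\psi$ in Theorem~\ref{t-ysmipc}, with the seed $|z|^2+it$ replaced by $z$ and the final reciprocal step replaced by division by $\psi$. First I build a local approximate CR function with leading term $z$. From \eqref{s1-e5b} one reads off $\hat Z_{\bar 1}z\in\varepsilon(\hat\rho^4)$, so the Taylor expansion of $\hat Z_{\bar 1}z$ at $p$ begins in non-isotropic degree $4$. Applying Lemma~\ref{l-ysmipcI} inductively (the hypothesis $\alpha+\beta+2\gamma\geq 3$ is automatically satisfied here), I obtain quasi-homogeneous polynomials $f_j$ of degree $j\geq 5$ such that
\[
\hat Z_{\bar 1}\Bigl(z+\sum_{j=5}^{m}f_j\Bigr)\in\varepsilon(\hat\rho^{m+1})\qquad\text{for every }m\geq 5.
\]
A Borel summation with shrinking cutoffs, exactly as in the proof of Proposition~\ref{p-ysmipcI} (the $\mathrm{Re}$-estimate inside Lemma~\ref{l-ysmipcI} plays no role), yields $v\in C^\infty(\hat X)$ supported in a small neighborhood of $p$ with $v=z+\varepsilon(\hat\rho^5)$ near $p$ and $\hat\ddbar_b v$ smooth on $\hat X$ and vanishing to infinite order at $p$.

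Next, set $\tau_v:=v/\psi\in C^\infty(X)$. Since $|v|\lesssim\hat\rho$ and $|\psi|\gtrsim\hat\rho^{2}$ near $p$, one has $|\tau_v|\lesssim\hat\rho^{-1}$, which is locally integrable on $\hat X$; so $\tau_v\in\mathscr D'(\hat X)$. Because $\hat\ddbar_b\psi=0$, the Leibniz rule on $X$ gives $\hat\ddbar_b\tau_v=\psi^{-1}\hat\ddbar_b v$ pointwise; the right-hand side extends smoothly to $\hat X$, as $\hat\ddbar_b v$ vanishes to infinite order at $p$ while $\psi^{-1}=O(\hat\rho^{-2})$. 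A standard truncation argument (pair with a test $(0,1)$-form, integrate over $\{\hat\rho\geq\varepsilon\}$, and let $\varepsilon\to 0$) shows the identity passes to $\mathscr D'(\hat X)$, so in particular $\hat\Box_b\tau_v\in C^\infty(\hat X)$.

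Now I apply the decomposition \eqref{eq:dual} to $\tau_v$ and set $F:=-\hat N\hat\Box_b\tau_v$, so that
\[
g:=\tau_v+F=\hat\Pi\tau_v\in\mathscr D'(\hat X).
\]
Since $\hat N$ is smoothing of order~$2$ (Theorem~\ref{thm2.1}) and acts on a smooth function, $F\in C^\infty(\hat X)$; hence $g\in C^\infty(X)$, and as the image of $\tau_v$ under $\hat\Pi$ it satisfies $\hat\ddbar_b g=0$ distributionally on $\hat X$. To extract the required local form, note that near $p$
\[
g=\frac{v+F\psi}{\psi}=\frac{z+r}{\psi},\qquad r:=(v-z)+F\psi.
\]
Then $r$ is smooth (using that the cutoff implicit in Step~1 equals~$1$ near $p$), $v-z=\varepsilon(\hat\rho^5)$, and $|F\psi|\leq\|F\|_{L^\infty}|\psi|\lesssim\hat\rho^2$; so $r=\varepsilon(\hat\rho^2)$ as required.

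I expect the main obstacle to be the distributional identity in Step~2: verifying that $\hat\ddbar_b\tau_v$, computed as a distribution on $\hat X$, really coincides with the pointwise formula $\psi^{-1}\hat\ddbar_b v$, or equivalently that the difference of the two---which is supported at $\{p\}$ and hence is a finite combination of derivatives of $\delta_p$---actually vanishes. The mild $O(\hat\rho^{-1})$ singularity of $\tau_v$ (yielding boundary integrals of size $\hat\rho^{-1}\cdot\hat\rho^3$ on shrinking pseudohermitian spheres) combined with the infinite-order vanishing of $\hat\ddbar_b v$ at $p$ should make the verification routine, but this is the only place where serious care about the singularity at $p$ is needed.
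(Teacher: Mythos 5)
Your proof is correct and follows essentially the same route as the paper: construct $v=z+\tilde r$ by iterating Lemma~\ref{l-ysmipcI} and Borel summation so that $\hat\ddbar_b v$ vanishes to infinite order at $p$, set $\tilde g=v/\psi\in\mathscr D'(\hat X)$, observe $\hat\Box_b\tilde g\in C^\infty(\hat X)$, and correct $\tilde g$ by $-\hat N\hat\Box_b\tilde g$ using the identity $I=\hat\Pi+\hat N\hat\Box_b$ on $\mathscr D'(\hat X)$, yielding $g=\hat\Pi\tilde g$ with the stated form. The one point you flag -- that the distributional $\hat\ddbar_b\tau_v$ on $\hat X$ agrees with the pointwise $\psi^{-1}\hat\ddbar_b v$ -- is indeed the only delicate step and is left implicit in the paper; your truncation argument (the boundary term on $\{\hat\rho=\varepsilon\}$ is $O(\varepsilon^{-1}\cdot\varepsilon^3)\to 0$) correctly closes it.
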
 

\begin{proof}
We can repeat the proof of Proposition~\ref{p-ysmipcI} with minor change and conclude that there is a function
$\Td r\in C^\infty(\hat X)$ with $\Td r=\varepsilon(\hat\rho^5)$ such that $\hat\ddbar_b(z+\tilde r)$ vanishes to infinite order at $p$. Put 
\[\Td g=\frac{1}{\psi}(z+\Td r).\] 
Since $\hat\ddbar_b(z+\Td r)$ vanishes to infinite order at $p$ and $\hat\ddbar_b\psi=0$, we have 
\begin{equation}\label{e-crhoI}
\hat\ddbar_b\Td g\in C^\infty(\hat X).
\end{equation} 
Thus, 
\[\hat\Box_b\Td g\in C^\infty(\hat X).\]
By (\ref{eq:dual}), we obtain
\[\Td g=\hat\Pi\Td g+\hat N\hat\Box_b\Td g=\hat\Pi\Td g-f,\]
where $f\in C^\infty(\hat X)$. Thus, 
\[\hat\ddbar_b(\Td g+f)=0\ \ \mbox{on $\mathscr D'(\hat X)$}.\]
Put $g=\Td g+f$. Then $\hat\ddbar_b g$=0 and we can check that $g=\frac{1}{\psi}(z+\Td r+\psi f)=\frac{1}{\psi}(z+r)$, where 
$r=\Td r+\psi f\in C^\infty(\hat X)$, $r=\varepsilon(\hat\rho^2)$. The theorem follows. 
\end{proof} 

\section{The relation between $\Box_{b,1}$ and $\Td\Box_b$}  \label{sect5}

Having constructed our CR function $\psi$, we can proceed as in Section \ref{subsect:outlineofproof}, and constuct the intermediate Kohn Laplacian $\Td \Box_b$.  We refer the reader to that section for the details of this construction. The goal of the current section is to reduce the study of our operator of interest, namely $\Box_{b,1}$, to the study of $\Td\Box_b$. This is done by establishing (\ref{eq:Boxb1totildeBoxb1}) and (\ref{eq:Boxb1totildeBoxb2}) in Section \ref{subsect:outlineofproof}.

First, we recall that
$
m_1 =\abs{\psi}^{-2}\Td m.
$
Thus,
\begin{equation}\label{e-crysmiII}
\begin{split}
&\mbox{$u\in L^2(m_1)$ if and only if $\psi^{-1}u\in L^2(\Td m)$},\\
&\mbox{$u\in L^2_{(0,1)}(m_1,\hat\theta)$ if and only if $\psi^{-1}u\in L^2_{(0,1)}(\Td m,\theta)$},
\end{split}
\end{equation}
and 
\begin{equation}\label{e-crysmiIII}
\begin{split}
&\norm{u}_{m_1}=\norm{\psi^{-1}u}_{\Td m}, \ \ \norm{v}_{m_1,\hat\theta}=\norm{\psi^{-1}v}_{\Td m,\hat\theta}, \forall u\in L^2(m_1), v\in L^2_{(0,1)}(m_1,\hat\theta), \\
&(u_1\ |\ u_2)_{m_1}=(\psi^{-1}u_1\ |\ \psi^{-1}u_2)_{\Td m},\ \ \forall u_1, u_2\in L^2(m_1),\\
&(v_1\ |\ v_2)_{m_1,\hat\theta}=(\psi^{-1}v_1\ |\ \psi^{-1}v_2)_{\Td m,\hat\theta},\ \ \forall v_1, v_2\in L^2_{(0,1)}(m_1,\hat\theta).
\end{split}
\end{equation} 
Let
\begin{equation}\label{e-lopcI}
\ol{\pr}^{*,f}_{b,1}:\Omega^{0,1}(X)\To C^\infty(X)
\end{equation}
be the formal adjoint of $\ddbar_b$ with respect to $(\,\cdot\,|\,\cdot\,)_{m_1}$, $(\,\cdot\,|\,\cdot\,)_{m_1,\hat\theta}$. Let also 
\begin{equation}\label{e-lopcIb}
\Td{\ol{\pr}}^{*,f}_{b}:\Omega^{0,1}(X)\To C^\infty(X)
\end{equation}
be the formal adjoint of $\ddbar_b$ with respect to $(\,\cdot\,|\,\cdot\,)_{\Td m}$, $(\,\cdot\,|\,\cdot\,)_{\Td m,\hat\theta}$. Then

\begin{lem}\label{l-lopcI}
For $v\in\Omega^{0,1}(X)$, we have 
\begin{equation}\label{e-lopcII}
\ol{\pr}^{*,f}_{b,1}v=\psi\Td{\ol{\pr}}^{*,f}_{b}(\psi^{-1}v).
\end{equation}
\end{lem}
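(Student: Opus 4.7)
The plan is to verify the identity by testing both sides against the inner product pairing with an arbitrary $u\in C^\infty_0(X)$, and then appeal to the uniqueness of the formal adjoint.

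First I would record two basic ingredients. The first is that by Theorem~\ref{t-ysmipc}, $\psi$ is a CR function on $\hat X$ that is nonvanishing on $X$, so $\hat\ddbar_b\psi=0$ on $X$; hence $\ddbar_b(\psi^{-1}u)=\psi^{-1}\ddbar_bu$ for every $u\in C^\infty(X)$ (the derivation $\ddbar_b$ kills the antiholomorphic-free factor $\psi^{-1}$). The second is the weight identity $m_1=|\psi|^{-2}\Td m$, which gives the pointwise rescalings already recorded in \eqref{e-crysmiIII}, and in particular the formula
\[
(u\,|\,\psi w)_{m_1}=(\psi^{-1}u\,|\,w)_{\Td m},\qquad u\in C^\infty_0(X),\ w\in C^\infty(X),
\]
obtained by a direct computation $\int u\,\ol{\psi w}\,|\psi|^{-2}\Td m=\int \psi^{-1}u\,\ol w\,\Td m$.

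Next I would string these together. For $u\in C^\infty_0(X)$ and $v\in\Omega^{0,1}(X)$, compute
\[
(\ddbar_bu\,|\,v)_{m_1,\hat\theta}=(\psi^{-1}\ddbar_bu\,|\,\psi^{-1}v)_{\Td m,\hat\theta}=(\ddbar_b(\psi^{-1}u)\,|\,\psi^{-1}v)_{\Td m,\hat\theta},
\]
using \eqref{e-crysmiIII} in the first equality and the CR property of $\psi$ in the second. Since $\psi^{-1}u\in C^\infty_0(X)$ and $\psi^{-1}v\in\Omega^{0,1}(X)$, the definition of $\Td{\ol{\pr}}^{*,f}_{b}$ (equation \eqref{e-lopcIb}) gives
\[
(\ddbar_b(\psi^{-1}u)\,|\,\psi^{-1}v)_{\Td m,\hat\theta}=(\psi^{-1}u\,|\,\Td{\ol{\pr}}^{*,f}_{b}(\psi^{-1}v))_{\Td m}.
\]
Applying the weight identity above with $w=\Td{\ol{\pr}}^{*,f}_{b}(\psi^{-1}v)$ converts the right-hand side into $(u\,|\,\psi\Td{\ol{\pr}}^{*,f}_{b}(\psi^{-1}v))_{m_1}$. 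Thus
\[
(\ddbar_bu\,|\,v)_{m_1,\hat\theta}=(u\,|\,\psi\Td{\ol{\pr}}^{*,f}_{b}(\psi^{-1}v))_{m_1}
\]
for every $u\in C^\infty_0(X)$. Since by \eqref{e-lopcI} the same pairing also equals $(u\,|\,\ol{\pr}^{*,f}_{b,1}v)_{m_1}$, the nondegeneracy of $(\cdot\,|\,\cdot)_{m_1}$ on $C^\infty_0(X)$ yields \eqref{e-lopcII}.

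There is no real obstacle: the only step requiring care is the legality of the manipulation $\ddbar_b(\psi^{-1}u)=\psi^{-1}\ddbar_bu$, which uses that $\psi$ is nonzero on $X$ and CR there — both are guaranteed by Theorem~\ref{t-ysmipc}. Everything else is bookkeeping with the weighted inner products.
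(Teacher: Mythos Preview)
Your proof is correct and follows essentially the same approach as the paper: test against $u\in C^\infty_0(X)$, use $m_1=|\psi|^{-2}\Td m$ to pass between the two inner products, use $\ddbar_b\psi=0$ to rewrite $\psi^{-1}\ddbar_bu=\ddbar_b(\psi^{-1}u)$, apply the defining property of $\Td{\ol\pr}^{*,f}_b$, and convert back. Your write-up is in fact slightly more careful than the paper's in making explicit where the CR property of $\psi$ and the nondegeneracy of the pairing are invoked.
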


\begin{proof}
Let $h\in C^\infty_0(X)$, $g\in\Omega^{0,1}_0(X)$. We have 
\[\begin{split}
(\ddbar_bh\,|\,g)_{m_1,\hat\theta}&=(\ddbar_bh\,|\,g\abs{\psi}^{-2})_{\Td m,\hat\theta}=(\ddbar_b(\psi^{-1}h)\,|\,\psi^{-1} g)_{\Td m,\hat\theta}\\
&=(\psi^{-1}h\,|\,\Td{\ol{\pr}}^{*,f}_{b}(\psi^{-1}g))_{\Td m}=(h\,|\,\psi\Td{\ol{\pr}}^{*,f}_{b}(\psi^{-1}g))_{m_1,\hat\theta}.
\end{split}\]
\eqref{e-lopcII} follows.
\end{proof}

The next lemma clarifies the relation between $\ddbar_{b,1}$ and $\Td\ddbar_{b}$:

\begin{lem}\label{l-lopcII}
We have 
\begin{equation}\label{e-lopcIII}
\mbox{$u\in{\rm Dom\,}\ddbar_{b,1}$ if and only if $\psi^{-1}u\in{\rm Dom\,}\Td\ddbar_{b}$}.
\end{equation} 

Moreover, 
\begin{equation}\label{e-lopcIIIb}
\ddbar_{b,1}u=\psi\Td\ddbar_b(\psi^{-1}u),\ \ \forall u\in{\rm Dom\,}\ddbar_{b,1}.
\end{equation}
\end{lem}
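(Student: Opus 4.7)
The plan is to exploit two facts: first, that $\psi \in C^\infty(\hat X)$ is nowhere vanishing on $X$, so $\psi^{-1} \in C^\infty(X)$ acts as a smooth multiplier on distributions on $X$; and second, that $\psi$ is a CR function, i.e.\ $\ddbar_b \psi = 0$ on $X$ (which is immediate from Theorem~\ref{t-ysmipc}). Combined with the $L^2$ correspondences in \eqref{e-crysmiII}--\eqref{e-crysmiIII}, these allow us to transfer the defining conditions of the two Gaffney domains to each other via multiplication by $\psi^{\pm 1}$.

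More concretely, I would proceed as follows. Suppose first $u \in {\rm Dom\,}\ddbar_{b,1}$, so $u \in L^2(m_1)$ and the distributional $\ddbar_b u$ on $X$ lies in $L^2_{(0,1)}(m_1,\hat\theta)$. By \eqref{e-crysmiII}, $\psi^{-1} u \in L^2(\Td m)$. Since $\psi^{-1} \in C^\infty(X)$, one can multiply distributions on $X$ by $\psi^{-1}$, and the Leibniz rule gives
\[
\ddbar_b(\psi^{-1} u) \;=\; \psi^{-1}\,\ddbar_b u \;+\; u\,\ddbar_b(\psi^{-1}) \;=\; \psi^{-1}\,\ddbar_b u,
\]
where the second equality uses $\ddbar_b(\psi^{-1}) = -\psi^{-2}\ddbar_b\psi = 0$ from Theorem~\ref{t-ysmipc}. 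Applying \eqref{e-crysmiII} again to the $(0,1)$-form $\ddbar_b u \in L^2_{(0,1)}(m_1,\hat\theta)$, we see that $\psi^{-1}\ddbar_b u \in L^2_{(0,1)}(\Td m,\hat\theta)$. Hence the distributional $\ddbar_b$ of $\psi^{-1} u$ is an $L^2_{(0,1)}(\Td m,\hat\theta)$ form, which means $\psi^{-1}u \in {\rm Dom\,}\Td\ddbar_b$ and $\Td\ddbar_b(\psi^{-1}u) = \psi^{-1}\ddbar_{b,1} u$. Multiplying through by $\psi$ yields \eqref{e-lopcIIIb}. The reverse implication in \eqref{e-lopcIII} is completely symmetric: starting from $v := \psi^{-1} u \in {\rm Dom\,}\Td\ddbar_b$, the same Leibniz argument applied to $u = \psi v$ (again using $\ddbar_b\psi = 0$ and smoothness of $\psi$ on $X$, in fact on $\hat X$) shows $\ddbar_b u = \psi\,\Td\ddbar_b v \in L^2_{(0,1)}(m_1,\hat\theta)$.

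The only delicate point is making sure the notion of ``distributional $\ddbar_b$'' used in the two domains is compatible. Since both $L^2(m_1)$ and $L^2(\Td m)$ may be regarded as spaces of measurable functions on $X$ (the point $p$ having measure zero for both measures), it is natural and consistent to interpret both distributional $\ddbar_b$'s as distributions on $X$; under this interpretation the Leibniz calculation is unambiguous, because $\psi^{-1}$ is a genuine smooth multiplier on $\mathscr{D}'(X)$. Once this is fixed, the proof reduces to the Leibniz rule plus \eqref{e-crysmiII}, and no real analytic difficulty remains.
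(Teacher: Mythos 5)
Your proof is correct and reaches the same place as the paper by a more direct route. The paper first isolates in Lemma~\ref{l-lopcI} the formal-adjoint identity $\ol{\pr}^{*,f}_{b,1}v=\psi\,\Td{\ol{\pr}}^{*,f}_{b}(\psi^{-1}v)$, and then, working at the level of weak pairings, transfers the defining condition $(h\,|\,\alpha)_{m_1,\hat\theta}=(u\,|\,\ol{\pr}^{*,f}_{b,1}\alpha)_{m_1}$, $\alpha\in\Omega^{0,1}_0(X)$, into the corresponding one for $\Td\ddbar_b$. You instead apply the Leibniz rule for multiplication of a distribution on $X$ by the smooth, nowhere-vanishing function $\psi^{\mp1}$, with the cross term $u\,\ddbar_b(\psi^{-1})$ (resp.\ $v\,\ddbar_b\psi$) vanishing because $\psi$ is CR. These are two sides of the same duality --- Lemma~\ref{l-lopcI} is precisely what the Leibniz rule for $\psi^{\pm1}$ becomes after integration against test forms --- but your route dispenses with the intermediate lemma, while the paper's avoids any separate appeal to a distributional product rule.

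The subtlety you flag about ``distributional $\ddbar_b$ on $X$ versus $\hat X$'' is real, but note that the paper's own proof adopts the same $X$-interpretation: its pairing \eqref{e-lopcV} is verified only against $g\in\Omega^{0,1}_0(X)$, yet it concludes membership in ${\rm Dom\,}\Td\ddbar_b$, whose definition refers to $(0,1)$-forms on $\hat X$. That the two notions coincide here (for $u$ and its distributional $\ddbar_b$ both in $L^2$ with respect to a measure comparable to $\hat m$) is a removable-singularity statement which the paper proves explicitly in Lemma~\ref{l-hoI}, via the cutoff $\alpha_\epsilon=(1-\chi(x/\epsilon))\alpha$ and the estimate $\|\hat{\ol{\pr}}^{*,f}_b(\alpha_\epsilon-\alpha)\|_{\hat m}\to0$. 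To be fully airtight you should either declare the $X$-interpretation upfront (as you do) and accept that the identification with the $\hat X$-version is deferred to Lemma~\ref{l-hoI}, or include that density argument here.
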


\begin{proof} 
Let $u\in{\rm Dom\,}\ddbar_{b,1}$. Then, there is a $h\in L^2_{(0,1)}(m_1,\hat\theta)$ such that 
\begin{equation}\label{e-lopcIV}
(h\ |\ \alpha)_{m_1,\hat\theta}=(u\ |\ \ol{\pr}^{*,f}_{b,1}\alpha)_{m_1},\ \ \forall \alpha\in\Omega^{0,1}_0(X).
\end{equation}
Note that $\ddbar_{b,1}u:=h$. From \eqref{e-lopcII} and \eqref{e-lopcIV}, it is easy to see that
\begin{equation}\label{e-lopcV}
\begin{split}
(\psi^{-1}h\ |\ g)_{\Td m,\hat\theta}&=(h\ |\ \psi g)_{m_1,\hat\theta}=(u\ |\ \ol{\pr}^{*,f}_{b,1}(\psi g))_{m_1}\\
&=(\psi^{-1}u\ |\ \psi^{-1}\ol{\pr}^{*,f}_{b,1}(\psi g))_{\Td m}\\
&=(\psi^{-1}u\ |\ \Td{\ol{\pr}}^{*,f}_bg)_{\Td m},\ \ \forall g\in\Omega^{0,1}_0(X).
\end{split}
\end{equation} 
Since $\psi^{-1}h$ in $L^2_{(0,1)}(\Td m,\hat\theta)$, from \eqref{e-lopcV}, we conclude that $\psi^{-1}u\in{\rm Dom\,}\Td\ddbar_b$ and 
\[\Td\ddbar_b(\psi^{-1}u)=\psi^{-1}h=\psi^{-1}\ddbar_{b,1}u.\]

We have proved that if $u\in{\rm Dom\,}\ddbar_{b,1}$ then $\psi^{-1}u\in{\rm Dom\,}\Td\ddbar_{b}$ and \[\psi\Td\ddbar_b(\psi^{-1}u)=\ddbar_{b,1}u.\]

We can repeat the procedure above and conclude that if $v\in{\rm Dom\,}\Td\ddbar_{b}$ then $\psi v\in{\rm Dom\,}\ddbar_{b,1}$ and $\psi^{-1}\ddbar_{b,1}(\psi v)=\Td\ddbar_bv$. The lemma follows.
\end{proof} 

We have a corresponding lemma about the relation between $\ddbar_{b,1}^*$ and $\Td\ddbar_{b}^*$:

\begin{lem}\label{l-lopcIII}
We have 
\begin{equation}\label{e-hoamI}
\mbox{$u\in{\rm Dom\,}\ol{\pr}^*_{b,1}$ if and only if $\psi^{-1}u\in{\rm Dom\,}\Td{\ol{\pr}}^*_{b}$}.
\end{equation} 

Moreover, 
\begin{equation}\label{e-hoamIII}
\ol{\pr}^*_{b,1}u=\psi\Td{\ol{\pr}}^*_{b}(\psi^{-1}u),\ \ \forall u\in{\rm Dom\,}\ol{\pr}^*_{b,1}.
\end{equation}
\end{lem}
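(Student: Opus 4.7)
The plan is to mimic the proof of Lemma~\ref{l-lopcII}, but now applied to the Hilbert-space adjoints on $(0,1)$ forms. The key point is that multiplication by $\psi^{-1}$ (resp.\ $\psi$) implements an isometry between $L^2(m_1)$ and $L^2(\Td m)$ (and similarly on $(0,1)$ forms) by \eqref{e-crysmiIII}, and by Lemma~\ref{l-lopcII} this same multiplication sends ${\rm Dom\,}\ddbar_{b,1}$ onto ${\rm Dom\,}\Td\ddbar_b$ with $\ddbar_{b,1}=\psi\,\Td\ddbar_b\,\psi^{-1}$. So the identity \eqref{e-hoamIII} should fall out by dualizing \eqref{e-lopcIIIb}.

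In more detail, suppose $u\in{\rm Dom\,}\ol{\pr}^*_{b,1}$ and set $h:=\ol{\pr}^*_{b,1}u\in L^2(m_1)$. By definition, for every $f\in{\rm Dom\,}\ddbar_{b,1}$,
\[
(u\,|\,\ddbar_{b,1}f)_{m_1,\hat\theta}=(h\,|\,f)_{m_1}.
\]
Using \eqref{e-crysmiIII} on both sides and \eqref{e-lopcIIIb} on the left, the left-hand side becomes $(\psi^{-1}u\,|\,\Td\ddbar_b(\psi^{-1}f))_{\Td m,\hat\theta}$ and the right-hand side becomes $(\psi^{-1}h\,|\,\psi^{-1}f)_{\Td m}$. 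By Lemma~\ref{l-lopcII}, as $f$ ranges over ${\rm Dom\,}\ddbar_{b,1}$, the function $g:=\psi^{-1}f$ ranges over ${\rm Dom\,}\Td\ddbar_b$, so we obtain
\[
(\psi^{-1}u\,|\,\Td\ddbar_b g)_{\Td m,\hat\theta}=(\psi^{-1}h\,|\,g)_{\Td m},\ \ \forall g\in{\rm Dom\,}\Td\ddbar_b.
\]
Since $\psi^{-1}h\in L^2(\Td m)$, this means exactly that $\psi^{-1}u\in{\rm Dom\,}\Td{\ol{\pr}}^*_b$ with $\Td{\ol{\pr}}^*_b(\psi^{-1}u)=\psi^{-1}h=\psi^{-1}\ol{\pr}^*_{b,1}u$, which rearranges to \eqref{e-hoamIII}.

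The converse direction is the same argument read backwards: if $v:=\psi^{-1}u\in{\rm Dom\,}\Td{\ol{\pr}}^*_b$ with $\Td{\ol{\pr}}^*_bv=k\in L^2(\Td m)$, then for every $g\in{\rm Dom\,}\Td\ddbar_b$,
\[
(v\,|\,\Td\ddbar_b g)_{\Td m,\hat\theta}=(k\,|\,g)_{\Td m};
\]
taking $g=\psi^{-1}f$ with $f\in{\rm Dom\,}\ddbar_{b,1}$ arbitrary, and applying \eqref{e-crysmiIII} and \eqref{e-lopcIIIb} in reverse, we get $(u\,|\,\ddbar_{b,1}f)_{m_1,\hat\theta}=(\psi k\,|\,f)_{m_1}$, so $u\in{\rm Dom\,}\ol{\pr}^*_{b,1}$ and $\ol{\pr}^*_{b,1}u=\psi k=\psi\,\Td{\ol{\pr}}^*_b(\psi^{-1}u)$.

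I do not expect any real obstacle here; the only point requiring a little care is the bijection of domains, which is ensured by Lemma~\ref{l-lopcII}, together with the fact that $\psi$ and $\psi^{-1}$ may be singular at $p$ but this is exactly compensated by the weight difference between $m_1$ and $\Td m=|\psi|^2 m_1$, so the $L^2$ identifications used above are genuine Hilbert-space isometries and no boundary terms are introduced.
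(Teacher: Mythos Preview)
Your proposal is correct and follows essentially the same approach as the paper: both argue by dualizing Lemma~\ref{l-lopcII}, using the isometries \eqref{e-crysmiIII} to pass between the two inner products and the bijection ${\rm Dom\,}\ddbar_{b,1}\leftrightarrow{\rm Dom\,}\Td\ddbar_b$ to conclude. The only cosmetic difference is that the paper parametrizes the test functions directly by $f\in{\rm Dom\,}\Td\ddbar_b$ (writing $\psi f\in{\rm Dom\,}\ddbar_{b,1}$), whereas you start from $f\in{\rm Dom\,}\ddbar_{b,1}$ and set $g=\psi^{-1}f$; the computations are the same.
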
 

\begin{proof} 
Let $u\in{\rm Dom\,}\ol{\pr}^*_{b,1}$. We claim that $\psi^{-1}u\in{\rm Dom\,}\Td{\ol{\pr}}^*_{b}$ and $\Td{\ol{\pr}}^*_{b}(\psi^{-1}u)=\psi^{-1}\ol{\pr}^*_{b,1}u$. Put $\ol{\pr}^*_{b,1}u=h\in L^2(m_1)$. By definition, we have 
\begin{equation}\label{e-hobmI}
(\ddbar_{b,1}g\ |\ u)_{m_1,\hat\theta}=(g\ |\ h)_{m_1},\ \ \forall g\in{\rm Dom\,}\ddbar_{b,1}. 
\end{equation} 
From \eqref{e-crysmiIII}, Lemma~\ref{l-lopcII} and \eqref{e-hobmI}, we have 
\begin{equation}\label{e-hobmII}
\begin{split}
(\Td\ddbar_bf\ |\ \psi^{-1}u)_{\Td m,\hat\theta}&=(\psi\Td\ddbar_bf\ |\ u)_{m_1,\hat\theta}
=(\ddbar_{b,1}(\psi f)\ |\ u)_{m_1,\hat\theta}\\
&=(\psi f\ |\ h)_{m_1}=(f\ |\ \psi^{-1}h)_{\Td m},\ \ \forall f\in{\rm Dom\,}\Td\ddbar_b. 
\end{split}
\end{equation}
Thus, $\psi^{-1}u\in{\rm Dom\,}\Td{\ol{\pr}}^*_{b}$ and $\Td{\ol{\pr}}^*_{b}(\psi^{-1}u)=\psi^{-1}h=\psi^{-1}\ol{\pr}^*_{b,1}u$. 

We can repeat the procedure above and conclude that if $v\in{\rm Dom\,}\Td{\ol{\pr}}^*_{b}$ then $\psi v\in{\rm Dom\,}\ol{\pr}^*_{b,1}$ and $\ol{\pr}^*_{b,1}(\psi v)=\psi\Td{\ol{\pr}}^*_{b}v$. The lemma follows.
\end{proof}

Combining the above, we obtain (\ref{eq:Boxb1totildeBoxb1}) and (\ref{eq:Boxb1totildeBoxb2}):

\begin{thm}\label{t-hobmI}
We have 
\[\mbox{$u\in{\rm Dom\,}\Box_{b,1}$ if and only if $\frac{u}{\psi}\in{\rm Dom\,}\Td\Box_b$}\]
and 
\[\Box_{b,1}u=\psi\Td\Box_b(\frac{u}{\psi}),\ \ \forall u\in{\rm Dom\,}\Box_{b,1}.\]
\end{thm}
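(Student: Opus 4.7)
The proof should be an essentially formal consequence of the two preceding lemmas (Lemma~\ref{l-lopcII} for $\ddbar_{b,1}$ versus $\Td\ddbar_b$, and Lemma~\ref{l-lopcIII} for the adjoints $\ol{\pr}^*_{b,1}$ versus $\Td{\ol{\pr}}^*_b$), combined with the defining conditions of the Gaffney extensions of $\Box_{b,1}$ and $\Td\Box_b$ recalled in Section~\ref{subsect:strategy}. No analysis remains once conjugation by $\psi$ is tracked carefully through the two-step composition, so I would expect the argument to be short.

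My plan is to unwind the definition of $\text{Dom}\,\Box_{b,1}$ step by step. Suppose first that $u\in\text{Dom}\,\Box_{b,1}\subset L^2(m_1)$. By definition this means $u\in\text{Dom}\,\ddbar_{b,1}$ and $\ddbar_{b,1}u\in\text{Dom}\,\ol{\pr}^*_{b,1}$. Since $m_1=\abs{\psi}^{-2}\Td m$, $u\in L^2(m_1)$ if and only if $\psi^{-1}u\in L^2(\Td m)$. Applying Lemma~\ref{l-lopcII}, the first condition is equivalent to $\psi^{-1}u\in\text{Dom}\,\Td\ddbar_b$, and we have the identity
\[
\ddbar_{b,1}u=\psi\,\Td\ddbar_b(\psi^{-1}u).
\]
Applying Lemma~\ref{l-lopcIII} to the element $\ddbar_{b,1}u\in L^2_{(0,1)}(m_1,\hat\theta)$, the condition $\ddbar_{b,1}u\in\text{Dom}\,\ol{\pr}^*_{b,1}$ is equivalent to
\[
\psi^{-1}\,\ddbar_{b,1}u=\Td\ddbar_b(\psi^{-1}u)\in\text{Dom}\,\Td{\ol{\pr}}^*_b,
\]
and under this condition,
\[
\ol{\pr}^*_{b,1}(\ddbar_{b,1}u)=\psi\,\Td{\ol{\pr}}^*_b\bigl(\psi^{-1}\ddbar_{b,1}u\bigr)=\psi\,\Td{\ol{\pr}}^*_b\Td\ddbar_b(\psi^{-1}u).
\]
Combining the two equivalences, $u\in\text{Dom}\,\Box_{b,1}$ if and only if $\psi^{-1}u\in\text{Dom}\,\Td\ddbar_b$ and $\Td\ddbar_b(\psi^{-1}u)\in\text{Dom}\,\Td{\ol{\pr}}^*_b$, which is exactly $\psi^{-1}u\in\text{Dom}\,\Td\Box_b$; and the displayed chain of identities then reads $\Box_{b,1}u=\psi\,\Td\Box_b(\psi^{-1}u)$, which is the asserted formula.

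The only potential subtlety is to ensure that Lemma~\ref{l-lopcIII} is applied to the correct element (namely $\ddbar_{b,1}u$, not $u$), and that the composition $\ol{\pr}^*_{b,1}\circ\ddbar_{b,1}$ really does factor as $\psi\circ\Td{\ol{\pr}}^*_b\circ\Td\ddbar_b\circ\psi^{-1}$ in the domain sense and not merely formally on smooth sections. But since Lemmas~\ref{l-lopcII} and \ref{l-lopcIII} are stated as biconditionals about membership in the $L^2$ domains (with the corresponding operator identities), the implications go in both directions automatically, and the converse ``if $\psi^{-1}u\in\text{Dom}\,\Td\Box_b$ then $u\in\text{Dom}\,\Box_{b,1}$'' is obtained by reading the same chain backwards. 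I do not anticipate any real obstacle; the proof is a one- or two-line bookkeeping exercise given the lemmas already established.
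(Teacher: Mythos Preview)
Your proposal is correct and matches the paper's approach exactly: the paper simply states that Theorem~\ref{t-hobmI} follows by ``combining the above,'' i.e., by chaining Lemmas~\ref{l-lopcII} and~\ref{l-lopcIII} through the definition of the Gaffney extension, precisely as you describe.
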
 

Thus $\Box_{b,1}$ will have closed range in $L^2(m_1)$, if and only if $\tilde{\Box}_b$ has closed range in $L^2(\tilde{m})$. We will prove the latter in the next two sections, and that will establish Theorem~\ref{thm2}.

\section{The relation between $\Td \Box_b$ and $\hat \Box_b$} \label{sect:TdBoxbtohatBoxb}

In the last section, we saw how the solution of $\Box_{b,1}$ is reduced to the solution of the intermediate operator $\Td\Box_b$. In this section, we see how the latter could be further reduced to solving $\hat{\Box_b}$, which we introduced in Section~\ref{subsect:outlineofproof}. In particular, we will prove (\ref{eq:Boxbhattotilde1}) and (\ref{eq:Boxbhattotilde2}) there.

First, note that 
\begin{equation}\notag
L^2(\Td m)=L^2(\hat m),\ \ L^2_{(0,1)}(\Td m,\hat\theta)=L^2_{(0,1)}(\hat m,\hat\theta).
\end{equation}
In fact, from the expansions (\ref{eq:Gpexpand}) of $G_p$ and (\ref{e-ysmiVIII}) of $\psi$, we see that near $p$,
\begin{equation}\label{miII}
\frac{\hat m}{\Td m}= G_p^{-2} |\psi|^{-2} = 1+a(z,t),\ \ a(z,t)\in\mathcal{E}(\hat\rho^2).
\end{equation}
Let
\begin{equation}\notag
\hat{\ol{\pr}}^{*,f}_{b}:\Omega^{0,1}(X)\To C^\infty(X)
\end{equation}
be the formal adjoint of $\ddbar_b$ with respect to $(\,\cdot\,|\,\cdot\,)_{\hat m}$, $(\,\cdot\,|\,\cdot\,)_{\hat m,\hat\theta}$. Then 
$$
\Td{\ol{\pr}}^{*,f}_b=\frac{\hat m}{\Td m}\hat{\ol{\pr}}^{*,f}_b\frac{\Td m}{\hat m},
$$
so from the above, we see that 
\begin{equation}\label{e-hoIa}
\Td{\ol{\pr}}^{*,f}_b=\hat{\ol{\pr}}^{*,f}_b+g,\quad \text{for some $g\in\mathcal{E}(\hat\rho,T^{0,1}\hat X)$}.
\end{equation} 
(Here we think of the $(0,1)$ vector $g$ as an element on the dual space of $\Lambda^{0,1}T^* \hat X$.) We then have the following lemma about the relation between $\Td \ddbar_b$ and $\hat \ddbar_b$:

\begin{lem}\label{l-hoI} 
We have $\Td\ddbar_b=\hat\ddbar_b$. That is, $${\rm Dom\,}\Td\ddbar_b={\rm Dom\,}\hat\ddbar_b, \quad \text{and} \quad \Td\ddbar_bu=\hat\ddbar_bu$$ for all $u$ in the common domain of definition.
\end{lem}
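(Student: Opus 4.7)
The plan is to observe that $\Td\ddbar_b$ and $\hat\ddbar_b$ are in fact the very same operator, because both are defined as the distributional $\ddbar_b$ restricted to those $u$ for which $\ddbar_b u$ lies in the appropriate $L^2$ space of $(0,1)$ forms, and the relevant $L^2$ spaces coincide as sets.

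First I would check that the density $\Td m/\hat m = G_p^2|\psi|^2$ and its reciprocal $\hat m/\Td m$ are both bounded on $\hat X$. Away from $p$ this is immediate, since $G_p$ and $\psi$ are smooth on $X$ and $\psi\ne 0$ there. Near $p$ one uses the expansion \eqref{eq:Gpexpand} of $G_p$, which gives $G_p \sim \frac{1}{2\pi}\hat\rho^{-2}$, and the expansion \eqref{e-ysmiVIII} of $\psi$, which gives $|\psi| \sim 2\pi\hat\rho^2$; multiplying, the singularities cancel exactly, as is already recorded in \eqref{miII} in the form $\hat m/\Td m = 1 + a(z,t)$ with $a\in\mathcal{E}(\hat\rho^2)$. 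Consequently $\Td m/\hat m$ and $\hat m/\Td m$ are bounded positive functions on $\hat X$.

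Next, from this boundedness I would conclude that the identity map gives an equality of sets with equivalent norms:
\[
L^2(\Td m) = L^2(\hat m), \qquad L^2_{(0,1)}(\Td m,\hat\theta)=L^2_{(0,1)}(\hat m,\hat\theta).
\]
Now recall the definitions from Section~\ref{subsect:outlineofproof}: $u\in{\rm Dom\,}\Td\ddbar_b$ means $u\in L^2(\Td m)$ and the distributional $\ddbar_b u$ lies in $L^2_{(0,1)}(\Td m,\hat\theta)$, and in that case $\Td\ddbar_b u$ is by definition the distributional $\ddbar_b u$; the definition of $\hat\ddbar_b$ is literally identical with $\Td m,\hat m$ interchanged. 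Since both ambient $L^2$ spaces are the same set of functions (respectively forms), the two membership conditions are the same, so ${\rm Dom\,}\Td\ddbar_b = {\rm Dom\,}\hat\ddbar_b$, and on this common domain both operators return the single object ``distributional $\ddbar_b u$''. Hence $\Td\ddbar_b = \hat\ddbar_b$, as claimed.

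There is no real obstacle here; the lemma is essentially a tautology once one verifies that the two measures are mutually absolutely continuous with bounded densities, which is the content of \eqref{miII}. The only place where any care is required is at $p$, and that is precisely where the cancellation between the pole of $G_p^{-2}$ and the zero of $|\psi|^{-2}$ (both of non-isotropic order $4$) is needed; this cancellation was built into the construction of $\psi$ in Section~\ref{sect:CR}.
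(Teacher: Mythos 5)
Your argument misses the one genuinely nontrivial step in the paper's proof of this lemma: a removable singularity argument at $p$. The issue is that ``distributional $\ddbar_b$'' cannot be tested against the same class of forms in the two definitions. Since $\Td m$ is not smooth at $p$, the formal adjoint $\Td{\ol{\pr}}^{*,f}_b$ does not preserve smoothness at $p$, so ${\rm Dom\,}\Td\ddbar_b$ is characterized by the weak identity $(\Td\ddbar_b u\,|\,\alpha)_{\Td m,\hat\theta}=(u\,|\,\Td{\ol{\pr}}^{*,f}_b\alpha)_{\Td m}$ only for $\alpha\in\Omega^{0,1}_0(X)$, i.e.\ forms supported away from $p$; this is exactly how the paper unpacks the definition at the start of the proof. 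By contrast $\hat m$ is smooth, so $\hat\ddbar_b$ is characterized by testing against all of $\Omega^{0,1}(\hat X)$, including forms that do not vanish at $p$. The inclusion ${\rm Dom\,}\hat\ddbar_b\subset{\rm Dom\,}\Td\ddbar_b$ is then immediate (shrink the test class and change measure using the bounded density), but the converse is not a tautology, and your proof simply asserts it.

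That converse is the actual content: one must show that if $u\in L^2$ and the weak $\ddbar_b u$ on $X=\hat X\setminus\set{p}$ is given by an $L^2$ form, then no defect concentrates at $p$, so the same identity holds against every $\alpha\in\Omega^{0,1}(\hat X)$. The paper proves this by the explicit cutoffs $\alpha_\epsilon=(1-\chi(x/\epsilon))\alpha$ and the estimate
$\norm{\hat{\ol{\pr}}^{*,f}_b(\alpha_\epsilon-\alpha)}_{\hat m}^2\leq\frac{C_1}{\epsilon^2}\int_{\abs{x}\leq\epsilon r}\hat m\to 0$:
the factor $\epsilon^{-2}$ from the squared derivative of the cutoff is dominated by the volume $\sim\epsilon^3$ of the shrinking ball, which is precisely where the codimension of the point singularity enters. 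Your observation that $\hat m/\Td m=1+a$ with $a\in\mathcal{E}(\hat\rho^2)$, so the two $L^2$ spaces coincide, is correct and is indeed used in the proof to convert the weak identity from the $\Td m$-pairing to the $\hat m$-pairing; but it does not by itself extend the class of admissible test forms from $\Omega^{0,1}_0(X)$ to $\Omega^{0,1}(\hat X)$, which is what the lemma is really asserting.
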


\begin{proof}
Let $u\in{\rm Dom\,}\Td\ddbar_b$. We claim that $u\in{\rm Dom\,}\hat\ddbar_b$ and $\Td\ddbar_bu=\hat\ddbar_bu$. By definition, there is a $h\in L^2_{(0,1)}(\Td m,\hat\theta)$ such that 
\begin{equation}\label{e-hoIab}
(h\ |\ \alpha)_{\Td m,\hat\theta}=(u\ |\ \Td{\ddbar_b}^{*,f}\alpha)_{\Td m},\ \ \forall \alpha\in\Omega^{0,1}_0(X).
\end{equation}
Note that $h=\Td\ddbar_bu$. From \eqref{e-hoIa} and \eqref{e-hoIab}, we have 
\begin{equation}\label{e-hoIac}
\begin{split}
(h\ |\ \gamma)_{\hat m,\hat\theta}&=(h\ |\ \frac{\hat m}{\Td m}\gamma)_{\Td m,\hat\theta}=(u\ |\ \Td{\ol{\pr}}^{*,f}_b(\frac{\hat m}{\Td m}\gamma))_{\Td m,\hat\theta}=(u\ |\ \frac{\hat m}{\Td m}\hat{\ol{\pr}}^{*,f}_b\gamma)_{\Td m,\hat\theta}\\
&=(u\ |\ \hat{\ol{\pr}}^{*,f}_b\gamma)_{\hat m,\hat\theta},\ \ \forall \gamma\in\Omega^{0,1}_0(X).
\end{split}\end{equation} 

Take $\alpha \in\Omega^{0,1}(\hat X)$. Let 
\[D_r:=\set{x=(x_1,x_2,x_3)\in\Real^3;\, \abs{x}^2:=\abs{x_1}^2+\abs{x_2}^2+\abs{x_3}^2<r},\ \ r>0,\] 
be a small ball. We identify $D_r$ with a neighbourhood of $p$. Take $\chi\in C^\infty_0(D_r,\ol\Real_+)$, $\chi=1$ on $D_{\frac{r}{2}}:=\set{x\in\Real^3;\, \abs{x}^2<\frac{r}{2}}$. Take $\epsilon>0$, $\epsilon$ small and put $\alpha_{\epsilon}=(1-\chi(\frac{x}{\epsilon}))\alpha \in\Omega^{0,1}_0(X)$. Then, 
\[\begin{split}
&\norm{\alpha_\epsilon-\alpha}^2_{\hat m,\hat\theta}=\int \abs{\alpha}^2_{\hat\theta}\abs{\chi(\frac{x}{\epsilon})}^2\hat m\leq C\int_{\abs{x}\leq\epsilon r}\hat m\To0\ \ \mbox{as $\epsilon\To0$},\\
&\norm{\hat{\ol{\pr}}^{*,f}_{b}(\alpha_\epsilon-\alpha)}^2_{\hat m}\leq\int \abs{\hat{\ol{\pr}}^{*,f}_b \alpha}^2_{\hat m}\abs{\chi(\frac{x}{\epsilon})}^2\hat m
+\int\abs{\alpha}^2_{\hat\theta}\abs{\frac{1}{\epsilon}(Z_1\chi)(\frac{x}{\epsilon})}^2\hat m\\
&\leq\frac{C_1}{\epsilon^2}\int_{\abs{x}\leq\epsilon r}\hat m\To0\ \ \mbox{as $\epsilon\To0$},
\end{split}\] 
where $C>0$, $C_1>0$ are constants independent of $\epsilon$. We conclude that there exist $\alpha_j\in\Omega^{0,1}_0(X)$, $j=1,2,\ldots$, such that 
\[\lim_{j\To\infty}\norm{\alpha_j-\alpha}_{\hat m,\hat\theta}=0,\ \ \lim_{j\To\infty}\norm{\hat{\ol{\pr}}^{*,f}_b(\alpha_j-\alpha)}_{\hat m}=0.\]
Combining this with \eqref{e-hoIac}, we have 
\[(h\ |\ \alpha)_{\hat m,\hat\theta}=\lim_{j\To\infty}(h\ |\ \alpha_j)_{\hat m,\hat\theta}=\lim_{j\To\infty}(u\ |\ \hat{\ol{\pr}}^{*,f}_b \alpha_j)_{\hat m}=(u\ |\ \hat{\ol{\pr}}^{*,f}_b \alpha)_{\hat m}.\] 
Thus, $u\in{\rm Dom\,}\hat\ddbar_b$ and $\hat\ddbar_bu=\Td\ddbar_bu=h$. 

We have proved that if $u\in{\rm Dom\,}\Td\ddbar_b$ then $u\in{\rm Dom\,}\hat\ddbar_b$ and $\hat\ddbar_bu=\Td\ddbar_bu$. 

We can repeat the procedure above and conclude that if $v\in{\rm Dom\,}\hat\ddbar_b$ then $v\in{\rm Dom\,}\Td\ddbar_b$ and $\Td\ddbar_bv=\hat\ddbar_bv$. The lemma follows.
\end{proof}

Next, we want to understand the relation between $\Td \ddbar_b^*$ and $\hat \ddbar_b^*$. To do so, we need the following lemma:

\begin{lem}\label{l-hocmI}
Let $v\in{\rm Dom\,}\hat\ddbar_b$. Then, $\frac{\Td m}{\hat m}v\in{\rm Dom\,}
\hat\ddbar_b$ and 
\[\hat\ddbar_b(\frac{\Td m}{\hat m}v)=\frac{\Td m}{\hat m}\hat\ddbar_bv-\frac{\Td m}{\hat m}g^{*}v\]
where $g^*$ is the $(0,1)$ form dual to the $(0,1)$ vector $g$.
\end{lem}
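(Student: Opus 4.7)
The plan is to verify the identity by duality, pairing against test $(0,1)$ forms on $\hat X$. First observe, from \eqref{miII}, that $\Td m/\hat m = G_p^2|\psi|^2$ lies in $\mathcal{E}(\hat\rho^0)$ and is therefore bounded on $\hat X$ (in fact continuous at $p$ with value $1$) and smooth on $X$; similarly $g^*$ is bounded on $\hat X$ by \eqref{e-hoIa}. Hence $(\Td m/\hat m)v\in L^2(\hat m)$ and the candidate form
\[
w := \frac{\Td m}{\hat m}\hat\ddbar_bv - \frac{\Td m}{\hat m}\,v\,g^*
\]
lies in $L^2_{(0,1)}(\hat m,\hat\theta)$. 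It therefore suffices to show that the distributional $\hat\ddbar_b$ of $(\Td m/\hat m)v$ equals $w$, which by definition of the domain of $\hat\ddbar_b$ amounts to the pairing identity $\int (\Td m/\hat m) v\,\overline{\hat{\ol{\pr}}^{*,f}_b\alpha}\,\hat m = \int\langle w,\alpha\rangle_{\hat\theta}\,\hat m$ for every $\alpha\in\Omega^{0,1}_0(\hat X)$.

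The heart of the matter is the case where the test form is supported away from $p$. For $\alpha\in\Omega^{0,1}_0(X)$, using $(\Td m/\hat m)\hat m = \Td m$ together with \eqref{e-hoIa},
\[
\int_{\hat X}\frac{\Td m}{\hat m}v\,\overline{\hat{\ol{\pr}}^{*,f}_b\alpha}\,\hat m
= \int_{\hat X} v\,\overline{\Td{\ol{\pr}}^{*,f}_b\alpha}\,\Td m - \int_{\hat X} v\,\overline{g\alpha}\,\Td m.
\]
By Lemma~\ref{l-hoI} we have $v\in{\rm Dom\,}\Td\ddbar_b$ with $\Td\ddbar_b v=\hat\ddbar_b v$, so the first term on the right equals $\int\langle\hat\ddbar_bv,\alpha\rangle_{\hat\theta}\,\Td m = \int\langle(\Td m/\hat m)\hat\ddbar_bv,\alpha\rangle_{\hat\theta}\,\hat m$. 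Writing $\overline{g\alpha}=\langle g^*,\alpha\rangle_{\hat\theta}$ from the definition of the dual $(0,1)$ form $g^*$, the second term becomes $\int\langle(\Td m/\hat m)\,v\,g^*,\alpha\rangle_{\hat\theta}\,\hat m$. Combining, the desired pairing identity holds for every $\alpha\in\Omega^{0,1}_0(X)$.

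To extend to arbitrary $\alpha\in\Omega^{0,1}_0(\hat X)$ (including those supported near $p$), I would use the same cut-off approximation as in the proof of Lemma~\ref{l-hoI}: set $\alpha_\epsilon := (1-\chi(\cdot/\epsilon))\alpha\in\Omega^{0,1}_0(X)$ for a standard cut-off $\chi$ that equals $1$ near $p$. That proof already shows $\alpha_\epsilon\to\alpha$ in $L^2_{(0,1)}(\hat m,\hat\theta)$ and $\hat{\ol{\pr}}^{*,f}_b\alpha_\epsilon\to\hat{\ol{\pr}}^{*,f}_b\alpha$ in $L^2(\hat m)$ as $\epsilon\to 0$. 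Applying the identity of the previous paragraph to each $\alpha_\epsilon$ and passing to the limit, using $(\Td m/\hat m)v\in L^2(\hat m)$ on the left and $w\in L^2_{(0,1)}(\hat m,\hat\theta)$ on the right, we obtain the identity for $\alpha$. This shows both that $(\Td m/\hat m)v\in{\rm Dom\,}\hat\ddbar_b$ and that $\hat\ddbar_b((\Td m/\hat m)v)=w$, as required.

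The main technical point will be the $L^2(\hat m)$ convergence $\hat{\ol{\pr}}^{*,f}_b\alpha_\epsilon\to\hat{\ol{\pr}}^{*,f}_b\alpha$, since $\hat{\ol{\pr}}^{*,f}_b$ is a first-order operator and one must control the contribution arising when the derivative falls on the cut-off $\chi(\cdot/\epsilon)$; this is absorbed by the shrinking $\hat m$-volume of the support of $\nabla\chi(\cdot/\epsilon)$ and is essentially the same estimate already carried out in Lemma~\ref{l-hoI}, so it should transfer directly.
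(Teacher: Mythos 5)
Your proof follows the same strategy as the paper's: verify the distributional $\hat\ddbar_b$ identity by pairing against test $(0,1)$ forms $\alpha\in\Omega^{0,1}_0(X)$, using \eqref{e-hoIa} to relate the two formal adjoints and Lemma~\ref{l-hoI} to identify $\Td\ddbar_b v$ with $\hat\ddbar_b v$, then extending to all of $\Omega^{0,1}(\hat X)$ by the same cut-off approximation argument carried out in the proof of Lemma~\ref{l-hoI}. The only difference is cosmetic — you spell out the $L^2$-membership checks and the role of the dual form $g^*$ a bit more explicitly than the paper does — so the argument is correct and essentially identical.
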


\begin{proof} 
For all $\alpha\in\Omega^{0,1}_0(X)$, we have 
\begin{equation}\notag
\begin{split}
(\frac{\Td m}{\hat m}v\ |\ \hat{\ol{\pr}}^{*,f}_b\alpha)_{\hat m}&=(\frac{\Td m}{\hat m}v\ |\ (\Td{\ol{\pr}}^{*,f}_b-g)\alpha)_{\hat m}\ \ \mbox{here we used \eqref{e-hoIa}}\\
&=(v\ |\ (\Td{\ol{\pr}}^{*,f}_b-g)\alpha)_{\Td m}\\
&=(\hat\ddbar_bv\ | \alpha)_{\Td m,\hat\theta}-(g^{*}v\ |\ \alpha)_{\Td m,\hat\theta}\ \ \mbox{here we used Lemma~\ref{l-hoI}}\\
&=(\frac{\Td m}{\hat m}\hat\ddbar_bv-\frac{\Td m}{\hat m}g^{*}v\ |\ \alpha)_{\hat m,\hat\theta}.
\end{split}
\end{equation} 

We can now repeat the procedure in the proof of Lemma~\ref{l-hoI} and conclude that 
\[(\frac{\Td m}{\hat m}v\ |\ \hat{\ol{\pr}}^{*,f}_bh)_{\hat m}=(\frac{\Td m}{\hat m}\hat\ddbar_bv-\frac{\Td m}{\hat m}g^{*}v\ |\ h)_{\hat m,\hat\theta},\ \ \forall h\in\Omega^{0,1}(\hat X).\] 
The lemma follows.
\end{proof} 

We can now prove:

\begin{lem}%\label{t-hocmI}
We have ${\rm Dom\,}\hat{\ol{\pr}}^*_b={\rm Dom\,}\Td{\ol{\pr}}^*_b$ and 
\begin{equation}\label{e-hocmII} 
\Td{\ol{\pr}}^*_bu=\hat{\ol{\pr}}^*_bu+gu,\ \ \forall u\in{\rm Dom\,}\hat{\ol{\pr}}^*_b={\rm Dom\,}\Td{\ol{\pr}}^*_b.
\end{equation} 
\end{lem}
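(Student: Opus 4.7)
The approach parallels the proof of Lemma~\ref{l-hoI}. Set $\rho := \Td m/\hat m = G_p^2|\psi|^2$: by \eqref{miII}, $\rho$ extends continuously across $p$ with value $1$, so $\rho$ and $\rho^{-1}$ are bounded on $\hat X$, while $\rho$ itself is smooth on $X$. Consequently $L^2(\hat m)=L^2(\Td m)$ with equivalent norms, and because $g\in\mathcal{E}(\hat\rho,T^{0,1}\hat X)$ is bounded, $gu\in L^2(\hat m)=L^2(\Td m)$ whenever $u\in L^2_{(0,1)}(\hat m,\hat\theta)$. The plan is to test the defining identity for each Hilbert-space adjoint against $C^\infty_0(X)$, use Lemma~\ref{l-hocmI} to move the factor $\rho$ across $\hat\ddbar_b$, and then exploit the bijection $f\mapsto\rho f$ on $C^\infty_0(X)$ to pass between the two formulations.

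The first step is to verify that $C^\infty_0(X)$ is a core for $\hat\ddbar_b$, i.e.\ dense in ${\rm Dom\,}\hat\ddbar_b$ in the graph norm. This follows by combining Friedrichs mollification on the compact manifold $\hat X$ with a cut-off argument analogous to the one in the proof of Lemma~\ref{l-hoI}: truncating a smooth function $f\in C^\infty(\hat X)$ by $\chi_\epsilon(x):=\chi(x/\epsilon)$ (vanishing on $B(p,\epsilon)$) introduces an error $(\hat\ddbar_b\chi_\epsilon)f$ whose $L^2(\hat m)$ norm is dominated by $\epsilon^{-1}|B(p,\epsilon)|^{1/2}\|f\|_{L^\infty}\lesssim\epsilon\to 0$. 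Consequently, for $v\in L^2_{(0,1)}(\hat m,\hat\theta)$, membership $v\in{\rm Dom\,}\hat{\ol{\pr}}^*_b$ (resp.\ ${\rm Dom\,}\Td{\ol{\pr}}^*_b$) is equivalent to the continuity of the functional $f\mapsto(\hat\ddbar_b f|v)_{\hat m,\hat\theta}$ (resp.\ $(\hat\ddbar_b f|v)_{\Td m,\hat\theta}$) in the $L^2(\hat m)$ (resp.\ $L^2(\Td m)$) norm of $f$, tested only on $f\in C^\infty_0(X)$.

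Now take $u\in{\rm Dom\,}\hat{\ol{\pr}}^*_b$ and put $h:=\hat{\ol{\pr}}^*_bu$. For any $f\in C^\infty_0(X)$, the product $\rho f$ again lies in $C^\infty_0(X)\subset{\rm Dom\,}\hat\ddbar_b$, since $f$ vanishes near $p$ and so the possible non-smoothness of $\rho$ at $p$ is irrelevant. By Lemma~\ref{l-hocmI}, $\rho\hat\ddbar_b f=\hat\ddbar_b(\rho f)+\rho g^*f$, whence
\[ (\hat\ddbar_b f|u)_{\Td m,\hat\theta}=(\rho\hat\ddbar_b f|u)_{\hat m,\hat\theta}=(\hat\ddbar_b(\rho f)|u)_{\hat m,\hat\theta}+(\rho g^*f|u)_{\hat m,\hat\theta}=(\rho f|h)_{\hat m}+(f|gu)_{\Td m}=(f|h+gu)_{\Td m}, \]
where the pointwise identity $\langle g^*,u\rangle_{\hat\theta}=\overline{gu}$ (in a local orthonormal frame for $T^{0,1}\hat X$) simplifies the second term, and $(\rho f|h)_{\hat m}=(f|h)_{\Td m}$ follows because $\rho$ is real. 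Since $h+gu\in L^2(\Td m)$ and this identity holds for every $f\in C^\infty_0(X)$, we conclude $u\in{\rm Dom\,}\Td{\ol{\pr}}^*_b$ with $\Td{\ol{\pr}}^*_bu=\hat{\ol{\pr}}^*_bu+gu$, as desired. For the reverse inclusion, assume $u\in{\rm Dom\,}\Td{\ol{\pr}}^*_b$ and read the same computation backwards: for any $\tilde f\in C^\infty_0(X)$, writing $\tilde f=\rho f$ with $f:=\rho^{-1}\tilde f\in C^\infty_0(X)$ yields $(\hat\ddbar_b\tilde f|u)_{\hat m,\hat\theta}=(\tilde f|\Td{\ol{\pr}}^*_bu-gu)_{\hat m}$, which forces $u\in{\rm Dom\,}\hat{\ol{\pr}}^*_b$ with $\hat{\ol{\pr}}^*_bu=\Td{\ol{\pr}}^*_bu-gu$.

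The main obstacle is the density of $C^\infty_0(X)$ in ${\rm Dom\,}\hat\ddbar_b$ in the graph norm; once this core property is available, the rest is bookkeeping with the product rule of Lemma~\ref{l-hocmI} and the bounded equivalence of $\hat m$ and $\Td m$. A secondary technical point is to keep the conjugation conventions straight when passing between the pairing $gu$ of the $(0,1)$-vector $g$ with the $(0,1)$-form $u$ and the Hermitian pairing $\langle g^*,u\rangle_{\hat\theta}$, but this is immediate in a local orthonormal frame for $T^{0,1}\hat X$.
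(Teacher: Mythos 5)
Your proof is correct and rests on the same two ingredients as the paper's: the relation $\Td m = \rho\,\hat m$ with $\rho = G_p^2|\psi|^2 \in \mathcal{E}(\hat\rho^0)$, and the product rule of Lemma~\ref{l-hocmI} that moves the density $\rho$ across $\hat\ddbar_b$. Where you diverge is in how you verify the adjoint identity. You test only against $f\in C^\infty_0(X)$, and therefore need a core-density statement (via Friedrichs mollification plus a cut-off near $p$) to upgrade the tested identity to all of ${\rm Dom\,}\Td\ddbar_b$. The paper instead runs the computation directly for \emph{every} $v\in{\rm Dom\,}\Td\ddbar_b = {\rm Dom\,}\hat\ddbar_b$, which is permissible precisely because Lemma~\ref{l-hocmI} is stated for arbitrary $v\in{\rm Dom\,}\hat\ddbar_b$, not just smooth compactly supported ones; the chain
$(v \mid (\hat{\ol\pr}^*_b+g)u)_{\Td m} = (\hat\ddbar_b(\rho v)\mid u)_{\hat m,\hat\theta} + (g^*v\mid u)_{\Td m,\hat\theta} = (\Td\ddbar_b v\mid u)_{\Td m,\hat\theta}$
then closes the argument with no separate density step. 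Your approach effectively re-does, inside the current lemma, the cut-off and approximation work that the paper has already packaged inside Lemma~\ref{l-hocmI}; this makes your proof more self-contained but introduces redundancy (you invoke Lemma~\ref{l-hocmI} and still carry out the core argument, when the former already does the heavy lifting for you). Both routes arrive at the same conclusion, and your estimate for the cut-off error ($\epsilon^{-1}|B(p,\epsilon)|^{1/2}\lesssim\epsilon$, using that the non-isotropic ball has volume $\sim\epsilon^4$) is correct.
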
 

\begin{proof} 
Let $u\in{\rm Dom\,}\hat{\ol{\pr}}^*_b$. We claim that $u\in{\rm Dom\,}\Td{\ol{\pr}}^*_b$ and $\Td{\ol{\pr}}^*_bu=\hat{\ol{\pr}}^*_bu+gu$. 
From Lemma~\ref{l-hoI} and Lemma~\ref{l-hocmI}, we can check that for every $v\in{\rm Dom\,}\Td\ddbar_b$,  
\begin{equation}\notag
\begin{split}
(v\ |\ (\hat{\ol{\pr}}^*_b+g)u)_{\Td m}&=(\frac{\Td m}{\hat m}v\ |\ \hat{\ol{\pr}}^*_bu)_{\hat m}+(v\ |\ gu)_{\Td m}\\
&=(\hat\ddbar_b(\frac{\Td m}{\hat m}v)\ |\ u)_{\hat m,\hat\theta}+(g^{*}v\ |\ u)_{\Td m,\hat\theta}\\
&=(\frac{\Td m}{\hat m}\hat\ddbar_b v-\frac{\Td m}{\hat m}g^{*}v\ |\ u)_{\hat m,\hat\theta}+(\frac{\Td m}{\hat m}g^{*}v\ |\ u)_{\hat m}\\
&=(\hat\ddbar_bv\ |\ u)_{\Td m,\hat\theta}=(\Td\ddbar_bv\ |\ u)_{\Td m,\hat\theta}.
\end{split}
\end{equation}
Thus, $u\in{\rm Dom\,}\Td{\ol{\pr}}^*_b$ and $\Td{\ol{\pr}}^*_bu=\hat{\ol{\pr}}^*_bu+gu$. 

Similarly, for $u\in{\rm Dom\,}\Td{\ol{\pr}}^*_b$, we can repeat the procedure above and conclude that  $u\in{\rm Dom\,}\hat{\ol{\pr}}^*_b$.
The lemma follows.
\end{proof} 

It follows from the above that (\ref{eq:Boxbhattotilde1}) and (\ref{eq:Boxbhattotilde2}) holds:

\begin{thm} \label{thm:TdBoxbtohatBoxb}
$$
{\rm Dom\,}\tilde \Box_{b} = {\rm Dom\,}\hat \Box_b,
$$
and
$$
\Td \Box_b u= \hat \Box_b u + g \hat \ddbar_b u ,\ \ \forall u\in{\rm Dom\,}\tilde \Box_{b}.
$$
\end{thm}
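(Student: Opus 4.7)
The plan is to read off the statement directly from Lemma~\ref{l-hoI} and the lemma establishing \eqref{e-hocmII}, by unwinding the definition of the Gaffney extension. Recall that
\[
{\rm Dom\,}\Td\Box_b = \{\, s \in L^2(\Td m) : s \in {\rm Dom\,}\Td\ddbar_b,\ \Td\ddbar_b s \in {\rm Dom\,}\Td{\ol{\pr}}^*_b\,\},
\]
and analogously for $\hat\Box_b$ with $L^2(\hat m)$, $\hat\ddbar_b$, $\hat{\ol{\pr}}^*_b$ in place of $\Td{}$. Since $L^2(\Td m) = L^2(\hat m)$ as vector spaces (only the inner products differ), the two definitions of domain can be meaningfully compared.

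First I would invoke Lemma~\ref{l-hoI}, which gives ${\rm Dom\,}\Td\ddbar_b = {\rm Dom\,}\hat\ddbar_b$ together with the pointwise equality $\Td\ddbar_b u = \hat\ddbar_b u$ on this common domain. Next, the previous lemma (the one immediately preceding this theorem) gives ${\rm Dom\,}\Td{\ol{\pr}}^*_b = {\rm Dom\,}\hat{\ol{\pr}}^*_b$, together with the identity
\[
\Td{\ol{\pr}}^*_b \alpha = \hat{\ol{\pr}}^*_b \alpha + g\alpha, \quad \forall \alpha \in {\rm Dom\,}\hat{\ol{\pr}}^*_b = {\rm Dom\,}\Td{\ol{\pr}}^*_b.
\]
Combining these two equalities of domains, the conditions ``$s \in {\rm Dom\,}\Td\ddbar_b$ and $\Td\ddbar_b s \in {\rm Dom\,}\Td{\ol{\pr}}^*_b$'' become literally the same as ``$s \in {\rm Dom\,}\hat\ddbar_b$ and $\hat\ddbar_b s \in {\rm Dom\,}\hat{\ol{\pr}}^*_b$''. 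This proves ${\rm Dom\,}\Td\Box_b = {\rm Dom\,}\hat\Box_b$.

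For the identity $\Td\Box_b u = \hat\Box_b u + g\hat\ddbar_b u$, take any $u \in {\rm Dom\,}\Td\Box_b = {\rm Dom\,}\hat\Box_b$. Then $\hat\ddbar_b u = \Td\ddbar_b u$ lies in ${\rm Dom\,}\hat{\ol{\pr}}^*_b = {\rm Dom\,}\Td{\ol{\pr}}^*_b$, and applying \eqref{e-hocmII} to the $(0,1)$ form $\alpha = \hat\ddbar_b u$ yields
\[
\Td\Box_b u = \Td{\ol{\pr}}^*_b \Td\ddbar_b u = \Td{\ol{\pr}}^*_b \hat\ddbar_b u = \hat{\ol{\pr}}^*_b \hat\ddbar_b u + g\,\hat\ddbar_b u = \hat\Box_b u + g\,\hat\ddbar_b u,
\]
which is exactly the claimed formula. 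There is no real obstacle here beyond careful bookkeeping of the two Hilbert space structures $L^2(\Td m)$ and $L^2(\hat m)$; all the analytic content has already been absorbed into Lemma~\ref{l-hoI} and the lemma giving \eqref{e-hocmII}, both of which relied on the density/approximation argument using cutoffs $\chi(x/\epsilon)$ near $p$ together with the asymptotics \eqref{miII} of $\hat m/\Td m$.
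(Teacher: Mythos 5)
Your argument is correct and is exactly what the paper has in mind: the paper simply says the theorem "follows from the above" (namely Lemma~\ref{l-hoI} and the lemma giving \eqref{e-hocmII}), and your careful unwinding of the Gaffney domain condition together with the two identities is precisely that deduction.
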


This allows one to understand solutions to $\Td \Box_b$ via solutions to $\hat \Box_b$, as we will see below.

\section{Proof of Theorem~\ref{thm2}} \label{sect:closedrange}

In this section, we will see that $\Td \Box_b$ and $\Box_{b,1}$ have closed ranges in $L^2(\Td m)$ and $L^2(m_1)$ respectively. The analogous property for $\hat \Box_b$ is well-known by the CR embeddability of $\hat X$ into $\mathbb{C}^N$. 

\begin{thm}\label{t-hoI}
The operator 
\[\Td\ddbar_b:{\rm Dom\,}\Td\ddbar_b\subset L^2(\Td m)\To L^2_{(0,1)}(\Td m, \hat\theta)\] 
has closed range.
\end{thm}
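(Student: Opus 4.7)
The plan is to reduce the assertion to the analogous (known) closed range property for $\hat\ddbar_b$ on $L^2(\hat m)$, by exploiting the identification of operators established in Lemma~\ref{l-hoI}. The point is that although the measures $\Td m$ and $\hat m$ differ, they are uniformly comparable on $\hat X$, so all $L^2$ statements transfer between them.

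First, I would verify that $L^2(\Td m) = L^2(\hat m)$ and $L^2_{(0,1)}(\Td m,\hat\theta) = L^2_{(0,1)}(\hat m,\hat\theta)$ with equivalent norms. By \eqref{eq:Gpexpand} and \eqref{e-ysmiVIII}, the density $\Td m/\hat m = G_p^2|\psi|^2$ satisfies $\Td m/\hat m = 1 + a(z,t)$ with $a \in \mathcal{E}(\hat\rho^2)$ in a neighborhood of $p$, so in particular $\Td m/\hat m$ is bounded and bounded away from $0$ near $p$. Away from $p$, both $G_p$ and $\psi$ are smooth on $X$, and $\psi\neq 0$ on $X$ by Theorem~\ref{t-ysmipc} while $G_p>0$ by hypothesis; combined with compactness of $\hat X$ this yields uniform two-sided bounds on $\Td m/\hat m$ throughout $\hat X$. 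Consequently the norms $\|\cdot\|_{\Td m}$ and $\|\cdot\|_{\hat m}$ (and likewise for $(0,1)$-forms) are equivalent.

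Next, by Lemma~\ref{l-hoI}, $\Td\ddbar_b$ and $\hat\ddbar_b$ are literally the same operator: they share the same domain in $L^2(\Td m) = L^2(\hat m)$ and act identically on it. Now the CR embeddability of $\hat X$ into $\mathbb{C}^N$ (which is part of our standing hypotheses) together with Kohn's theorem~\cite{Koh86} implies that
\[
\hat\ddbar_b:{\rm Dom\,}\hat\ddbar_b \subset L^2(\hat m) \to L^2_{(0,1)}(\hat m,\hat\theta)
\]
has closed range.

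Finally, closed range of an operator is preserved when the Hilbert space norms on the source and target are replaced by equivalent norms. Hence $\Td\ddbar_b$ has closed range in $L^2_{(0,1)}(\Td m,\hat\theta)$, as desired. There is no real obstacle here beyond carefully checking the comparability of the two densities on all of $\hat X$; once that is in hand, the result reduces to Lemma~\ref{l-hoI} and the already-known closed range theorem for the smooth Kohn Laplacian on the compact CR embeddable manifold $\hat X$.
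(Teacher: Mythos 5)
Your argument is correct and follows exactly the paper's own route: identify $L^2(\Td m)$ with $L^2(\hat m)$ (and likewise for $(0,1)$-forms) via the uniform two-sided comparability of the densities, invoke Lemma~\ref{l-hoI} to identify $\Td\ddbar_b$ with $\hat\ddbar_b$ as operators, and then transfer Kohn's closed range theorem \cite{Koh86} across the equivalent norms. The only difference is that you spell out the norm-equivalence check in more detail than the paper (which records the density expansion just before the lemma and treats the rest as immediate), which is a reasonable thing to do.
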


\begin{proof}
One simply notes that since $\hat X$ is CR embeddable in some $\mathbb{C}^N$, by the result of \cite{Koh86}, $\hat\ddbar_b \colon \text{Dom} \hat \ddbar_b \subset L^2(\hat m) \to L^2_{(0,1)}(\hat m, \hat\theta)$ has closed range in $L^2_{(0,1)}(\hat m,\hat\theta)$. By the identity of $\hat \ddbar_b$ with $\Td \ddbar_b$ as in Lemma~\ref{l-hoI}, it follows that $\Td\ddbar_b$ has closed range in $L^2_{(0,1)}(\Td m,\hat\theta)$. 
\end{proof}

It is now a standard matter to prove:

\begin{thm}\label{t-hoII}
The operator 
\[\Td\Box_b:{\rm Dom\,}\Td\Box_b\subset L^2(\Td m)\To L^2(\Td m)\] 
has closed range.
\end{thm}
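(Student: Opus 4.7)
The plan is to deduce the closed range of $\Td\Box_b$ from that of $\Td\ddbar_b$ (Theorem~\ref{t-hoI}) by a standard Hilbert space argument, using only (i) self-adjointness of the Gaffney extension, (ii) the identity $(\Td\Box_b u|u)_{\Td m} = \|\Td\ddbar_b u\|_{\Td m,\hat\theta}^2$ on $\mathrm{Dom}\,\Td\Box_b$, and (iii) a Poincar\'e-type inequality for $\Td\ddbar_b$ coming from its closed range. No microlocal input is required at this stage.

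First I would record the identification of kernels: $\Ker\Td\Box_b = \Ker\Td\ddbar_b$. Indeed, $\Ker\Td\ddbar_b \subset \mathrm{Dom}\,\Td\Box_b$ with $\Td\Box_b u=0$ automatically, and conversely, if $u\in\mathrm{Dom}\,\Td\Box_b$ with $\Td\Box_b u=0$, then $\|\Td\ddbar_b u\|_{\Td m,\hat\theta}^2=(\Td\Box_b u|u)_{\Td m}=0$, so $u\in\Ker\Td\ddbar_b$. Next, since $\Td\ddbar_b$ has closed range (Theorem~\ref{t-hoI}), the orthogonal decomposition
\[
L^2(\Td m) = \Ker\Td\ddbar_b \oplus (\Ker\Td\ddbar_b)^\perp
\]
is preserved by $\Td\Box_b$ in the natural sense, and the standard closed range lemma yields a constant $c>0$ such that
\[
\|\Td\ddbar_b u\|_{\Td m,\hat\theta} \geq c\,\|u\|_{\Td m}
\qquad\text{for all }u\in\mathrm{Dom}\,\Td\ddbar_b\cap(\Ker\Td\ddbar_b)^\perp.
\]

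Now let $u\in\mathrm{Dom}\,\Td\Box_b$ with $u\perp\Ker\Td\Box_b=\Ker\Td\ddbar_b$. Then $u\in\mathrm{Dom}\,\Td\ddbar_b\cap(\Ker\Td\ddbar_b)^\perp$, so by Cauchy--Schwarz and the Poincar\'e estimate,
\[
\|\Td\Box_b u\|_{\Td m}\,\|u\|_{\Td m} \;\geq\; (\Td\Box_b u|u)_{\Td m} \;=\; \|\Td\ddbar_b u\|_{\Td m,\hat\theta}^2 \;\geq\; c^2\,\|u\|_{\Td m}^2,
\]
hence $\|\Td\Box_b u\|_{\Td m}\geq c^2\|u\|_{\Td m}$ on $(\Ker\Td\Box_b)^\perp\cap\mathrm{Dom}\,\Td\Box_b$. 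A standard argument (take a sequence $u_n\in\mathrm{Dom}\,\Td\Box_b$ with $\Td\Box_b u_n\to v$, project each $u_n$ onto $(\Ker\Td\Box_b)^\perp$, apply the above estimate to deduce that this projected sequence is Cauchy, and then use closedness of the Gaffney extension) shows that $v\in\mathrm{Ran}\,\Td\Box_b$. This gives the closed range of $\Td\Box_b$.

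There is no real obstacle here beyond being careful that the closed range property of $\Td\ddbar_b$ passes to the Poincar\'e inequality used above; this is the classical argument (e.g. as in \cite[Prop.~3.1.2]{MM07}) and applies without change in our weighted setting since Theorem~\ref{t-hoI} is stated for exactly the Hilbert space $L^2(\Td m)$ with its given inner product. Finally, combining this with Theorem~\ref{t-hobmI} (which identifies $\Box_{b,1}$ with $\Td\Box_b$ via conjugation by the nowhere vanishing factor $\psi$, an isometry between $L^2(m_1)$ and $L^2(\Td m)$) transfers the closed range property from $\Td\Box_b$ to $\Box_{b,1}$, thereby establishing Theorem~\ref{thm2}.
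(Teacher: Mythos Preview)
Your proof is correct and follows essentially the same route as the paper's own argument: both deduce a Poincar\'e-type inequality from the closed range of $\Td\ddbar_b$, identify $\Ker\Td\Box_b=\Ker\Td\ddbar_b$, and then use Cauchy--Schwarz on $(\Td\Box_b u\,|\,u)_{\Td m}=\|\Td\ddbar_b u\|^2_{\Td m,\hat\theta}$ to obtain the coercivity estimate $\|\Td\Box_b u\|_{\Td m}\ge c\|u\|_{\Td m}$ on $(\Ker\Td\Box_b)^\perp$. Your additional remarks about transferring the result to $\Box_{b,1}$ via Theorem~\ref{t-hobmI} are also exactly what the paper does in the paragraph following this theorem.
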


\begin{proof} 
By Theorem~\ref{t-hoI}, there is a constant $c>0$ such that 
\begin{equation}\label{e-hoII} 
\norm{\Td\ddbar_bu}^2_{\Td m,\hat\theta}\geq c\norm{u}^2_{\Td m},\ \ \forall u\bot{\rm Ker\,}\Td\ddbar_b.
\end{equation}

Let $f\in{\rm Dom\,}\Td\Box_b\bigcap({\rm Ker\,}\Td\Box_b)^{\bot}$. It is not difficult to see that ${\rm Ker\,}\Td\Box_b={\rm Ker\,}\Td\ddbar_b$. Thus, $f\in{\rm Dom\,}\Td\ddbar_b\bigcap({\rm Ker\,}\Td\ddbar_b)^{\bot}$. From this observation and \eqref{e-hoII}, we have 
\[\norm{\Td\Box_bf}_{\Td m}\norm{f}_{\Td m}\geq(\Td\Box_bf\ |\ f)_{\Td m}=(\Td\ddbar_bf\ |\ \Td\ddbar_bf)_{\Td m,\hat\theta}\geq c\norm{f}^2_{\Td m},\]
where $c>0$ is the constant as in \eqref{e-hoII}. Thus, 
\begin{equation}\label{e-lopc523III}
\norm{\Td\Box_bf}_{\Td m}\geq c\norm{f}_{\Td m},\ \ \forall f\in{\rm Dom\,}\Td\Box_{b}\bigcap({\rm Ker\,}\Td\Box_{b})^{\bot},
\end{equation}
where $c>0$ is the constant as in \eqref{e-hoII}. From \eqref{e-lopc523III}, it is easy to see that $\Td\Box_b$ has closed range. 
The theorem follows.
\end{proof} 

Now we use the closed range property of $\Td\ddbar_b$ to prove the same for $\ddbar_{b,1}$.

\begin{thm}\label{t-lopcI}
The operator 
\[\ddbar_{b,1}:{\rm Dom\,}\ddbar_{b,1}\subset L^2(m_1)\To L^2_{(0,1)}(m_1,\hat\theta)\] 
has closed range.
\end{thm}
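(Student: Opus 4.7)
The plan is to reduce directly to Theorem~\ref{t-hoI} via the isometric conjugation by $\psi$ that was already set up in Lemma~\ref{l-lopcII}. By the identities in \eqref{e-crysmiIII}, multiplication by $\psi^{-1}$ defines an isometric isomorphism
\[
M \colon L^2(m_1)\To L^2(\Td m), \quad M \colon L^2_{(0,1)}(m_1,\hat\theta)\To L^2_{(0,1)}(\Td m,\hat\theta),
\]
with inverse given by multiplication by $\psi$. By Lemma~\ref{l-lopcII}, $M$ restricts to a bijection from ${\rm Dom\,}\ddbar_{b,1}$ onto ${\rm Dom\,}\Td\ddbar_b$, and $\ddbar_{b,1} = M^{-1}\Td\ddbar_b M$ on this domain. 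Hence ${\rm Ran\,}\ddbar_{b,1} = M^{-1}({\rm Ran\,}\Td\ddbar_b)$, which is closed because $M^{-1}$ is a topological isomorphism and ${\rm Ran\,}\Td\ddbar_b$ is closed by Theorem~\ref{t-hoI}.

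If one prefers a more hands-on argument parallel to the proof of Theorem~\ref{t-hoII}, Theorem~\ref{t-hoI} together with the closed graph theorem yields a constant $c>0$ with
\[
\norm{\Td\ddbar_b v}_{\Td m,\hat\theta}\geq c\norm{v}_{\Td m},\qquad v\in {\rm Dom\,}\Td\ddbar_b\cap ({\rm Ker\,}\Td\ddbar_b)^{\bot}.
\]
Given $u\in {\rm Dom\,}\ddbar_{b,1}\cap({\rm Ker\,}\ddbar_{b,1})^{\bot}$, set $v:=\psi^{-1}u$. Since $M$ is isometric and, by Lemma~\ref{l-lopcII}, carries ${\rm Ker\,}\ddbar_{b,1}$ bijectively onto ${\rm Ker\,}\Td\ddbar_b$, it also carries $({\rm Ker\,}\ddbar_{b,1})^{\bot}$ onto $({\rm Ker\,}\Td\ddbar_b)^{\bot}$; thus $v\in ({\rm Ker\,}\Td\ddbar_b)^{\bot}$. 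Combining Lemma~\ref{l-lopcII} and \eqref{e-crysmiIII},
\[
\norm{\ddbar_{b,1}u}_{m_1,\hat\theta} = \norm{\psi\,\Td\ddbar_b v}_{m_1,\hat\theta}=\norm{\Td\ddbar_b v}_{\Td m,\hat\theta}\geq c\norm{v}_{\Td m}=c\norm{u}_{m_1},
\]
which is the standard quantitative criterion for closed range.

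There is no real obstacle here: Lemma~\ref{l-lopcII} and \eqref{e-crysmiIII} were precisely engineered so that $\ddbar_{b,1}$ is unitarily equivalent (up to the multiplier $\psi$) to $\Td\ddbar_b$, and the closed range property is preserved under such equivalence. The only point to verify carefully is that the isometry $M$ respects the orthogonal decomposition into kernel and its complement, which is immediate from its bijectivity on the kernels.
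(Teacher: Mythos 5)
Your proposal is correct and takes essentially the same route as the paper: both reduce the closed-range property of $\ddbar_{b,1}$ to that of $\Td\ddbar_b$ (Theorem~\ref{t-hoI}) via the isometry $\psi^{-1}\colon L^2(m_1)\to L^2(\Td m)$ and the conjugation identity of Lemma~\ref{l-lopcII}. The paper phrases this concretely (take a convergent sequence $g_j=\ddbar_{b,1}f_j\to g$, pull back by $\psi$, solve using closed range of $\Td\ddbar_b$, push forward), while you phrase it abstractly as ${\rm Ran\,}\ddbar_{b,1}=M^{-1}({\rm Ran\,}\Td\ddbar_b)$; these are the same argument.
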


\begin{proof}
Let $f_j\in{\rm Dom\,}\ddbar_{b,1}$, $j=1,2,\ldots$, $\ddbar_{b,1}f_j=g_j\in L^2_{(0,1)}(m_1,\hat\theta)$, $j=1,2,\ldots$. We assume that 
there is a function $g\in L^2_{(0,1)}(m_1,\hat\theta)$ such that $\lim_{j\To\infty}\norm{g_j-g}_{m_1,\hat\theta}=0$. 
We are going to show that $g\in{\rm Ran\,}\ddbar_{b,1}$. From \eqref{e-lopcIII}, \eqref{e-lopcIIIb} and \eqref{e-crysmiIII}, we see that 
\begin{equation}\label{e-lopc523I}
\begin{split}
&\psi^{-1}f_j\in{\rm Dom\,}\Td\ddbar_{b},\ \ j=1,2,\ldots,\\
&\Td\ddbar_{b}(\psi^{-1}f_j)=\psi^{-1}g_j\in L^2_{(0,1)}(\Td m,\hat\theta),\ \ j=1,2,\ldots,\\
&\lim_{j\To\infty}\norm{\psi^{-1}g_j-\psi^{-1}g}_{\Td m,\hat\theta}=0.
\end{split}
\end{equation} 
Since $\Td\ddbar_{b}$ has closed range, we can find $\Td h\in{\rm Dom\,}\Td\ddbar_{b}$ such that $\Td\ddbar_{b}\Td h=\psi^{-1}g$. Put $h=\psi\Td h\in L^2(m_1)$. From \eqref{e-lopcIII} and \eqref{e-lopcIIIb}, we see that $h\in{\rm Dom\,}\ddbar_{b,1}$ and $\ddbar_{b,1}h=g$. Thus,  
$g\in{\rm Ran\,}\ddbar_{b,1}$. The theorem follows. 
\end{proof}

From Theorem~\ref{t-lopcI}, we can repeat the proof of Theorem~\ref{t-hoII} and conclude the proof of Theorem~\ref{thm2}.

\section{Proof of Theorem~\ref{thm3}} 

In the last section, we have seen that $\Box_{b,1}$ and $\Td \Box_b$ have closed ranges in $L^2$. Thus one can define the partial inverses $N$ and $\Td N$ of $\Box_{b,1}$ and $\Td \Box_b$ respectively (c.f. Section~\ref{subsect:def}). Furthermore, we write $\Pi$ and $\Td \Pi$ for the Szeg\"o projections, which are orthogonal projections onto ${\rm Ker\,}\Td\Box_b$ and ${\rm Ker\,}\Box_{b,1}$ respectively, as in Sections~\ref{subsect:strategy} and \ref{subsect:outlineofproof}. Our goal is to understand $N$ and $\Pi$, as in Theorem~\ref{thm3}. But from \eqref{e-crysmiII}, \eqref{e-crysmiIII} and Theorem~\ref{t-hobmI}, we obtain
$$
\Box_{b,1} \psi\Td N\frac{1}{\psi} + \psi\Td\Pi\frac{1}{\psi} = I \quad \mbox{on $L^2(m_1)$}.
$$
Thus we obtain (\ref{eq:soltoTdsol}), namely
$$
N=\psi\Td N\frac{1}{\psi} \quad \text{and} \quad \Pi=\psi\Td\Pi\frac{1}{\psi} .
$$
The analysis of $N$ and $\Pi$ then reduces to the analysis of $\Td N$ and $\Td \Pi$; in fact, to prove Theorem~\ref{thm3}, it suffices to prove instead  (\ref{eq:Pimapprop}) and (\ref{eq:Nmapprop}): 

\begin{thm}\label{t-msmapepa}
$\Td \Pi$ and $\Td N$ extend as continuous operators
$$
\Td \Pi \colon \mathcal{E}(\hat \rho^{-4+\delta}) \to \mathcal{E}(\hat \rho^{-4+\delta})
$$
$$
\Td N \colon \mathcal{E}(\hat \rho^{-4+\delta}) \to \mathcal{E}(\hat \rho^{-2+\delta})
$$
for every $0 < \delta < 2$.
\end{thm}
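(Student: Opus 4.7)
The plan is to execute the strategy already sketched in Section~\ref{subsect:outlineofproof}, which reduces everything to the mapping property (\ref{eq:I+hatRchi}) (supplied by Theorem~\ref{t-slalkeyI} in Section~\ref{sect:aux}) together with the two pseudolocality statements (\ref{eq:TdPipseudolocal}) and (\ref{eq:TdNpseudolocal}). First, starting from $\hat\Box_b\hat N+\hat\Pi=I$ and Theorem~\ref{thm:TdBoxbtohatBoxb}, I would verify the operator identities (\ref{eq:TdPitohatPi}) and (\ref{eq:TdNtohatN}) on $L^2(\Td m)$, and then take their adjoints with respect to $(\cdot|\cdot)_{\Td m}$ to obtain (\ref{eq:TdtohatPiadjoint}) and (\ref{eq:TdtohatNadjoint}). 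Since $\mathcal{E}(\hat\rho^{-4+\delta})\not\subset L^2(\Td m)$ for $0<\delta<2$, I would next extend $\Td\Pi$ and $\Td N$ to this weighted space using (\ref{eq:Tdtohatdist}): the mapping properties (\ref{eq:hPimapprop})--(\ref{eq:hNmapprop}) of $\hat\Pi$, $\hat N$ (from Theorems~\ref{thm2.1}--\ref{thm2.3}) show that $\hat R=g\hat\ddbar_b\hat N$ sends $\mathcal{E}(\hat\rho^{-4+\delta})$ into $\mathcal{E}(\hat\rho^{-2+\delta})\subset L^2(\Td m)$, so the right-hand sides of (\ref{eq:Tdtohatdist}) define the required extensions uniquely.

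The core step is to introduce the cut-off $\chi\in C_0^\infty$, identically $1$ near $p$ and supported in a sufficiently small neighbourhood of $p$, and rewrite the adjoint identities in the form (\ref{eq:TdtohatPiadjointchi})--(\ref{eq:TdtohatNadjointchi}). The operator to invert is
$$
I+\hat R^{*,\Td m}\chi \;=\; I+\tfrac{\hat m}{\Td m}\,\hat N\,\hat\ddbar_b^*\!\left(\chi g^{*}\tfrac{\Td m}{\hat m}\right),
$$
and this is where Theorem~\ref{t-slalkeyI} is applied: after factoring $\chi g^{*}(\Td m/\hat m)$ as multiplication by a function $h\in\mathcal{E}(\hat\rho^{1})$ of small support (so small $L^\infty$ norm), the left factor $(\hat m/\Td m)\hat N\hat\ddbar_b^{*}$ is a composition of a smoothing operator of order $2$ with a first-order derivative and with multiplication by $\hat m/\Td m\in\mathcal{E}(\hat\rho^{0})$, which, using Theorem~\ref{thm2.2} and Proposition~\ref{prop:nonisosmoothing}, plays the role of the smoothing operator $A$ of order $1$ in Theorem~\ref{t-slalkeyI}. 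That theorem then supplies the invertibility (\ref{eq:I+hatRchi}) on $\mathcal{E}(\hat\rho^{-4+\delta})$, for every $0<\delta<4$.

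To dispose of the ``far-from-$p$'' terms $(1-\chi)\Td\Pi$ and $(1-\chi)\Td N$, I would invoke the auxiliary Kohn Laplacian $\hat\Box_{b,\varepsilon}$ built from $\hat m_\varepsilon=\eta_\varepsilon\hat m+(1-\eta_\varepsilon)\Td m$. The point is that the corresponding field $g_\varepsilon$ in (\ref{eq:Boxbhattotildeep}) is compactly supported inside $\{\hat\rho\le\varepsilon\}$, so that for $\varepsilon$ small compared to the support of $1-\chi$ the composition $(1-\chi)\hat N_\varepsilon\hat\ddbar_{b,\varepsilon}^{*}g_\varepsilon^{*}$ is infinitely smoothing by pseudolocality of $\hat N_\varepsilon$ (which has the same local structure as $\hat N$, and hence satisfies mapping properties analogous to (\ref{eq:hPimapprop})--(\ref{eq:hNmapprop})). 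Substituting the $\varepsilon$-analogues of (\ref{eq:TdtohatPiadjoint})--(\ref{eq:TdtohatNadjoint}) and solving for $(1-\chi)\Td\Pi$, $(1-\chi)\Td N$ as in (\ref{eq:1-chiTdPi})--(\ref{eq:1-chiTdN}) gives the pseudolocality (\ref{eq:TdPipseudolocal})--(\ref{eq:TdNpseudolocal}).

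Finally I would assemble the pieces via a bootstrap. Applying $(I+\hat R^{*,\Td m}\chi)^{-1}$ to (\ref{eq:TdtohatPiadjointchi}) and combining (\ref{eq:Pi*}), (\ref{eq:R*}) with (\ref{eq:TdPipseudolocal}) yields the claim for $\Td\Pi$ on $\mathcal{E}(\hat\rho^{-4+\delta})$. Then substituting this new information about $\Td\Pi$ into the right-hand side of (\ref{eq:TdtohatNadjointchi}), and using (\ref{eq:N*}), (\ref{eq:R*}), (\ref{eq:TdNpseudolocal}) together with (\ref{eq:I+hatRchi}), delivers the claim for $\Td N$ from $\mathcal{E}(\hat\rho^{-4+\delta})$ to $\mathcal{E}(\hat\rho^{-2+\delta})$. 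The hardest step is clearly the invertibility (\ref{eq:I+hatRchi}): $L^2$ invertibility is an easy Neumann series once the support of $\chi$ is chosen so small that $\|\hat R^{*,\Td m}\chi\|_{L^2(\Td m)\to L^2(\Td m)}\le 1/2$, but extending the inverse to the Fr\'echet spaces $\mathcal{E}(\hat\rho^{-4+\delta})$ requires the delicate uniform control on the high iterates $\hat\rho^{k}\hat\nabla_b^{k}(Ah)^{k+1}$ given by Lemma~\ref{lem:hrhokderiv2}, whose proof via the $T_{j,\ell}$ calculus and the non-isotropic Taylor estimate (\ref{eq:nonisoTaylor}) is the main technical input of the whole argument.
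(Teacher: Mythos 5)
Your proposal tracks the paper's proof step for step: the identities $\Td\Pi(I+\hat R)=\hat\Pi$ and $\Td N(I+\hat R)=(I-\Td\Pi)\hat N$ from Lemma~\ref{l-copI}, passage to the adjoints, the cut-off trick of Theorem~\ref{thm:I+hatRchi-1} reducing to Theorem~\ref{t-slalkeyI}, the $\varepsilon$-Kohn Laplacian argument of Theorem~\ref{t-mspII} for the far-from-$p$ pseudolocality, and the final bootstrap. One small imprecision worth flagging: to invoke Theorem~\ref{t-slalkeyI} you need $\hat N\hat\ddbar_b^*$ to be smoothing of order $1$, and the clean reason is the operator identity $\hat N\hat\ddbar_b^*=\hat K\hat K^*\hat\ddbar_b^*=\hat K$ (the partial inverse of $\hat\ddbar_b$), which is smoothing of order $1$ by Theorem~\ref{thm2.1}; your appeal to Theorem~\ref{thm2.2} and Proposition~\ref{prop:nonisosmoothing} does not by itself give this, since Theorem~\ref{thm2.2} composes two smoothing operators rather than a smoothing operator with the differential operator $\hat\ddbar_b^*$.
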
  
We will achieve this by reducing to the analogous properties of $\hat N$ and $\hat\Pi$, which we proved in Section~\ref{sect2}.

The starting point is the following lemma:

\begin{lem}\label{l-copI}
On $L^2(\Td m)$, we have 
\begin{align}
&\Td\Pi(I+\hat R)=\hat\Pi,\label{e-copI}\\
&\Td N(I+\hat R)=(I-\Td\Pi) \hat N,\label{e-copIlkmiII}
\end{align}
where $\hat R = g \hat \ddbar_b \hat N$.
\end{lem}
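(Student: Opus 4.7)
The plan is to combine the two $L^2$ decompositions $\hat\Box_b \hat N + \hat\Pi = I$ on $L^2(\hat m)$ and $\Td\Box_b \Td N + \Td \Pi = I$ on $L^2(\Td m)$ via the bridge provided by Theorem~\ref{thm:TdBoxbtohatBoxb}, namely ${\rm Dom\,}\Td\Box_b = {\rm Dom\,}\hat\Box_b$ and $\Td\Box_b u = \hat\Box_b u + g \hat\ddbar_b u$ on this common domain. Before doing the main computation I would record that $\hat R = g\hat\ddbar_b\hat N$ is a bounded operator on $L^2(\Td m) = L^2(\hat m)$: the identity $\|\hat\ddbar_b \hat N u\|^2_{\hat m,\hat\theta} = (\hat\Box_b \hat N u\,|\,\hat N u)_{\hat m}\leq \|u\|_{\hat m}\|\hat N u\|_{\hat m}$ shows $\hat\ddbar_b \hat N$ is $L^2$-bounded, and multiplication by $g\in\mathcal{E}(\hat\rho^1,T^{0,1}\hat X)$ is bounded since $g$ is bounded on $\hat X$.

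The key computation is the following. For $u \in L^2(\Td m)$, the function $\hat N u$ lies in ${\rm Dom\,}\hat\Box_b = {\rm Dom\,}\Td\Box_b$, so one may apply $\Td\Box_b$ to it and use Theorem~\ref{thm:TdBoxbtohatBoxb} together with the identity $\hat\Box_b \hat N = I - \hat\Pi$ to get
\begin{equation}\label{e-planmain}
\Td\Box_b(\hat N u) = \hat\Box_b(\hat N u) + g\hat\ddbar_b(\hat N u) = u - \hat\Pi u + \hat R u.
\end{equation}
Next I would observe that ${\rm Ker\,}\hat\Box_b \subset {\rm Ker\,}\Td\Box_b$: if $v\in{\rm Ker\,}\hat\Box_b$ then $0=(\hat\Box_b v\,|\,v)_{\hat m}=\|\hat\ddbar_b v\|^2_{\hat m,\hat\theta}$, so $\hat\ddbar_b v = 0$, and therefore $\Td\Box_b v = \hat\Box_b v + g\hat\ddbar_b v = 0$. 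Applied to $v = \hat\Pi u$, this yields $\Td\Pi \hat\Pi u = \hat\Pi u$ and $\Td N \hat\Pi u = 0$.

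To finish, I would apply $\Td\Pi$ and $\Td N$ respectively to \eqref{e-planmain}. Since $\Td\Pi \Td\Box_b = 0$ on ${\rm Dom\,}\Td\Box_b$ (the range of $\Td\Box_b$ is orthogonal to its kernel by self-adjointness), applying $\Td\Pi$ gives
$$0 = \Td\Pi u - \Td\Pi\hat\Pi u + \Td\Pi \hat R u = \Td\Pi(I+\hat R)u - \hat\Pi u,$$
which is \eqref{e-copI}. Since $\Td N \Td\Box_b = I - \Td\Pi$ on ${\rm Dom\,}\Td\Box_b$ and $\Td N \hat\Pi u = 0$, applying $\Td N$ gives
$$(I-\Td\Pi)\hat N u = \Td N u + \Td N \hat R u = \Td N(I+\hat R)u,$$
which is \eqref{e-copIlkmiII}.

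The argument is essentially algebraic, so I do not anticipate a serious obstacle. The only point that requires genuine care is the domain bookkeeping, namely that Theorem~\ref{thm:TdBoxbtohatBoxb} legitimately places $\hat N u$ inside ${\rm Dom\,}\Td\Box_b$ and permits the substitution $\Td\Box_b \hat N u = \hat\Box_b \hat N u + g\hat\ddbar_b \hat N u$; everything else follows from the defining properties of the partial inverses and orthogonal projections.
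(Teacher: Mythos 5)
Your proposal is correct and follows essentially the same route as the paper: both hinge on the identity $\Td\Box_b\hat N = \hat\Box_b\hat N + g\hat\ddbar_b\hat N = I-\hat\Pi+\hat R$ from Theorem~\ref{thm:TdBoxbtohatBoxb}, and then apply $\Td\Pi$ and $\Td N$ together with $\Td\Pi\hat\Pi=\hat\Pi$ and $\Td N\hat\Pi=0$. Your version is slightly more self-contained in that you explicitly verify the kernel inclusion ${\rm Ker\,}\hat\Box_b\subset{\rm Ker\,}\Td\Box_b$ (and hence $\Td\Pi\hat\Pi=\hat\Pi$, $\Td N\hat\Pi=0$), a step the paper uses but does not spell out.
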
 

\begin{proof}
First, we know that $\hat N:L^2(\hat m)\To {\rm Dom\,}\hat\Box_b = {\rm Dom\,}\Td\Box_b$. 
From Theorem~\ref{thm:TdBoxbtohatBoxb}, we can check that
$$
\Td\Box_b\hat N+\hat\Pi=\hat\Box_b\hat N+\hat\Pi+\hat R=I+\hat R.
$$
From this, we have 
\begin{equation}\label{almiIX}
\Td\Pi(I+\hat R)=\Td\Pi(\Td\Box_b\hat N+\hat\Pi)=\Td\Pi\hat\Pi.
\end{equation}
On the other hand, we have 
\begin{equation}\label{almiX}
\hat\Pi=(\Td N\Td\Box_b+\Td\Pi)\hat\Pi=\Td\Pi\hat\Pi.
\end{equation}
From \eqref{almiIX} and \eqref{almiX}, we get \eqref{e-copI}.

Now, from Theorem~\ref{thm:TdBoxbtohatBoxb} again, we have
\begin{equation}\notag
\begin{split}
\hat N&=(\Td N\Td\Box_b+\Td\Pi)\hat N \\
&=(\Td N\hat\Box_b+\Td\Pi)\hat N+\Td N\hat R\\
&=\Td N(I-\hat\Pi)+\Td\Pi\hat N+\Td N\hat R\\
&=\Td N+\Td\Pi\hat N+\Td N\hat R;
\end{split}
\end{equation} 
in the last line we used that fact that $\Td N\hat\Pi=0$. \eqref{e-copIlkmiII} then follows, and we are done.
\end{proof}

We now extend the definitions of $\Td \Pi$ and $\Td N$ to $\mathcal{E}(\hat \rho^{-4+\delta})$, $0 < \delta < 2$. The problem is that one does not have a good inverse for $(I+\hat R)$ in \eqref{e-copI} and \eqref{e-copIlkmiII}. The key then is to rewrite (\ref{e-copI}) and (\ref{e-copIlkmiII}) as
\begin{equation} \label{eq:TdPitohatPi2}
\Td \Pi = \hat \Pi - \Td \Pi \hat R,
\end{equation}
\begin{equation} \label{eq:TdNtohatN2}
\Td N = (I-\Td \Pi) \hat N - \Td N \hat R.
\end{equation}
Note that $\hat R$ extends to a continuous operator
$$
\hat R \colon \mathcal{E}(\hat \rho^{-4+\delta}) \to \mathcal{E}(\hat \rho^{-2+\delta}) \subset L^2(\Td m)
$$
for every $0 < \delta < 2$, since $\hat R = g \hat \ddbar_b \hat N$, and $\hat N$ satisfies the analogous property. Thus $\Td \Pi \hat R$ maps $\mathcal{E}(\hat \rho^{-4+\delta})$ continuously to $L^2(\Td m)$. Since $\hat \Pi$ maps $\mathcal{E}(\hat \rho^{-4+\delta})$ continuously to $\mathcal{E}(\hat \rho^{-4+\delta})$, by (\ref{eq:TdPitohatPi2}), we have extended the domain of definition of $\Td \Pi $ to $\mathcal{E}(\hat \rho^{-4+\delta})$. Furthermore, $\Td N \hat R$ maps $\mathcal{E}(\hat \rho^{-4+\delta})$ continuously to $L^2(\Td m)$, and $\hat N$ maps $\mathcal{E}(\hat \rho^{-4+\delta})$ continuously to $\mathcal{E}(\hat \rho^{-2+\delta}) \subset L^2(\Td m)$. Thus together with the continuity of $\Td \Pi$ on $L^2(\Td m)$, from (\ref{eq:TdNtohatN2}), we see that $\Td N$ extends as a continuous map $\mathcal{E}(\hat \rho^{-4+\delta}) \to L^2(\Td m)$.

To proceed further, let's write $\hat{\Pi}^{*,\Td m}$, $\hat{N}^{*,\Td m}$ and $\hat R^{*,\Td m}$ for the adjoints of $\hat \Pi$, $\hat N$ and $\hat R$ with respect to the inner product of $L^2(\Td m)$. We note that
$$
\hat \Pi^{*,\Td m} = \frac{\hat m}{\Td m} \hat \Pi \frac{\Td m}{\hat m}, 
$$
$$
\hat N^{*,\Td m} = \frac{\hat m}{\Td m} \hat N \frac{\Td m}{\hat m}, 
$$
$$
\hat R^{*,\Td m} = \frac{\hat m}{\Td m} \hat N \hat \ddbar_b^* (g^{*} \frac{\Td m}{\hat m}), 
$$
where $\Td m / \hat m := G_p^2 |\psi|^2$ is the density of $\Td m$ with respect to $\hat m$, and similarly $\hat m / \Td m := G_p^{-2} |\psi|^{-2}$. Here $g^*$ is the $(0,1)$ form dual to $g$. Since $\Td m / \hat m$, $\hat m / \Td m \in \mathcal{E}(\hat \rho^0)$, one can show that
\begin{equation} \label{eq:Pi*2}
\hat \Pi^{*,\Td m} \colon \mathcal{E}(\hat \rho^{-4+\delta}) \to \mathcal{E}(\hat \rho^{-4+\delta})
\end{equation}
\begin{equation} \label{eq:N*2}
\hat N^{*,\Td m} \colon \mathcal{E}(\hat \rho^{-4+\delta}) \to \mathcal{E}(\hat \rho^{-2+\delta})
\end{equation}
\begin{equation} \label{eq:R*2}
\hat R^{*,\Td m} \colon \mathcal{E}(\hat \rho^{-4+\delta}) \to \mathcal{E}(\hat \rho^{-2+\delta})
\end{equation}
for every $0 < \delta < 2$; these are easy consequences of the analogous properties of $\hat \Pi$, $\hat N$ and $\hat R$. The problem is that it is not clear that $(I+\hat{R}^{*,\Td m})$ is invertible on $\mathcal{E}(\hat \rho^{-4+\delta})$; if it is, then we can invoke (\ref{eq:TdtohatPiadjoint}) and (\ref{eq:TdtohatNadjoint}) and our proof of Theorem~\ref{thm3} would be much easier. In order to get around this problem, we introduce a cut-off $\chi$, as was explained in Section~\ref{subsect:outlineofproof}; we will prove

\begin{thm}\label{t-mspII}
Let $\chi\in C^\infty(\hat X)$ with $\chi=1$ near $p$. Then  
\[(1-\chi)\Td N, (1-\chi)\Td\Pi:\mathcal{E}(\hat\rho^{-4+\delta})\To C^\infty_0(X)\]
are continuous for $0<\delta<2$.
\end{thm}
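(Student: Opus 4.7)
The plan is to exploit the auxiliary Kohn Laplacian $\hat\Box_{b,\varepsilon}$ introduced in Section~\ref{subsect:outlineofproof}, choosing $\varepsilon\in(0,1)$ sufficiently small that the support of $\eta_\varepsilon$ is disjoint from the support of $1-\chi$. The preliminary step is to develop the basic theory of $\hat\Box_{b,\varepsilon}$ in parallel with that of $\hat\Box_b$: since $\hat m_\varepsilon$ has smooth positive density against $\hat m$, the operator $\hat\Box_{b,\varepsilon}$ is subelliptic of the same type as $\hat\Box_b$, has closed range in $L^2(\hat m_\varepsilon)$, and its partial inverse $\hat N_\varepsilon$ and Szeg\"o projection $\hat\Pi_\varepsilon$ are smoothing operators of orders $2$ and $0$ respectively in the sense of Section~\ref{sect2}; in particular $\hat N_\varepsilon$ is pseudolocal. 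A computation paralleling Section~\ref{sect:TdBoxbtohatBoxb}, using that $\hat m_\varepsilon/\Td m=1+\eta_\varepsilon a$ for the function $a\in\mathcal{E}(\hat\rho^2)$ of \eqref{miII}, produces $g_\varepsilon\in\mathcal{E}(\hat\rho^1,T^{0,1}\hat X)$ supported in $\mathrm{supp}\,\eta_\varepsilon$ such that ${\rm Dom\,}\Td\Box_b={\rm Dom\,}\hat\Box_{b,\varepsilon}$ and $\Td\Box_b u=\hat\Box_{b,\varepsilon}u+g_\varepsilon\hat\ddbar_b u$ for all $u\in{\rm Dom\,}\Td\Box_b$.

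With this identity in hand, the proof of Lemma~\ref{l-copI} applies verbatim, yielding on $L^2(\Td m)$
\[
\Td\Pi(I+\hat R_\varepsilon)=\hat\Pi_\varepsilon,\qquad \Td N(I+\hat R_\varepsilon)=(I-\Td\Pi)\hat N_\varepsilon,
\]
with $\hat R_\varepsilon:=g_\varepsilon\hat\ddbar_b\hat N_\varepsilon$. Taking adjoints with respect to the inner product of $L^2(\Td m)$, and using that $\Td\Pi$, $\Td N$ are self-adjoint there, one obtains $(I+\hat R_\varepsilon^{*,\Td m})\Td\Pi=\hat\Pi_\varepsilon^{*,\Td m}$ and $(I+\hat R_\varepsilon^{*,\Td m})\Td N=\hat N_\varepsilon^{*,\Td m}(I-\Td\Pi)$. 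Rearranging and multiplying by $1-\chi$ on the left gives
\[
(1-\chi)\Td\Pi=(1-\chi)\hat\Pi_\varepsilon^{*,\Td m}-(1-\chi)\hat R_\varepsilon^{*,\Td m}\Td\Pi,
\]
\[
(1-\chi)\Td N=(1-\chi)\hat N_\varepsilon^{*,\Td m}(I-\Td\Pi)-(1-\chi)\hat R_\varepsilon^{*,\Td m}\Td N,
\]
and it remains to verify that each term on the right extends continuously from $\mathcal{E}(\hat\rho^{-4+\delta})$ into $C^\infty_0(X)$.

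For the first term in each equation, the factorization $\hat\Pi_\varepsilon^{*,\Td m}=\tfrac{\hat m_\varepsilon}{\Td m}\hat\Pi_\varepsilon\tfrac{\Td m}{\hat m_\varepsilon}$ (and similarly for $\hat N_\varepsilon^{*,\Td m}$), together with the fact that the density factors lie in $\mathcal{E}(\hat\rho^0)$, gives continuous maps $\hat\Pi_\varepsilon^{*,\Td m}\colon\mathcal{E}(\hat\rho^{-4+\delta})\to\mathcal{E}(\hat\rho^{-4+\delta})$ and $\hat N_\varepsilon^{*,\Td m}\colon\mathcal{E}(\hat\rho^{-4+\delta})\to\mathcal{E}(\hat\rho^{-2+\delta})$; multiplication by $1-\chi$, which vanishes in a neighborhood of $p$, then kills any singularity at $p$ and produces elements of $C^\infty_0(X)$, since $\mathrm{supp}(1-\chi)$ is a compact subset of $X$. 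The main point---and the reason for introducing $\hat\Box_{b,\varepsilon}$ in place of $\hat\Box_b$---is the analysis of $(1-\chi)\hat R_\varepsilon^{*,\Td m}$. Writing
\[
(1-\chi)\hat R_\varepsilon^{*,\Td m}=(1-\chi)\tfrac{\hat m_\varepsilon}{\Td m}\,\hat N_\varepsilon\,\hat\ddbar_{b,\varepsilon}^{*}\bigl(g_\varepsilon^{*}\tfrac{\Td m}{\hat m_\varepsilon}\,\cdot\,\bigr),
\]
the input is first multiplied by the $(0,1)$ form $g_\varepsilon^{*}\tfrac{\Td m}{\hat m_\varepsilon}$, which is supported in $\mathrm{supp}\,\eta_\varepsilon$. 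By our choice of $\varepsilon$, $\mathrm{supp}(1-\chi)$ and $\mathrm{supp}\,\eta_\varepsilon$ are disjoint, so by pseudolocality of $\hat N_\varepsilon$ (equivalently, smoothness of its Schwartz kernel off the diagonal), the composition $(1-\chi)\hat N_\varepsilon\hat\ddbar_{b,\varepsilon}^{*}g_\varepsilon^{*}$ has a smooth, compactly supported Schwartz kernel and is therefore infinitely smoothing. Consequently $(1-\chi)\hat R_\varepsilon^{*,\Td m}$ sends any reasonable distribution---in particular the already extended $\Td\Pi u$ or $\Td N u$ for $u\in\mathcal{E}(\hat\rho^{-4+\delta})$---into $C^\infty_0(X)$, and the two right-hand sides together endow $(1-\chi)\Td\Pi$ and $(1-\chi)\Td N$ with the claimed mapping properties.

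The main obstacle I anticipate is the technical but routine verification of the $\varepsilon$-analogs of the results of Section~\ref{sect2}---closed range for $\hat\Box_{b,\varepsilon}$, pseudolocality of $\hat N_\varepsilon$, and mapping properties of $\hat N_\varepsilon$, $\hat\Pi_\varepsilon$ on the weighted spaces $\mathcal{E}(\hat\rho^{-4+\delta})$---all of which go through because $\hat m_\varepsilon$ differs from $\hat m$ only by a smooth positive density, so that the subelliptic estimates of Propositions~\ref{p-keyest} and \ref{p-keyest2} apply with cosmetic modifications. A secondary but crucial check is that the composition $(1-\chi)\hat R_\varepsilon^{*,\Td m}\,\Td\Pi u$ (and analogously with $\Td N$) is well-defined on $\mathcal{E}(\hat\rho^{-4+\delta})$; this is handled by noting that $\Td\Pi u$ and $\Td N u$ have been extended as distributions on $X$ in Section~\ref{subsect:outlineofproof}, and an infinitely smoothing operator with smooth, compactly supported kernel pairs against any such distribution to produce an element of $C^\infty_0(X)$.
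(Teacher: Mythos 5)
Your proposal is correct and follows essentially the same route as the paper: introduce the auxiliary Kohn Laplacian $\hat\Box_{b,\varepsilon}$ so that the resulting $g_\varepsilon$ has compact support in ${\rm supp}\,\eta_\varepsilon$, take adjoints of the analogue of Lemma~\ref{l-copI} with respect to $L^2(\Td m)$, multiply by $1-\chi$ on the left, and invoke pseudolocality of $\hat N_\varepsilon$ on the disjointly-supported composition $(1-\chi)\hat N_\varepsilon\hat\ddbar_{b,\varepsilon}^*g_\varepsilon^*$. The one place you are slightly more careful than the paper is in flagging explicitly that the infinitely smoothing operator with smooth compactly supported kernel pairs against the distribution-valued extensions of $\Td\Pi u$ and $\Td N u$; the paper treats this as implicit.
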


\begin{thm}\label{thm:I+hatRchi-1}
If the support of $\chi \in C^\infty(\hat X)$ is a sufficiently small neighborhood of $p$, then $(I + \hat{R}^{*,\Td m} \chi)$ is invertible on $L^2(\Td m)$, and extends to a linear map 
$$ %\begin{equation} \label{eq:I+hatRchi}
(I + \hat{R}^{*,\Td m} \chi)^{-1} \colon \mathcal{E}(\hat \rho^{-4+\delta}) \to \mathcal{E}(\hat \rho^{-4+\delta})
$$ %\end{equation}
for every $0 < \delta < 4$.
\end{thm}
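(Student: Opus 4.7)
The plan is to reduce the inversion of $I+\hat R^{*,\Td m}\chi$ to a single application of Theorem~\ref{t-slalkeyI}. First I would conjugate away the non-smooth factors $\hat m/\Td m$ and $\Td m/\hat m$: since $\chi$, $g^*$ and $\Td m/\hat m$ are commuting multiplication operators, a direct calculation gives
\[
I+\hat R^{*,\Td m}\chi \;=\; \frac{\hat m}{\Td m}\,(I+\Td T)\,\frac{\Td m}{\hat m},\qquad \Td T u:=\hat N\hat\ddbar_b^{*}(\chi g^*\,u).
\]
Because $\hat m/\Td m$ and $\Td m/\hat m$ both lie in $\mathcal{E}(\hat\rho^{0})$, multiplication by them is bounded on $L^{2}(\Td m)=L^{2}(\hat m)$ and, by the Leibniz rule, preserves $\mathcal{E}(\hat\rho^{-4+\delta})$ for every $0<\delta<4$. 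It therefore suffices to prove that if the support of $\chi$ is sufficiently small, then $I+\Td T$ is invertible on $L^{2}(\hat m)$ and its inverse extends continuously to $\mathcal{E}(\hat\rho^{-4+\delta})\to\mathcal{E}(\hat\rho^{-4+\delta})$ for every $0<\delta<4$.

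Next, I would shrink the support of $\chi$ so that it lies inside one of the non-isotropic balls $B_i$ from the partition of unity preceding \eqref{eq:hatKdefn}, on which a unit local frame $\hat\omega_i$ of $\Lambda^{0,1}T^{*}\hat X$ is defined. Picking a cut-off $\eta\in C^{\infty}_{0}(2B_i)$ with $\eta\equiv 1$ on $\mathrm{supp}\,\chi$, and setting $\omega:=\eta\hat\omega_i$ (extended by zero to a smooth global $(0,1)$-form on $\hat X$), the rank-one nature of the CR structure in dimension three gives $\chi g^{*}=\tilde h\,\omega$, where
\[
\tilde h\;:=\;\chi\,\langle g^{*}\,|\,\hat\omega_i\rangle_{\hat\theta}\;\in\;\mathcal{E}(\hat\rho^{1})
\]
has compact support inside $\mathrm{supp}\,\chi$. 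Consequently
\[
\Td T u\;=\;A(\tilde h\,u),\qquad Au\;:=\;\hat N\hat\ddbar_b^{*}(u\,\omega),
\]
and $A$ is a globally defined linear operator from functions to functions. I would then verify that $A$ is a smoothing operator of order $1$ on $\hat X$ in the sense of Section~\ref{sect2}: decomposing $\hat N\hat\ddbar_b^{*}=\hat K(I-\hat\Pi_1)$, where $\hat\Pi_1$ denotes the orthogonal projection of $L^{2}_{(0,1)}(\hat m,\hat\theta)$ onto $\Ker\hat\ddbar_b^{*}$, Theorem~\ref{thm2.1} identifies $\hat K$ as smoothing of order $1$, while the same subelliptic argument based on Propositions~\ref{p-keyest} and~\ref{p-keyest2} shows that $\hat\Pi_1$ is smoothing of order $0$. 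Composing with multiplication by the smooth global $(0,1)$-form $\omega$ (passed through the frames $\{\hat\omega_j\}$ as in the discussion preceding \eqref{eq:hatKdefn}) and invoking Theorem~\ref{thm2.2} then yields the required kernel estimates and cancellation properties for $A$.

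Once $A$ is identified as a smoothing operator of order $1$, Theorem~\ref{t-slalkeyI} (applied with $h$ replaced by $-\tilde h$) yields both the invertibility of $I+\Td T = I-A(-\tilde h)$ on $L^{2}(\hat m)$ and the extension
\[
(I+\Td T)^{-1}\colon\mathcal{E}(\hat\rho^{-4+\delta})\to\mathcal{E}(\hat\rho^{-4+\delta}),\qquad 0<\delta<4,
\]
provided the support of $\chi$ is chosen sufficiently small. Conjugating back with $\hat m/\Td m$ and $\Td m/\hat m$ then delivers the desired inverse of $I+\hat R^{*,\Td m}\chi$ with the advertised mapping property. I expect the principal technical obstacle to lie in the verification that $A$ is strictly a smoothing operator of order $1$ on $\hat X$ in the sense of Section~\ref{sect2}: Theorem~\ref{thm2.1} is phrased in terms of the local representations $\hat K_i$ and one must trace carefully through the partition of unity and the frame $\omega$ to pass from the $(0,1)$-form picture to the function-to-function picture; moreover, the statement that $\hat\Pi_1$ is smoothing of order $0$, although parallel to the one for $\hat\Pi$, is not explicitly proved in Section~\ref{sect2} and will require a brief vector-valued reprise of that proof. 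Once this bookkeeping is in place, the remainder of the argument is essentially algebraic.
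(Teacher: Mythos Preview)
Your proposal is correct and follows essentially the same route as the paper: conjugate by $\hat m/\Td m$ and $\Td m/\hat m$ to reduce to $I+\hat N\hat\ddbar_b^{*}(\chi g^{*}\,\cdot\,)$, then invoke Theorem~\ref{t-slalkeyI}. The paper's own proof is terser and simply asserts that Theorem~\ref{t-slalkeyI} applies, whereas you have (correctly) unpacked the remaining bookkeeping---writing $\chi g^{*}=\tilde h\,\omega$, identifying $A=\hat N\hat\ddbar_b^{*}(\cdot\,\omega)=\hat K(I-\hat\Pi_1)(\cdot\,\omega)$, and noting that the smoothing-of-order-$0$ property of $\hat\Pi_1$ is indeed stated (though not spelled out in full) in Section~\ref{sect2}.
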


Assuming these for the moment. Then one can finish the proof of Theorem~\ref{t-msmapepa} using (\ref{eq:TdtohatPiadjointchi}) and (\ref{eq:TdtohatNadjointchi}) as explained in Section~\ref{subsect:outlineofproof} shortly after these identities. Theorem~\ref{thm3} then follows from (\ref{eq:soltoTdsol}) and Theorem~\ref{t-msmapepa}. We omit the details.

We now turn to the proofs of the theorems above. 

\begin{proof}[Proof of Theorem~\ref{thm:I+hatRchi-1}]
First, observe that
$$
I + \hat{R}^{*,\Td m} \chi = \frac{\hat m}{\Td m} (I + \hat N \hat \ddbar_b^* \chi g^* ) \frac{\Td m}{\hat m},
$$
and $\Td m/\hat m$, $\hat m/\Td m \in \mathcal{E}(\hat \rho^0)$. Thus it suffices to prove that $I + \hat N \hat \ddbar_b^* \chi g^*$ is invertible on $L^2(\Td m)$, and extends to a linear map 
$$ %\begin{equation} \label{eq:I+hatRchi}
(I + \hat N \hat \ddbar_b^* \chi g^* )^{-1} \colon \mathcal{E}(\hat \rho^{-4+\delta}) \to \mathcal{E}(\hat \rho^{-4+\delta})
$$ %\end{equation}
for every $0 < \delta < 4$. But this follows from Theorem~\ref{t-slalkeyI}, since $\chi g^* \in \mathcal{E}(\hat \rho, \Lambda^{0,1} T^*\hat X)$ has compact support in a sufficiently small neighborhood of $p$. Thus we are done.
\end{proof}

\begin{proof}[Proof of Theorem~\ref{t-mspII}]
We need to recall the Kohn Laplacian $\hat \Box_{b,\varepsilon} := \hat \ddbar_{b,\varepsilon}^* \hat \ddbar_{b,\varepsilon}$ with respect to the volume form $m_{\varepsilon} := \eta_{\varepsilon} \hat m + (1-\eta_{\varepsilon}) \Td m$ as described in Section~\ref{subsect:outlineofproof}. First
$$ 
\frac{\hat m_{\varepsilon}}{\Td m}= \eta_{\varepsilon} G_p^{-2} |\psi|^{-2} + (1-\eta_{\varepsilon}) =  \eta_{\varepsilon} (1+a) + (1-\eta_{\varepsilon}) = 1+ \eta_{\varepsilon} a.
$$ 
The upshot is that $\eta_{\varepsilon} a \in \mathcal{E}(\hat \rho^2, T^{0,1} \hat X)$ has compact support near $p$. Thus if we follow the construction in Section~\ref{sect:TdBoxbtohatBoxb}, there will exist some $g_{\varepsilon} \in \mathcal{E}(\hat \rho^1, T^{0,1} \hat X)$ (possibly non-smooth near $p$) such that 
\begin{equation} \label{eq:Boxbhattotildeepep}
\Td \Box_b u= \hat \Box_{b,\varepsilon} u + g_{\varepsilon} \hat \ddbar_b u ,\ \ \forall u\in{\rm Dom\,}\tilde \Box_{b};
\end{equation} 
in addition $g_{\varepsilon}$ will be compactly supported in the support of $\eta_{\varepsilon}$. It is known that $\hat \Box_{b,\varepsilon}$ has closed range in $L^2(\hat m_{\varepsilon})$. Thus one can define the partial inverse $\hat N_{\varepsilon}$ of $\hat \Box_{b,\varepsilon}$, as well as the Szeg\"o projection $\hat \Pi_{\varepsilon}$ onto the kernel of $\hat \Box_{b,\varepsilon}$, such that
$$
\hat \Box_{b,\varepsilon} \hat N_{\varepsilon} + \hat \Pi_{\varepsilon} = I.
$$
One can then repeat the proof of Lemma~\ref{l-copI}, and show that 
\begin{equation} \label{eq:TdPitohatPiep}
\Td \Pi (I + \hat{R}_{\varepsilon})  = \hat \Pi_{\varepsilon}
\end{equation}
\begin{equation} \label{eq:TdNtohatNep}
\Td N (I + \hat{R}_{\varepsilon}) =  (I - \Td \Pi) \hat N_{\varepsilon}
\end{equation}
on $L^2(\Td m)$, where 
$$
\hat{R}_{\varepsilon} := g_{\varepsilon} \hat \ddbar_{b,\varepsilon} \hat N_{\varepsilon}.
$$
Now we write $\hat \Pi_{\varepsilon}^{*,\Td m}$, $\hat N_{\varepsilon}^{*,\Td m}$ and $\hat{R}_{\varepsilon}^{*,\Td m}$ for the adjoints of $\hat \Pi_{\varepsilon}$, $\hat N_{\varepsilon}$ and $\hat R_{\varepsilon}$ with respect to $L^2(\Td m)$. By taking adjoints of (\ref{eq:TdPitohatPiep}) and (\ref{eq:TdNtohatNep}), and multiplying by $(1-\chi)$, it follows that
\begin{equation} \label{eq:1-chiTdPi2}
(1-\chi) \Td \Pi = (1-\chi) \hat \Pi_{\varepsilon}^{*,\Td m} - (1-\chi) \hat{R}_{\varepsilon}^{*,\Td m} \Td \Pi,
\end{equation}
\begin{equation} \label{eq:1-chiTdN2}
(1-\chi) \Td N = (1-\chi) \hat N_{\varepsilon}^{*,\Td m} (I - \Td \Pi) - (1-\chi) \hat{R}_{\varepsilon}^{*,\Td m} \Td N.
\end{equation}
But
$$
(1-\chi) \hat{R}_{\varepsilon}^{*,\Td m} = \frac{\hat m_{\varepsilon}}{\Td m} (1-\chi) \hat N_{\varepsilon} \hat \ddbar_{b,\varepsilon}^* g_{\varepsilon}^* \frac{\Td m}{\hat m_{\varepsilon}},
$$
and if $\varepsilon$ is chosen sufficiently small (so that the support of $g_{\varepsilon}$ is disjoint from that of $1-\chi$), then $(1-\chi) \hat N_{\varepsilon} \hat \ddbar_{b,\varepsilon}^* g_{\varepsilon}^*$ is an infinitely smoothing pseudodifferential operator, by pseudolocality of $\hat N_{\varepsilon}$. Hence the last term of (\ref{eq:1-chiTdPi2}), and also the last term of (\ref{eq:1-chiTdN2}), map $\mathcal{E}(\hat \rho^{-4+\delta})$ into $C^{\infty}_0(X)$. Since for every $0 < \varepsilon < 1$ and every $0 < \delta < 2$, 
$$
\hat \Pi_{\varepsilon}^{*,\Td m} = \frac{\hat m_{\varepsilon}}{\Td m}\hat \Pi_{\varepsilon} \frac{\Td m}{\hat m_{\varepsilon}} \colon \mathcal{E}(\hat \rho^{-4+\delta}) \to \mathcal{E}(\hat \rho^{-4+\delta}),
$$ 
and 
$$
\hat N_{\varepsilon}^{*,\Td m} = \frac{\hat m_{\varepsilon}}{\Td m}\hat N_{\varepsilon} \frac{\Td m}{\hat m_{\varepsilon}} \colon \mathcal{E}(\hat \rho^{-4+\delta}) \to \mathcal{E}(\hat \rho^{-2+\delta}),
$$ 
it follows from (\ref{eq:1-chiTdPi2}) that $(1-\chi) \Td \Pi$ maps $\mathcal{E}(\hat\rho^{-4+\delta})$ continuously into $C^\infty_0(X)$ as desired. This then implies the corresponding result for $(1-\chi) \Td N$ by (\ref{eq:1-chiTdN2}), and we are done.
\end{proof}

\section{Proof of Theorem~\ref{thm4}} \label{sect:app}

In this section, we will complete the proof of Theorem~\ref{thm4}. To begin with, we have the following lemma:

\begin{lem} \label{lem:PiErho0}
If $\alpha \in \mathcal{E}(\hat \rho^0, \Lambda^{0,1} T^* \hat X)$, then $\ddbar_{b,1}^* \alpha \in \mathcal{E}(\hat \rho^{-1})$, and 
$$
\Pi \ddbar_{b,1}^* \alpha = 0.
$$
\end{lem}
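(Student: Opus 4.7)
The idea is that if $\alpha$ had compact support in $X$, then $\ddbar_{b,1}^*\alpha$ would lie in the range of $\ddbar_{b,1}^*$ acting as a densely defined operator on $L^2(m_1)$, and would therefore be orthogonal to ${\rm Ker\,}\ddbar_{b,1} = {\rm Ker\,}\Box_{b,1} = {\rm Ran\,}\Pi$, giving $\Pi\ddbar_{b,1}^*\alpha = 0$. For general $\alpha \in \mathcal{E}(\hat\rho^0,\Lambda^{0,1}T^*\hat X)$ the plan is to approximate $\alpha$ by a cut-off family $\chi_\varepsilon\alpha$ supported away from $p$, and to pass to the limit using the continuity of $\Pi$ on the weighted spaces given by Theorem~\ref{thm3}.

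First I will derive an explicit distributional formula for $\ddbar_{b,1}^*$. Since $m_1 = G_p^2\hat m$ and $G_p$ is real, a conformal change of adjoint produces
$$\ddbar_{b,1}^* \alpha \;=\; G_p^{-2}\,\hat{\ol{\pr}}^{*,f}_b(G_p^2\alpha) \;=\; \hat{\ol{\pr}}^{*,f}_b\alpha \;-\; 2\,G_p^{-1}\langle\alpha\,|\,\ddbar_b G_p\rangle_{\hat\theta}.$$
From \eqref{eq:Gpexpand} one has $G_p\in\mathcal{E}(\hat\rho^{-2})$, $G_p^{-1}\in\mathcal{E}(\hat\rho^2)$, and $\ddbar_b G_p\in\mathcal{E}(\hat\rho^{-3},\Lambda^{0,1}T^*\hat X)$, so $G_p^{-1}\ddbar_b G_p\in\mathcal{E}(\hat\rho^{-1},\Lambda^{0,1}T^*\hat X)$. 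Combined with the fact that the first-order operator $\hat{\ol{\pr}}^{*,f}_b$ sends $\mathcal{E}(\hat\rho^0,\Lambda^{0,1}T^*\hat X)$ into $\mathcal{E}(\hat\rho^{-1})$, this yields the first assertion $\ddbar_{b,1}^*\alpha \in \mathcal{E}(\hat\rho^{-1})$.

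Next, fix $\phi\in C^\infty(\mathbb R)$ with $\phi=0$ on $[0,1/2]$ and $\phi=1$ on $[1,\infty)$, and set $\chi_\varepsilon := \phi(\hat\rho^4/\varepsilon^4)\in C^\infty(\hat X)$, so that $\chi_\varepsilon$ vanishes in a neighborhood of $p$. Then $\chi_\varepsilon\alpha \in \Omega^{0,1}_0(X) \subset {\rm Dom\,}\ddbar_{b,1}^*$, so $\ddbar_{b,1}^*(\chi_\varepsilon\alpha)\in L^2(m_1)$; and for every $u\in{\rm Ker\,}\ddbar_{b,1}$,
$$(u\,|\,\ddbar_{b,1}^*(\chi_\varepsilon\alpha))_{m_1} \;=\; (\ddbar_{b,1}u\,|\,\chi_\varepsilon\alpha)_{m_1,\hat\theta} \;=\; 0,$$
which forces $\Pi\ddbar_{b,1}^*(\chi_\varepsilon\alpha)=0$ for every $\varepsilon>0$.

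Finally, applying the Leibniz rule derived in the first step to $\chi_\varepsilon\alpha$,
$$\ddbar_{b,1}^*\alpha \;-\; \ddbar_{b,1}^*(\chi_\varepsilon\alpha) \;=\; (1-\chi_\varepsilon)\,\ddbar_{b,1}^*\alpha \;+\; \langle\alpha\,|\,\ddbar_b\chi_\varepsilon\rangle_{\hat\theta},$$
both terms being supported in $\{\hat\rho\leq\varepsilon\}$. Fix any $0<\delta<1$. Using $|\hat\nabla_b^j\chi_\varepsilon|\lesssim \varepsilon^{-j}$ on the support together with $\alpha\in\mathcal{E}(\hat\rho^0)$ and $\ddbar_{b,1}^*\alpha\in\mathcal{E}(\hat\rho^{-1})$, a Leibniz expansion bounds the $k$-th semi-norm of the difference in $\mathcal{E}(\hat\rho^{-2+\delta})$ by $C_k\,\varepsilon^{1-\delta}$, which tends to $0$ as $\varepsilon\to 0$. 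The continuity statement $\Pi\colon\mathcal{E}(\hat\rho^{-2+\delta})\to\mathcal{E}(\hat\rho^{-2+\delta})$ from Theorem~\ref{thm3} then yields
$$\Pi\ddbar_{b,1}^*\alpha \;=\; \lim_{\varepsilon\to 0}\Pi\ddbar_{b,1}^*(\chi_\varepsilon\alpha) \;=\; 0.$$
The one genuine technical hurdle is this final derivative count: the choice $\delta<1$ is precisely what lets the weight $\hat\rho^{2-\delta+k}$ absorb both the $\varepsilon^{-j}$ blow-up from derivatives of $\chi_\varepsilon$ and the $\hat\rho^{-1-(k-j)}$ singularity of $\ddbar_{b,1}^*\alpha$, leaving the net factor $\varepsilon^{1-\delta}$ that furnishes the required convergence.
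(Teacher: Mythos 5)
Your proof is correct and follows essentially the same route as the paper: approximate $\alpha$ by compactly supported forms, note that $\Pi\ddbar_{b,1}^*$ annihilates them by orthogonality, and pass to the limit using the continuity of $\Pi$ on $\mathcal{E}(\hat\rho^{-2+\delta})$ from Theorem~\ref{thm3}. The only cosmetic difference is that the paper invokes the general density of $C^\infty_0(X)$ in $\mathcal{E}(\hat\rho^m)$ for the topology of $\mathcal{E}(\hat\rho^{m'})$ ($m'<m$) rather than constructing the cut-off family $\chi_\varepsilon$ and verifying the $\varepsilon^{1-\delta}$ convergence rate explicitly; your explicit computation (and your formula $\ddbar^*_{b,1}\alpha = \hat{\ol{\pr}}^{*,f}_b\alpha - 2G_p^{-1}\langle\alpha\,|\,\ddbar_b G_p\rangle_{\hat\theta}$) fills in details the paper leaves implicit.
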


\begin{proof}
Given $\alpha \in \mathcal{E}(\hat \rho^0, \Lambda^{0,1} T^* \hat X)$, and any $\gamma \in (0,1)$, there exists a sequence of smooth and compactly supported $\alpha_j \in \Omega_0^{0,1}(X)$ such that 
$$
\alpha_j \to \alpha \quad \text{in $\mathcal{E}(\hat \rho^{-\gamma}, \Lambda^{0,1} T^* \hat X)$.}
$$
Then
$$
\ddbar_{b,1}^* \alpha_j \to \ddbar_{b,1}^* \alpha \quad \text{in $\mathcal{E}(\hat \rho^{-1-\gamma}, \Lambda^{0,1} T^* \hat X)$,}
$$
so by continuity of $\Pi$ on $\mathcal{E}(\hat \rho^{-1-\gamma}, \Lambda^{0,1} T^* \hat X)$, we have 
$$
\Pi \ddbar_{b,1}^* \alpha = \lim_{j \to \infty} \Pi \ddbar_{b,1}^* \alpha_j.
$$
But the right hand side here is zero, since 
$$
\Pi \ddbar_{b,1}^* = 0 \quad \text{on ${\rm Dom\,}\ddbar^*_{b,1}$},
$$
and $\alpha_j\in{\rm Dom\,}\ddbar^*_{b,1}$ for all $j$. Hence we are done.
\end{proof}

Now to prove of Theorem~\ref{thm4}, suppose $f = G_p^2 F$ and $F = \Box_b \Td \beta$ as in Section~\ref{subsect:strategy}, where
$$
\Td\beta=\beta_0+\beta_1,\quad\beta_0 = \chi(z,t) \frac{i \bar z}{|z|^2-it} \in \mathcal{E}(\hat \rho^{-1}), \quad \beta_1 \in \mathcal{E}(\hat \rho^1).
$$ 
Then $f = \Box_{b,1} \Td \beta$. Our goal is to compute $\Pi f = \Pi \ddbar_{b,1}^* (\ddbar_{b,1} \Td \beta)$. The problem is that $\ddbar_{b,1} \Td \beta$ is not in $\mathcal{E}(\hat \rho^0, \Lambda^{0,1} T^* \hat X)$; otherwise we could simply apply the above lemma to conclude. Nevertheless, we will write $\ddbar_{b,1} \Td \beta$ as the sum of a main term and an error, where the error is in $\mathcal{E}(\hat \rho^0, \Lambda^{0,1} T^* \hat X)$, and the $\ddbar_{b,1}^*$ of the main term can be approximated in $\mathcal{E}(\hat \rho^{-1})$ by the $\ddbar_{b,1}^*$ of some forms in $\mathcal{E}(\hat \rho^0, \Lambda^{0,1} T^* \hat X)$. Then we can conclude using the lemma, and the continuity of $\Pi$ on $\mathcal{E}(\hat \rho^{-1})$.

To begin with, note that
$$
\ddbar_{b,1} \Td \beta = \chi \ddbar_{b,1} \frac{i \bar z}{|z|^2-it} + (\ddbar_{b,1} \chi)  \frac{i \bar z}{|z|^2-it}  + \ddbar_{b,1} \beta_1,
$$
and the last two terms are in $\mathcal{E}(\hat \rho^0, \Lambda^{0,1} T^* \hat X)$. Thus the key is to compute the first term. Suppose we pick a local section $\hat Z_1$ of $T^{1,0} \hat X$ near $p$, with $\langle \hat Z_1 | \hat Z_1 \rangle_{\hat \theta} = 1$, such that $\hat Z_1$ admits the expansion (\ref{s1-e5b}) in CR normal coordinates $(z,t)$ near $p$. We also write $\hat Z_{\bar 1}$ for $\overline{\hat Z_1}$, and $\hat Z^{\bar 1}$ for the dual $(0,1)$ form of $\hat Z_{\bar 1}$. Then by the expansion (\ref{s1-e5b}) of $\hat Z_1$, we have
$$
\chi \ddbar_{b,1} \frac{i \bar z}{|z|^2-it} 
= \chi \hat Z_{\bar 1} \left( \frac{i \bar z}{|z|^2-it} \right) \hat Z^{\bar 1} 
= -i \chi \frac{|z|^2+it}{(|z|^2-it)^2} \hat Z^{\bar 1} + \text{error},
$$
where the error is in $\mathcal{E}(\hat \rho^0, \Lambda^{0,1} T^* \hat X)$. Thus
$$
\ddbar_{b,1} \Td \beta = -i \chi \frac{|z|^2+it}{(|z|^2-it)^2} \hat Z^{\bar 1}  + \text{error},
$$
where the first term is in $\mathcal{E}(\hat \rho^{-2}, \Lambda^{0,1} T^* \hat X)$, and the error is in $\mathcal{E}(\hat \rho^0, \Lambda^{0,1} T^* \hat X)$. 

To proceed further, we write $$m_1 = G_p^2 \hat \theta \wedge d \hat \theta = 2i v(z,t) dt \wedge d\bar z \wedge dz$$ for some function $v(z,t)$ near $p$. Then by the expansion (\ref{eq:Gpexpand}) of $G_p$ and the expansion (\ref{eq:hthetaexpansion}) of $\hat \theta$, we have
$$
v(z,t) = \frac{1}{4\pi^2 \hat \rho^4} + \frac{A}{\pi \hat \rho^2} + \text{error in $\mathcal{E}(\hat \rho^{-1})$}.
$$
Now we define a $(0,1)$ form
$$
\alpha_0 := -2\pi i \chi v^{-1} \overline{\psi}^{-3} \hat Z^{\bar 1}
$$
near $p$. Then using the expansion (\ref{e-ysmiVIII}) of $\psi$, we get 
$$
-i \chi \frac{|z|^2+it}{(|z|^2-it)^2} \hat Z^{\bar 1} = \alpha_0 + \text{error in $\mathcal{E}(\hat \rho^0, \Lambda^{0,1} T^* \hat X)$}.
$$
It follows that 
$$
\ddbar_{b,1} \Td \beta = \alpha_0 + E,
$$
where $\alpha_0$ is the main term in $\mathcal{E}(\hat \rho^{-2}, \Lambda^{0,1} T^* \hat X)$, and $E$ is an error term in $\mathcal{E}(\hat \rho^0, \Lambda^{0,1} T^* \hat X)$.

Recall our goal was to compute $\Pi f = \Pi \ddbar_{b,1}^* (\ddbar_{b,1} \Td \beta).$ But by Lemma~\ref{lem:PiErho0}, $$\Pi \ddbar_{b,1}^* E = 0.$$ Thus it suffices to compute $\Pi \ddbar_{b,1}^* \alpha_0$. Now define, for $\varepsilon > 0$,
$$
\alpha_{\varepsilon} := -2\pi i \chi v^{-1} \overline{\psi}^{-2} (\overline{\psi} + \varepsilon)^{-1} \hat Z^{\bar 1}.
$$
Then 
\begin{equation} \label{eq:gamma0reg}
\alpha_{\varepsilon} \in \mathcal{E}(\hat \rho^0, \Lambda^{0,1} T^* \hat X)
\end{equation} 
for all $\varepsilon > 0$, since by Theorem~\ref{t-ysmipc}, $\text{Re } \psi \geq 0$, which implies $\overline{\psi} + \varepsilon \ne 0$ on $X$. By (\ref{s1-e5b}), there exists some $s \in \mathcal{E}(\hat \rho^3)$ such that
$$
\ddbar_{b,1}^* (h \hat Z^{\bar 1}) = (-\hat Z_1 - \frac{\hat Z_1 v}{v} + s) h 
$$
for all $h \in C^{\infty} (X)$. But $\hat Z_1 \overline{\psi} = 0$ on $X$. Thus
$$
\ddbar_{b,1}^* \alpha_0  = - 2 \pi i s \chi v^{-1} \overline{\psi}^{-3} + 2 \pi i (\hat Z_1 \chi) v^{-1}  \overline{\psi}^{-3}. 
$$
Similarly,
$$
\ddbar_{b,1}^* \alpha_{\varepsilon} = - 2 \pi i s \chi v^{-1} \overline{\psi}^{-2} (\overline{\psi} + \varepsilon)^{-1} + 2 \pi i (\hat Z_1 \chi) v^{-1} \overline{\psi}^{-2} (\overline{\psi} + \varepsilon)^{-1}. 
$$
It follows that
$$
\ddbar_{b,1}^* \alpha_{\varepsilon} \to \ddbar_{b,1}^* \alpha_0 \quad \text{in $\mathcal{E}(\hat \rho^{-1})$}. 
$$
From (\ref{eq:gamma0reg}) and Lemma~\ref{lem:PiErho0}, we then have
$$
\Pi  \ddbar_{b,1}^* \alpha_0 = \lim_{\varepsilon \to 0} \Pi \ddbar_{b,1}^* \alpha_{\varepsilon}  = 0.
$$
This completes the proof of Theorem~\ref{thm4}.

\section{Appendix 1: The Green's function of the conformal Laplacian}

%\label{r-Green} 
In order to apply our results to the positive $p$-mass theorem in~\cite{CMY}, one needs to check that the Green function $G_p$ for $-4\triangle_b+R$ at $p$ satisfies (\ref{eq:Gpexpand0}) under the assumption that the Tanaka-Webster curvature $R$ is positive on $X$. This can be done by using an argument similar to the one in Theorem~\ref{thm2.3}. 

We recall that $\triangle_b$ denotes the sublaplacian on $X$. It was shown in section 5 of~\cite{CMY} that $G_p$ has the form: $G_p=\frac{1}{2\pi\hat\rho^2}+\omega$, where $\omega\in C^1(\hat X)$ and $\omega$ satisfies the equation
\begin{equation}\label{e-gemimapaIbos}
(-4\triangle_b+R)\omega=\Td g,\ \ \Td g\in\mathcal{E}(\hat\rho^0).
\end{equation}
It is obvious that $G_p$ satisfies (\ref{eq:Gpexpand0}) if $\omega-\omega(p)\in\mathcal{E}(\hat\rho)$. We are going to prove that $\omega-\omega(p)\in\mathcal{E}(\hat\rho)$. 

First we extend $-4\triangle_b+R$ to 
\[-4\triangle_b+R:{\rm Dom\,}(-4\triangle_b+R)\subset L^2(\hat m)\To L^2(\hat m)\] 
in the standard way. Note that $-4\triangle_b+R$ is subelliptic, self-adjoint and  $-4\triangle_b+R$ has $L^2$ closed range. Since the Tanaka-Webster curvature $R$ is positive on $X$, it is easy to see that $-4\triangle_b+R:{\rm Dom\,}(-4\triangle_b+R)\subset L^2(\hat m)\To L^2(\hat m)$ is injective. Let $H:L^2(\hat m)\To{\rm Dom\,}(-4\triangle_b+R)$ be the inverse of $-4\triangle_b+R$. We have 
\begin{equation}\label{e-gemimapaIIbos}
\begin{split}
&(-4\triangle_b+R)H=I\ \ \mbox{on $L^2(\hat m)$},\\
&H(-4\triangle_b+R)=I\ \ \mbox{on ${\rm Dom\,}(-4\triangle_b+R)$}. 
\end{split}
\end{equation}
We can repeat the $L^2$ estimates of Kohn and show that for every $k\in\mathbb N_0$, there is a 
constant $c_k>0$ such that 
\begin{equation}\label{e-gemimapaIIIbos}
\norm{\hat\nabla^{k+2}_bHu}_{\hat m}\leq c_k\norm{\hat\nabla^{k}_bu}_{\hat m},\ \ \forall u\in C^\infty(\hat X), 
\end{equation}
and the distribution kernel $H(x,y)$ of $H$ is $C^\infty$ away from the diagonal. 

Now, we claim that $H$ is a smoothing operator of order $2$. Let $B(x,r)$ be a small ball and let $(x_1,x_2,x_3)$ be local coordinates on $B(x,r)$. We first observe that for any smooth function $f$, we have 
\begin{equation}\label{e-gemimapaIVbos}
\begin{split}
\norm{f(x)}_{L^\infty(B(x,r))}&\leq c_0\int_{B(x,2r)}\abs{\frac{\pr^3f}{\pr x_1\pr x_2\pr x_3}(x)}\hat m(x)\\
&\leq c_1\int_{B(x,2r)}\abs{\hat\nabla^4_b f(x)}\hat m(x),
\end{split}
\end{equation}
where $c_0>0$ and $c_1>0$ are constants independent of $f$ and $r$. Let $\phi$ be a normalized bump function in the ball $B(x,r)$ and let $k\in\mathbb N_0$. From  \eqref{e-gemimapaIIIbos} and \eqref{e-gemimapaIVbos}, we have 
\begin{equation}\label{e-gemimapaVbos}
\begin{split}
\norm{\hat\nabla^{k}_bH\phi}_{L^\infty(B(x,r))}&\leq c_1\int_{B(x,2r)}\abs{(\hat\nabla^{k+4}_bH\phi)(x)}\hat m(x)\\
&\leq c_2r^2\norm{\hat\nabla^{k+4}_bH\phi}_{\hat m}\leq\Td c_kr^2\norm{\hat\nabla^{k+2}_b\phi}_{\hat m}\leq \hat c_kr^{2-k},
\end{split}
\end{equation}
where $c_1>0$, $c_2>0$, $\Td c_k>0$ and $\hat c_k>0$ are constants independent of $r$, $\phi$ and $x$. Thus, $H$ satisfies the cancellation property for a smoothing operator of order 2. From \eqref{e-gemimapaVbos} and \eqref{e-gemimapaIIIbos}, we can repeat the methods as in Christ~\cite{Ch88I}, ~\cite{Ch88II} and Koenig~\cite{Koe02} and conclude that for all multi-indices $\alpha_1$, $\alpha_2$, we have, 
\begin{equation}\label{e-gemimapaVIbos}
\abs{(\nabla^{\alpha_1}_b)_x(\nabla^{\alpha_2}_b)_yH(x,y)}\leq C_{\alpha}\vartheta(x,y)^{-2-|\alpha|}, \forall (x,y)\in\hat X\times\hat X,\ \ x\neq y,
\end{equation} 
where $C_{\alpha}>0$ is a constant. This shows that $H$ is a smoothing operator of order 2.

Now, we are ready to prove that $\omega-\omega(p)\in\mathcal{E}(\hat\rho)$. From \eqref{e-gemimapaIbos} and \eqref{e-gemimapaIIbos}, we have $\omega=H\Td g$. Fix $k\in\mathbb N_0$ and fix a point $x_0 \neq p$, $x_0$ is in some small neighbourhood $W$ of $p$. Let $r=\frac{1}{4}\vartheta(x_0,p)$, and $\eta$ be a normalized bump function supported in $B(x_0,r)$, with $\eta=1$ on $B(x_0,r/2)$. Then, 
\[
\begin{split}
&|\hat{\nabla}_b^{k+1}\omega(x_0)|=
|\hat{\nabla}_b^{k+1}H\Td g(x_0)|\\
& \leq |\hat{\nabla}_b^{k+1}H (\eta\Td g)(x_0)| + |\hat{\nabla}_b^{k+1}H((1-\eta)\Td g)(x_0)|
\end{split}\]
and $\eta\Td g(x)$ is a normalized bump function on $B(x_0,r)$. So by \eqref{e-gemimapaVbos}, we see that 
\begin{equation}\label{e-gemimapaVI-Ibos}
|\hat{\nabla}_b^{k+1}H(\eta \Td g)(x_0)|\leq C_kr^{1-k},
\end{equation}
where $C_k>0$ is a constant independent of $x_0$ and $r$. By using \eqref{e-gemimapaVIbos}, $\hat{\nabla}_b^{k+1}H((1-\eta)\Td g)(x_0)$ can be estimated by writing out the integral directly: 
\[\hat{\nabla}_b^{k+1}H((1-\eta)\Td g)(x_0) = \int (\hat{\nabla}_b^{k+1}H)(x_0,y) (1-\eta)(y)\Td g(y)\hat m(y),\]
this integral is dominated by 
\begin{equation}\label{e-gemimapaVIIbos}
D_k\int_{\vartheta(y,x_0)\geq\frac{1}{2}r}\vartheta(y,x_0)^{-3-k}\hat m(y)\leq E_kr^{-k}\int_{\hat X}\vartheta(y,x_0)^{-3}\hat m(y)\leq F_kr^{-k},
\end{equation}
where $D_k>0$, $E_k>0$ and $F_k>0$ are constants independent of the point $x_0$ and $r$. From \eqref{e-gemimapaVI-Ibos} and \eqref{e-gemimapaVIIbos}, we conclude that $\hat\nabla_b\omega\in\mathcal{E}(\hat\rho^0)$ and hence $\omega-\omega(p)\in\mathcal{E}(\hat\rho)$. 

\section{Appendix 2: Subelliptic estimates for $\ddbar_b$}

In this appendix we present a proof of Proposition~\ref{p-keyest} and \ref{p-keyest2}. As is well-known, the crux of the matter is to prove a normalized subelliptic estimate on a unit cube, and rescale to a ball of radius $r$. It is this normalized subelliptic estimate we will focus on below.

Suppose on $\mathbb{R}^3$, $T = \frac{\partial}{\partial x_3}$, and on the cube $Q_2 :=(-2,2)^3$, there is a (complex) vector field $Z$ such that $[Z, \Zbar] = -iT + bZ + \overline{b} \Zbar$ and $[Z,T] = cZ + d\Zbar + eT$. Fix a sequence of positive numbers $c_k$. The only assumptions we make on $b,c,d$ and $e$ are that they are $C^{\infty}$ on $Q_2$, and that their $C^k$ norms are bounded by $c_k$ for all $k$. 

We also need to assume the following condition on $Z$: Write $Z = \sum_{i=1}^3 A_i(x) \frac{\partial}{\partial x_i}$ on $Q_2$. Then the only assumption we make on the $A_i$'s is that $|A_i(x)| \leq 1$. %their $C^k$ norms are bounded by $c_k$ for all $k$. %, and $$\left| \sum_{i=1}^3 A_i(x) \xi_i \right| \geq 1$$ whenever $x \in Q_2$, and $\xi$ satisfies $|\xi_3| \leq 2(|\xi_1| + |\xi_2|)$ and $|\xi| \geq 1$. 
%(For convenience we'll smoothly extend $Z$ to the whole $\mathbb{R}^3$ such that this continues to hold.)

Note that $Z$ and $\Zbar$ are only defined on $Q_2$. They will never hit any function that is not supported in $Q_2$, whereas $T$ could hit a function that is defined on all of $\mathbb{R}^3$.

Suppose also that we have a smooth contact form $\theta$ on $Q_2$, so that $\theta(T) = 1$, and $\theta(Z) = \theta(\Zbar) = 0$ on $Q_2$. One then has a measure $\m$ on $Q_2$.
We also assume that the formal adjoint of $Z$ with respect to $L^2(\m)$ on $Q_2$ is given by $-\Zbar + a$ for some $C^{\infty}$ function $a$, where again the only assumption on $a$ is that its $C^k$ norm is bounded by $c_k$ for all $k$. Similarly for the adjoint of $\Zbar$. We also assume that on $Q_2$, $\m = \rho^2 dx$ where $dx = dx_1 dx_2 dx_3$ is the Lebesgue measure on $\mathbb{R}^3$ and $\rho $ is a positive smooth function on $Q_2$. The only assumptions on $\rho $ are that $c_0^{-1} \leq \rho  \leq 1$ and that its $C^1$ norm is bounded by $c_1$. There are no other assumptions on $\theta$.

We shall also fix two functions $\eta$, $\teta$ such that they are $C^{\infty}_c$ with support in $Q_2$, identically equal to 1 on $Q_1:=(-1,1)^3$, and $\teta \equiv 1$ on a neighborhood of the support of $\eta$.

Write $\nabla_b u$ for $(Zu, \Zbar u)$. We claim the following proposition:

\begin{prop} \label{prop:normalizedsub}
For all functions $u \in C^{\infty}(Q_2)$ and $k \geq 1$, we have 
$$
\|\nabla_b^k (\eta u)\| \leq C_k \left(\|\nabla_b^{k-1} \Zbar (\teta u)\| + \|\teta u\| + \|\teta v\|\right)
$$
where $v$ is any solution to the equation $Zv = u$ on $Q_2$, all norms are $L^2(\m)$ norms, and $C_k$ depends only on the chosen sequence $c_k$ and on $\eta$, $\teta$ (but not otherwise on the vector fields, the coefficients $a, b, c, d, e, A_i$ or $\theta$).
\end{prop}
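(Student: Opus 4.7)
I'll prove the estimate by induction on $k$, with the heart of the argument in the base case $k=1$. The base case inequality $\|\nabla_b(\eta u)\| \leq C(\|\Zbar(\teta u)\| + \|\teta u\| + \|\teta v\|)$ splits into bounding $\|\Zbar(\eta u)\|$ and $\|Z(\eta u)\|$. The $\Zbar$-direction is immediate: since $\teta \equiv 1$ on the support of $\eta$, one has $\Zbar(\eta u) = \eta\,\Zbar(\teta u) + (\Zbar\eta)u$, so $\|\Zbar(\eta u)\| \leq \|\Zbar(\teta u)\| + C\|\teta u\|$. The whole game reduces to estimating $\|Z(\eta u)\|$, and this is where the equation $Zv = u$ must enter.

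Writing $\phi = \eta u$, the standard $L^2$ integration by parts using $Z^* = -\Zbar + a$ and the commutator $[Z,\Zbar] = -iT + bZ + \bar b\,\Zbar$ yields the identity
\[
\|Z\phi\|^2 = \|\Zbar\phi\|^2 - i(\phi, T\phi) + R,
\]
where $|R| \leq C(\|\phi\|\|\nabla_b\phi\| + \|\phi\|^2)$ is absorbable. The real part of $(\phi, T\phi)$ is harmless: since $T^* = -T + \gamma$ for the real function $\gamma = -\rho^{-2}T(\rho^2)$, pairing $(T^*\phi,\phi)=(\phi,T\phi)$ against $(T\phi,\phi) = \overline{(\phi,T\phi)}$ gives $2\,\mathrm{Re}(\phi,T\phi) = (\gamma\phi,\phi) = O(\|\teta u\|^2)$. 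The main task is thus to control $\mathrm{Im}(\phi, T\phi)$ in terms of $\|\Zbar(\teta u)\|$, $\|\teta u\|$, $\|\teta v\|$, modulo an absorbable $\epsilon\|\nabla_b\phi\|^2$.

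For this I will follow Kohn's microlocalization strategy. Introduce order-zero pseudodifferential cutoffs $\Lambda^+, \Lambda^-, \Lambda^0$ in the Fourier dual $\tau$ of $x_3$, with symbols identically $1$ on the regions $\tau \geq 2|\xi|$, $\tau \leq -2|\xi|$ and $|\tau| \leq |\xi|$ respectively (where $\xi$ is dual to $(x_1,x_2)$), and satisfying $\Lambda^+ + \Lambda^- + \Lambda^0 = I$ modulo smoothing. Decompose $\phi = \phi^+ + \phi^- + \phi^0$. On the elliptic region $\Lambda^0$, $T$ is subdominant to $\nabla_b$ so $|(\phi^0, T\phi^0)|$ is controlled by $\|\Zbar\phi\|^2 + \|\phi\|^2$ after an $\epsilon$-absorption. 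On $\Lambda^-$ the symbol of $-iT$ is non-positive, hence $(\phi^-, -iT\phi^-) \leq O(\|\phi\|^2)$, and the identity immediately gives $\|Z\phi^-\|^2 \leq \|\Zbar\phi\|^2 + C\|\phi\|^2$, using nothing about $v$. The delicate piece is $\Lambda^+$: here $-iT \geq 0$ modulo lower order, and $(\phi^+, -iT\phi^+) \sim \|(-iT)^{1/2}\phi^+\|^2$ is genuinely of non-isotropic order one, so we must exploit $u = Zv$. Substituting $\phi = \eta Zv = Z(\eta v) - (Z\eta)v$, commuting $\Lambda^+$ past $Z$ (the commutator $[\Lambda^+, Z]$ is of order $0$), and integrating by parts to move $Z$ onto $-iT\phi^+$, we then split $T = (-iT)^{1/2}\cdot i(-iT)^{1/2}$ on $\Lambda^+$ and apply Cauchy--Schwarz to bound the result by
\[
C\|\teta v\|\cdot\bigl(\|(-iT)^{1/2}\Zbar\phi^+\| + \text{l.o.t.}\bigr).
\]
Kohn's $\tfrac12$-subelliptic estimate $\|\phi^+\|_{1/2}^2 \lesssim \|\nabla_b\phi\|^2 + \|\phi\|^2$ controls the latter factor, and with the $\epsilon$-absorption the base case closes.

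For the inductive step $k \geq 2$, insert a nested chain of cutoffs $\eta = \eta_0 \prec \eta_1 \prec \cdots \prec \eta_{k-1} \prec \teta$ and write
\[
\nabla_b^k(\eta u) = \nabla_b\bigl(\eta_1\,\nabla_b^{k-1}(\eta u)\bigr) + E_1,
\]
where $E_1$ collects commutator and cutoff terms supported where one can reduce to strictly lower-order estimates. Applying the base case to the outer $\nabla_b$ (with cutoff pair $(\eta_1, \eta_2)$) reduces matters to bounding $\|\nabla_b^{k-1}(\eta u)\|$, which is the inductive hypothesis (again with a modified cutoff pair). The commutators between successive $Z$'s and $\Zbar$'s generate factors of $T$ and of $[Z,T], [\Zbar, T]$, but the structural hypotheses on $b, c, d, e$ together with the chain of cutoffs absorb all of these into the target RHS. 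The main obstacle throughout is the $\Lambda^+$ analysis in the base case: the equation $u = Zv$ must be used there to regain the half-derivative lost by Kohn's estimate, and the integrations by parts must be orchestrated so that derivatives never land on $v$ itself (only $\|\teta v\|$ is allowed on the RHS). A purely algebraic IBP approach bypassing microlocalization runs into circular identities in which $Tv$ repeatedly reappears with opposite signs, reflecting that the required gain is genuinely microlocal in nature.
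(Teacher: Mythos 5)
Your treatment of the $\Lambda^-$ and $\Lambda^0$ pieces matches the paper, and the framework --- microlocalize, use the equation $Zv=u$ only on the $\Lambda^+$ piece, induct on $k$ --- is the right one. But the $\Lambda^+$ step as written has a genuine gap, and it is exactly the step where all the difficulty lives.

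After one integration by parts your $\Lambda^+$ term has the form $(\Lambda^+\eta v,\, -iT\Zbar\phi^+)$ modulo controllable errors. You propose to write $-iT = (-iT)^{1/2}\cdot(-iT)^{1/2}$ and apply Cauchy--Schwarz to land on $\|\teta v\|\cdot\|(-iT)^{1/2}\Zbar\phi^+\|$. That step is not valid: the symmetric split leaves a $(-iT)^{1/2}$ on \emph{both} slots, so you get $\|(-iT)^{1/2}\Lambda^+\eta v\|\cdot\|(-iT)^{1/2}\Zbar\phi^+\|$, and the first factor costs a half-derivative of $v$, which the statement does not allow. The two unbalanced splits fare no better: putting all of $T$ on $v$ requires $\|Tv\|$ (unavailable), while putting all of $T$ on $\Zbar\phi^+$ gives $\|T\Zbar\phi^+\|$, which is a full non-isotropic order beyond $\|\nabla_b\phi\|$ and cannot be absorbed. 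Moreover even granting your Cauchy--Schwarz, Kohn's $\tfrac12$-subelliptic estimate controls $\|\phi^+\|_{1/2}$ by $\|\nabla_b\phi\|+\|\phi\|$, not $\|(-iT)^{1/2}\Zbar\phi^+\|\approx\|\Zbar\phi^+\|_{1/2}$ --- the latter needs one more $\nabla_b$. A half-derivative gain is simply not enough here; you need a \emph{full} derivative gain somewhere, and that is what your argument does not produce.

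The mechanism the paper uses is to introduce the microlocalized object $v^+:=\eta_1\Lambda^+\eta v$ as a quantity in its own right and prove for it a full-derivative estimate. Because $v^+$ is supported in the $\Lambda^+$ cone, the ``reversed'' version of your $\Lambda^-$ argument (with the roles of $Z$ and $\Zbar$ swapped) gives $\|\nabla_b^{k}v^+\|\lesssim\|\nabla_b^{k-1}Zv^+\|+\|\eta v\|$ (Lemma~\ref{lem:v+simple}); combined with $Zv^+ = u^+ +$ (zeroth and negative order errors) (Lemma~\ref{lem:v+u+}), this yields $\|\nabla_b^{k+1}v^+\|\lesssim\|\nabla_b^{k}u^+\|+$ lower order (Lemma~\ref{lem:highorderv+}). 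With this in hand, the paper performs \emph{two} integrations by parts --- not one --- starting from $\|Zu^+\|^2 = (ZZv^+,Zu^+)+\dots$ and ending with $(iTv^+,\Zbar u^+)$ plus absorbable terms, then estimates $\|Tv^+\|\lesssim\|\nabla_b^2v^+\|\lesssim\|\nabla_b u^+\|+\|\eta_k u\|+\|\eta_{k+1}v\|$. The whole argument closes with an $\epsilon$-absorption of $\|\nabla_b u^+\|^2$. The essential point you are missing is that the gain needed to control the $T$ pairing on $\Lambda^+$ must come from a full-derivative microlocal estimate for $v^+$ (available precisely because $v^+$ lives in the favorable half of phase space), not from the half-derivative gain of the global Kohn subelliptic estimate.
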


We remark that if we have $Zv + \alpha v = u$ instead of $Zv = u$, where $\alpha$ is a fixed $C^{\infty}$ function on $Q_2$, then the above theorem still holds. See the end of this section for a discussion about that. Propositions~\ref{p-keyest} and \ref{p-keyest2} now follows easily by a well-known rescaling procedure. We omit the details.

To prove Proposition~\ref{prop:normalizedsub}, recall on $\mathbb{R}^3$ we have the Lebesgue measure $dx = dx_1 dx_2 dx_3$, and there is the Fourier transform defined by $\widehat{u}(\xi) = \int u(x) e^{-2\pi i x \xi} dx$. Let $\Psi^+$ be a smooth function of $\xi$, such that it is (Euclidean) homogeneous of degree 0 outside the unit ball $\{|\xi| \leq 1\}$, equal to 1 on $\{ \xi_3 > 2\varepsilon_0 ( |\xi_1| + |\xi_2| ) \}$ there, and equal to 0 on both $\{\xi_3 < \varepsilon_0 (|\xi_1| + |\xi_2|)\} \cap \{|\xi| > 1\}$ and $\{|\xi| \leq \frac{1}{2}\}$. Here $\varepsilon_0$ is a small positive absolute constant to be chosen. Let $\Psi^-(\xi) = \Psi^+(\xi_1,\xi_2,-\xi_3)$, and $\Psi^0 = 1-\Psi^+-\Psi^-$. Let $\Lambda^+, \Lambda^0, \Lambda^-$ be the Fourier multipliers corresponding to $\Psi^+, \Psi^0, \Psi^-$ respectively. For instance, if $U$ is a function on $\mathbb{R}^3$, then $\widehat{\Lambda^+ U}(\xi) := \Psi^+(\xi) \widehat{U}(\xi)$. Then $\Lambda^+ + \Lambda^0 + \Lambda^-$ is the identity operator on $L^2(dx)$. 

We shall also fix a sequence of $C^{\infty}$ functions $\Psi^+_0, \Psi^+_1, \Psi^+_2, \dots$ of $\xi$, with $\Psi^+_0 = \Psi^+$, such that each $\Psi^+_k$ is (Euclidean) homogeneous of degree 0 outside the unit ball $\{|\xi| \leq 1\}$, equal to 1 on a neighborhood of the support of $\Psi^+_{k-1}$, and equal to 0 on both $\{\xi_3 < (1/2)\varepsilon_0 (|\xi_1| + |\xi_2|)\} \cap \{|\xi| > 1\}$ and $\{|\xi| \leq \frac{1}{4}\}$. We shall denote the corresponding Fourier multipliers $\Lambda^+_k$. Note $\Lambda^+_k \Lambda^+_{k-1} = \Lambda^+_{k-1}$ for all $k \geq 1$.

In addition, we fix a Fourier multiplier operator $\tilde{\Lambda}^-$, with symbol $\tilde{\Psi}^-$ that is supported on $\{\xi_3 < 0\}$, such that $\tilde{\Lambda}^- \Lambda^- = \Lambda^-$.

Finally, we fix a sequence of $C^{\infty}_c$ functions $\eta_0, \eta_1, \eta_2, \dots$, with $\eta_0 = \eta$, such that each $\eta_k$ has support in $Q_2$, $\eta_{k+1} \equiv 1$ on a neighborhood of the support of $\eta_k$ for all $k$, and $\teta \equiv 1$ on the support of $\eta_k$ for all $k$.

Now suppose we are given $u \in C^{\infty}(Q_2)$. We write $u^+$ for $\eta_1 \Lambda^+ (\eta u)$, and similarly $u^0$ and $u^-$. Note that then $\eta u = u^+ + u^0 + u^-$. 

\subsection{Estimate for $u^-$}

First we prove that for any $k \geq 1$,
\begin{equation}\label{eq:u-}
\|\nabla_b^k u^-\| \leq C_k \left(\|\nabla_b^{k-1} \Zbar u^-\| + \|\eta u\|\right).
\end{equation}

A useful lemma is the following:

\begin{lem} \label{lem:pdou-}
For every $k \geq 1$, there exist Euclidean pseudodifferential operators $S_{-1}$ and $S_{-k}$, smoothing of orders 1 and $k$ respectively, so that 
$$
u^- = \tilde{\Lambda}^- u^- + S_{-1} u^- + S_{-k} (\eta u).
$$
\end{lem}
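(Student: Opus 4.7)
The plan is to use the identity $\tilde\Lambda^-\Lambda^- = \Lambda^-$ to absorb $\tilde\Lambda^-$ against the identity, up to a commutator, and then to exploit the frequency separation built into $\Psi^\pm,\Psi^0$ to peel off a term acting purely on $u^-$.

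First I would commute $(I-\tilde\Lambda^-)$ past $\eta_1$ in the expression for $u^--\tilde\Lambda^- u^-$:
\begin{equation*}
u^- - \tilde\Lambda^- u^- = (I-\tilde\Lambda^-)\eta_1\Lambda^-(\eta u) = \eta_1(I-\tilde\Lambda^-)\Lambda^-(\eta u) - [\tilde\Lambda^-,\eta_1]\Lambda^-(\eta u).
\end{equation*}
The first term on the right vanishes identically because $\tilde\Lambda^-\Lambda^- = \Lambda^-$, leaving
\begin{equation*}
u^- = \tilde\Lambda^- u^- - [\tilde\Lambda^-,\eta_1]\Lambda^-(\eta u).
\end{equation*}
Since $\tilde\Lambda^-$ is a zeroth-order Euclidean pseudodifferential operator (its symbol is smooth and homogeneous of degree zero outside a compact set), the standard symbol calculus gives that $[\tilde\Lambda^-,\eta_1]$ is of order $-1$, and composition with the zeroth-order operator $\Lambda^-$ preserves that order.

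Next, using that $\eta_1\equiv 1$ on the support of $\eta$, one has the decomposition $\eta u = u^+ + u^0 + u^-$, and I would split
\begin{equation*}
-[\tilde\Lambda^-,\eta_1]\Lambda^-(\eta u) = -[\tilde\Lambda^-,\eta_1]\Lambda^- u^- \;-\; [\tilde\Lambda^-,\eta_1]\Lambda^-\eta_1(\Lambda^+ + \Lambda^0)(\eta u).
\end{equation*}
The first summand defines $S_{-1}u^-$ with $S_{-1}:=-[\tilde\Lambda^-,\eta_1]\Lambda^-$, which is smoothing of order $1$ by the above. For the second summand I would take $S_{-k}:=-[\tilde\Lambda^-,\eta_1]\Lambda^-\eta_1(\Lambda^+ + \Lambda^0)$; a single operator that I claim works for every $k\geq 1$.

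The essential point, and what I expect to be the main technical step, is therefore to verify that this single $S_{-k}$ is infinitely smoothing. This reduces to showing that $\Lambda^-\eta_1\Lambda^+$ and $\Lambda^-\eta_1\Lambda^0$ are both infinitely smoothing. For the first, the frequency supports satisfy $\operatorname{supp}\Psi^-\cap\{|\xi|\geq 1\}\subset\{\xi_3\leq -\varepsilon_0(|\xi_1|+|\xi_2|)\}$ and $\operatorname{supp}\Psi^+\cap\{|\xi|\geq 1\}\subset\{\xi_3\geq \varepsilon_0(|\xi_1|+|\xi_2|)\}$, so the two cones are separated by a positive angle. Multiplication by the compactly supported smooth function $\eta_1$ is convolution in frequency by the Schwartz function $\widehat{\eta_1}$, and the angular separation gives $|\xi-\zeta|\gtrsim |\xi|+|\zeta|$ on the relevant supports, which forces arbitrary decay of the composition's Schwartz kernel in frequency. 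For $\Lambda^-\eta_1\Lambda^0$ the claim is even easier since $\Psi^0$ has compact support, so $\Lambda^0$ is itself infinitely smoothing. Composing on the left with the order $-1$ operator $-[\tilde\Lambda^-,\eta_1]$ preserves infinite smoothing, and the lemma follows.
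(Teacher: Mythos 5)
Your algebraic reduction is correct and somewhat cleaner than the paper's: commuting $(I-\tilde{\Lambda}^-)$ past $\eta_1$ and using $\tilde{\Lambda}^-\Lambda^- = \Lambda^-$ does give
\begin{equation*}
u^- \;=\; \tilde{\Lambda}^-u^- \;-\; [\tilde{\Lambda}^-,\eta_1]\Lambda^-(\eta u),
\end{equation*}
and $[\tilde{\Lambda}^-,\eta_1]$ is indeed a Euclidean pseudodifferential operator of order $-1$. Your frequency-separation argument for $\Lambda^-\eta_1\Lambda^+$ is also sound: outside the unit ball, $\operatorname{supp}\Psi^-$ and $\operatorname{supp}\Psi^+$ lie in opposite cones about the $\xi_3$-axis, so $|\xi-\zeta|\gtrsim|\xi|+|\zeta|$ on the supports when $|\xi|+|\zeta|$ is large, and the Schwartz decay of $\widehat{\eta_1}$ forces the composition to be infinitely smoothing.

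The gap is in the $\Lambda^0$ piece. You assert that ``$\Psi^0$ has compact support, so $\Lambda^0$ is itself infinitely smoothing,'' and this is false. Since $\Psi^0 = 1 - \Psi^+ - \Psi^-$ and both $\Psi^\pm$ vanish on the unbounded cone $\{|\xi_3| < \varepsilon_0(|\xi_1|+|\xi_2|)\}\cap\{|\xi|>1\}$, the function $\Psi^0$ is identically $1$ there; $\Lambda^0$ is a genuine order-zero operator, not infinitely smoothing. Moreover, $\operatorname{supp}\Psi^-$ and $\operatorname{supp}\Psi^0$ actually \emph{overlap} at high frequency, in the transition cone $\{-2\varepsilon_0(|\xi_1|+|\xi_2|)\le\xi_3\le-\varepsilon_0(|\xi_1|+|\xi_2|)\}$, so no frequency-separation argument can make $\Lambda^-\eta_1\Lambda^0$ infinitely smoothing --- it has a nonzero principal symbol on that cone. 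Consequently your candidate $S_{-k}=-[\tilde{\Lambda}^-,\eta_1]\Lambda^-\eta_1(\Lambda^+ + \Lambda^0)$ is only of order $-1$ (the gain coming entirely from the commutator out front), and your single decomposition proves the lemma only for $k=1$, not for general $k$.

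The paper avoids this entirely by not decomposing $\eta u$ microlocally: it inserts $\eta_2 + (1-\eta_2)$ inside $\eta_1\tilde{\Lambda}^-\Lambda^-(\eta u)$ and uses \emph{spatial} separation. Since $(1-\eta_2)$ vanishes on a neighborhood of $\operatorname{supp}(\eta u)$, one writes $\eta u = \zeta(\eta u)$ for an intermediate cutoff $\zeta$ with $(1-\eta_2)\zeta = 0$, and commutes $\zeta$ past $\Lambda^-$ repeatedly: each commutator drops the order by one and each leftover $\zeta$ is annihilated by $(1-\eta_2)$, producing an operator of order $-k$ for any prescribed $k$. This trade of a spatial gap for an arbitrarily high smoothing order, one commutator at a time, is precisely the mechanism your argument is missing.
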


Here a Euclidean pseudodifferential operator is said to be smoothing of order $k$, if its symbol is in the H\"ormander class $S^{-k}_{1,0}$.

\begin{proof}
To see this, write
\begin{align}
u^- 
&= \eta_1 \Lambda^- \eta u \notag \\
&= \eta_1 \tilde{\Lambda}^- \eta_2 \Lambda^- \eta u + \eta_1 \tilde{\Lambda}^- (1-\eta_2) \Lambda^- \eta u \notag \\
&= \tilde{\Lambda}^- \eta_1 \Lambda^- \eta u + [\eta_1, \tilde{\Lambda}^-] \eta_2 \Lambda^- \eta u + \eta_1 \tilde{\Lambda}^- (1-\eta_2) \Lambda^- \eta u. \label{eq:u-lemma}
\end{align}
The last term here is $S_{-k} (\eta u)$ for some (Euclidean) pseudodifferential operator that is smoothing of order $k$, because one can pick some $C^{\infty}_c$ function $\zeta$ such that $\zeta \equiv 1$ on the support of $\eta$ and $\eta_1 \equiv 1$ on the support of $\zeta$; this is possible because $\eta_1 \equiv 1$ on a neighborhood of the support of $\eta$. Then writing $\eta u$ as $\zeta \eta u$, and commuting the $\zeta$ past $\Lambda^-$ to hit $(1-\eta_2)$, the last term above is just
$$
\eta_1 \tilde{\Lambda}^- (1-\eta_2) [[[\Lambda^-, \zeta], \zeta],\dots,\zeta] (\eta u) = S_{-k} (\eta u).
$$
With the same choice of $\zeta$, the second term in (\ref{eq:u-lemma}) can be written as
$$
[\eta_1, \tilde{\Lambda}^-] \eta_2 \eta_1 \Lambda^- \eta u + [\eta_1, \tilde{\Lambda}^-] \eta_2 (1-\eta_1) \Lambda^- \eta u = [\eta_1, \tilde{\Lambda}^-] u^- + [\eta_1, \tilde{\Lambda}^-] \eta_2 (1-\eta_1) \Lambda^- \eta u,
$$
and by the same argument above, the last term here is a pseudodifferential operator of order $-k$ acting on $\eta u$. It follows that this second term in (\ref{eq:u-lemma}) is of the form $S_{-1} u^- + S_{-k} (\eta u)$. Finally, the first term in (\ref{eq:u-lemma}) is just $\tilde{\Lambda}^- u^-$. This completes our proof of this lemma.
\end{proof}

Also, to prove (\ref{eq:u-}), it suffices to prove that 
\begin{equation}\label{eq:u-sum}
\|\nabla_b^k u^-\| \leq C_k \left(\sum_{l=0}^{k-1} \|\nabla_b^l \Zbar u^-\| + \|\eta u\|\right),
\end{equation}
in view of the following interpolation inequality:

\begin{lem} \label{lem:interpolate}
If $k \geq 1$, then $$\|\nabla_b^l \Zbar u\|^2 \leq C_k \left(\|\nabla_b^{k-1} \Zbar u\|^2 + \|u\|^2\right)$$ for all $0 \leq l \leq k-1$, for any function $u$ that is smooth and compactly supported in $Q_2$.
\end{lem}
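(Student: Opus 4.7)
The plan is to reduce to a standard subelliptic interpolation inequality for smooth compactly supported functions on $Q_2$, and then apply it to $v=\Zbar u$, handling the lowest-order term $\|\Zbar u\|$ separately via integration by parts.

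First I would prove the following interpolation: for any $v\in C^\infty_c(Q_2)$ and any integers $0\leq j\leq m$,
\begin{equation}\label{eq:plan-basicinterp}
\|\nabla_b^j v\|^2 \leq \varepsilon \|\nabla_b^m v\|^2 + C_{m,\varepsilon}\|v\|^2 \quad \text{for every } \varepsilon>0,
\end{equation}
with $C_{m,\varepsilon}$ depending only on the sequence $(c_k)$. The argument is induction on $m$, the key one-step estimate being the following: for $1\leq j\leq m-1$, writing $\nabla_b^j v = W\nabla_b^{j-1}v$ with $W\in\{Z,\Zbar\}$ and using $W^*=-W'+a$ (where $W'\in\{Z,\Zbar\}$ and $a$ is bounded), integration by parts gives
\[
\|\nabla_b^j v\|^2 = \bigl(\nabla_b^{j-1}v,\,W^*\nabla_b^j v\bigr) \leq \|\nabla_b^{j-1}v\|\bigl(\|\nabla_b^{j+1}v\|+C\|\nabla_b^j v\|\bigr),
\]
so two applications of Young's inequality yield $\|\nabla_b^j v\|^2\leq \varepsilon\|\nabla_b^{j+1}v\|^2+C_\varepsilon\|\nabla_b^{j-1}v\|^2$. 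Iterating this estimate and absorbing the intermediate norms on the left (by choosing the small parameters successively) yields \eqref{eq:plan-basicinterp}. A noteworthy feature is that $Z$ and $\Zbar$ are never commuted with each other in this step, so the vector field $T$ (and the commutator coefficients $b,c,d,e$) plays no role.

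I would then apply \eqref{eq:plan-basicinterp} with $v=\Zbar u$, $j=l$, $m=k-1$ to obtain
\[
\|\nabla_b^l\Zbar u\|^2 \leq \varepsilon\|\nabla_b^{k-1}\Zbar u\|^2 + C_{k,\varepsilon}\|\Zbar u\|^2.
\]
The remaining task is to bound $\|\Zbar u\|^2$ by $C_k\bigl(\|\nabla_b^{k-1}\Zbar u\|^2+\|u\|^2\bigr)$. The case $k=1$ forces $l=0$ and is trivial. For $k\geq2$, one more integration by parts gives
\[
\|\Zbar u\|^2 = \bigl(u,\,\Zbar^*\Zbar u\bigr) = \bigl(u,\,-Z\Zbar u+\overline{a}\Zbar u\bigr) \leq \|u\|\,\|\nabla_b\Zbar u\| + C\|u\|\,\|\Zbar u\|,
\]
so Young's inequality, used first to absorb the last term on the left and then to split $\|u\|\,\|\nabla_b\Zbar u\|$, gives $\|\Zbar u\|^2\leq \delta\|\nabla_b\Zbar u\|^2 + C_\delta\|u\|^2$ for any $\delta>0$. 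Applying \eqref{eq:plan-basicinterp} once more with $v=\Zbar u$, $j=1$, $m=k-1$, and then choosing $\delta$ small enough to absorb the resulting $\|\Zbar u\|^2$ on the left, yields $\|\Zbar u\|^2\leq C_k(\|\nabla_b^{k-1}\Zbar u\|^2+\|u\|^2)$. Combining with the estimate above finishes the proof.

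The only delicate point is the bookkeeping of the small parameters: the $\delta$ in the bound for $\|\Zbar u\|^2$ must be chosen after the parameter coming from \eqref{eq:plan-basicinterp} so that the final absorption is valid. I do not anticipate any essential obstacle beyond this, as everything reduces to integration by parts against operators whose adjoints differ from $\pm Z, \pm\Zbar$ by bounded coefficients.
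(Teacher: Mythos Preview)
Your proposal is correct and takes essentially the same approach as the paper: both rest on the one-step integration-by-parts estimate $\|\nabla_b^j v\|^2 \leq \varepsilon\|\nabla_b^{j+1}v\|^2 + C_\varepsilon\|\nabla_b^{j-1}v\|^2$ followed by absorption. The only cosmetic difference is that the paper works directly with $v=\Zbar u$ throughout and phrases the iteration as an induction on $l$ with the right-hand side $\varepsilon\sum_{j=0}^{k-1}\|\nabla_b^j\Zbar u\|^2 + C_{k,\varepsilon}\|u\|^2$, whereas you first isolate the interpolation for a generic $v$ and then specialize; your treatment of the base case $\|\Zbar u\|^2$ is more explicit than the paper's.
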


\begin{proof}
One proves, by induction on $l$ beginning at $l=0$, that for any $\varepsilon > 0$, there exists $C_{k,\varepsilon}$ such that
$$
\|\nabla_b^l \Zbar u\|^2 \leq \varepsilon \sum_{j=0}^{k-1} \|\nabla_b^j \Zbar u\|^2 + C_{k,\varepsilon} \|u\|^2
$$
for all $0 \leq l \leq k-2$. The key is that 
\begin{align*}
(\nabla_b^l \Zbar u, \nabla_b^l \Zbar u) &= (\nabla_b^{l-1} \Zbar u, \nabla_b^{l+1} \Zbar u) + O(\|\nabla_b^{l-1} u\|\|\nabla_b^l u\|) \\
&\leq \varepsilon (\|\nabla_b^{l+1} \Zbar u\|^2 + \|\nabla_b^{l} \Zbar u\|^2) + C_{l,\varepsilon} \|\nabla_b^{l-1} \Zbar u\|^2.
\end{align*}
Once this is established, the lemma follows easily by summing over $l$.
\end{proof}

Now, to prove (\ref{eq:u-sum}), we proceed by induction on $k$. 

When $k = 1$, it suffices to bound $\|Z u^-\|^2$. Denote $(\cdot, \cdot)$ the inner product in $L^2(\m)$. Then
\begin{align*}
&\|Z u^-\|^2 \\
=& -(\Zbar Z u^-,  u^-) + O(\|u^-\| \|\nabla_b u^-\|)\\
=& -(Z \Zbar u^-,  u^-) - (iTu^-, u^-) + O(\|u^-\| \|\nabla_b u^-\|)\\
=& \|\Zbar u^-\|^2 - (iTu^-, u^-) + O(\|u^-\| \|\nabla_b u^-\|).
\end{align*}
Now $(iT u^-, u^-) = \langle \rho iT u^-, \rho u^- \rangle$, where $\langle \cdot, \cdot \rangle$ is the $L^2$ inner product with respect to the Lebesgue measure $dx$. Hence 
$$
(iTu^-, u^-) = \langle iT \rho u^-, \rho u^-\rangle + O(\|u^-\|^2).
$$
Also, by the above lemma, 
\begin{align*}
\rho u^- 
&= \tilde{\Lambda}^- \rho u^- + [\rho, \tilde{\Lambda}^-] u^- + \rho S_{-1} u^- + \rho S_{-1}(\eta u)\\
&= \tilde{\Lambda}^- \rho u^- + S_{-1} u^- + S_{-1}(\eta u).
\end{align*}
So
\begin{align*}
(iTu^-, u^-) =& \langle iT\tilde{\Lambda}^- \rho u^-, \tilde{\Lambda}^- \rho u^- \rangle + \langle iT\tilde{\Lambda}^- \rho u^-, S_{-1} u^- \rangle \\
& \quad+ \langle iT\tilde{\Lambda}^- \rho u^-, S_{-1} (\eta u) \rangle + O(\|u^-\|^2) + O(\|\eta u\|^2).
\end{align*}
But the second and third terms is $O(\|u^-\|^2) + O(\|\eta u\|^2)$ (one just needs to integrate by parts in $T$ and let $T$ fall on $S_{-1}$), and the first term is $$\langle iT \tilde{\Lambda}^- \rho u^-, \tilde{\Lambda}^- \rho u^- \rangle = \int -2\pi \xi_3 |\tilde{\Psi}^-(\xi)|^2 |\widehat{\rho u^- }(\xi)|^2 d\xi$$
which is non-negative since $\xi_3 < 0$ on the support of $\tilde{\Psi}^-$.
Hence altogether 
$$
\|Z u^-\|^2 \leq C \|\Zbar u^-\|^2 + O(\|u^-\|^2) + O(\|\eta u\|^2) + O(\|u^-\| \|\nabla_b u^-\|),
$$
and using $\|u^-\|\|\nabla_b u^-\| \leq \delta \|\nabla_b u^-\|^2 + \delta^{-1} \|u^-\|^2$ and $\|u^-\|^2 \leq C \|\eta u\|^2$, we get
$$\|Z u^-\|^2 \leq C (\|\Zbar u^-\|^2 + \|\eta u\|^2)$$
as desired.

Next, suppose (\ref{eq:u-sum}) has been proved for $k-1$ for some $k \geq 2$. We prove the same estimate for $k$. To do so, we first prove that for all $0 \leq m \leq \lfloor \frac{k}{2} \rfloor$ and all $\varepsilon > 0$, there exists $C_{\varepsilon}$ such that 
\begin{equation}\label{eq:u-m}
\|T^m \nabla_b^{k-2m} u^- \|^2 \leq \varepsilon \|\nabla_b^k u^-\|^2 + C_{\varepsilon} \left( \|\nabla_b^{k-1}\Zbar u^-\|^2 + \sum_{l=0}^{k-1} \|\nabla_b^l u^-\|^2 + \|\eta u\|^2 \right).
\end{equation}
In fact the desired inequality (\ref{eq:u-}) for $k$ follows readily from the above inequality when $m = 0$.

To prove (\ref{eq:u-m}), we proceed in two steps. First, we prove that for all $0 \leq m \leq \lfloor \frac{k}{2} \rfloor$ and all $\varepsilon > 0$, there exists $C_{\varepsilon}$ such that
\begin{equation}\label{eq:u-msecond}
\|T^m Z^{k-2m} u^- \|^2 \leq \varepsilon \|\nabla_b^k u^-\|^2 + C_{\varepsilon} \left( \|\nabla_b^{k-1}\Zbar u^-\|^2 + \sum_{l=0}^{k-1} \|\nabla_b^l u^-\|^2 + \|\eta u\|^2 \right).
\end{equation}
Next, we prove by induction on $m$, beginning from $m$ that is as large as possible, that (\ref{eq:u-m}) holds.

In the first step, there are two cases: either $k-2m = 0$ (which occurs only when $k$ is even), or $k-2m \geq 1$. 

In the first case, we need to estimate $\|T^m u^-\|^2$. Now $m \geq 1$, and
\begin{align*}
&\|T^m u^-\|^2 \\ 
=& (T^m u^-, i(Z \Zbar - \Zbar Z) T^{m-1} u^-) + O(\|T^m u^-\| \|\nabla_b^{k-1} u^-\|) \\
=& -(T^m u^-, i\Zbar Z T^{m-1} u^-) + (T^m u^-, iZ T^{m-1} \Zbar u^-) + O(\|T^m u^-\| \sum_{l=0}^{k-1} \|\nabla_b^{l} u^-\|)\\
=& -(T^m u^-, i\Zbar Z T^{m-1} u^-) + O(\|T^m u^-\| \|\nabla_b^{k-1} \Zbar u^-\|) + O(\|T^m u^-\| \sum_{l=0}^{k-1} \|\nabla_b^{l} u^-\|).
\end{align*}
But 
\begin{align*}
&(T^m u^-, i\Zbar Z T^{m-1} u^-) \\
=& (iT (Z T^{m-1} u^-), Z T^{m-1} u^-) + O(\|T^m u^-\| \|\nabla_b^{k-1} u^-\|) + O( \|\nabla_b^{k-1} u^-\|^2)\\
=& \langle iT (\rho Z T^{m-1} u^-), \rho Z T^{m-1} u^- \rangle + O(\|T^m u^-\| \|\nabla_b^{k-1} u^-\|) + O( \|\nabla_b^{k-1} u^-\|^2).
\end{align*}
Also by Lemma~\ref{lem:pdou-}, 
\begin{align*}
&\rho Z T^{m-1} u^- \\
=& \tilde{\Lambda}^- (\rho Z T^{m-1} u^-) + [\rho Z T^{m-1}, \tilde{\Lambda}^-] u^- + \rho Z T^{m-1} S_{-1}(u^-) + \rho Z T^{m-1} S_{-k}(\eta u)\\
=& \tilde{\Lambda}^- (\rho Z T^{m-1} u^-) + S_{-1} \sum_{l=0}^{k-1} \nabla_b^l u^- + S_{-1}(\eta u).
\end{align*}
Hence
\begin{align*}
&\langle iT (\rho Z T^{m-1} u^-), \rho Z T^{m-1} u^- \rangle\\
=& \langle iT \tilde{\Lambda}^- (\rho Z T^{m-1} u^-), \tilde{\Lambda}^- (\rho Z T^{m-1} u^-) \rangle + \langle iT \tilde{\Lambda}^- (\rho Z T^{m-1} u^-), S_{-1} \sum_{l=0}^{k-1} \nabla_b^l u^- \rangle \\ & \quad + \langle iT \tilde{\Lambda}^- (\rho Z T^{m-1} u^-), S_{-1}(\eta u) \rangle + O(\sum_{l=0}^{k-1} \|\nabla_b^l u^-\|^2) + O(\|\eta u\|^2),
\end{align*}
where the first term is non-negative, and the second and third terms are $O(\sum_{l=0}^{k-1} \|\nabla_b^l u^-\|^2) + O(\|\eta u\|^2)$ after integrating by parts in $T$. Altogether, we get
$$
\|T^m u^-\|^2 \leq \varepsilon \|\nabla_b^k u^-\|^2 + C_{\varepsilon} \left( \|\nabla_b^{k-1}\Zbar u^-\|^2 + \sum_{l=0}^{k-1} \|\nabla_b^l u^-\|^2 + \|\eta u\|^2 \right) 
$$
as desired.

Next, in the second case, we need to estimate $\|T^m Z^{k-2m} u^-\|^2$ when $k-2m \geq 1$. The strategy is the same as the one when we dealt with the case when $k=1$ and $m=0$. One observes that
\begin{align*}
&\|T^m Z^{k-2m} u^-\|^2 \\
=& -(T^m \Zbar Z^{k-2m} u^-, T^m Z^{k-2m-1} u^-) + O(\sum_{l=0}^{k-1} \|\nabla_b^l u^-\| \|\nabla_b^k u^-\|) + O(\sum_{l=0}^{k-1} \|\nabla_b^l u^-\|^2)  \\
=& -(T^m Z^{k-2m} \Zbar u^-, T^m Z^{k-2m-1} u^-) - (k-2m) (iT (T^m Z^{k-2m-1} u^-), T^m Z^{k-2m-1} u^-) \\
&+ O(\sum_{l=0}^{k-1} \|\nabla_b^l u^-\| \|\nabla_b^k u^-\|) +O(\sum_{l=0}^{k-1} \|\nabla_b^l u^-\|^2) \\
=& \, \|T^m Z^{k-2m-1} \Zbar u^-\|^2 - [2(k-2m)-1] \langle iT(\rho T^m Z^{k-2m-1} u^-), \rho T^m Z^{k-2m-1} u^- \rangle \\
& + O(\sum_{l=0}^{k-1} \|\nabla_b^l u^-\| \|\nabla_b^k u^-\|) + O(\sum_{l=0}^{k-1} \|\nabla_b^l u^-\|^2).
\end{align*}
Now $[2(k-2m)-1] > 0$, and by Lemma~\ref{lem:pdou-}, 
\begin{align*}
&\rho T^m Z^{k-2m-1} u^- \\
=& \tilde{\Lambda}^- (\rho T^m Z^{k-2m-1} u^-) + [\rho T^m Z^{k-2m-1}, \tilde{\Lambda}^-] u^- + \rho T^m Z^{k-2m-1} S_{-1}(u^-) + \rho T^m Z^{k-2m-1} S_{-k}(\eta u)\\
=& \tilde{\Lambda}^- (\rho T^m Z^{k-2m-1} u^-) + S_{-1} \sum_{l=0}^{k-1} \nabla_b^l u^- + S_{-1}(\eta u).
\end{align*}
Hence
\begin{align*}
&\langle iT (\rho T^m Z^{k-2m-1} u^-), \rho T^m Z^{k-2m-1} u^- \rangle\\
=& \langle iT \tilde{\Lambda}^- (\rho T^m Z^{k-2m-1} u^-), \tilde{\Lambda}^- (\rho T^m Z^{k-2m-1} u^-) \rangle + \langle iT \tilde{\Lambda}^- (\rho T^m Z^{k-2m-1} u^-), S_{-1} \sum_{l=0}^{k-1} \nabla_b^l u^- \rangle \\ & \quad + \langle iT \tilde{\Lambda}^- (\rho T^m Z^{k-2m-1} u^-), S_{-1}(\eta u) \rangle + O(\sum_{l=0}^{k-1} \|\nabla_b^l u^-\|^2) + O(\|\eta u\|^2),
\end{align*}
where the first term is non-negative, and the second and third terms are $O(\sum_{l=0}^{k-1} \|\nabla_b^l u^-\|^2) + O(\|\eta u\|^2)$ after integrating by parts in $T$. Altogether,
$$
\|T^m Z^{k-2m} u^-\|^2 \leq \varepsilon \|\nabla_b^k u^-\|^2 + C_{\varepsilon} \left( \|\nabla_b^{k-1}\Zbar u^-\|^2 + \sum_{l=0}^{k-1} \|\nabla_b^l u^-\|^2 + \|\eta u\|^2 \right) 
$$
as desired. This finishes our first step in proving (\ref{eq:u-m}).

Now to complete the proof of (\ref{eq:u-m}), we proceed by induction on $m$, beginning with $m$ that is as big as possible. In that case $k-2m$ is either 0 or 1. 
Both cases follow right away by what we have proved above in the first step. Now we prove the inequality (\ref{eq:u-m}) for $m$, assuming the inequality has been proved for all strictly bigger $m$'s. Then we need to estimate $\|T^m \nabla_b^{k-2m} u^-\|^2$. Consider $T^m \nabla_b^{k-2m} u^-$. If all the $\nabla_b$'s are $Z$, then this follows again from what we have proved above. If one of the $\nabla_b$'s is $\Zbar$, then one only needs to commute the $\Zbar$ all the way through the other $\nabla_b$'s to get $T^m \nabla_b^{k-2m-1} \Zbar u^-$, up to an error that either has fewer $\nabla_b$ derivatives, or an error of the form $T^{m+1} \nabla_b^{k-2m-2} u^-$. For example,
\begin{align*}
& \|T^m \Zbar \nabla_b^{k-2m-1} u^-\|^2 \\
=& \|T^m \nabla_b^{k-2m-1} \Zbar u^-\|^2 + O(\|T^{m+1} \nabla_b^{k-2m-2} u^-\|^2) + O(\sum_{l=0}^{k-1} \|\nabla_b^{l} u^-\|^2).
\end{align*}
The first error term can then be estimated by our induction hypothesis on $m$. Hence
$$
\|T^m \Zbar \nabla_b^{k-2m-1} u^-\|^2 \leq \varepsilon \|\nabla_b^k u^-\|^2 + C_{\varepsilon} \left( \|\nabla_b^{k-1}\Zbar u^-\|^2 + \sum_{l=0}^{k-1} \|\nabla_b^l u^-\|^2 + \|\eta u\|^2 \right).
$$
This completes the proof of (\ref{eq:u-m}), and thus the proof of (\ref{eq:u-}).

\subsection{Estimate for $u^0$}

Next we prove that for any $k \geq 1$ and any $\varepsilon > 0$,
\begin{equation} \label{eq:u0}
\|\nabla_b^k u^0\| \leq \|\nabla_b^{k-1} \Zbar u^0\|^2 + \varepsilon \|\nabla_b^k (\eta u)\|^2 + C_{k,\varepsilon} \sum_{l=0}^{k-1} \|\nabla_b^{l} (\eta u)\|.
\end{equation}
We proceed by induction on $k$ exactly as before. 

When $k = 1$, we only need to estimate $|(iT u^0, u^0)|$. But $$|(iT u^0, u^0)| \leq \varepsilon \|Tu^0\|^2 + \varepsilon^{-1} \|\eta u\|^2,$$ and
$$
\|T u^0\|^2
\leq  \|T \Lambda^0 (\rho  \eta u)\|^2 + C\|\eta u\|^2.
$$
Taking Fourier transform,
\begin{align*}
&\|T \Lambda^0 (\rho  \eta u)\|^2 \\
\leq & \int (1 + 2 \varepsilon_0^2 (|\xi_1|^2 + |\xi_2|^2)) |\widehat{\Lambda^0 (\rho \eta u)}(\xi)|^2 d\xi\\
\leq & 2 \varepsilon_0^2 \left( \left\| \frac{\partial}{\partial x_1} \Lambda^0 (\rho \eta u) \right \|^2 + \left\| \frac{\partial}{\partial x_2} \Lambda^0 (\rho \eta u) \right \|^2 \right ) +  \|\eta u\|^2 \\
\leq & 2 \varepsilon_0^2 \left( \left\| \frac{\partial}{\partial x_1} \eta_1 \Lambda^0 (\rho \eta u) \right \|^2 + \left\| \frac{\partial}{\partial x_2} \eta_1 \Lambda^0 (\rho \eta u) \right \|^2 \right ) +  \|\eta u\|^2.
\end{align*}
(The last line follows since $\rho \eta u = \eta_1 \rho  \eta u$ and one can commute the $\eta_1$ past $\Lambda^0$ to obtain a better error.)

Now the key observation is that on $Q_2$, $\frac{\partial}{\partial x_1}$ and $\frac{\partial}{\partial x_2}$ can be written as linear combinations of $Z$, $\Zbar$ and $T$ with coefficients that are bounded by an absolute constant. In fact we only need to bound the coefficients of the the inverse of the matrix whose first column is $(A_1,A_2,A_3)$, the second column is the conjugate of the first, and the third column is (0,0,1). From $\m(Z,\Zbar,T) = d\theta(Z,\Zbar) = -\theta(Z,\Zbar) = \theta(iT) = i$, $\m = \rho ^2 dx$, and $\rho  \leq 1$, we have $|dx(Z,\Zbar,T)| \geq 1$, i.e. the determinant of the matrix to be inverted is bounded below by 1. Together with the assumed bounds on the $A_i$'s, we obtain our key observation. 

Hence, continuing from above, 
$$
\|T \Lambda^0 (\rho  \eta u)\|^2
\leq  C_0 \varepsilon_0^2 \left( \left\| \nabla_b \Lambda^0 (\rho \eta u) \right \|^2 + \left\| T \Lambda^0 (\rho \eta u) \right \|^2 \right) +  \|\eta u\|^2,
$$
which implies 
\begin{align*}
&\|T \Lambda^0 (\rho  \eta u)\|^2 \\
\leq & C \left\| \nabla_b \Lambda^0 (\rho \eta u) \right \|^2 + \|\eta u\|^2\\
\leq & C (\left\| \nabla_b (\eta u) \right \|^2 + \|\eta u\|^2)
\end{align*}
if $\varepsilon_0$ was chosen to be sufficiently small. One then completes the proof of the case $k=1$ as before.

Next, we prove by induction on $m$ the following for any $\varepsilon > 0$:
\begin{equation}\label{eq:u0m}
\|T^m \nabla_b^{k-2m} u^0 \|^2 \leq C_{\varepsilon} \left( \|\nabla_b^{k-1}\Zbar u^0\|^2 + \sum_{l=0}^{k-1} \|\nabla_b^l (\eta u)\|^2\right) + \varepsilon \|\nabla_b^k (\eta u)\|^2.
\end{equation}

First, suppose $k-2m = 0$. Then we need only estimate $|(iT ZT^{m-1} u^0, ZT^{m-1} u^0)|$. One certainly has $$|(iT ZT^{m-1} u^0, ZT^{m-1} u^0)| \leq \varepsilon \|TZT^{m-1}u^0\|^2 + \varepsilon^{-1} \sum_{l=0}^{k-1} \|\nabla_b^{l} (\eta u)\|^2.$$ To estimate $\|TZT^{m-1}u^0\|^2$, we write $TZT^{m-1}u^0 = T\Lambda^0 ZT^{m-1} (\eta u) + T[ZT^{m-1}\eta_1, \Lambda^0](\eta u)$. The second term is bounded by $$\sum_{l=0}^k \|\nabla_b^l (\eta u)\|.$$ The first term satisfies
$$
\|T\Lambda^0 ZT^{m-1} (\eta u)\|^2 \leq \|\nabla_b^{k-1}(\eta u)\|^2 + C_0 \varepsilon_0^2 \|\nabla_b \Lambda^0 ZT^{m-1} (\eta u)\|^2 + C_0 \varepsilon_0^2 \|T\Lambda^0 ZT^{m-1} (\eta u)\|^2,
$$
from which it follows that
\begin{align*}
&\|T\Lambda^0 ZT^{m-1} (\eta u)\|^2\\
 \leq & \|\nabla_b^{k-1}(\eta u)\|^2 + C \|\nabla_b \Lambda^0 ZT^{m-1} (\eta u)\|^2 \\
 \leq & C \|\nabla_b^{k-1}(\eta u)\|^2 + C \|\nabla_b^k (\eta u)\|^2 \\
\end{align*}
by our choice of $\varepsilon_0$. Hence
$$
|(iT ZT^{m-1} u^0, ZT^{m-1} u^0)| \leq \varepsilon \|\nabla_b^k (\eta u)\|^2 + C_{\varepsilon}  \sum_{l=0}^{k-1} \|\nabla_b^l (\eta u)\|^2 
$$
and one finishes the proof for the case $k-2m = 0$ as before.

Next, when $k-2m=1$, we need to estimate $|(iT T^m u^0, T^m u^0)|$. One certainly has $$|(iT T^{m} u^0, T^{m} u^0)| \leq \varepsilon \|T^{m+1}u^0\|^2 + \varepsilon^{-1} \sum_{l=0}^{k-1} \|\nabla_b^{l} (\eta u)\|^2.$$ To estimate $\|T^{m+1}u^0\|^2$, we write $T^{m+1}u^0 = T\Lambda^0 T^{m} (\eta u) + T[T^{m}\eta_1, \Lambda^0](\eta u)$. The second term is bounded by $$\sum_{l=0}^k \|\nabla_b^l (\eta u)\|.$$ The first term satisfies
$$
\|T\Lambda^0 T^{m} (\eta u)\|^2 \leq \|\nabla_b^{k-1}(\eta u)\|^2 + C_0 \varepsilon_0^2 \|\nabla_b \Lambda^0 T^{m} (\eta u)\|^2 + C_0 \varepsilon_0^2 \|T\Lambda^0 T^{m} (\eta u)\|^2,
$$
from which it follows that
\begin{align*}
&\|T\Lambda^0 T^{m} (\eta u)\|^2\\
 \leq & \|\nabla_b^{k-1}(\eta u)\|^2 + C \|\nabla_b \Lambda^0 T^{m} (\eta u)\|^2 \\
 \leq & C \|\nabla_b^{k-1}(\eta u)\|^2 + C \|\nabla_b^k (\eta u)\|^2.
\end{align*}
Hence
$$
|(iT T^m u^0, T^m u^0)| \leq \varepsilon \|\nabla_b^k (\eta u)\|^2 + C_{\varepsilon} \sum_{l=0}^{k-1} \|\nabla_b^l (\eta u)\|^2
$$
and one finishes the proof for the case $k-2m = 1$ as before.

Now we prove (\ref{eq:u0m}) for $m$, assuming that the statement has been proved for all larger $m$'s. We then estimate $\|T^m \nabla_b^{k-2m} u^0\|^2$. If one of the $\nabla_b$ is $\Zbar$, we proceed exactly as before and commute the $\Zbar$ until it hits $u^0$. This proves the desired estimate with the induction hypothesis on $m$. If now all $\nabla_b$ are $Z$'s, then as before we only need to bound $|(iT T^m Z^{k-2m-1}u^0, T^m Z^{k-2m-1}u^0)|$. One certainly has $$|(iT T^{m} Z^{k-2m-1}u^0, T^{m}Z^{k-2m-1} u^0)| \leq \varepsilon \|T^{m+1}Z^{k-2m-1}u^0\|^2 + \varepsilon^{-1} \sum_{l=0}^{k-1} \|\nabla_b^{l} (\eta u)\|^2.$$ To estimate $\|T^{m+1}Z^{k-2m-1}u^0\|^2$, we write $T^{m+1}Z^{k-2m-1}u^0 = T\Lambda^0 T^{m}Z^{k-2m-1} (\eta u) + T[T^{m}Z^{k-2m-1}\eta_1, \Lambda^0](\eta u)$. The second term is bounded by $$\sum_{l=0}^k \|\nabla_b^l (\eta u)\|.$$ The first term satisfies
\begin{align*}
&\|T\Lambda^0 T^{m} Z^{k-2m-1}(\eta u)\|^2 \\
\leq &\|\nabla_b^{k-1}(\eta u)\|^2 + C_0 \varepsilon_0^2 \|\nabla_b \Lambda^0 T^{m} Z^{k-2m-1}(\eta u)\|^2 + C_0 \varepsilon_0^2 \|T\Lambda^0 T^{m}Z^{k-2m-1} (\eta u)\|^2,
\end{align*}
from which it follows that
\begin{align*}
&\|T\Lambda^0 T^{m}Z^{k-2m-1} (\eta u)\|^2\\
 \leq & \|\nabla_b^{k-1}(\eta u)\|^2 + C \|\nabla_b \Lambda^0 T^{m} Z^{k-2m-1}(\eta u)\|^2 \\
 \leq & C \|\nabla_b^{k-1}(\eta u)\|^2 + C \|\nabla_b^k (\eta u)\|^2.
\end{align*}
Hence
$$
|(iT T^m Z^{k-2m-1}u^0, T^m Z^{k-2m-1} u^0)| \leq \varepsilon \|\nabla_b^k (\eta u)\|^2 + C_{\varepsilon} \sum_{l=0}^{k-1} \|\nabla_b^l (\eta u)\|^2
$$
and one finishes the proof for this case as before.

\subsection{Estimate for $u^+$}

Now we turn to estimate $u^+$. Recall we introduced a sequence of cut-offs $\eta_0 = \eta, \eta_1, \eta_2, \dots$, and a sequence of Fourier multipliers $\Lambda^+_0 = \Lambda^+, \Lambda^+_1, \Lambda^+_2, \dots$, such that $\eta_k \eta_{k+1} = \eta_k$, and $\Lambda^+_k \Lambda^+_{k+1} = \Lambda^+_k$ for all $k \geq 0$. In fact the Fourier multiplier for $\Lambda^+_{k+1}$ is identically equal to 1 on a neighborhood of the support of that of $\Lambda^+_k$. Also, the cut-off function $\teta$ dominates all the $\eta_j$'s, in the sense that $\eta_j \teta = \eta_j$ for all $j$. We wrote $u^+ = \eta_1 \Lambda^+ \eta u$, and $v$ is any solution to $Zv = u$ on $Q_2$. The estimate we shall prove is
\begin{equation}\label{eq:u+}
\|\nabla_b^k u^+\| \leq C_k (\|\nabla_b^{k-1} \Zbar u^+\| + \sum_{l=0}^{k-1} \|\nabla_b^l (\eta_k u)\| + \|\eta_{k+1} v\|)
\end{equation}
for all $k \geq 1$.

To prove this, the first observation is the following:

\begin{lem} \label{lem:v+simple}
For all $k \geq 1$,
$$
\|\nabla_b^k v^+\| \leq C_k \left(\|\nabla_b^{k-1} Z v^+\| + \|\eta v\|\right),
$$
where $v^+ = \eta_1 \Lambda^+ \eta v$. 
\end{lem}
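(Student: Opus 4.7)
The plan is to prove Lemma~\ref{lem:v+simple} as the exact mirror image of the estimate (\ref{eq:u-}) for $u^-$, with the roles of $Z$ and $\Zbar$ interchanged and $\Lambda^-$ replaced by a corresponding $\Lambda^+$. The first preparatory step is the ``$+$'' analog of Lemma~\ref{lem:pdou-}: fix a Fourier multiplier $\tilde{\Lambda}^+$ with symbol $\tilde{\Psi}^+$ supported in $\{\xi_3>0\}$ and $\tilde{\Lambda}^+\Lambda^+=\Lambda^+$; then for every $k\geq1$ there are Euclidean pseudodifferential operators $S_{-1}$ and $S_{-k}$, smoothing of orders $1$ and $k$, with
$$
v^+ = \tilde{\Lambda}^+ v^+ + S_{-1}v^+ + S_{-k}(\eta v).
$$
The proof is a verbatim transcription of the argument for Lemma~\ref{lem:pdou-}, replacing $\tilde{\Lambda}^-$ by $\tilde{\Lambda}^+$.

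For the base case $k=1$, I would combine $Z^*=-\Zbar+a$, $\Zbar^*=-Z+\bar a$ and $[Z,\Zbar]=-iT+bZ+\bar b\Zbar$ to obtain
$$
\|\Zbar v^+\|^2 - \|Zv^+\|^2 = (v^+, iTv^+) + O\bigl(\|v^+\|\,\|\nabla_b v^+\|\bigr),
$$
exactly opposite in sign to the identity used for $u^-$. Writing $\m=\rho^2 dx$, inserting the decomposition above for $\rho v^+$ and integrating by parts in $T$ to move $T$ off $S_{-1}$ and $S_{-k}$, the main term becomes
$$
\langle iT\tilde{\Lambda}^+\rho v^+,\tilde{\Lambda}^+\rho v^+\rangle = \int -2\pi\xi_3\,|\tilde{\Psi}^+(\xi)|^2\,|\widehat{\rho v^+}(\xi)|^2\,d\xi \leq 0
$$
because $\xi_3>0$ on $\operatorname{supp}\tilde{\Psi}^+$. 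This is exactly the crucial sign change that makes the $+$ case force an estimate of $\|\Zbar v^+\|$ by $\|Zv^+\|$ (rather than the reverse, as for $u^-$). All remaining pieces are absorbed using $\|v^+\|\lesssim\|\eta v\|$ and the standard Young-type splitting, giving $\|\nabla_b v^+\|\lesssim \|Zv^+\|+\|\eta v\|$.

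For general $k$, I would mimic the induction leading to (\ref{eq:u-m}). By the ``$Z$-version'' of Lemma~\ref{lem:interpolate} (whose proof is the symmetric mirror of the one given and substitutes $Z$ for $\Zbar$), it suffices to prove
$$
\|T^m\nabla_b^{k-2m}v^+\|^2 \leq \varepsilon\|\nabla_b^k v^+\|^2 + C_\varepsilon\Bigl(\|\nabla_b^{k-1}Zv^+\|^2 + \sum_{l=0}^{k-1}\|\nabla_b^l v^+\|^2 + \|\eta v\|^2\Bigr)
$$
for every $0\leq m\leq \lfloor k/2\rfloor$. One first handles the case where the $\nabla_b$-derivatives are all $\Zbar$: expanding $\|T^m\Zbar^{k-2m}v^+\|^2$ with the commutator trick produces a boundary term of the form $\langle iT(\rho T^m\Zbar^{k-2m-1}v^+),\rho T^m\Zbar^{k-2m-1}v^+\rangle$ whose main part, after inserting $\tilde{\Lambda}^+$ via the decomposition above, is again $\leq 0$ by the sign of $\xi_3$ on $\operatorname{supp}\tilde{\Psi}^+$. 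This is the direct analog of the positivity step in the $u^-$ argument, only with opposite sign conventions. Then one performs the downward induction on $m$, starting from the maximal value where $k-2m\in\{0,1\}$ and at each stage, for any $Z$ appearing in $\nabla_b^{k-2m}$, commuting it through the remaining derivatives onto $v^+$ at the cost of terms with either fewer $\nabla_b$ derivatives or one extra $T$, both of which are controlled by the induction hypothesis.

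The main obstacle is really just bookkeeping: there is no new analytic input beyond the arguments already carried out for $u^-$ and $u^0$, but one must verify carefully that every sign flips in the correct way so that $\tilde{\Lambda}^+$ yields the \emph{negative} contribution from $\int -2\pi\xi_3|\tilde{\Psi}^+|^2$, which is exactly what forces $Z$ to appear (and not $\Zbar$) on the right-hand side of the desired estimate. Once these signs line up, the induction closes in the same way as in the proof of (\ref{eq:u-}), and taking $m=0$ yields Lemma~\ref{lem:v+simple}.
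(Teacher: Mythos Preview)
Your proposal is correct and follows exactly the approach the paper takes: the paper's entire proof of this lemma is the single sentence ``The proof of this inequality is the same as that of (\ref{eq:u-}), except that one reverses the role of $Z$ and $\Zbar$, and replaces $u^-$ by $v^+$,'' and you have correctly unpacked what this entails, including the $\tilde{\Lambda}^+$ analog of Lemma~\ref{lem:pdou-} and the crucial sign observation that $\int -2\pi\xi_3|\tilde{\Psi}^+|^2\,d\xi\leq 0$ on $\{\xi_3>0\}$, which is precisely what forces $Z$ rather than $\Zbar$ to appear on the right-hand side.
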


The proof of this inequality is the same as that of (\ref{eq:u-}), except that one reverses the role of $Z$ and $\Zbar$, and replaces $u^-$ by $v^+$. It does not make use of the fact that $v$ solves $Zv = u$. By the same token,

\begin{lem} \label{lem:v+}
For all $k \geq 1$ and all $j \geq 1$, 
$$
\|\nabla_b^k (\eta_j \Lambda^+_{j-1} \eta_{j-1} v)\| \leq C_{j,k} \left(\|\nabla_b^{k-1} Z (\eta_j \Lambda^+_{j-1} \eta_{j-1} v)\| + \|\eta_{j-1} v\|\right).
$$
\end{lem}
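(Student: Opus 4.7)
The plan is to reduce Lemma~\ref{lem:v+} to a direct analog of the argument that established \eqref{eq:u-}; indeed Lemma~\ref{lem:v+simple} is the case $j=1$, and the paper has already noted that it is proved by the same method with the roles of $Z$ and $\Zbar$ swapped and $u^-$ replaced by $v^+$. For the general $j$, the roles played in the earlier proof by $u^-$, $\Lambda^-$, $\tilde\Lambda^-$, $\eta$, $\eta_1$, $\eta_2$, and $\Zbar$ will here be played by $w:=\eta_j\Lambda^+_{j-1}\eta_{j-1}v$, $\Lambda^+_{j-1}$, a Fourier multiplier $\tilde\Lambda^+$ whose symbol is supported in $\{\xi_3>0\}$ and equals one on a neighborhood of the support of $\Psi^+_{j-1}$, the cut-offs $\eta_{j-1}$, $\eta_j$, $\eta_{j+1}$, and $Z$. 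The key sign consideration still works: for $w$ frequency-localized to $\{\xi_3>0\}$, the principal contribution to $(iTw,w)$ comes from
\[
\langle iT\tilde\Lambda^+\rho w,\tilde\Lambda^+\rho w\rangle=\int(-2\pi\xi_3)|\tilde\Psi^+(\xi)|^2|\widehat{\rho w}(\xi)|^2\,d\xi\leq 0,
\]
which is exactly the sign needed to absorb this term into the $\|Zw\|^2$ side of the identity $\|\Zbar w\|^2=\|Zw\|^2+(iTw,w)+\mathrm{l.o.t.}$

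First I would establish the direct analog of Lemma~\ref{lem:pdou-}: there exist Euclidean pseudodifferential operators $S_{-1}$, $S_{-k}$ smoothing of orders $1$ and $k$ respectively, such that
\[
w=\tilde\Lambda^+ w+S_{-1}w+S_{-k}(\eta_{j-1}v).
\]
Writing $w=\eta_j\tilde\Lambda^+\Lambda^+_{j-1}\eta_{j-1}v$ and splitting via $1=\eta_{j+1}+(1-\eta_{j+1})$, one moves $\eta_j$ across $\tilde\Lambda^+$ to produce the main term $\tilde\Lambda^+ w$ plus a commutator error; the piece involving $(1-\eta_{j+1})$ has support disjoint from that of $\eta_{j-1}v$ (after commuting a suitable auxiliary cut-off past $\Lambda^+_{j-1}$), and so is an operator smoothing of order $k$ applied to $\eta_{j-1}v$. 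The commutator piece is then decomposed using $\eta_{j+1}=\eta_j\eta_{j+1}+(1-\eta_j)\eta_{j+1}$, exactly as in the passage after \eqref{eq:u-lemma}.

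With this decomposition in hand, I would repeat the proof of \eqref{eq:u-} almost verbatim. By Lemma~\ref{lem:interpolate} applied with $Zw$ in place of $\Zbar u$, it suffices to establish
\[
\|\nabla_b^k w\|^2\leq C_k\Bigl(\sum_{l=0}^{k-1}\|\nabla_b^l Zw\|^2+\sum_{l=0}^{k-1}\|\nabla_b^l w\|^2+\|\eta_{j-1}v\|^2\Bigr),
\]
and this is proved by induction on $k$ together with the auxiliary estimate
\[
\|T^m\nabla_b^{k-2m}w\|^2\leq\varepsilon\|\nabla_b^k w\|^2+C_\varepsilon\Bigl(\|\nabla_b^{k-1}Zw\|^2+\sum_{l=0}^{k-1}\|\nabla_b^l w\|^2+\|\eta_{j-1}v\|^2\Bigr),
\]
in direct parallel to \eqref{eq:u-m}. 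The base case $k=1$ reduces to bounding $(iTw,w)$ via the sign computation above. The inductive step uses the algebraic identity $\|T^m\Zbar^{k-2m}w\|^2=\|T^m\Zbar^{k-2m-1}Zw\|^2+[2(k-2m)-1]\langle iT(\rho T^m\Zbar^{k-2m-1}w),\rho T^m\Zbar^{k-2m-1}w\rangle+\mathrm{l.o.t.}$ (obtained from the commutator $[Z,\Zbar]=-iT+\mathrm{l.o.t.}$ with the roles of $Z$ and $\Zbar$ swapped relative to the earlier argument), followed by downward induction on $m$ to handle arbitrary mixed derivatives $T^m\nabla_b^{k-2m}w$ by commuting $Z$'s through the $\Zbar$'s.

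The main obstacle is the bookkeeping of cut-offs: at every step one must verify that the smoothing remainders land on $\eta_{j-1}v$ and not on $v$ itself, so that the right-hand side of the final estimate has the correct cut-off. In particular, each auxiliary cut-off $\zeta$ that is commuted past $\Lambda^+_{j-1}$ or $\tilde\Lambda^+$ must be chosen with $\zeta\equiv 1$ on $\mathrm{supp}\,\eta_{j-1}$ and $\mathrm{supp}\,\zeta\subset\{\eta_j=1\}$, which is possible since $\eta_j\equiv 1$ on $\mathrm{supp}\,\eta_{j-1}$. Once this indexing is organized, the remainder is a mechanical transcription of the proof of \eqref{eq:u-}.
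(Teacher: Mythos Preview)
Your proposal is correct and follows exactly the approach the paper indicates: the paper proves Lemma~\ref{lem:v+simple} (the case $j=1$) by repeating the proof of \eqref{eq:u-} with the roles of $Z$ and $\Zbar$ reversed and $u^-$ replaced by $v^+$, and then states that Lemma~\ref{lem:v+} follows ``by the same token''---you have simply spelled out what that repetition looks like for general $j$, with the correct sign computation and the appropriate bookkeeping of cut-offs and Fourier multipliers.
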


Another useful lemma is the following: 

\begin{lem} \label{lem:v+u+}
For any $k \geq 1$ and any $j \geq 1$, there exist pseudodifferential operators $S_0$ and $S_{-k}$, smoothing of order 0 and $k$ respectively, such that $$\eta_j \Lambda^+_{j-1} \eta_{j-1} u = Z (\eta_j \Lambda^+_j \eta_{j-1} v) + S_0 (\eta_{j+1} \Lambda^+_j \eta_j v) + S_{-k} (\eta_j v).$$ In particular, when $j = 1$,
$$u^+ = Zv^+ + S_0(\eta_2 \Lambda^+_1 \eta_1 v) + S_{-k} (\eta_1 v).$$
\end{lem}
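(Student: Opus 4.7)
\textbf{Proof proposal for Lemma~\ref{lem:v+u+}.} The plan is to use the equation $Zv=u$ to convert $u$ into $Zv$ inside the expression $\eta_j\Lambda^+_{j-1}\eta_{j-1}u$, and then commute both the cut-off $\eta_j$ and the multiplier $\Lambda^+_{j-1}$ past $Z$ to pull $Z$ all the way to the outside. The resulting commutator errors will, after a little reshuffling, fall into the two bins $S_0(\eta_{j+1}\Lambda^+_j\eta_j v)$ and $S_{-k}(\eta_j v)$ permitted by the lemma. First I would write
\[
\eta_{j-1}u=\eta_{j-1}Zv=Z(\eta_{j-1}v)-(Z\eta_{j-1})v,
\]
apply $\eta_j\Lambda^+_{j-1}$, and use $\Lambda^+_{j-1}Z=Z\Lambda^+_{j-1}+[\Lambda^+_{j-1},Z]$ together with $\eta_j Z=Z\eta_j-(Z\eta_j)$. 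This produces
\[
\eta_j\Lambda^+_{j-1}\eta_{j-1}u=Z\bigl(\eta_j\Lambda^+_{j-1}\eta_{j-1}v\bigr)+(\mathrm{I})+(\mathrm{II})+(\mathrm{III}),
\]
where $(\mathrm{I})=-(Z\eta_j)\Lambda^+_{j-1}\eta_{j-1}v$, $(\mathrm{II})=\eta_j[\Lambda^+_{j-1},Z]\eta_{j-1}v$, and $(\mathrm{III})=-\eta_j\Lambda^+_{j-1}(Z\eta_{j-1})v$. The leading $Z(\eta_j\Lambda^+_{j-1}\eta_{j-1}v)$ reduces to $Zv^+$ in the case $j=1$ (using $\Lambda^+_{j-1}=\Lambda^+_j\Lambda^+_{j-1}$ when necessary to reconcile with the form stated for general $j$).

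Next I would treat each error separately. Term $(\mathrm{I})$ involves the product of the multiplication operators $Z\eta_j$ and $\eta_{j-1}$, which have \emph{disjoint} supports because $\eta_j\equiv 1$ on a neighbourhood of $\mathrm{supp}\,\eta_{j-1}$; sandwiching the Fourier multiplier $\Lambda^+_{j-1}$ between them therefore gives an operator whose Schwartz kernel is smooth everywhere, i.e.\ smoothing to all orders. After absorbing a factor $\eta_j$ against $v$ (which is legitimate because $\eta_j\eta_{j-1}=\eta_{j-1}$), term $(\mathrm{I})$ becomes $S_{-k}(\eta_j v)$. For term $(\mathrm{II})$, the commutator $A:=\eta_j[\Lambda^+_{j-1},Z]\eta_{j-1}$ is a properly supported pseudodifferential operator of order $0$; I would then write $v=\eta_j v$ modulo terms killed by the cut-offs, decompose
\[
\eta_j v=\Lambda^+_j(\eta_j v)+(I-\Lambda^+_j)(\eta_j v),
\]
and handle the two pieces separately. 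The first gives $A\Lambda^+_j(\eta_j v)$, and inserting $\eta_{j+1}$ (which equals $1$ on $\mathrm{supp}\,\eta_j$) introduces only a smoothing error with disjoint supports, so this piece is $S_0(\eta_{j+1}\Lambda^+_j\eta_j v)+S_{-k}(\eta_j v)$. The second piece $A(I-\Lambda^+_j)(\eta_j v)$ is the one whose smoothness requires a microlocal cone-separation argument: the symbol of $[\Lambda^+_{j-1},Z]$ is essentially supported where $\Psi^+_{j-1}$ varies, hence where $\Psi^+_j\equiv 1$, so computing the Schwartz kernel of $[\Lambda^+_{j-1},Z](I-\Lambda^+_j)$ as a double Fourier integral and exploiting $|\xi-\eta|\gtrsim |\xi|+|\eta|$ on the support of the integrand gives rapid decay in both frequencies, hence an infinitely smoothing kernel. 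This yields $S_{-k}(\eta_j v)$.

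Term $(\mathrm{III})$ is handled by the same cone-separation idea. After absorbing $\eta_j$ so that $(\mathrm{III})=-\eta_j\Lambda^+_{j-1}(Z\eta_{j-1})\eta_j v$, I would again decompose $\eta_j v=\Lambda^+_j(\eta_j v)+(I-\Lambda^+_j)(\eta_j v)$. The $\Lambda^+_j$ piece is an order-$0$ operator applied to $\Lambda^+_j\eta_j v$, which, after inserting the cut-off $\eta_{j+1}$, gives a contribution of the form $S_0(\eta_{j+1}\Lambda^+_j\eta_j v)+S_{-k}(\eta_j v)$. For the $(I-\Lambda^+_j)$ piece, the Schwartz kernel of $\Lambda^+_{j-1}\circ(\text{mult.~by }Z\eta_{j-1})\circ(I-\Lambda^+_j)$ is
\[
K(x,y)=\iint e^{ix\cdot\xi-iy\cdot\eta}\,\Psi^+_{j-1}(\xi)(1-\Psi^+_j)(\eta)\,\widehat{Z\eta_{j-1}}(\xi-\eta)\,d\xi\,d\eta;
\]
on the support of the integrand $\xi$ lies in a cone where $\Psi^+_j\equiv 1$ and $\eta$ in the complementary cone, so $|\xi-\eta|\gtrsim|\xi|+|\eta|$, and the rapid decay of $\widehat{Z\eta_{j-1}}$ forces $K$ to be $C^\infty$, giving another $S_{-k}(\eta_j v)$ contribution. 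Summing all three terms proves the identity.

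The main obstacle is the bookkeeping of the Fourier-cone separation arguments used to show that compositions of the form $\Lambda^+_{j-1}\circ(\text{smooth multiplication})\circ(I-\Lambda^+_j)$ are smoothing to all orders (and not merely to order $-1$, which would come too cheaply from the standard symbol calculus because of the intervening multiplication). This has to be done via the double Fourier integral representation above, using the fact that the $\Psi^+_k$ are chosen in nested cones so that the symbol supports of $\Lambda^+_{j-1}$ and $I-\Lambda^+_j$ are separated by a strictly positive cone-angle. Once this smoothing is in hand, the remaining cut-off manipulations (pulling $\eta_{j+1}$ inside $\Lambda^+_j$, absorbing $\eta_j$ against $\eta_{j-1}$, etc.) are routine since each pair of cut-offs involved either nests or has disjoint supports by construction.
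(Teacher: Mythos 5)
Your proposal is essentially correct and arrives at the same decomposition that the paper needs, but it takes a noticeably different route. The paper first substitutes $u=Z v$, replaces $v$ by $\eta_j v$, and \emph{then} does the frequency splitting $\eta_j v = \eta_{j+1}\Lambda^+_j\eta_j v + \eta_{j+1}(1-\Lambda^+_j)\eta_j v$ before commuting $Z$ to the outside; the cross-cone error is handled by the iterative trick of inserting intermediate multipliers $\Lambda^+_{j,0}$ with $\Lambda^+_{j-1}(1-\Lambda^+_{j,0})=0$ and repeatedly trading a commutator for one more order of smoothing. You instead commute $Z$ to the outside first, which produces the three explicit error terms $(\mathrm{I}),(\mathrm{II}),(\mathrm{III})$, and you then frequency-split each one and argue smoothing directly by writing the kernel as an explicit Fourier integral and invoking cone-separation. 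Both buy the same conclusion; the paper's order of operations consolidates all the bad frequencies into one term, while yours is more transparent about where each commutator error comes from, at the cost of having three separate kernels to estimate.

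Two points worth flagging. First, the leading term: your derivation, like the paper's, produces $Z(\eta_j\Lambda^+_{j-1}\eta_{j-1}v)$ (which specializes to $Zv^+$ when $j=1$, and which is exactly what is used later in Lemma~\ref{lem:lowerorderv+}). The statement as printed has $\Lambda^+_j$ there, which appears to be a typo; your attempt to reconcile via $\Lambda^+_{j-1}=\Lambda^+_j\Lambda^+_{j-1}$ does not work, since $\Lambda^+_j\Lambda^+_{j-1}=\Lambda^+_{j-1}\neq\Lambda^+_j$. You should simply note the discrepancy rather than try to massage the result into the stated form. Second, in your treatment of $(\mathrm{II})$ there is a small but real gap: the operator you actually need to show is smoothing is $\eta_j[\Lambda^+_{j-1},Z]\,\eta_{j-1}\,(I-\Lambda^+_j)$, which has the compactly supported multiplication $\eta_{j-1}$ sitting \emph{between} the commutator and $I-\Lambda^+_j$; your double Fourier integral is written for $[\Lambda^+_{j-1},Z](I-\Lambda^+_j)$ without that intermediate factor. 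This is fixable — e.g.\ expand $[\Lambda^+_{j-1},Z]\eta_{j-1}(I-\Lambda^+_j)=\Lambda^+_{j-1}(Z\eta_{j-1})(I-\Lambda^+_j)-Z\Lambda^+_{j-1}\eta_{j-1}(I-\Lambda^+_j)$ and observe that each piece has the form $\Lambda^+_{j-1}\circ(\text{compactly supported mult.})\circ(I-\Lambda^+_j)$ (possibly preceded or followed by a derivative), to which the cone-separation kernel estimate you wrote for $(\mathrm{III})$ applies directly — but as written the step is incomplete. The paper sidesteps this altogether because its intermediate-multiplier iteration never requires an explicit kernel computation for the commutator term.
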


\begin{proof}
For all $j \geq 1$,
\begin{align}
\eta_j \Lambda^+_{j-1} \eta_{j-1} u
&= \eta_j \Lambda^+_{j-1} \eta_{j-1} Zv \notag \\
&= \eta_j \Lambda^+_{j-1} \eta_{j-1} Z (\eta_j v) \notag \\
&= \eta_j \Lambda^+_{j-1} \eta_{j-1} Z (\eta_{j+1} \Lambda^+_j \eta_j v) + \eta_j \Lambda^+_{j-1} \eta_{j-1} Z (\eta_{j+1} (1-\Lambda^+_j) \eta_j v). \label{eq:vj+}
\end{align}
We shall argue that the second term on the last line is $S_{-k}(\eta_j v)$ for any $k \geq 1$.

Since the Fourier multiplier $\Psi^+_j$ of $\Lambda^+_j$ is identically 1 on a neighborhood of the support of $\Psi^+_{j-1}$, there exists a Fourier multiplier $\Psi^+_{j,0}$ such that $\Psi^+_j$ is identically 1 on the support of $\Psi^+_{j,0}$, and $\Psi^+_{j,0}$ is identically 1 on the support of $\Psi^+_{j-1}$. Writing $\Lambda^+_{j,0}$ for the Fourier multiplier operator corresponding to $\Psi^+_{j,0}$, we have
$$(1-\Lambda^+_j) = (1-\Lambda^+_{j,0})(1-\Lambda^+_j);$$ indeed $\Psi^+_j \equiv 1$ on the support of $\Psi^+_{j,0}$ implies that $\Lambda^+_{j,0} (1-\Lambda^+_j) = 0$. Putting this back in the second term (\ref{eq:vj+}), and commuting $1-\Lambda^+_{j,0}$ until it hits $\Lambda^+_{j-1}$, we get 
$$
\eta_j \Lambda^+_{j-1} (1-\Lambda^+_{j,0}) \eta_{j-1} Z (\eta_{j+1} (1-\Lambda^+_j) \eta_j v) + \eta_j \Lambda^+_{j-1} [\eta_{j-1} Z \eta_{j+1}, 1-\Lambda^+_{j,0}] (1-\Lambda^+_j) \eta_j v).
$$
The first term here is zero, since $$\Lambda^+_{j-1} (1-\Lambda^+_{j,0}) = 0;$$ the second term here is $$\eta_j \Lambda^+_{j-1} S_0 (1-\Lambda^+_j) \eta_j v.$$ Again writing $(1-\Lambda^+_j) = (1-\Lambda^+_{j,0})(1-\Lambda^+_j)$ and commuting $1-\Lambda^+_{j,0}$ until it hits $\Lambda^+_{j-1}$, we get that this is
$$
\eta_j \Lambda^+_{j-1} S_{-1} (1-\Lambda^+_j) \eta_j v.
$$
Repeating this argument, it is clear that we can make this $\eta_j \Lambda^+_{j-1} S_{-k} (1-\Lambda^+_j) \eta_j v$ for any $k$, and this is thus $S_{-k} (\eta_j v)$.

Next, the first term in (\ref{eq:vj+}) is
\begin{align*}
&\eta_j \Lambda^+_{j-1} \eta_{j-1} Z (\eta_{j+1} \Lambda^+_j \eta_j v)\\
=& Z \eta_j \Lambda^+_{j-1} \eta_{j-1} \eta_{j+1} \Lambda^+_j \eta_j v + [\eta_j \Lambda^+_{j-1} \eta_{j-1}, Z] (\eta_{j+1} \Lambda^+_j \eta_j v)\\
=& Z \eta_j \Lambda^+_{j-1} \eta_{j-1} \Lambda^+_j \eta_j v + S_0 (\eta_{j+1} \Lambda^+_j \eta_j v).
\end{align*}
By writing $\Lambda^+_j  = 1 - (1-\Lambda^+_j )$, the first term in the last line is equal to
\begin{align*}
Z (\eta_j \Lambda^+_{j-1} \eta_{j-1} v) - Z \eta_j \Lambda^+_{j-1} \eta_{j-1} (1-\Lambda^+_j) \eta_j v.
\end{align*}
We only need to argue now that the second term in the last line is $S_{-k} (\eta_j v)$ for any $k$. But we only need to adopt the strategy above again: writing  $(1-\Lambda^+_j) = (1-\Lambda^+_{j,0})(1-\Lambda^+_j)$ and commuting $1-\Lambda^+_{j,0}$ until it hits $\Lambda^+_{j-1}$, we get that this is
$S_{-k}( \eta_j v)$ for any $k$.
\end{proof}

It follows that

\begin{lem} \label{lem:lowerorderv+}
For all $k \geq 1$,
$$\sum_{l=0}^k \|\nabla_b^l (\eta_2 \Lambda^+_1 \eta_1 v)\| \leq C (\sum_{l=0}^{k-1} \|\nabla_b^l (\eta_k u)\| + \|\eta_{k+1} v\|).$$
\end{lem}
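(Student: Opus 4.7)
I will induct on $k$. For the base case $k=1$, Lemma~\ref{lem:v+} with $j=2$ gives
\[\|\nabla_b(\eta_2\Lambda^+_1\eta_1 v)\|\leq C\bigl(\|Z(\eta_2\Lambda^+_1\eta_1 v)\| + \|\eta_1 v\|\bigr).\]
Using $Zv=u$ and commuting $Z$ to the outside yields
\[Z(\eta_2\Lambda^+_1\eta_1 v) = \eta_2\Lambda^+_1\eta_1 u + [Z,\eta_2\Lambda^+_1\eta_1]v,\]
where the commutator decomposes into the three pieces $(Z\eta_2)\Lambda^+_1\eta_1 v$, $\eta_2[Z,\Lambda^+_1]\eta_1 v$, and $\eta_2\Lambda^+_1(Z\eta_1)v$. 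The first of these is infinitely smoothing because $\operatorname{supp}(Z\eta_2)$ and $\operatorname{supp}(\eta_1)$ are disjoint by the nesting of the cutoffs; the remaining two, together with the main $u$-term, are bounded zero-order operators applied to $\eta_1 u$ or $\eta_1 v$. Combined with $\|W_1\|\leq C\|\eta_1 v\|$, this settles the case $k=1$.

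\textbf{Inductive step.} Assume the inequality is known through $k-1$ for all expressions of the same form $B_j := \eta_{j+1}\Lambda^+_j\eta_j v$, not just for $j=1$. For the step, Lemma~\ref{lem:v+} reduces the task to bounding $\|\nabla_b^{k-1} Z(\eta_2\Lambda^+_1\eta_1 v)\|$ using the decomposition above. The main $u$-term $\nabla_b^{k-1}(\eta_2\Lambda^+_1\eta_1 u)$ is bounded by $C\sum_{l=0}^{k-1}\|\nabla_b^l(\eta_k u)\|$: commute $\nabla_b^{k-1}$ through the zero-order composition $\eta_2\Lambda^+_1\eta_1$ via Leibniz, absorbing each commutator $[\nabla_b,\Lambda^+_1]$ into a zero-order operator bounded on $L^2$, then rewrite $\eta_1 u = \eta_1\cdot\eta_k u$ (valid since $\eta_k\equiv 1$ on $\operatorname{supp}\eta_1$). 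The first commutator piece $(Z\eta_2)\Lambda^+_1\eta_1 v$ remains infinitely smoothing, so $\|\nabla_b^{k-1}\cdot\|$ of it is bounded by $C\|\eta_{k+1}v\|$.

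\textbf{Main obstacle.} The two remaining commutator pieces $\eta_2[Z,\Lambda^+_1]\eta_1 v$ and $\eta_2\Lambda^+_1(Z\eta_1)v$ are the crux of the difficulty: a naive Leibniz on $\nabla_b^{k-1}$ applied to them would produce uncontrolled $\nabla_b^l v$ terms with $l\geq 1$. The resolution is to invoke Lemma~\ref{lem:v+u+} for suitably chosen $j$, which decomposes
\[\eta_j\Lambda^+_{j-1}\eta_{j-1} u = Z(\eta_j\Lambda^+_j\eta_{j-1} v) + S_0(\eta_{j+1}\Lambda^+_j\eta_j v) + S_{-N}(\eta_j v).\]
Running this identity in reverse trades each problematic $v$-side commutator error for a $u$-side expression (controlled by $\sum_l\|\nabla_b^l(\eta_k u)\|$) plus a zero-order smoothing acting on $B_{j+1}$, the next element of the hierarchy. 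This yields a recursion in which $\|\nabla_b^k B_1\|$ is dominated by $\|\nabla_b^{k-1} B_2\|$ plus acceptable $u$-terms, which in turn (by the same argument one level up) is dominated by $\|\nabla_b^{k-2} B_3\|$ plus $u$-terms, and so on. After $k$ iterations the $v$-tail terminates at $\|B_{k+1}\|\leq C\|\eta_{k+1}v\|$, while all accumulated $u$-contributions lie within $\sum_{l=0}^{k-1}\|\nabla_b^l(\eta_k u)\|$ (since $\eta_k\equiv 1$ on $\operatorname{supp}\eta_j$ for every $j\leq k-1$), closing the induction.
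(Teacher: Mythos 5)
Your argument is correct and follows essentially the same route as the paper's proof: both strengthen the statement to arbitrary $j$, induct on $k$, pass from $\nabla_b^k$ to $\nabla_b^{k-1}Z$ via Lemma~\ref{lem:v+}, and then use Lemma~\ref{lem:v+u+} to convert $Z(\eta_j\Lambda^+_{j-1}\eta_{j-1}v)$ into the $u$-term, plus a zero-order operator acting on the next level $\eta_{j+1}\Lambda^+_j\eta_jv$ (to which the induction hypothesis at $k-1$ applies), plus an infinitely smoothing error. The one inefficiency in your inductive step is that you first write out the commutator $[Z,\eta_2\Lambda^+_1\eta_1]v$ piece by piece and then appeal to Lemma~\ref{lem:v+u+} to reorganize the problematic terms; this is redundant, since Lemma~\ref{lem:v+u+} is exactly that commutator computation already carried out in the useful form (with the zero-order remainder hitting the nested $\eta_{j+1}\Lambda^+_j\eta_jv$ rather than raw $\nabla_b^l v$), and the paper applies it directly to $Z(\eta_j\Lambda^+_{j-1}\eta_{j-1}v)$ without the detour. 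Two small points: the symbol $W_1$ in your base case is never defined (presumably it is the $l=0$ term $\|\eta_2\Lambda^+_1\eta_1 v\|$), and you should record the base case for all $j$ rather than only $j=2$, since your stated induction hypothesis, and the appeal to it one level up in $j$, requires it.
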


\begin{proof}
We shall prove by induction on $k$ that for all $j,k \geq 1$,
$$\sum_{l=0}^k \|\nabla_b^l (\eta_j \Lambda^+_{j-1} \eta_{j-1} v)\| \leq C (\sum_{l=0}^{k-1} \|\nabla_b^l (\eta_{j+k-2} u)\| + \|\eta_{j+k-1} v\|).$$
The case $j=2$ yields the current lemma. Assume this has been proved for $k-1$, and we prove the statement for $k$.
By Lemma~\ref{lem:v+}, $$\|\nabla_b^k (\eta_j \Lambda^+_{j-1} \eta_{j-1} v)\| \leq C (\|\nabla_b^{k-1} Z (\eta_j \Lambda^+_{j-1} \eta_{j-1} v)\| + \|\eta_{j-1} v\|).$$ Now by Lemma~\ref{lem:v+u+}, one has
$$
Z (\eta_j \Lambda^+_{j-1} \eta_{j-1} v) = \eta_j \Lambda^+_{j-1} \eta_{j-1} u + S_0(\eta_{j+1} \Lambda^+_j \eta_j v) + S_{-(k-1)}(\eta_j v).
$$
Hence one only needs to estimate $\|\nabla_b^{k-1}S_0(\eta_{j+1} \Lambda^+_j \eta_j v)\|$, which can be estimated by induction hypothesis since this involves fewer than $k$ derivatives on $\eta_{j+1} \Lambda^+_j \eta_j v$.
\end{proof}

As a result,

\begin{lem} \label{lem:highorderv+}
For all $k \geq 0$,
$$\|\nabla_b^{k+1} v^+ \| \leq C(\|\nabla_b^k u^+\| + \sum_{l=0}^{k-1} \|\nabla_b^l (\eta_k u)\| + \|\eta_{k+1} v\|).$$
\end{lem}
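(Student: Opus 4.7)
The plan is to combine the three preceding lemmas. First, I would apply Lemma~\ref{lem:v+simple} with $k$ replaced by $k+1$, giving
\[
\|\nabla_b^{k+1} v^+\| \le C_{k+1}\bigl(\|\nabla_b^k Z v^+\| + \|\eta v\|\bigr),
\]
which reduces the problem to a bound on $\|\nabla_b^k Z v^+\|$ (note $\|\eta v\|$ is already dominated by $\|\eta_{k+1} v\|$). Next, by Lemma~\ref{lem:v+u+} with $j=1$ and order-$N$ smoothing error chosen with $N\ge k$, one decomposes
\[
Z v^+ = u^+ - S_0(\eta_2 \Lambda^+_1 \eta_1 v) - S_{-N}(\eta_1 v),
\]
so that the triangle inequality yields
\[
\|\nabla_b^k Z v^+\| \le \|\nabla_b^k u^+\| + \|\nabla_b^k S_0(\eta_2 \Lambda^+_1 \eta_1 v)\| + \|\nabla_b^k S_{-N}(\eta_1 v)\|.
\]
The first term is already of the claimed form. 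The last term is controlled by $\|\eta_1 v\| \le \|\eta_{k+1} v\|$ because $S_{-N}$ smooths $N$ Euclidean orders and $\nabla_b^k$ is dominated on $Q_2$ by a Euclidean differential operator of order $k$; choosing $N \ge k$ then makes $\nabla_b^k S_{-N}$ bounded from $L^2$ to $L^2$.

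The heart of the proof is therefore the commutation-type estimate
\[
\|\nabla_b^k S_0 (\eta_2 \Lambda^+_1 \eta_1 v)\| \le C \sum_{l=0}^{k} \|\nabla_b^l (\eta_2 \Lambda^+_1 \eta_1 v)\|,
\]
after which a direct appeal to Lemma~\ref{lem:lowerorderv+} converts the right-hand side into $C\bigl(\sum_{l=0}^{k-1}\|\nabla_b^l(\eta_k u)\| + \|\eta_{k+1}v\|\bigr)$ and completes the proof. To prove this estimate I would smoothly extend $Z$ and $\Zbar$ from $Q_2$ to compactly supported vector fields $\tilde Z, \tilde \Zbar$ on $\mathbb{R}^3$ and write $\tilde\nabla_b = (\tilde Z, \tilde \Zbar)$. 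Since $g := \eta_2 \Lambda^+_1 \eta_1 v$ is supported inside $\operatorname{supp}\eta_2 \subset Q_2$, where $\tilde\nabla_b = \nabla_b$, and since $\tilde\nabla_b^l g$ stays supported in $\operatorname{supp}\eta_2$ for every $l$, it is enough to bound $\|\tilde\nabla_b^k S_0 g\|_{L^2(\mathbb{R}^3)}$. Each commutator $[\tilde Z, S_0]$, $[\tilde \Zbar, S_0]$ is a Euclidean pseudodifferential operator of order $0$, so iteration yields a schematic identity
\[
\tilde\nabla_b^k S_0 = \sum_{l=0}^k S_{(l)} \tilde\nabla_b^l
\]
with each $S_{(l)}$ of order $0$ and therefore $L^2(\mathbb{R}^3)$-bounded, which supplies the required inequality.

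The main obstacle is exactly this commutation of $\nabla_b^k$ past the Fourier multiplier $S_0$: the operator $\nabla_b$ is defined only on $Q_2$, whereas $S_0$ is a non-local object on $\mathbb{R}^3$, so a naive commutator is not literally meaningful. The device of globally extending $Z, \Zbar$, combined with the compact support of $\eta_2\Lambda^+_1\eta_1 v$ inside $Q_2$, is what legitimizes the Euclidean pseudodifferential calculus and yields the bound. Once this step is in place, assembling the three ingredients above produces the desired inequality for every $k \ge 0$; the marginal case $k=0$ does not even need Lemma~\ref{lem:lowerorderv+} and relies only on the $L^2$-boundedness of $\Lambda^+_1$ and $S_0$.
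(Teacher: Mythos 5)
Your proof follows the paper's argument exactly: apply Lemma~\ref{lem:v+simple} with $k+1$ in place of $k$, substitute for $Zv^+$ using Lemma~\ref{lem:v+u+} with $j=1$, and then invoke Lemma~\ref{lem:lowerorderv+} after commuting $\nabla_b^k$ past $S_0$. You spell out the commutation step (global extension of $Z,\Zbar$ and the resulting order-zero commutators) that the paper leaves implicit, but the route and the decomposition are identical.
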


\begin{proof}
By Lemma~\ref{lem:v+simple}, $$\|\nabla_b^{k+1} v^+\| \leq C (\|\nabla_b^k Z v^+\| + \|\eta v\|).$$ Now by Lemma~\ref{lem:v+u+}, one has
$$
Z v^+ = u^+ + S_0(\eta_2 \Lambda^+_1 \eta_1 v) + S_{-k}(\eta_1 v).
$$
Hence one only needs to estimate $\|\nabla_b^k S_0(\eta_2 \Lambda^+_1 \eta_1 v)\|$, which can be estimated by Lemma~\ref{lem:lowerorderv+}.
\end{proof}

Now we prove (\ref{eq:u+}) by induction on $k$.

When $k = 1$, we need only estimate $\|Z u^+\|^2$. But by Lemma~\ref{lem:v+u+}, we have
$$u^+ = Zv^+ + S_0(\eta_2 \Lambda^+_1 \eta_1 v) + S_{-1} (\eta_1 v).$$
Hence
\begin{align*}
&\|Zu^+\|^2 \\
=& (ZZv^+, Zu^+) + O(\|Z \eta_2 \Lambda^+_1 \eta_1 v\| \|Zu^+\| ) + O(\|\eta_1 v\| \|Z u^+\|)\\
=& (ZZv^+, Zu^+) + O(\|\eta_1 u\| \|Zu^+\| ) + O(\|\eta_2 v\| \|Z u^+\|)) \quad \text{by Lemma~\ref{lem:lowerorderv+}} \\
=& -(Zv^+, Z \Zbar u^+) - (Zv^+, iT u^+) + O(\|Zv^+\|\|\nabla_b u^+\|) + O(\|\eta_1 u\| \|Zu^+\| ) + O(\|\eta_2 v\| \|Z u^+\|)\\
=& -(Zv^+, Z \Zbar u^+) - (Zv^+, iT u^+) + O(\|\eta_1 u\| \|\nabla_b u^+\| ) + O(\|\eta_2 v\| \|\nabla_b u^+\|) \quad \text{by Lemma~\ref{lem:highorderv+}} \\
=& (\Zbar Z v^+, \Zbar u^+) + (iT v^+, \Zbar u^+) + O(\|\nabla_b^2 v^+\| \|u^+\|) + O(\|\nabla_b v^+\|(\|\nabla_b u^+\|+\|u^+\|))\\
& + O(\|\eta_1 u\| \|\nabla_b u^+\| ) + O(\|\eta_2 v\| \|\nabla_b u^+\|) \\
=& O(\|\nabla_b^2 v^+\| \|\Zbar u^+\|) + O(\|\nabla_b^2 v^+\| \|u^+\|) + O(\|\nabla_b v^+\|(\|\nabla_b u^+\|+\|u^+\|)) \\
& + O(\|\eta_1 u\| \|\nabla_b u^+\| ) + O(\|\eta_2 v\| \|\nabla_b u^+\|) \\
\leq & C(\varepsilon \|\nabla_b^2 v^+\|^2 + \varepsilon^{-1} \|\Zbar u^+\|^2 + \varepsilon^{-1} \|u^+\|^2 + \varepsilon \|Zu^+\|^2 + \varepsilon^{-1} \|\nabla_b v^+\|^2 + \varepsilon^{-1} \|\eta_1 u\|^2 + \varepsilon^{-1} \|\eta_2 v\|^2).
\end{align*}
Absorbing $C\varepsilon \|Zu^+\|^2$ to the left hand side, we get
\begin{align*}
\|Zu^+\|^2 
\leq  C(\varepsilon \|\nabla_b^2 v^+\|^2 + \varepsilon^{-1} \|\Zbar u^+\|^2 + \varepsilon^{-1} \|\nabla_b v^+\|^2 + \varepsilon^{-1} \|\eta_1 u\|^2 + \varepsilon^{-1} \|\eta_2 v\|^2).
\end{align*}
Now by Lemma~\ref{lem:highorderv+}, one estimates $\|\nabla_b^2 v^+\|$ and $\|\nabla_b v^+\|$:
$$
\|\nabla_b^2 v^+\| \leq  C \left(\|\nabla_b u^+\| + \|\eta_1 u\| + \|\eta_2 v\|\right),
$$
$$
\|\nabla_b v^+\| \leq C (\|u^+\| + \|\eta_1 v\|) \leq C (\|\eta_1 u\| + \|\eta_2 v\|).
$$
Together, we get
$$
\|Zu^+\|^2 \leq C \left(\|\Zbar u^+\|^2 + \|\eta_1 u\| + \|\eta_2 v\|\right)
$$
as desired.

Next, to prove (\ref{eq:u+}) for a general $k$, we prove the following statement by induction on $m$ for all $0 \leq m \leq \lfloor \frac{k}{2} \rfloor$ and $\varepsilon > 0$:
\begin{equation}\label{eq:u+m}
\|T^m \nabla_b^{k-2m} u^+\| \leq \varepsilon \|\nabla_b^k u^+ \| + C_{\varepsilon} \left(\|\nabla_b^{k-1} \Zbar u^+\| + \sum_{l=0}^{k-1} \|\nabla_b^l (\eta_k u)\| + \|\eta_{k+1} v\|\right)
\end{equation}
In fact the case $m = 0$ readily implies (\ref{eq:u+}) for $k$.

Again we begin from the biggest possible value of $m$. Suppose $k-2m = 0$. Then we need to estimate $\|T^m u^+\|$. Now 
\begin{align*}
&\|T^m u^+\|^2 \\
=& (T^m Zv^+, T^m u^+) + O((\|\nabla_b^k(\eta_2 \Lambda^+_1 \eta_1 v)\| + \|\eta_1 v\|) \|T^m u^+\|) \\
=& -(T^m v^+, T^m \Zbar u^+) + O(\sum_{l=0}^k \|\nabla_b^l v^+\| \|T^m u^+\|) + O( \|T^m v^+\| \sum_{l=0}^k \|\nabla_b^l u^+\|) \\
& \quad + O((\sum_{l=0}^{k-1} \|\nabla_b^l(\eta_k u)\| + \|\eta_{k+1} v\|) \|T^m u^+\|) \\
=& (\nabla_b T^m v^+, \nabla_b T^{m-1} \Zbar u^+) + O(\sum_{l=0}^k \|\nabla_b^l v^+\| \|T^m u^+\|) + O( \|T^m v^+\| \sum_{l=0}^k \|\nabla_b^l u^+\|) \\
& \quad + O((\sum_{l=0}^{k-1} \|\nabla_b^l(\eta_k u)\| + \|\eta_{k+1} v\|) \|T^m u^+\|) 
\end{align*}
the last line following by writing one of the $T$'s in $T^m \Zbar u^+$ as a commutator and integrating by parts. Hence 
\begin{align*}
&\|T^m u^+\|^2 \\
\leq & \varepsilon \|\nabla_b^{k+1} v^+\|^2 + \varepsilon \|\nabla_b^k u^+\|^2 + C_{\varepsilon} ( \|\nabla_b^{k-1} \Zbar u^+\|^2 + \sum_{l=0}^k \|\nabla_b^l v^+\|^2 + \sum_{l=0}^{k-1} \|\nabla_b^l(\eta_k u)\|^2 + \|\eta_{k+1} v\|^2 ).
\end{align*}
Now we invoke the Lemma~\ref{lem:highorderv+} to estimate $\|\nabla_b^{k+1} v^+\|$ and $\sum_{l=0}^k \|\nabla_b^l v^+\|$.
Together,
$$
\|T^m u^+\|^2 \leq \varepsilon \|\nabla_b^k u^+\|^2 + C_{\varepsilon} \left( \|\nabla_b^{k-1} \Zbar u^+\|^2 + \sum_{l=0}^{k-1} \|\nabla_b^l (\eta_k u)\|^2 + \|\eta_{k+1} v\|^2 \right)
$$
which implies the desired estimate for $T^m u^+$.

Next, we estimate estimate $\|T^m Z^{k-2m} u^+\|$ for any $0 \leq m \leq \lfloor \frac{k}{2} \rfloor$, if $k - 2m > 0$. Then 
\begin{align*}
&\|T^m Z^{k-2m} u^+\|^2 \\
=& (T^m Z^{k-2m+1} v^+, T^m Z^{k-2m} u^+) + O((\|\nabla_b^k (\eta_2 \Lambda^+_1 \eta_1 v)\| + \|\eta_1 v\|) \| T^m Z^{k-2m} u^+\|) \\
=& -(T^m Z^{k-2m} v^+, T^m Z^{k-2m} \Zbar u^+) - (k-2m) (T^m Z^{k-2m} v^+, iT^{m+1}Z^{k-2m-1} u^+) \\& + O(\sum_{l=0}^{k} \|\nabla_b^l v^+\| \sum_{j=0}^k \|\nabla_b^j u^+\|) + O((\sum_{l=0}^{k-1} \|\nabla_b^l (\eta_k u)\| + \|\eta_{k+1} v\|)\| T^m Z^{k-2m} u^+\|)
\end{align*}
Now in the first term, we split off one $Z$ in $T^m Z^{k-2m} \Zbar$ and integrate by parts. Also, in the second term, we split off one $T$ in $T^{m+1} Z^{k-2m-1}$ and integrate by parts; then we split off one $Z$ in $T^m Z^{k-2m}$ and integrate by parts. We get
\begin{align*}
&\|T^m Z^{k-2m} u^+\|^2 \\
=& (\Zbar T^m Z^{k-2m} v^+, T^m Z^{k-2m-1} \Zbar u^+) + (k-2m)(iT^{m+1} Z^{k-2m-1} v^+, T^m Z^{k-2m} u^+)\\
& + O(\|\nabla_b^{k+1} v^+\|\sum_{l=0}^{k-1} \|\nabla_b^l u^+\|) +  O(\sum_{l=0}^{k} \|\nabla_b^l v^+\| \sum_{j=0}^k \|\nabla_b^l u^+\|) \\
& + O((\sum_{l=0}^{k-1} \|\nabla_b^l (\eta_k u)\| + \|\eta_{k+1} v\|)\| T^m Z^{k-2m} u^+\|) \\
\leq & O(\|\nabla_b^{k+1} v^+\| \|\nabla_b^{k-1} \Zbar u^+\|)  + O(\|\nabla_b^{k+1} v^+\|\|\sum_{l=0}^{k-1} \|\nabla_b^l u^+\|) \\&+  O(\sum_{l=0}^{k} \|\nabla_b^l v^+\| \sum_{j=0}^k \|\nabla_b^l u^+\|) + O((\sum_{l=0}^{k-1} \|\nabla_b^l (\eta_k u)\| + \|\eta_{k+1} v\|)\| T^m Z^{k-2m} u^+\|)  \\
\end{align*}
As a result,
\begin{align*}
&\|T^m Z^{k-2m} u^+\|^2 \\
\leq & \varepsilon \|\nabla_b^{k+1} v^+\|^2 + \varepsilon \|\nabla_b^k u^+\|^2 \\& + C_{\varepsilon} \left(\|\nabla_b^{k-1} \Zbar u^+\|^2  + \sum_{l=0}^{k} \|\nabla_b^l v^+\|^2 + \sum_{l=0}^{k-1} \|\nabla_b^l (\eta_k u)\|^2  + \|\eta_{k+1} v\|^2 \right)  \\
\end{align*}
and the desired estimate follows upon invoking Lemma~\ref{lem:highorderv+}. 

Now suppose we have proved (\ref{eq:u+m}) for all strictly bigger $m$'s, and want to prove the inequality for $m$. Then we need to estimate $\|T^m \nabla_b^{k-2m} u^+\|$. If one of the $\nabla_b$ is $\Zbar$, we commute until that $\Zbar$ hits $u^+$, obtaining an error that has more $T$ in it, which one can estimate by the induction hypothesis. Otherwise all $\nabla_b$ are $Z$'s, and the estimate follows from what we have proved above. This completes the proof of (\ref{eq:u+}).

Now putting (\ref{eq:u-}), (\ref{eq:u0}) and (\ref{eq:u+}) together, and remembering that $\eta u = u^- + u^0 + u^+$, we get
$$
\|\nabla_b^k (\eta u) \| \leq C_k \left( \|\nabla_b^{k-1} \Zbar (\eta u)\| + \sum_{l=0}^{k-1} \| \nabla_b^l (\eta_k u)\| + \|\eta_{k+1} v\| \right).
$$
But $\sum_{l=0}^{k-1} \|\nabla_b^l (\eta_k u)\|$ involves fewer than $k$ derivatives of $\eta_k u$, and can be estimated if we iterate the above inequality. In fact
$$
\sum_{l=0}^{k-1} \| \nabla_b^l (\eta_k u)\| \leq C \left(\sum_{l=0}^{k-2}\|\nabla_b^l \Zbar (\teta u)\| + \|\teta u\| + \|\teta v\|\right). 
$$
It follows that
$$
\|\nabla_b^k (\eta u) \| \leq C_k \left( \sum_{l=0}^{k-1} \| \nabla_b^l \Zbar (\teta u)\| + \|\teta v\| \right).
$$
Using the interpolation inequality in Lemma~\ref{lem:interpolate}, one obtains the desired inequality in Proposition~\ref{prop:normalizedsub}.

Finally, we come back to the remark we made after the statement of Proposition~\ref{prop:normalizedsub}. We remark that if we have $Zv + \alpha v = u$ instead of $Zv = u$, where $\alpha$ is a fixed $C^{\infty}$ function on $Q_2$, then the above theorem still holds. See the end of this section for a discussion about that. The key there is to observe that Lemma~\ref{lem:v+u+} above holds under this modified assumption as well. In fact, then 
\begin{align*}
\eta_j \Lambda^+_{j-1} \eta_{j-1} u
&= \eta_j \Lambda^+_{j-1} \eta_{j-1} (Z + \alpha) v \\
&= \eta_j \Lambda^+_{j-1} \eta_{j-1} (Z + \alpha) (\eta_j v) \\
&= \eta_j \Lambda^+_{j-1} \eta_{j-1} (Z + \alpha) (\eta_{j+1} \Lambda^+_j \eta_j v) + \eta_j \Lambda^+_{j-1} \eta_{j-1} (Z + \alpha) (\eta_{j+1} (1-\Lambda^+_j) \eta_j v).
\end{align*}
The $\alpha$ in the first term contributes only $S_0(\eta_{j+1}\Lambda^+_j \eta_j v)$, while the $\alpha$ in the last term contributes only $S_{-k-1}(\eta_j v)$.

\begin{bibdiv}
\begin{biblist}
 
\bib{BG88}{book}{
   author={Beals, Richard},
   author={Greiner, Peter},
   title={Calculus on Heisenberg manifolds},
   series={Annals of Mathematics Studies},
   volume={119},
   publisher={Princeton University Press},
   place={Princeton, NJ},
   date={1988},
   pages={x+194},
   isbn={0-691-08500-5},
   isbn={0-691-08501-3},
   review={\MR{953082 (89m:35223)}},
}

\bib{BoSh}{article}{
   author={Boas, Harold P.},
   author={Shaw, Mei-Chi},
   title={Sobolev estimates for the Lewy operator on weakly pseudoconvex
   boundaries},
   journal={Math. Ann.},
   volume={274},
   date={1986},
   number={2},
   pages={221--231},
   issn={0025-5831},
   review={\MR{838466 (87i:32029)}},
   doi={10.1007/BF01457071},
}

\bib{BouSj76}{article}{
   author={Boutet de Monvel, L.},
   author={Sj{\"o}strand, J.},
   title={Sur la singularit\'e des noyaux de Bergman et de Szeg\H o},
   language={French},
   conference={
      title={Journ\'ees: \'Equations aux D\'eriv\'ees Partielles de Rennes
      (1975)},
   },
   book={
      publisher={Soc. Math. France},
      place={Paris},
   },
   date={1976},
   pages={123--164. Ast\'erisque, No. 34-35},
   review={\MR{0590106 (58 \#28684)}},
}

\bib{CMY}{article}{
   author={Cheng, Jih-Hsin},
   author={Malchiodi, Andrea},
   author={Yang, Paul},
   title={A positive mass theorem in three dimensional Cauchy-Riemann geometry},
   journal={preprint},
}

\bib{CS01}{book}{
   author={Chen, So-Chin},
   author={Shaw, Mei-Chi},
   title={Partial differential equations in several complex variables},
   series={AMS/IP Studies in Advanced Mathematics},
   volume={19},
   publisher={American Mathematical Society},
   place={Providence, RI},
   date={2001},
   pages={xii+380},
   isbn={0-8218-1062-6},
   review={\MR{1800297 (2001m:32071)}},
}
		
\bib{Ch88I}{article}{
   author={Christ, Michael},
   title={Regularity properties of the $\overline\partial_b$ equation on
   weakly pseudoconvex CR manifolds of dimension $3$},
   journal={J. Amer. Math. Soc.},
   volume={1},
   date={1988},
   number={3},
   pages={587--646},
   issn={0894-0347},
   review={\MR{928903 (89e:32027)}},
   doi={10.2307/1990950},
}

\bib{Ch88II}{article}{
   author={Christ, Michael},
   title={Pointwise estimates for the relative fundamental solution of
   $\overline\partial_b$},
   journal={Proc. Amer. Math. Soc.},
   volume={104},
   date={1988},
   number={3},
   pages={787--792},
   issn={0002-9939},
   review={\MR{929407 (89d:35126)}},
   doi={10.2307/2046793},
}

\bib{F}{article}{
   author={Fefferman, Charles},
   title={The Bergman kernel and biholomorphic mappings of pseudoconvex
   domains},
   journal={Invent. Math.},
   volume={26},
   date={1974},
   pages={1--65},
   issn={0020-9910},
   review={\MR{0350069 (50 \#2562)}},
}

\bib{FeKo88}{article}{
   author={Fefferman, C. L.},
   author={Kohn, J. J.},
   title={Estimates of kernels on three-dimensional CR manifolds},
   journal={Rev. Mat. Iberoamericana},
   volume={4},
   date={1988},
   number={3-4},
   pages={355--405},
   issn={0213-2230},
   review={\MR{1048582 (91h:32013)}},
   doi={10.4171/RMI/78},
}

\bib{FoSt}{article}{
   author={Folland, G. B.},
   author={Stein, E. M.},
   title={Estimates for the $\bar \partial _{b}$ complex and analysis on
   the Heisenberg group},
   journal={Comm. Pure Appl. Math.},
   volume={27},
   date={1974},
   pages={429--522},
   issn={0010-3640},
   review={\MR{0367477 (51 \#3719)}},
}
	
\bib{GrSt}{book}{
   author={Greiner, P. C.},
   author={Stein, E. M.},
   title={Estimates for the $\overline \partial $-Neumann problem},
   note={Mathematical Notes, No. 19},
   publisher={Princeton University Press},
   place={Princeton, N.J.},
   date={1977},
   pages={iv+195},
   isbn={0-691-08013-5},
   review={\MR{0499319 (58 \#17218)}},
}

\bib{Hor85}{book}{
   author={H{\"o}rmander, Lars},
   title={The analysis of linear partial differential operators. III},
   series={Grundlehren der Mathematischen Wissenschaften [Fundamental
   Principles of Mathematical Sciences]},
   volume={274},
   note={Pseudodifferential operators},
   publisher={Springer-Verlag},
   place={Berlin},
   date={1985},
   pages={viii+525},
   isbn={3-540-13828-5},
   review={\MR{781536 (87d:35002a)}},
}

\bib{Hsiao08}{article}{
   author={Hsiao, Chin-Yu},
   title={Projections in several complex variables},
   language={English, with English and French summaries},
   journal={M\'em. Soc. Math. Fr. (N.S.)},
   number={123},
   date={2010},
   pages={131},
   issn={0249-633X},
   isbn={978-2-85629-304-1},
   review={\MR{2780123 (2011m:32004)}},
}	

\bib{HY13}{article}{
   author={Hsiao, Chin-Yu},
   author={Yung, Po-Lam},
   title={The tangential {Cauchy}-{Riemann} complex on the {Heisenberg} group {Via} {Conformal} {Invariance}},
   journal={Bulletin of the Institute of Mathematics, Academia Sinica (New Series)},
   volume={8},
   date={2013},
   number={3},
   pages={359--375},
}	

\bib{Jean12}{article}{
   author={Jean, F.},
   title={Control of {Nonholonomic} systems and {Sub}-{Riemannian} {Geometry}},
   journal={preprint},
}	

\bib{Koe02}{article}{
   author={Koenig, Kenneth D.},
   title={On maximal Sobolev and H\"older estimates for the tangential
   Cauchy-Riemann operator and boundary Laplacian},
   journal={Amer. J. Math.},
   volume={124},
   date={2002},
   number={1},
   pages={129--197},
   issn={0002-9327},
   review={\MR{1879002 (2002m:32061)}},
}

\bib{Koh85}{article}{
   author={Kohn, J. J.},
   title={Estimates for $\bar\partial_b$ on pseudoconvex CR manifolds},
   conference={
      title={Pseudodifferential operators and applications (Notre Dame,
      Ind., 1984)},
   },
   book={
      series={Proc. Sympos. Pure Math.},
      volume={43},
      publisher={Amer. Math. Soc.},
      place={Providence, RI},
   },
   date={1985},
   pages={207--217},
   review={\MR{812292 (87c:32025)}},
}
	
\bib{Koh86}{article}{
   author={Kohn, J. J.},
   title={The range of the tangential Cauchy-Riemann operator},
   journal={Duke Math. J.},
   volume={53},
   date={1986},
   number={2},
   pages={525--545},
   issn={0012-7094},
   review={\MR{850548 (87m:32041)}},
   doi={10.1215/S0012-7094-86-05330-5},
}

\bib{KoRo}{article}{
   author={Kohn, J. J.},
   author={Rossi, Hugo},
   title={On the extension of holomorphic functions from the boundary of a
   complex manifold},
   journal={Ann. of Math. (2)},
   volume={81},
   date={1965},
   pages={451--472},
   issn={0003-486X},
   review={\MR{0177135 (31 \#1399)}},
}

\bib{MM07}{book}{
   author={Ma, Xiaonan},
   author={Marinescu, George},
   title={Holomorphic Morse inequalities and Bergman kernels},
   series={Progress in Mathematics},
   volume={254},
   publisher={Birkh\"auser Verlag},
   place={Basel},
   date={2007},
   pages={xiv+422},
   isbn={978-3-7643-8096-0},
   review={\MR{2339952 (2008g:32030)}},
}

\bib{Ma1}{article}{
   author={Machedon, Matei},
   title={Estimates for the parametrix of the Kohn Laplacian on certain
   domains},
   journal={Invent. Math.},
   volume={91},
   date={1988},
   number={2},
   pages={339--364},
   issn={0020-9910},
   review={\MR{922804 (89d:58118)}},
   doi={10.1007/BF01389371},
}

\bib{Ma2}{article}{
   author={Machedon, Matei},
   title={Szeg\H o kernels on pseudoconvex domains with one degenerate
   eigenvalue},
   journal={Ann. of Math. (2)},
   volume={128},
   date={1988},
   number={3},
   pages={619--640},
   issn={0003-486X},
   review={\MR{970613 (89i:32043)}},
   doi={10.2307/1971438},
}

\bib{NaSt}{book}{
   author={Nagel, Alexander},
   author={Stein, E. M.},
   title={Lectures on pseudodifferential operators: regularity theorems and
   applications to nonelliptic problems},
   series={Mathematical Notes},
   volume={24},
   publisher={Princeton University Press},
   place={Princeton, N.J.},
   date={1979},
   pages={159},
   isbn={0-691-08247-2},
   review={\MR{549321 (82f:47059)}},
}

\bib{NSW}{article}{
   author={Nagel, Alexander},
   author={Stein, Elias M.},
   author={Wainger, Stephen},
   title={Balls and metrics defined by vector fields. I. Basic properties},
   journal={Acta Math.},
   volume={155},
   date={1985},
   number={1-2},
   pages={103--147},
   issn={0001-5962},
   review={\MR{793239 (86k:46049)}},
   doi={10.1007/BF02392539},
}

\bib{NRSW1}{article}{
   author={Nagel, Alexander},
   author={Rosay, Jean-Pierre},
   author={Stein, Elias M.},
   author={Wainger, Stephen},
   title={Estimates for the Bergman and Szeg\H o kernels in certain weakly
   pseudoconvex domains},
   journal={Bull. Amer. Math. Soc. (N.S.)},
   volume={18},
   date={1988},
   number={1},
   pages={55--59},
   issn={0273-0979},
   review={\MR{919661 (89a:32025)}},
   doi={10.1090/S0273-0979-1988-15598-X},
}

\bib{NRSW2}{article}{
   author={Nagel, A.},
   author={Rosay, J.-P.},
   author={Stein, E. M.},
   author={Wainger, S.},
   title={Estimates for the Bergman and Szeg\H o kernels in ${\bf C}^2$},
   journal={Ann. of Math. (2)},
   volume={129},
   date={1989},
   number={1},
   pages={113--149},
   issn={0003-486X},
   review={\MR{979602 (90g:32028)}},
   doi={10.2307/1971487},
}

\bib{RoSt}{article}{
   author={Rothschild, Linda Preiss},
   author={Stein, E. M.},
   title={Hypoelliptic differential operators and nilpotent groups},
   journal={Acta Math.},
   volume={137},
   date={1976},
   number={3-4},
   pages={247--320},
   issn={0001-5962},
   review={\MR{0436223 (55 \#9171)}},
}

\bib{Ste93}{book}{
   author={Stein, Elias M.},
   title={Harmonic analysis: real-variable methods, orthogonality, and
   oscillatory integrals},
   series={Princeton Mathematical Series},
   volume={43},
   note={With the assistance of Timothy S. Murphy;
   Monographs in Harmonic Analysis, III},
   publisher={Princeton University Press},
   place={Princeton, NJ},
   date={1993},
   pages={xiv+695},
   isbn={0-691-03216-5},
   review={\MR{1232192 (95c:42002)}},
}

\end{biblist}
\end{bibdiv}

\end{document}